\documentclass[11pt,reqno,a4paper]{amsart}
\usepackage{amsmath,amsthm,verbatim,amscd,amssymb,setspace,enumitem,hyperref}
\usepackage{exscale,color}
\usepackage[colorinlistoftodos,prependcaption,textsize=tiny]{todonotes}
\usepackage{enumitem}
\usepackage{tikz-cd}
\usepackage{mathrsfs}

\renewcommand{\epsilon}{\varepsilon}

\newcommand{\Q}{\mathbb{Q}}

\newcommand{\C}{\mathbb{C}}
\newcommand{\cO}{\mathcal O}
\newcommand{\Spec}{\operatorname{Spec}}
\newcommand{\Vol}{\operatorname{vol}}

\newcommand{\ii}{\ensuremath{\sqrt{-1}}}

\newcommand{\reg}{\mathrm{reg}}
\newcommand{\loc}{\mathrm{loc}}
\newcommand{\pp}{\overline{\partial}}
\renewcommand{\Re}{\operatorname{Re}}
\newcommand{\rf}{\left(M, (g_t)_{t\in [-1,0)}\right)}
\newtheorem{mainthm}{Theorem}

\newcommand{\supp}{\operatorname{supp}}
\newcommand{\lie}{\operatorname{Lie}}

\newcommand{{\vol}}{\rm vol}
\newcommand{\p}{\partial}

\newcommand{\Ric}{\operatorname{Ric}}

\newcommand{\Rm}{\operatorname{Rm}}

\def\tr{\operatorname{tr}}

\def\Div{\operatorname{div}}
\def\Id{\operatorname{Id}}

\newtheoremstyle{fancy}{}{}{\itshape}{}{\textbf\bgroup}{.\egroup}{ }{}
\newtheoremstyle{fancy2}{}{}{\rm}{}{\textbf\bgroup}{.\egroup}{ }{}

\theoremstyle{fancy}
\newtheorem{theorem}{Theorem}[section]
\newtheorem{lemma}[theorem]{Lemma}
\newtheorem{corollary}[theorem]{Corollary}

\newtheorem{prop}[theorem]{Proposition}

\theoremstyle{fancy2}
\newtheorem{definition}[theorem]{Definition}

\newtheorem{remark}[theorem]{Remark}

\usepackage{cleveref}

\crefname{maintheorem}{Theorem}{Theorems}
\Crefname{maintheorem}{Theorem}{Theorems}

\usepackage[a4paper,margin=2.5cm]{geometry}
\setlist{leftmargin=*}

\numberwithin{equation}{section}

\begin{document}
\title{Singular K\"ahler--Ricci shrinkers and polarized Fano fibrations}
\date{\today}

\author{Max Hallgren}
\address{Department of Mathematics, Rutgers University, Piscataway, NJ 08854, USA}
\email{mh1564@scarletmail.rutgers.edu}

\author{Junsheng Zhang}
\address{Courant Institute of Mathematical Sciences\\
  New York University, 251 Mercer St\\
  New York, NY 10012\\}
\email{jz7561@nyu.edu}
\date{\today}

\begin{abstract}
We prove that any singular K\"ahler--Ricci shrinker $X$ arising as a noncollapsed limit of K\"ahler--Ricci flows admits a natural structure of a polarized Fano fibration.
We also show that it is simply connected, has unique tangent cones at every point, and is a smooth orbifold outside a subset of complex codimension at least three. As an application, we prove a new long-time pseudolocality theorem for almost-selfsimilar K\"ahler--Ricci flows.
\end{abstract}
\maketitle
\section{Introduction}

Many compactness results in K\"ahler geometry produce metric-space limits whose
regular sets carry natural complex-analytic structures. A fundamental problem is
to determine whether these structures extend globally across the singular set
and, moreover, whether the resulting spaces are algebraic. A positive answer
makes available the powerful tools of complex and algebraic geometry in the
study of such limits.

In this paper, we establish the global algebraicity of singular
K\"ahler--Ricci shrinkers, namely, shrinking gradient K\"ahler--Ricci solitons. More
precisely, we show that every singular K\"ahler--Ricci shrinker admits a
polarized Fano fibration structure, extending to the singular setting the
structure introduced in \cite{SunZhang}.

The problem of promoting metric limits to complex-analytic spaces has been most thoroughly developed for Gromov--Hausdorff limits of polarized noncollapsed K\"ahler manifolds. Beyond the case of K\"ahler-Einstein limits in complex dimension two \cite{anderson1989r,BKN,Tian90}, where global analyticity was shown using a removable singularities theorem, a major breakthrough was achieved by Donaldson–Sun \cite{DS1,DS2}. They showed that any pointed Gromov--Hausdorff limit of noncollapsed polarized K\"ahler manifolds with bounded Ricci curvature is a complex-analytic variety with log terminal singularities and proved the uniqueness of metric tangent cones at every point by characterizing such cones algebraically; see also \cite{HS,SWZ2025}. Some of these results were extended in \cite{CDSII,CDS3,LiuSz1,LiuSz2}, where the assumption of an upper bound on the Ricci curvature was removed, and various geometric consequences were derived. 

In the setting of normalized K\"ahler--Ricci flow on a Fano manifold, analogous results for Gromov-Hausdorff limits were established in \cite{TZ16,CW1,CW2}. 
Bamler’s compactness results for Ricci flows \cite{bamler1,bamler2,bamler3} make it possible to consider more general limits of noncollapsed K\"ahler–Ricci flows. Making use of this, certain Gromov-Hausdorff limits of polarized K\"ahler–Ricci flows based at Ricci vertices were shown to be normal analytic varieties \cite{JST23,HJST}.

 Two compactness frameworks are relevant here. Bamler's compactness theory
for noncollapsed Ricci flows is formulated in terms of $\mathbb F$-convergence
of flows of metric-measure spaces, with reference measures typically given by
conjugate heat-kernel measures \cite{bamler2,bamler3}. On the other hand,
Li--Li--Wang developed a compactness theory adapted to sequences of
noncollapsed Ricci shrinkers: after passing to a subsequence, one obtains
pointed Gromov--Hausdorff convergence together with smooth Cheeger--Gromov
convergence on the regular locus
\cite[Theorem~1.1 and Definition~7.1]{LLW21}.
The latter framework can in fact be incorporated into the former. By
\cite{LW24}, the pointed Ricci flows induced by the shrinkers, based at
minimal points of their soliton potentials, also converge in the
$\mathbb F$-sense, and the time slices of the resulting limit flow are
isometric to the corresponding pointed Gromov--Hausdorff limits of
Li--Li--Wang. See Section~\ref{sec:prelim} for a detailed discussion.

In the
smooth setting, it is proved that every complete K\"ahler--Ricci shrinker
naturally defines a polarized Fano fibration
\cite[Theorem~1.1]{SunZhang}.  Our main result extends this theorem to
singular  K\"ahler--Ricci shrinkers arising in both settings.

\begin{mainthm}\label{thm:pffibration}
Let $X$ be a singular K\"ahler--Ricci shrinker arising in either
setting \ref{assume:KRF} or setting \ref{assume:KRS} discussed in Section \ref{sec;convergence setting}.  Then $X$ admits a
polarized Fano fibration.
\end{mainthm}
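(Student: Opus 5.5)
The plan is to follow the strategy of \cite{SunZhang} in the smooth case and replace each smooth input with its singular counterpart, available from the structure theory of noncollapsed limits. Recall that a polarized Fano fibration consists of a normal variety $X$ with klt singularities, a projective morphism $\pi: X\to Y$ onto an affine variety $Y$ with $-K_X$ $\pi$-ample, and a torus action for which the soliton vector field is polarizing. We will produce $Y$ as the spectrum of a ring of holomorphic functions of polynomial growth, and $\pi$ from sections of $-mK_X$ of polynomial growth.

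\textbf{Step 1: analytic structure.} Using the convergence in setting \ref{assume:KRF} or \ref{assume:KRS}, we take the limit time slice $X$. It carries a K\"ahler shrinker structure $(\omega,J,f)$ on the regular part $\mathcal R$, and the singular set has real codimension at least $4$, with Minkowski-type estimates. We show that $X$ is a normal complex analytic space. To do this we run H\"ormander $L^2$ estimates with the weight $e^{-f}$ on $\mathcal R$, using a cutoff near the singular set; the codimension-$4$ bound makes the cutoff error vanish. These estimates construct holomorphic peak sections and functions of polynomial growth that separate points and tangent directions, as in Donaldson--Sun's partial $C^0$ argument. Holomorphic functions on $\mathcal R$ that are locally $L^2$ extend across the singular set by normality-type arguments, which gives the normal structure.

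\textbf{Step 2: the torus and the affine base.} The vector field $J\nabla f$ is Killing and holomorphic on $\mathcal R$, and it extends to an isometric action on $X$. Its flow closure in the isometry group is a compact torus $T$; compactness holds because $X$ is proper and $f$ is proper. The ring $R(X)$ of holomorphic functions of polynomial growth decomposes into $T$-weight spaces. The growth bound $f\sim r^2/4$ and the three-circle or frequency estimates from drift-Laplacian spectral theory bound each weight space's dimension. As in \cite{SunZhang}, this gives finite generation, so $Y=\Spec R(X)$ is affine. For $-K_X$, we use the sections of $\mathcal O(-mK_X)$ built from $e^{-f}\omega^n$-weighted $L^2$ data. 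The soliton equation $\Ric+\ii\partial\bar\partial f=\omega$ gives the Hermitian metric on $-K_X$ the curvature $\omega$, so Step 1's $L^2$ machinery applied to $-mK_X$ yields the embedding $X\to Y\times\mathbb P^N$ and hence $\pi$-ampleness. We check klt by noting that the volume form $e^{-f}\omega^n$ has finite local mass near singular points, which is the log-terminal criterion.

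\textbf{Main obstacle.} We expect the hardest step to be properness of $\pi$ and the fact that the fibers are exactly the compact analytic subvarieties. This requires showing that functions of polynomial growth are plentiful enough to separate points up to compact sets, uniformly at infinity on a singular space. Our first attempt is the Sun--Zhang argument: a weighted Hessian estimate for $f$ combined with the Duistermaat--Heckman-type localization along the flow of $\nabla f$. Flowing by $-\nabla f$ contracts $X$ toward the compact critical set, so the problem reduces to a compact neighbourhood of $\{\nabla f=0\}$, where Step 1's local peak sections suffice. For this we need the gradient flow of $f$ to extend across the singular set, and we would obtain that from the $\mathbb F$-limit flow being self-similar, $g_t=|t|\phi_t^*g$, in Bamler's framework.
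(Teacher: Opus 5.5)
Your proposal has a genuine gap at the central step. In your last paragraph you reduce properness to a neighbourhood of ``the compact critical set'' $\{\nabla f=0\}$. But compactness of this set, equivalently of the fixed locus of the torus $\mathbb T$ generated by $J\nabla f$, is exactly what has to be proved, and nothing in your Steps 1--2 gives it.

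From $R+|\nabla^{1,0}f|^2=f-W$ you only get $R=f-W$ at zeros of $\nabla f$. There is no a priori sub-quadratic bound on $R$, not even for smooth shrinkers, so zeros could escape to infinity. The paper, following Sun--Zhang, proves compactness algebro-geometrically:
\begin{enumerate}
\item Convexity of $\mu(X)$ lets one perturb $\xi$ to a rational $\eta\in\Lambda_{\mathbb Q}$ generating an $\mathbb S^1$-action with proper Hamiltonian $u_\eta$.
\item The K\"ahler reductions $X_r=X^s(\eta,r)/\mathbb C^*$ at regular values are projective varieties of klt type, and are of Fano type for regular $r$ near $0$.
\item Across each positive wall one proves the graph identity and the effectivity of the character jump $E_c=p_-^*L_--p_+^*L_+$, using the negativity lemma and without assuming the fixed center is smooth.
\item This propagates a complement of fixed index with coefficients in a fixed finite set.
\item Hacon--Xu boundedness then gives a uniform lower bound on normalized volumes. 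This in turn bounds the stabilizer orders and hence the Cartier index $M$ of $L_r$.
\item The relative length theorem yields $c\le 2(n-1)M$ for every critical value.
\end{enumerate}
Only after this does one obtain a regular value $r_*$ above all critical values, the stabilized quotient, the punctured affine cone $Y'$, and a proper map $\pi'\colon X\to Y'$. Properness comes from compactness of $X\setminus X^s(\eta,r_*)$. None of this machinery, nor any substitute for it, appears in your plan.

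Several other steps would also not go through as written.
\begin{itemize}
\item Finite-dimensionality of the $\mathbb T$-weight spaces of polynomial-growth holomorphic functions does not imply that the ring is finitely generated, so $Y=\Spec R(X)$ is not justified. In the paper, $Y$ arises as the Stein factorization of $\pi'$, after anticanonical eigensections have been used to embed $X$ equivariantly into $Y'\times\mathbb P^N$.
\item In Step 1, Minkowski codimension four does not by itself kill the cutoff error. The available cutoffs only satisfy $\nabla\chi_\epsilon\in L^{4-\sigma}$. The H\"ormander estimate therefore needs a local $L^\infty$ bound for $\overline{\partial}$-closed, $\overline{\partial}_f^*$-closed $K^{-\ell}$-valued $(0,1)$-forms. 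The paper proves this with an improved Kato inequality, conjugate heat kernel estimates and Perelman's Harnack inequality.
\item Extending holomorphic functions across the singular set ``by normality'' is circular, since normality is the output of Step 1.
\item Self-similarity of the $\mathbb F$-limit only gives completeness of $\tau(\partial_{\mathfrak t}-\nabla f)$. Completeness of $\nabla f$ on $X^{\reg}$, and extension of its flow across $X\setminus X^{\reg}$, come from the normal complex structure via local holomorphic embeddings, not from Bamler's framework.
\end{itemize}
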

 The proof of Theorem~\ref{thm:pffibration} consists of two main steps. The first
is to establish the complex-analyticity of the space, and the second is to
upgrade this complex-analytic structure to an algebraic one. The first step is
accomplished by the following result.
\begin{mainthm} \label{thm:complexstructure}
Let $\mathcal{O}_X$ denote the sheaf of continuous functions on $X$ whose restrictions to $\mathcal{R}_X$ are holomorphic. Then:  
\begin{enumerate}
    \item \label{thm:complexstructure1} $(X,\mathcal{O}_X)$ is a normal complex-analytic variety with log terminal singularities;
    \item \label{thm:complexstructure2} $\omega_X$ extends to $X$ as a positive $(1,1)$-current with bounded potential. Moreover, it is a K\"ahler current in the sense that for any $p \in X$, there exist a neighborhood $U_p$ of $p$ and a holomorphic embedding $U_p \hookrightarrow \mathbb{C}^N$ such that for some constant $c_p>0$,
\begin{equation}
\omega_X \ge c_p\omega_{\mathbb{C}^N}\big|_{U_p};
\end{equation}
    \item \label{thm:complexstructure3} the metric singular set of $X$ coincides with its complex analytic singular set;    
    \item \label{thm:complexstructure4} at every point of $X$, the tangent cone is unique, and is a K\"ahler Ricci-flat cone on an affine variety with log terminal singularities;
    \item \label{thm:complexstructure5} $\omega_X$ is a smooth orbifold metric on the orbifold locus $X^{\mathrm{orb}}$ of $X$.
\end{enumerate}
\end{mainthm}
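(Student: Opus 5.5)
The plan is to run the Donaldson--Sun partial $C^0$ / Hörmander $L^2$ method, with the soliton potential as a weight, on the noncollapsed limit $X$. The local structure theory of the limit (Bamler's, or Li--Li--Wang's via \cite{LW24}) supplies the input: codimension-four singular set, smooth convergence on $\mathcal R_X$, and every tangent cone a metric cone $C(Y)$ whose regular part is Kähler Ricci-flat.

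\textbf{Step 1: holomorphic sections that separate points.} Near any $p\in X$ the limit is locally almost a Kähler cone at small scales. On a tangent cone $C(Y)$ I use the holomorphic functions that are homogeneous under the Reeb flow. I pull these back to the approximating smooth manifolds as sections of $L^k$, where $L$ is the trivial line bundle with hermitian metric $e^{-f}$. By the soliton equation its curvature is $\omega$, up to the gradient-flow normalization, so $L^k$ behaves like a polarization. To do the pullback I cut them off away from the singular set (using codimension $\geq 4$ to get good cutoffs) and correct them by $\bar\partial$ with Hörmander's estimate in the weight $k f$. Bakry--Émery Ricci $= \tfrac12 g$ gives exactly the positivity needed, and $f$ being quadratic keeps all weighted norms finite. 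The three-circle/gradient-estimate and Cheeger--Colding arguments then give uniform gradient and $L^\infty$ bounds. The resulting sections converge to sections on $X$, continuous and holomorphic on $\mathcal R_X$, that separate points and tangents near $p$. This yields local maps $U_p\to\mathbb C^N$ which are homeomorphisms onto analytic subvarieties. Normality comes from Hartogs-type extension across a set of real codimension four together with continuity; this gives (1) except for log terminality.

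\textbf{Step 2: potentials, singularities, and tangent cones.} Writing $\omega_X=\sqrt{-1}\partial\bar\partial(\log\sum|s_i|^2_{h^k})/k$ locally gives a bounded potential. Since the Bergman-type embedding has uniformly bounded gradient, comparing the pulled-back Euclidean form with $\omega$ gives the Kähler-current bound of (2). For (3), a metrically regular point has a Euclidean tangent cone, so the embedding is a local biholomorphism and the point is analytically smooth. Conversely, at a metric singularity the volume density is $<1$, which rules out analytic smoothness: smooth points have density one via the plurisubharmonic comparison, as in Donaldson--Sun. Log terminality follows because the volume form $e^{-f}\omega^n$, extended as a measure on $X$, has locally finite mass and defines an $L^2$ adapted measure; this uses the Ricci-form identity $\Ric+\sqrt{-1}\partial\bar\partial f=\omega$ on $\mathcal R_X$. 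Part (4) is proved by the two-step degeneration of Donaldson--Sun \cite{DS2}, adapted as in \cite{HS,SWZ2025}. The Reeb vector field is determined by volume minimization on the algebraic semi-valuation side. The K-semistable cone degenerates to a unique polystable cone, which is Ricci-flat. Rigidity of algebraic degenerations then forces uniqueness of the tangent cone.

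\textbf{Step 3: orbifold points, and where the difficulty lies.} For (5), at an orbifold point $p$ the analytic germ is $\mathbb C^n/\Gamma$. Lifting to the local uniformizing chart, $\omega_X$ is a bounded-potential Kähler current. On the complement of a codimension $\geq2$ set it is smooth and satisfies the soliton equation. By Step 2 the tangent cone is $\mathbb C^n/\Gamma$, so the lifted metric is a weak solution of a complex Monge--Ampère equation $(\sqrt{-1}\partial\bar\partial\phi)^n=e^{-\phi+F}$ with a smooth right side on the chart. Pluripotential regularity (Kołodziej and Chen--Cheng style estimates, or Evans--Krylov after a $C^2$ bound) then gives smoothness.

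I expect the main obstacle to be Step 1's $L^2$ construction: making the Hörmander estimates uniform over noncompact, merely almost-selfsimilar Kähler--Ricci flows rather than compact polarized manifolds. In setting \ref{assume:KRF} I would first use \cite{LW24} to reduce to time slices that are genuine approximate shrinkers. Then I would work with the weight $e^{-kf}$, cut off on the sublevel sets $\{f\le R\}$, to control the noncompact ends.
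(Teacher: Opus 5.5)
There is a genuine gap, and it starts with the polarization. The trivial bundle with metric $e^{-f}$ has curvature $\sqrt{-1}\partial\bar\partial f=\omega-\Ric(\omega)$, not $\omega$. With weight $e^{-kf}$, the Bochner--Kodaira curvature term for $(0,1)$-forms is $\Ric+k\sqrt{-1}\partial\bar\partial f=k\omega-(k-1)\Ric$. This is not positive wherever the Ricci curvature is large, e.g.\ near the singular set. The Bakry--\'Emery identity only helps at $k=1$, where the positivity $\omega$ does not scale with $k$, so the Donaldson--Sun rescaling argument cannot be run. The paper instead polarizes by $K_{\mathcal R_X}^{-1}$ with the metric $h$ induced by the weighted volume form $e^{-f}\omega^n$. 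Its curvature is $\Ric+\sqrt{-1}\partial\bar\partial f=\omega$, and for $K^{-\ell}$-valued $(0,1)$-forms with the extra weight $e^{-f}$ the curvature term is exactly $(\ell+1)\omega$. Your second problem is that building sections on approximating manifolds and passing to the limit fails in setting \ref{assume:KRF}. There the approximants are time slices of compact K\"ahler--Ricci flows. They have no soliton potential and no Hermitian line bundle with curvature $\omega_t$ (the classes $[\omega_t]$ need not be rational). Their almost-selfsimilarity holds only in an integral sense, which gives no pointwise positivity in Bochner--Kodaira. Also, \cite{LW24} concerns sequences of shrinkers and cannot turn these flows into shrinkers. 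This is why the paper proves H\"ormander's estimate (Proposition~\ref{prop:hormander}) directly on the incomplete manifold $\mathcal R_X$.

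Working on $\mathcal R_X$ requires an analytic core that your proposal does not supply. Minkowski codimension four only gives cut-offs with $\nabla\chi_\epsilon\to 0$ in $L^{4-\sigma}$ (Lemma~\ref{lem:chicutoff}). So discarding boundary terms in the Bochner--Kodaira integration by parts needs the local $L^\infty$ bound of Proposition~\ref{prop:localbound} for $\bar\partial$-closed, $\bar\partial_f^*$-closed forms, not just $L^2$ control. Likewise, the Bochner-type gradient and sup estimates you invoke need a Ricci lower bound, which fails here: $\Delta|\nabla u|^2$ contains $-\nabla\bar\nabla f(\nabla u,\nabla u)$, with no pointwise control. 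The paper handles both issues as follows:
\begin{itemize}
\item it extends tensors by $\mathcal L_{\partial_t-\frac12\nabla f}(\cdot)=0$ and integrates $|\eta|^\beta$, $|u|^{1/2}$, $|\nabla u|^\beta$ against conjugate heat kernel measures;
\item it uses the improved Kato inequalities (Lemmas~\ref{lem-improved kato} and~\ref{lem--kato for holo gradient}) so that H\"older's inequality pairs with $L^{4-\sigma}$ cut-offs;
\item it integrates the $\nabla\bar\nabla f$ terms by parts, controlling $|\nabla\log K|$ by Perelman's Harnack inequality (Lemma~\ref{lem--integration by part trick}).
\end{itemize}
This yields Lemma~\ref{lem:C1estimates}, on which the peak sections, the bounded potential and the K\"ahler-current bound all rest.

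Step~3 has a similar gap. Since $\log(\omega^n/\omega_{\mathbb C^n}^n)=f-\varphi+(\text{pluriharmonic})$ and $f$ is only Lipschitz on the chart, the right-hand side is not known to be smooth. Knowing the tangent cone is $\mathbb C^n/\Gamma$ gives no $C^2$ bound, and bounded weak solutions of complex Monge--Amp\`ere with smooth data need not be $C^{1,1}$. The paper first proves $C^{-1}\omega_{\mathbb C^n}\le p^*\omega_X\le C\omega_{\mathbb C^n}$, using the gradient estimate for the chart coordinates together with pluriharmonic extension \cite{GrRe}. It then extends $\tfrac12\nabla f$ by Hartogs to a holomorphic field $V$, rewriting the equation as $(\sqrt{-1}\partial\bar\partial\varphi)^n=e^{V\varphi+f_0-\varphi}\omega_{\mathbb C^n}^n$ with $f_0$ smooth and $C^\alpha$ right-hand side. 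It concludes with \cite{zeng2016} and bootstrapping.
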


% \begin{remark}  In particular, if $n=3$, then $X$ is a smooth orbifold away from a discrete set, and the link of any tangent cone is a smooth Riemannian orbifold \cite[Theorem 5.1]{DS1}. 
% \end{remark}

\begin{remark}
    Since the objects we consider arise as limits of smooth objects, we can obtain estimates from those in the smooth setting, combined with a limiting argument. In particular, estimates for the conjugate heat kernel, which play an essential role in the proof of the above theorem, allow us to derive the gradient estimate for holomorphic pluri-anticanonical sections established in Lemma~\ref{lem:C1estimates}.  There are also related questions concerning singular canonical metrics constructed in the sense of pluripotential theory. We refer to \cite{EGZ,hein-sun,LiTian,GGZ,zhang2025,AIM,FGS2025,GPP} and the references therein for results along this direction.
\end{remark}

Once this analytic structure is established, one can derive further consequences from the torus symmetry of the shrinker. Prior work shows that $J_X\nabla f_X$ generates a complete flow on
$\mathcal{R}_X$ \cite{halljian}, and that this flow induces an isometric action
of a compact torus $\mathbb T$ on $X$ whose elements restrict to biholomorphisms
of $\mathcal{R}_X$ \cite{hallgrenKahlerRicciTangentFlows2023}.
We show that this extends to an action of the complexified torus $\mathbb{T}^{\mathbb{C}}$ on $X$, and that there exists a locally Lipschitz moment map $\mu:X\to \operatorname{Lie}(\mathbb{T})^*$ for the $\mathbb{T}$-action. Then we prove the following.

\begin{mainthm} \label{thm:additionalstructure}
\begin{enumerate}
    \item \label{thm:additional2} $\mu(X)$ is a closed convex  subset of $\operatorname{Lie}(\mathbb{T})^*$.
        \item \label{thm:additional1} $X$ is simply connected.
\end{enumerate}
\end{mainthm}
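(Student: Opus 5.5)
We prove the two items by adapting the arguments of \cite{SunZhang} for smooth shrinkers. We use the complex-analytic structure from Theorem~\ref{thm:complexstructure}, the holomorphic $\mathbb{T}^{\mathbb{C}}$-action, and the locally Lipschitz moment map $\mu$. Fix a generic element $\xi\in\lie(\mathbb{T})$ close to the direction of $J_X\nabla f_X$, so that $\langle\mu,\xi\rangle$ is a proper function bounded below on $X$. This should follow from the quadratic growth $f_X(x)\sim \tfrac14 d(x,x_0)^2$, which passes to the limit from the smooth shrinkers, together with the identity $\langle \mu, \xi_0\rangle = f_X + c$ for the soliton vector field $\xi_0$ (on $\mathcal{R}_X$ and then by continuity).

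\textbf{Convexity.} On the smooth locus $\mathcal{R}_X$, the form $\omega_X$ is K\"ahler and $\mu$ is a genuine moment map. Each fiber of $\mu$ is $\mathbb{T}$-invariant, and the proper function $\langle\mu,\xi_0\rangle$ is bounded below. The plan is the Atiyah--Guillemin--Sternberg argument in its noncompact form (Lerman--Meinrenken--Tolman--Woodward, or Prato--Wu type): convexity follows from \emph{local convexity} of $\mu$ near each fiber, combined with properness of $\mu$ onto its image, which comes from properness of $f_X$. Local convexity at points of $\mathcal{R}_X$ follows from the equivariant Darboux (local normal form) theorem.

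At singular points we do not have this normal form. Instead we use the conic model: by Theorem~\ref{thm:complexstructure}\eqref{thm:complexstructure4} the tangent cone at $p$ is unique and $\mathbb{T}$-equivariant. Alternatively, we can work with a $\mathbb{T}^{\mathbb{C}}$-equivariant local embedding into $\mathbb{C}^N$ with a linear action. Then the image of a small neighborhood of $p$ is the cone generated by the weights at $p$, intersected with a ball, shifted by $\mu(p)$.

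A simpler route is also available. Since $\mathcal{R}_X$ is open and dense, connected (by normality), and $\mu$ is continuous, we have $\mu(X)=\overline{\mu(\mathcal{R}_X)}$ up to the properness. It therefore suffices to show that $\mu(\mathcal{R}_X)$ is \enquote{convex up to closure}. We do this by approximation: take the smooth shrinkers $(M_i,g_i)$ with torus actions converging to $X$ (in setting \ref{assume:KRS}), or the flow analog, for which \cite{SunZhang} gives convex images. Then pass to the limit, using local Lipschitz convergence of the moment maps on compact sets. The difficulty here is that the torus of the approximating shrinkers need not contain $\mathbb{T}$. So the primary plan is the intrinsic local-to-global argument, with local convexity at singular points coming from the linearization of the $\mathbb{T}^{\mathbb{C}}$-action on the affine cone at $p$.

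\textbf{Simple connectivity.} We use the Morse--Bott-type flow of $-\nabla\langle\mu,\xi\rangle$, i.e. the $\mathbb{R}_{>0}$ part of the $\mathbb{C}^*\subset\mathbb{T}^{\mathbb{C}}$ generated by $\xi$. This holomorphic action retracts $X$ onto a neighborhood of the compact fixed-point set $Z=X^{\mathbb{T}}$ at the minimum level of $f_X$. More precisely, every point flows as $t\to\infty$ to $Z$, since $\langle\mu,\xi\rangle$ is proper and decreases. This gives a deformation retraction of $X$ onto a neighborhood of the compact set $\mu^{-1}(\text{bottom})$ or, equivalently, onto the central fiber of the polarized Fano fibration. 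That fiber is a projective variety $Y$ over which $X$ is an affine-cone-like Fano fibration $X\to Y$ with Fano fibers (Theorem~\ref{thm:pffibration}).

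Then $\pi_1(X)\cong\pi_1$ of the relevant piece. We kill this group using the fact that log terminal Fano varieties, and more generally the total space of a log terminal variety with $-K_X$ relatively ample over an affine base with a good $\mathbb{C}^*$-action contracting to a point, are simply connected. The key inputs are Takayama's theorem that log terminal Fano varieties are simply connected, together with the retraction of the affine base onto its vertex via the good $\mathbb{C}^*$-action; this last part is the \cite{SunZhang} argument. Here the klt singularities come from Theorem~\ref{thm:complexstructure}\eqref{thm:complexstructure1}.

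An alternative analytic route is to use that any finite cover of $X$ is again a singular shrinker with the same $\mathbb{F}$-limit volume structure. Comparing Perelman-type entropy or volume growth, which is multiplicative under covers, with the noncollapsing bound then forces the degree to be $1$. This handles finite quotients; infinite $\pi_1$ must be excluded separately.

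\textbf{Main obstacle.} I expect the main obstacle to be local convexity, or the local normal form, of $\mu$ at singular points, where only a Lipschitz moment map and a current $\omega_X$ are available.
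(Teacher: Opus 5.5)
Your convexity argument has a genuine gap at exactly the point you flag. The local-to-global principle (Lerman--Meinrenken--Tolman--Woodward, Hilgert--Neeb--Plank) needs something at \emph{every} point, singular ones included. Namely, each point must have a neighbourhood on which $\mu$ is open onto a convex image and has connected fibres. At a singular point $p$ your proposal gives no way of producing this. A linear $\mathbb{T}^{\mathbb{C}}$-equivariant embedding into $\C^N$ does not help: $\omega_X$ is only a current with bounded potentials and is not the restriction of a smooth ambient K\"ahler form. So $\mu$ is not the restriction of the linear moment map, and its local image cannot be read off from the weights. Uniqueness of the tangent cone only controls the rescaled limit, not the exact image of a neighbourhood. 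Nor can you run the argument on $\mathcal{R}_X$ alone, since $\mu|_{\mathcal{R}_X}$ is not proper.

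The missing idea is that no analysis at singular points is needed. $X^{\reg}$ is $\mathbb{T}^{\mathbb{C}}$-invariant and $\omega$ is an honest K\"ahler form there. The paper therefore applies Heinzner--Huckleberry's Internal Convexity Theorem, which uses the complexified action rather than properness, to the set $X_{\max}\subset X^{\reg}$ of points with maximal-dimensional $\mathbb{T}$-orbits. This gives that $\mu(X_{\max})$ is convex. Then $\mu(X)=\overline{\mu(X_{\max})}$ by density and the continuity of $\mu$ (Lemma~\ref{lem-vector field bounded}), and closedness comes from properness. Your reduction $\mu(X)=\overline{\mu(\mathcal{R}_X)}$ was the right first step; smooth approximation was the wrong second one.

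For (2), your main route rests on Theorem~\ref{thm:pffibration}. The paper proves that theorem afterwards, on top of (1) and the whole reduction and wall-crossing machinery, so your argument inherits the gap above. Several specific points are off:
\begin{enumerate}
\item The compactness of the fixed locus that you take for granted is Proposition~\ref{prop:bounded-critical-values}, the hardest step of the paper.
\item Fixed points do not all lie at the minimum level, since critical values of $u$ occur at several levels in general. So $X$ does not retract onto a neighbourhood of $u^{-1}(\min u)$.
\item Even granting the fibration, the correct retraction target is the central fibre $\pi^{-1}(o)$. This fibre is generally not a klt Fano variety, so you would need the relative form of Takayama's theorem (fibres of a klt morphism with relatively ample $-K$ are simply connected), not the absolute one.
\item Quadratic growth of $f$ gives properness of $\langle\mu,\xi\rangle$ only for the soliton field itself. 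For nearby rational $\xi$ the paper uses convexity.
\item The finite-cover alternative leaves infinite $\pi_1$ untouched.
\end{enumerate}

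The paper's proof avoids the fibration entirely. The map $\pi_1(X\setminus Z)\to\pi_1(X)$ is surjective for $Z=X^{\mathrm{sing}}\cup u_\eta^{-1}(z_\eta)$ with $z_\eta=\min u_\eta$. By general position, loops avoid the complex-codimension-$\ge 1$ locus of points whose $J\eta$-flow does not descend to the minimum level, and they can then be flowed into $u_\eta^{-1}(z_\eta+\delta)$. Since $X^{ss}(\eta,z_\eta)$ retracts onto $u_\eta^{-1}(z_\eta)$ compatibly with the quotient map, their classes factor through $\pi_1(X_{z_\eta+\delta})$. This group is trivial because $X_{z_\eta+\delta}$ is birational to the Fano-type reduction near $0$ (Proposition~\ref{prop-fano type}, Corollary~\ref{cor-birational}), and $\pi_1$ is a birational invariant of klt projective varieties.
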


When $X$ is smooth, \ref{thm:additional2} was proved in \cite[Theorem 4.1]{HNP} and \ref{thm:additional1} was proved in  \cite{esparza2,SunZhang}. The convexity of the moment map image is used to show that one can perturb the soliton vector field to obtain a holomorphic Killing vector field generating an $\mathbb{S}^1$-action with proper Hamiltonian potential; see \cite[Lemma 3.3]{SunZhang}. The simply-connectedness of a singular K\"ahler--Ricci shrinker is a non-compact analogue of the result that a projective variety of Fano type is simply connected \cite{KMM,campana1994,Tak,QZhang,HM07}. 

After we prove Theorem \ref{thm:complexstructure} and \ref{thm:additionalstructure}, we can follow the strategy in \cite{SunZhang} to show the singular K\"ahler--Ricci shrinker admits a polarized Fano fibration. As in
\cite{SunZhang}, the crucial point is to show that the vanishing locus of the
soliton vector field, or equivalently, the fixed-point locus of the compact
torus $\mathbb T$ generated by $\xi$, is compact. To this end, we perform
K\"ahler reduction at regular values and study the birational transformations
that occur when crossing critical values, a phenomenon commonly referred to
as wall crossing.

In the smooth setting considered in \cite{SunZhang}, the coefficients of the difference divisors arising from wall crossing can be computed explicitly using local models of the quotients. The new difficulty here is that the
fixed center of the wall is singular, in which case it appears difficult
to determine these coefficients explicitly. The key observation is that
their precise values are not needed: as long as the difference divisors
have the correct sign, both the propagation of complements and the
preservation of a lower bound for discrepancies across the wall can still
be established.

\

Using the completeness of $\nabla f_X$ and a contradiction-compactness argument, we also obtain the following long-time pseudolocality result near almost-selfsimilar points of the non-collapsed K\"ahler--Ricci flow. For the definitions of $P^*$-parabolic balls and the curvature radius $r_{\Rm}$, we refer to \cite[Sections 9 and 10]{bamler1}; for the definition of self-similar points, see \cite[Section 5]{bamler3}.
\begin{mainthm} \label{thm:pseudolocality} For any $W,\sigma_0>0$ and $D \in (1,\infty)$, there exist $\delta=\delta(D,W,\sigma_0)>0$ and $\sigma =\sigma(D,W,\sigma_0)>0$ such that the following holds. Suppose $(M^n,(g_t)_{t\in I})$ is a compact K\"ahler--Ricci flow and $(x_0,t_0)\in M\times I$, $r>0$ are such that $\mathcal{N}_{x_0,t_0}(r^2)\geq -W$ and $(x_0,t_0)$ is $(\delta,r)$-selfsimilar. Then for any $(x,t)\in P^{\ast}(x_0,t_0;Dr)$ satisfying $t\leq t_0-D^{-2}r^2$ and $r_{\Rm}(x,t) \geq \sigma_0 r$, we have 
$$\inf\{r_{\operatorname{Rm}}(x,t')\mid t'\in (t_0-D^2 r^2, t_0-D^{-2}r^2)\} \geq  \sigma r.$$
\end{mainthm}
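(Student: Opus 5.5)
We argue by contradiction and compactness, as indicated before the statement. Fix $W,\sigma_0,D$ and suppose the conclusion fails for $\delta_i=\sigma_i=i^{-1}$. Then there are compact K\"ahler--Ricci flows $(M_i,(g^i_t))$, points $(x_{0,i},t_{0,i})$ and scales $r_i$ with $\mathcal N_{x_{0,i},t_{0,i}}(r_i^2)\ge -W$ and $(x_{0,i},t_{0,i})$ $(i^{-1},r_i)$-selfsimilar. There are also points $(x_i,t_i)\in P^*(x_{0,i},t_{0,i};Dr_i)$ with $t_i\le t_{0,i}-D^{-2}r_i^2$ and $r_{\Rm}(x_i,t_i)\ge\sigma_0 r_i$, and times $t_i'\in(t_{0,i}-D^2r_i^2,t_{0,i}-D^{-2}r_i^2)$ with $r_{\Rm}(x_i,t_i')<i^{-1}r_i$. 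We parabolically rescale by $r_i^{-2}$ and translate so that $t_{0,i}=0$ and $r_i=1$.

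By Bamler's compactness theory, after passing to a subsequence the flows based at the conjugate heat kernels $\nu_{x_{0,i},0}$ $\mathbb F$-converge on $[-D^2,0)$ to a metric flow. The selfsimilarity assumption with $\delta_i\to0$, together with the entropy bound, implies that this limit is a singular shrinking soliton (\cite[Section 5]{bamler3}). It arises in setting \ref{assume:KRF}, so it is a singular K\"ahler--Ricci shrinker $X$ with potential $f_X$, and its time slices are related by the flow of $\nabla f_X$ and scaling. Since $P^*$-parabolic balls have bounded $\nu$-distance to the basepoint, $(x_i,t_i)$ and $(x_i,t_i')$ converge within the correspondence to points $(x_\infty,t_\infty)$ and $(x'_\infty,t'_\infty)$ of the limit, with $t_\infty,t'_\infty\in[-D^2,-D^{-2}]$.

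Bamler's theory gives smooth convergence on the regular part. It also gives lower semicontinuity of $r_{\Rm}$ in the form: if $r_{\Rm}(x_i,t_i)\ge\sigma_0$, then the limit point is regular with $r_{\Rm}(x_\infty,t_\infty)\ge\sigma_0$, and conversely regular limit points with $r_{\Rm}\ge\rho$ force $r_{\Rm}\ge\rho/2$ along approximating sequences. So $x_\infty$ lies in the regular set $\mathcal R_X$ at time $t_\infty$.

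The key step is to show that $x_\infty$ stays regular with a uniform curvature bound at every time $t'\in[-D^2,-D^{-2}]$, and that $(x_i,t')$ actually converges to the corresponding point of the limit. Here we use the completeness of $\nabla f_X$ on $\mathcal R_X$ (\cite{halljian}). In soliton coordinates, the worldline of $x$ in the limit flow is the integral curve of the time-dependent vector field $-\nabla f_X/\tau$ (up to normalization) starting at $x_\infty$. Completeness keeps this integral curve in $\mathcal R_X$ for all $t'\in[-D^2,-D^{-2}]$, and it sweeps a compact subset $K\subset\mathcal R_X$ over that interval. Hence $\inf_K r_{\Rm}=:2\rho>0$.

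This is the main obstacle: passing from information on the limit to the static worldline $(x_i,t')$ in $M_i$. My plan is a continuity argument in $t'$ starting from $t_i$. As long as $r_{\Rm}(x_i,s)\ge\rho$ for $s$ between $t_i$ and $t'$, smooth convergence on a neighborhood of $K$ shows that the worldline of $x_i$ converges to the limit worldline. Therefore $r_{\Rm}(x_i,t')\ge 2\rho-o(1)$, which exceeds $\rho$ for large $i$. The interval on which the bound holds is then both open and closed, so it is all of $[-D^2,-D^{-2}]$. To make this rigorous, one needs that smooth $\mathbb F$-convergence on regular parts carries along worldlines; this should follow from Bamler's local smooth convergence for time-intervals on the regular part of the limit.

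Applying the bound at $t'=t_i'$ gives $r_{\Rm}(x_i,t_i')\ge\rho>i^{-1}$ for large $i$, which is a contradiction. Note that the resulting $\delta$ and $\sigma=\rho$ depend only on $D$, $W$ and $\sigma_0$, as required.
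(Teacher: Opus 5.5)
Your architecture (contradiction--compactness, a K\"ahler shrinker limit, unscathed worldlines through a regular point of the limit, and transfer of curvature bounds back via smooth convergence) matches the paper's. However, the step that carries the whole theorem is not justified as written.

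You attribute completeness of $\nabla f_X$ on $\mathcal R_X$ to \cite{halljian}, but that reference only gives completeness of the Killing field $J\nabla f_X$. That is the comparatively soft statement, since its flow acts by isometries and so preserves the metric regular set. The gradient flow is not isometric. A priori an integral curve of $\nabla f_X$ starting in $\mathcal R_X$ could reach the singular set in finite time, which is exactly the failure of unscathedness the theorem has to exclude.

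In the paper this is the genuinely new input, Proposition~\ref{prop:holovectorfields}\ref{prop:holovectorfields1}, and it needs the complex-analytic structure. Locally one extends the holomorphic field $\nabla f$ to an ambient holomorphic field through an embedding, and checks that its flow preserves the variety. Linear growth of $|\nabla f|$ rules out escape to infinity. Normality then shows the flow extends to biholomorphisms of $X$, hence preserves $X^{\reg}$. One further needs $X^{\reg}=\mathcal R_X$ (Proposition~\ref{prop:regularset}) to know that \emph{metric}-regular points stay metric-regular. Finally, Bamler's completeness of $\tau(\partial_{\mathfrak t}-\nabla f)$ \cite[Theorem 15.69]{bamler3}, together with $[\tau\nabla f,\tau(\partial_{\mathfrak t}-\nabla f)]=0$, gives completeness of $\partial_{\mathfrak t}$ (Corollary~\ref{cor:timelikecomplete}). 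With these results in place of your citation, your argument goes through.

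By contrast, the step you call the main obstacle is routine. Bamler's smooth convergence on $\mathcal R$ includes $\psi_i^*\partial_t\to\partial_{\mathfrak t}$. So once a regular ball sweeps out an unscathed compact subset of $\mathcal R$ over $[-2D^2,-\frac12 D^{-2}]$, the curvature bound on the corresponding static sets in $M_i$ follows directly, with no continuity argument.

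What does need care is producing that regular ball near $x_i$, since $(x_i,t_i)$ is not a priori known to converge to a point of $\mathcal X_{t_\infty}$. The paper uses \cite[Lemma 4.2]{FlHa} to obtain a limiting conjugate heat flow based at the singular time $t_\infty$. Then, as in \cite[Proof of Lemma 26.1]{bamler3}, it chooses $t^\ast$ slightly before $t_\infty$ and an $H_{2n}$-center $z_\infty$ whose approximants $z_i$ are $C(\sigma_0)\sqrt{\delta}$-close to $x_i$, so that $B(z_\infty,\frac{\sigma_0}{2})\subseteq\mathcal R_{t^\ast}$. Your claim that $(x_i,t_i')$ also converges to a point of the limit is unjustified, and it is not needed.
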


\subsection*{Outline of Paper} 

In Section \ref{sec:prelim}, we first review some facts about singular solitons obtained from Bamler's compactness theory. We then recall or establish some estimates and identities necessary for the proof of Theorem \ref{thm:complexstructure}.

In Section \ref{sec:Hormander}, we establish the analytic machinery necessary for proving Theorem \ref{thm:complexstructure} via the general strategy of \cite{DS2}. We establish H\"ormander's \( L^2 \)-estimate directly on the singular space \( X \) as in \cite{LiuSz2,hallgrenKahlerRicciTangentFlows2023}. After constructing appropriate cut-off functions using the heat kernel of the shrinker (Section \ref{sec:cutoff}), the appropriate H\"ormander-type $L^2$ estimates follow from Bochner formulas once we prove a local boundedness result for bundle-valued $(0,1)$-forms which are harmonic near the singular set. This is done in Section \ref{sec:localboundedness}. Some of the main ingredients used to prove this in our setting are as follows:
\begin{enumerate}
    \item We regard a singular K\"ahler--Ricci shrinker as polarized by its anticanonical bundle with Hermitian metric corresponding to the weighted volume form. 
    \item Since we do not have the expected optimal local finiteness of the singular set with respect to the Minkowski codimension-four content, we instead rely on refined Kato inequalities (see \S \ref{sec:Katoinequalities}) to bypass this issue.
    \item We use the shrinker equation to estimate the Ricci curvature in terms of the complex Hessian of \( f \). Various integrations by parts are then combined with Perelman's differential Harnack inequality to handle the terms involving second-order derivatives of \( f \) (see \S \ref{sec:IBPtrick}).
\end{enumerate}
Similar ideas are used to prove $C^0$ and $C^1$ estimates for holomorphic pluri-anticanonical sections, which is done in Section \ref{sec:C0C1}. 

In Section \ref{sec:Thm1Pf}, we carry out the proof of Theorem \ref{thm:complexstructure}. Given the results of Section \ref{sec:Hormander}, our proof that the soliton is a normal complex variety closely follows that of \cite{LiuSz2}, and we record the necessary modifications in Section \ref{sec:normal}. In Section \ref{sec:orbifold}, we prove the klt property using the methods of \cite{HS}, and then leverage the K\"ahler current property with elliptic estimates to obtain orbifold regularity on the singular soliton metric on the orbifold locus of the variety.

In Section \ref{sec:equivariant}, we show that the action of the compact torus associated to the soliton vector field extends to a holomorphic action of its complexification. As a consequence, we deduce that the standard time-like vector field $\partial_{\mathfrak{t}}$ on the shrinker is complete on times $(-\infty,0)$. A straightforward contradiction-compactness argument then yields the long-time pseudolocality result, Theorem \ref{thm:pseudolocality}.

In Section \ref{sec:moment}, we establish Theorem \ref{thm:additionalstructure}. The proof relies on the quotient theory developed in \cite{H91,HL94,HH96,HHL94} for a holomorphic action of a complex reductive group on a possibly noncompact K\"ahler space. A new difficulty in our setting is that the K\"ahler–Ricci shrinker metric $\omega_X$ is not a smooth K\"ahler form, in the sense that it need not arise as the restriction of a smooth K\"ahler form under a local embedding. Nevertheless, by combining the local boundedness of holomorphic vector fields with respect to $\omega_X$ (cf. Lemma~\ref{lem-vector field bounded}) and the local lower bound for $\omega_X$ established in Theorem \ref{thm:complexstructure}-\ref{thm:complexstructure2}, we recover sufficient analytic control to carry out the arguments in \cite{HL94,HHL94}. We then apply the methods of \cite{SunZhang} and the results of \cite{greb-projec,BGLM2024}, together with additional arguments needed to handle the singularities of the quotient, thereby completing the proof of Theorem \ref{thm:additionalstructure}.

In Section~\ref{sec:fano-fibration}, we prove
Theorem~\ref{thm:pffibration}. We first record the analytic inputs and
construct the reductions at regular values. We then analyze wall crossing
on the singular total space and prove the effectivity of character jump and then establish uniform bounds for quotient
boundaries, stabilizers, Cartier indices, and critical values. Finally, we use an equivariant anticanonical
embedding to construct the polarized Fano fibration.

\subsection*{Conventions and Notation}
\begin{itemize}
    \item Throughout, we let $\nabla$ denote both the Levi--Civita connection and the Chern connection on a Hermitian holomorphic vector bundle $E$, as well as the induced connections on $\mathcal A^{p,q}(E)$. We write $\nabla^{1,0}$ and $\nabla^{0,1}$ for the $(1,0)$- and $(0,1)$-components of $\nabla$, respectively, and denote by $\partial$ and $\bar\partial$ the anti-symmetrizations of $\nabla^{1,0}$ and $\nabla^{0,1}$.
    \item On a K\"ahler manifold, we let $\langle \cdot ,\cdot \rangle$ denote the sesquilinear extension of $g$ to $T_{\mathbb{C}}M$. We let $\Delta = \Delta_{\pp }=g^{\overline{\jmath}i}\partial_i \partial_{\overline{\jmath}}$ (in local normal coordinates) denote half of the Riemannian Laplacian and $$R=\tr_{\omega}(\Ric(\omega))$$ denote half the Riemannian scalar curvature.
    \item   For $f\in C^{\infty}(M,\mathbb R)$, we let $\nabla f = 2\mathrm{Re}(g^{\overline{\jmath}i}\partial_if\frac{\partial}{\partial \overline{z}_j})$ denote the Riemannian gradient. It decomposes as
\begin{equation}
    \nabla f = \nabla^{1,0} f + \nabla^{0,1} f,
\end{equation}
where $\nabla^{1,0} f$ and $\nabla^{0,1} f$ are sections of  $T^{1,0}M$ and $T^{0,1}M$ respectively.
    \item We let $\Delta_{f}=\Delta-\nabla_{\frac{1}{2}\nabla f}$ denote the drifted Laplacian. When applied to a smooth real valued function \( u \), it is given by 
\begin{equation}
\begin{aligned}
        \Delta_f u=&\Delta u-\frac{1}{2}\left<\nabla f,\nabla u\right>\\
    &=g^{\overline{\jmath}i}\partial_i \partial_{\overline{\jmath}}u-\mathrm{Re}(g^{\overline{\jmath}i}\partial_i f \partial_{\overline{\jmath}}u).
\end{aligned}
\end{equation}
\end{itemize}  

\subsection*{Acknowledgements}
The authors would like to thank Ronan Conlon, Yu Li, G\'abor Sz\'ekelyhidi, Jian Song, Song Sun, and Xiaowei Wang for useful conversations and suggestions. The first author was supported in part by the
National Science Foundation under Grant No. DMS-2202980.

\subsection*{Declaration on the use of AI}
AI tools were used in developing  the argument in Section \ref{sec:fano-fibration}, including suggestions
leading to Lemmas~\ref{lem:divisorial-character-jump}
and~\ref{lem:complements}. The authors take
full responsibility for the content and correctness of the paper.

\section{Preliminaries} \label{sec:prelim}

\subsection{Finite-time singularities of K\"ahler--Ricci flow}\label{sec--finite time sing of KRF}

Let $\rf$ be a compact Ricci flow developing a finite-time singularity at $t=0$. Note that the metric space at the singular time 0 is not defined a priori. Following \cite{bamler2,bamler3}, we may view a point at the singular time $0$ as a conjugate heat flow whose variance tends to zero as $t \to 0$.

\begin{definition}[\cite{bamler3}]\label{conjugate kernel based at sing time}
    A conjugate heat flow based at the singular time is a conjugate heat flow $\nu=\left(\nu_t\right)_{t \in[-1, 0)}$ on $\rf$ with the property that
\begin{equation}
\lim _{t \nearrow 0} \operatorname{Var}\left(\nu_t\right)=0.    
\end{equation}

\end{definition}

\begin{remark}
    The set of conjugate heat flows based at the singular time is always non-empty. In fact, given any $x_0 \in M$ and any sequence of times $T_i \nearrow 0$, we can pass to a subsequence to obtain convergence in $C_{\operatorname{loc}}^{\infty}(M\times [-1,0))$ to a positive solution $$K(x_0,0;\cdot,\,\cdot):= \lim_{i\to \infty} K(x_0,\,T_i;\cdot,\,\cdot):M \times [-1,0)\to\mathbb{R}$$ of the conjugate heat equation \cite[Lemma 2.2]{MM2015}. Let $\nu=(\nu_t)_{t\in [-1,0)}$ denote the measure obtained via this limit. Then by the $C^{\infty}_{loc}$ convergence and the estimates in \cite[Section 3]{bamler1}, we know that for any  $t\in [-1,0)$, we have
$$\operatorname{Var}(\nu_t)=\int_{M} \int_{M} d_t^2(x,y)\,d\nu_{t}(x)\, d\nu_{t}(y) \leq H_{n}|t|,$$ where $H_n:= \frac{(n-1)\pi^2}{2}+4$ and $n$ is the real dimension of $M$.  In particular, we have 
   $ \lim_{t\nearrow 0}\mathrm{Var}(\nu_{t})=0.$
\end{remark}

By \cite[Theorem 2.37]{bamler3}, for any sequence $\tau_i \searrow 0$, if we set $g_{i,t}:=\tau_i^{-1}g_{\tau_i t}$ and $\nu_{i;t} := \nu_{\tau_i t}$, then we can pass to a subsequence to obtain a metric soliton $(\mathcal{X},(\mu_t)_{t\in (-\infty,0)})$ such that 
\begin{equation} \label{eq:Fconverge} (M, (g_{i,t})_{t\in [-\tau_i^{-1},0)},(\nu_{i;t})_{t\in [-\tau_i^{-1},0)})\xrightarrow[i\to \infty]{\mathbb{F}} (\mathcal{X},(\mu_t)_{t\in (-\infty,0)}) \end{equation}
uniformly on compact time intervals. Any $\mathbb{F}$-sequential limit of the rescaled flows $(M,\tau_i^{-1}g_{\tau_i t},\nu_{\tau_i t})$ along a sequence $\tau_i \to 0$ is called a tangent flow at $\nu$.

\subsection{Two convergence frameworks}\label{sec;convergence setting} The proof of Theorem~\ref{thm:pffibration} works as long as the
analytic and equivariant conclusions of
Theorems~\ref{thm:complexstructure} and~\ref{thm:additionalstructure},
together with the weighted and cutoff estimates used in
Section~\ref{sec:fano-fibration}, are available. Thus, the same proof
applies to any singular K\"ahler--Ricci shrinker satisfying this package
of assumptions. In particular, we consider the following two convergence
settings.

\begin{enumerate}[label=\textup{(\Roman*)}]
\item \label{assume:KRF} \emph{Noncollapsed $\mathbb F$-limits of compact K\"ahler--Ricci flows.}
Let $T_i\to\infty$, and let
\[
 \bigl(M_i,J_i,(g_{i,t})_{t\in[-T_i,0]},
       (\nu_{x_i,0;t})_{t\in[-T_i,0]}\bigr)
\]
be compact K\"ahler--Ricci flows equipped with the conjugate heat-kernel
measures based at $(x_i,0)$.  Assume that, for some $W<\infty$,
\[
       \liminf_{i\to\infty}\mathcal N^{g_i}_{x_i,0}(1)\ge -W
\]
and that, after passing to a subsequence,
\[
 \bigl(M_i,(g_{i,t}),(\nu_{x_i,0;t})\bigr)
 \xrightarrow[i\to\infty]{\mathbb F}
 \bigl(\mathcal X,(\mu_t)_{t\in(-\infty,0)}\bigr)
\]
uniformly on compact time intervals, where the limit is a metric soliton.
Then $\mathcal X$ is modeled on a singular K\"ahler--Ricci shrinker
$(X,d_X)$ with singular set of Minkowski codimension at least four.  Its
regular Ricci-flow spacetime
$(\mathcal R,g,\mathfrak t,\partial_{\mathfrak t},\mathcal J)$ is the
spacetime induced by the smooth, generally incomplete, shrinker
$(\mathcal R_X,g_X,\omega_X,J_X,f_X)$, and
$d_X|_{\mathcal R_X}$ is the length metric of $g_X$
\cite{bamler2,bamler3}. Note that this case includes the tangent flows at finite-time singularities
discussed in the previous section.

\item \label{assume:KRS} \emph{Pointed Gromov-Hausdorff limits of
noncollapsed K\"ahler--Ricci shrinkers.}
Let $n\ge2$ and let
\begin{equation*} (M_i,p_i,g_i,J_i,f_i) \end{equation*}
be complete smooth K\"ahler--Ricci shrinkers, normalized by
\begin{equation*}
 \Ric(g_i)+(\nabla^{g_i})^2f_i=g_i,
 \qquad R_{g_i}+|\nabla f_i|_{g_i}^2=f_i,
\end{equation*}
where $p_i$ is a minimum point of $f_i$.  Suppose that the 
shrinker entropy satisfies
\[
       \boldsymbol{\mu}(M_i,g_i)\ge-A
\]
for some $A<\infty$.  By
\cite[Theorem~1.1]{LLW21}, after passing to a subsequence,
\[
 (M_i,p_i,d_{g_i},f_i)
 \xrightarrow[i\to\infty]{\mathrm{pGH}}
 (X_\infty,p_\infty,d_\infty,f_\infty),
\]
where $X_\infty=\mathcal R_\infty\sqcup\mathcal S_\infty$,
$\mathcal S_\infty$ is closed with
$\dim_{\mathcal M}\mathcal S_\infty\le2n-4$, and
$(\mathcal R_\infty,g_\infty,f_\infty)$ is a smooth K\"ahler--Ricci shrinker.  The convergence is pointed-$\widehat{C}^{\infty}$-Cheeger--Gromov
in the following precise sense \cite[Definition~7.1]{LLW21}.  For every $L>0$ and every compact set
$K\Subset\mathcal R_\infty\cap B_{d_\infty}(p_\infty,L)$, after adjusting
$\epsilon_{L,i}\downarrow0$, the $\epsilon_{L,i}$-isometry 
\[
 \psi_{L,i}\colon B_{d_\infty}(p_\infty,L)
 \longrightarrow B_{d_{g_i}}(p_i,L+\epsilon_{L,i})
\]
may be chosen so that
\[
       \psi_{L,i}|_K=\varphi_{K,L,i},
\]
where
\(
\varphi_{K,L,i}\colon K\to\varphi_{K,L,i}(K)\subset M_i
\)
is a diffeomorphism and
\[
 \varphi_{K,L,i}^*g_i\longrightarrow g_\infty,
 \quad
 \varphi_{K,L,i}^*f_i\longrightarrow f_\infty, \quad \varphi_{K,L,i}^*J_i\longrightarrow J_\infty
\]
in $C^\infty(K)$.  By the work in \cite{LW24}, one can also consider the \(\mathbb F\)-convergence of pointed Ricci flows induced by Ricci shrinkers and based at minimal points of their soliton potentials. The time slices of the resulting limit flow $\mathcal X$ are isometric to the corresponding pointed Gromov--Hausdorff limits defined above.
\end{enumerate}

\subsection{Singular K\"ahler--Ricci shrinkers}\label{Singular Kahler-Ricci shrinkers}

Let $(X,d_X)$ be a singular K\"ahler--Ricci shrinker  arising as a
limit in one of the setting considered in Section \ref{sec;convergence setting}. It is known by    \cite{bamler2,bamler3} and \cite{LLW21,LW24}, that its regular part admit a (incomplete) K\"ahler--Ricci shrinker structure $$(\mathcal{R}_X,g_X,\omega_X,J_X,f_X)$$ and singular part has Minkowski codimension at least 4.  Moreover, the restriction of $d_X$ to $\mathcal R_X$ is equal to the length metric induced by the smooth Riemannian metric $g_X$. In the following, we will denote $(\mathcal{X},\mu_t)$ denote the metric soliton induced by $(X,d_X)$ and then we know that the K\"ahler--Ricci flow spacetime structure $(\mathcal{R},g,\mathfrak{t},\partial_{\mathfrak{t}},\mathcal{J})$ on the regular part of $\mathcal{X}$ can be identified with the Ricci flow spacetime induced by the regular part $(\mathcal{R}_X,g_X,\omega_X,J_X,f_X)$ of $(X,d_X)$.

Note that $(\mathcal{R}_X,g_X)$ is isometric to $(\mathcal{R}_{-1}=\mathcal{R}\cap \mathfrak{t}^{-1}(-1),g_{-1})$ by a diffeomorphism identifying $d\mu_{-1}$ with $(2\pi)^{-n}e^{-f_X}\frac{\omega_X^n}{n!}$, where we normalize $f_X$ such that 
\begin{equation}
    \frac{1}{(2\pi)^n}\int_{\mathcal{R}_X}e^{-f_X}\frac{\omega_X^n}{n!}=1.
\end{equation}
In the following, we use $\mathcal{X}_{-1}$ and $X$, as well as $\mathcal{R}_{-1}$ and $\mathcal{R}_X$, interchangeably, and omit the dependence on $X$ whenever there is no risk of confusion.
By the following identities, $\nabla f$ is locally bounded, and hence $f$ is locally Lipschitz. 
Moreover, $f$ is proper \cite{cao-zhou,CMZ2024}. 
We therefore can fix a point $p \in X$ such that
\begin{equation}
    f(p) = \min_X f.
\end{equation}

We now recall some important identities that hold on the regular part $\mathcal{R}$ of $\mathcal{X}$:
\begin{equation}\label{eq--shrinker equation}
    \operatorname{Ric}+\nabla^2 f= \frac{1}{\tau}g, \quad \partial_t f=|\nabla^{1,0} f|^2=\frac{1}{2}|\nabla f|^2,
\end{equation}
\begin{equation}\label{eq-scalar curvature and laplacian}
     R+\Delta f=\frac{n}{ \tau}, \quad R>0 \text{ or } \Ric\equiv 0,
\end{equation}
\begin{equation}\label{eq--relation between nabla f and f}
     -\tau\left(R+|\nabla^{1,0} f|^2\right)+f \equiv  W, 
\end{equation}
where the constant $W$ is called the shrinker entropy, which coincides with Perelman's $\nu$-functional \cite{LW20,CMZ2024}. Moreover, the Nash entropy of $\mu_t$ at every scale equals $W$, i.e., for all $\tau>0$,
\begin{equation}
    \mathcal{N}_{(\mu_t)}(\tau)=W,
\end{equation}
and for any point $x\in \mathcal X_{-1}$ and $\tau>0$, we have \cite{bamler3,fangli2025}
\begin{equation}
    \mathcal{N}_x(\tau)\geq W.
\end{equation}

Let $h$ be the Hermitian metric on $K_{\mathcal{R}_{-1}}^{-1}$ induced by the volume form $(2\pi)^{-n}e^{-f} \frac{\omega^{n}}{n!}$. For $\ell\in \mathbb N$, it induces a Hermitian metric on $K_{\mathcal{R}_{-1}}^{-\ell}$. Let $\overline{\partial}^{\ast}_f$ denote the formal adjoint of $\overline{\partial}$ with respect to the inner product
$$\left<\alpha,\beta\right>_f:=\frac{1}{(2\pi)^n} \int_{\mathcal{R}_{-1}} \langle \alpha ,\beta\rangle_{h^\ell} e^{-f}\frac{1}{n!}\omega^n.$$  Note that $\overline{\partial}^{\ast}_f$ coincides with the usual formal adjoint of $\overline{\partial}$ when using the Hermitian metric $h^\ell e^{-f}$ on $K_{\mathcal{R}_{-1}}^{-\ell}$. Similarly for any $t\in (-\infty,0)$, let  $h_{t}$ be the Hermitian metric on $K_{\mathcal{R}_{t}}^{-1}$ corresponding
to the volume form $(2\pi|t|)^{-n}e^{-f_{t}}\frac{\omega_{t}^{n}}{n!}$.

Suppose $\eta_{-1}\in \mathcal{A}^{0,1}(\mathcal{R}_{-1},K_{\mathcal{R}_{-1}}^{-\ell})$ with $\pp \eta_{-1}=0$ and $\pp^{\ast}_f\eta_{-1}=0$. We extend $\eta_{-1}$ to a tensor on $\mathcal{R}$ by demanding that 
\begin{equation}\label{extension of eta}
\mathcal{L}_{\partial_{\mathfrak{t}}-\frac{1}{2}\nabla f}\eta=0.
\end{equation} Equivalently, we obtain a tensor $\eta$ on $\mathcal{R}_{-1}\times (-\infty,0)$ by 
$\eta_t=\phi_t^*\eta,$ where
\begin{equation}\label{eq-self-diff}
    \dot \phi_t=\frac{1}{-2t}\nabla^{g_{-1}} f \quad \text{ and } \quad \phi_{-1}=\Id.
\end{equation}Here, $\phi_t^*\eta$ is defined by pulling back the form part via $\phi_t$ and pushing forward the anticanonical section part via $(\phi_t)^{-1}$; since $\phi_t^*$ and $(\phi_t^{-1})_*$ coincide on functions, this definition makes $\phi_t^*$ well-defined on $\mathcal{A}^{0,1}(\mathcal{R}_{-1},K_{\mathcal{R}_{-1}}^{-\ell})$. We have the following:
\begin{enumerate}
    \item If $\overline{\partial}\eta_{-1}=0$, then because $\mathcal{L}_{\partial_t-\frac{1}{2}\nabla f}J=0$, we have $\overline{\partial} \eta_t=0$ for all $t\in (-\infty,0)$;
    \item If $\pp^{\ast}_f \eta_{-1}=0$, then since $\mathcal{L}_{\partial_t-\frac{1}{2}\nabla f}J=0$ and $\mathcal{L}_{\partial_t-\frac{1}{2}\nabla f} f=0$, we have $\pp^{\ast}_{f_t}\eta_t=0$ for all $t\in (-\infty,0)$.
\end{enumerate}

\subsection{Bochner formulas}

We recall some Bochner--Weitzenb\"ock type formulas that will be used
later.
\begin{lemma}\label{lem-bochner formula}
Let $(L,h)$ be a holomorphic Hermitian line bundle on a K\"ahler manifold $M$. Then for 
 $f\in C^{\infty}(M)$, and $\eta\in\mathcal{A}^{0,1}(M,L)$ 
we have 
\begin{equation} 
    \begin{aligned}
\Delta_{\pp}\eta:=\left(\overline{\partial}^{\ast}\overline{\partial}+\overline{\partial}\overline{\partial}^{\ast}\right)\eta &=\left(\partial^{\ast}\partial+\partial\partial^{\ast}\right)\eta+\Theta(\eta)-\tr_{\omega}(\Theta)\eta\\
&=(\nabla^{0,1})^{\ast}\nabla^{0,1}\eta+(\Ric+\Theta)(\eta),
    \end{aligned}
\end{equation} and
\begin{equation}\label{eq--f-twisted laplacian}
\Delta_{\pp,f}\eta:=\left(\overline{\partial}_{f}^{\ast}\overline{\partial}+\overline{\partial}\overline{\partial}_{f}^{\ast}\right)\eta= \Delta_{\pp}\eta+(\mathcal{L}_{\nabla^{0,1}f}\eta)^{0,1}.
\end{equation}
\end{lemma}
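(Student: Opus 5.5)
This is a pointwise identity, so I will verify it in local holomorphic normal coordinates at a point $x\in M$. I choose coordinates with $g_{i\bar\jmath}(x)=\delta_{ij}$ and $dg(x)=0$, and a local holomorphic frame $e$ of $L$ with $h(e,e)(x)=1$ and $dh(x)=0$. Then the Chern curvature is $\Theta_{i\bar\jmath}=-\partial_i\partial_{\bar\jmath}\log h$. Write $\eta=\eta_{\bar\jmath}\,d\bar z^j\otimes e$, and use the conventions fixed in the introduction: $\Delta$ is half the Riemannian Laplacian and $\Ric$, $\Theta$ act on $(0,1)$-forms by contraction.

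For the first chain I will use the standard Bochner--Kodaira--Nakano/Weitzenb\"ock formulas.

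1. Compute $\pp\eta=\nabla_{\bar k}\eta_{\bar\jmath}\,d\bar z^k\wedge d\bar z^j$, $\pp^*\eta=-g^{\bar\jmath i}\nabla_i\eta_{\bar\jmath}$, and $(\pp^*\pp\eta)_{\bar\jmath}=-g^{\bar k i}(\nabla_i\nabla_{\bar k}\eta_{\bar\jmath}-\nabla_i\nabla_{\bar\jmath}\eta_{\bar k})$.
2. Add these to get $(\Delta_{\pp}\eta)_{\bar\jmath}=-g^{\bar k i}\nabla_i\nabla_{\bar k}\eta_{\bar\jmath}+g^{\bar k i}[\nabla_i,\nabla_{\bar\jmath}]\eta_{\bar k}$. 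The first term is $(\nabla^{0,1})^*\nabla^{0,1}\eta$.
3. Evaluate the commutator on $(0,1)$-forms with values in $L$: it produces the Ricci term from the Levi-Civita part and the term $\Theta_{i\bar\jmath}\eta_{\bar k}g^{\bar k i}$ from the bundle part. This gives the second equality, $(\nabla^{0,1})^*\nabla^{0,1}\eta+(\Ric+\Theta)(\eta)$.
4. Do the conjugate computation, $(\partial^*\partial+\partial\partial^*)\eta=(\nabla^{1,0})^*\nabla^{1,0}\eta$, with no curvature term because $\eta$ has no $(1,0)$ part.
5. Relate the two rough Laplacians through $[\nabla_i,\nabla_{\bar k}]$ traced: this yields $-g^{\bar k i}\nabla_i\nabla_{\bar k}=-g^{\bar k i}\nabla_{\bar k}\nabla_i+\tr_\omega\Theta-\Ric$ acting on $\eta$.
6. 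Combining step 5 with step 3, the $\Ric$ terms cancel and we obtain $\Delta_{\pp}\eta=\Delta_\partial\eta+\Theta(\eta)-\tr_\omega(\Theta)\eta$, which is the middle expression.

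For the $f$-twisted formula \eqref{eq--f-twisted laplacian}, I use that the weighted adjoint is $\pp^*_f\alpha=e^{f}\pp^*(e^{-f}\alpha)=\pp^*\alpha+\iota_{\nabla^{1,0}f}\alpha$, up to the metric identification of the gradient. Inserting this, $\Delta_{\pp,f}\eta-\Delta_{\pp}\eta=\iota_{\nabla^{1,0}f}\pp\eta+\pp(\iota_{\nabla^{1,0}f}\eta)$. By Cartan's formula this is the $(0,1)$-part of $\mathcal{L}_{V}\eta$ for the appropriate gradient field $V$. Taking the $(0,1)$-part discards the contribution of $\partial$, which is exactly the projection written in the statement.

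The main obstacle is bookkeeping rather than ideas. I must check the following:
1. that the interior product $\iota$ of the $(1,0)$ gradient on $(0,1)$-forms corresponds to the paper's $\nabla^{0,1}f$ under the convention $\nabla f=2\Re(g^{\bar\jmath i}\partial_i f\,\partial_{\bar z_j})$;
2. that $\mathcal{L}$ on $L$-valued forms is interpreted via the Chern connection;
3. that the factor of $2$ is consistent with $\Delta$ being half the Riemannian Laplacian.

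Once the sign and normalization conventions are fixed, each step above is a direct calculation.
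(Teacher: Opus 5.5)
Your plan is sound, but step 5 has a sign error, and step 6 does not follow from step 5 as you wrote it. With your convention $\Theta_{i\bar\jmath}=-\partial_i\partial_{\bar\jmath}\log h$, the commutator acts on the $L$-factor by $+\Theta_{i\bar k}$. This is the sign you use in step 3, and the one in \eqref{commute 2derivative for sections}. Its Levi-Civita part, traced over $i,\bar k$ on a $(0,1)$-form, gives $+\Ric(\eta)$ by the K\"ahler symmetries. So the two curvature terms enter with the same sign:
\[
-g^{\bar k i}\nabla_i\nabla_{\bar k}\eta_{\bar\jmath}=-g^{\bar k i}\nabla_{\bar k}\nabla_i\eta_{\bar\jmath}-\bigl(\Ric(\eta)\bigr)_{\bar\jmath}-\tr_\omega(\Theta)\,\eta_{\bar\jmath}.
\]
With your sign ($+\tr_\omega\Theta-\Ric$), steps 3 and 5 would give $\Delta_{\pp}\eta=\Delta_\partial\eta+\Theta(\eta)+\tr_\omega(\Theta)\eta$, which has the wrong sign on the trace. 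A degree-zero check: for a holomorphic section $s$ the corrected identity gives $g^{\bar k i}\nabla_{\bar k}\nabla_i s=-\tr_\omega(\Theta)s$, which matches $\nabla_{\bar k}\nabla_j u=-\lambda g_{j\bar k}u$ in Lemma~\ref{lem--kato for holo gradient}.

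In the twisted part, the contraction must be with the $(0,1)$-vector field $\nabla^{0,1}f=g^{\bar\jmath i}\partial_if\,\partial_{\bar z_j}$, the metric dual of $\partial f$. Contracting a $(0,q)$-form with $\nabla^{1,0}f$ gives zero. In every degree $\pp^*_f=\pp^*+\iota_{\nabla^{0,1}f}$, being $e^{f}\pp^*e^{-f}$, where $\iota_{\nabla^{0,1}f}$ is the adjoint of $\pp f\wedge$. This vector field is exactly the $(0,1)$-part of the paper's $\nabla f$, so there is no factor of $2$ to track, and the function Laplacian $\Delta$ does not enter.

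With these corrections, your first chain is the standard computation that the paper simply quotes. For \eqref{eq--f-twisted laplacian} you take a different route from the paper. The paper proceeds in three steps:
\begin{enumerate}
\item It starts from the weighted Bochner--Kodaira formula $\Delta_{\pp,f}\eta=(\nabla^{0,1})^*_f\nabla^{0,1}\eta+(\Ric+\Theta+\nabla\bar\nabla f)(\eta)$.
\item It splits $(\nabla^{0,1})^*_f=(\nabla^{0,1})^*+\nabla^{0,1}f\lrcorner$ and recognizes $\Delta_{\pp}\eta$ via the second line of the first chain.
\item It uses torsion-freeness, \eqref{eq--difference between lie-derivative and connection}, to turn the drift term plus the Hessian term into $\mathcal{L}_{\nabla^{0,1}f}\eta$.
\end{enumerate}
Your argument is first order. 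You have $\Delta_{\pp,f}-\Delta_{\pp}=\iota_V\pp+\pp\iota_V$ with $V=\nabla^{0,1}f$, and Cartan's formula identifies this with $(\mathcal{L}_V\eta)^{0,1}$. Writing $\eta=\eta'\otimes e$ with $e$ a holomorphic frame, $\pp$ and $\iota_V$ act only on $\eta'$. So the argument passes verbatim to $L$-valued forms with the paper's frame definition of $\mathcal{L}_V\eta$ in Remark~\ref{rem:defofliederivative}. In the $(0,1)$-part this also agrees with the Chern-connection interpretation you mention.

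Your route is shorter, involves no curvature, and is independent of the first chain. The paper's route produces, along the way, the weighted Weitzenb\"ock identity with the $\ii\partial\bar\partial f$ term and the covariant/Lie-derivative conversion. Both are reused later: the former in Proposition~\ref{prop:hormander}, the latter in the proof of Proposition~\ref{prop:localbound}.
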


\begin{proof}The first two identities are standard. We give just the proof of the last identity.
Letting $(\nabla^{0,1})_{f}^{\ast}$ denote the formal adjoint of
$\nabla^{0,1}$ with respect to the $f$-weighted measure, we have
\begin{equation} \label{eq:differenceonlyinbochner}
    \begin{aligned}
        (\Delta_{\pp,f}\eta)_{\overline{\jmath}}= & ((\nabla^{0,1})_{f}^{\ast}\nabla^{0,1}\eta)_{\overline{\jmath}}+(R_{i\overline{\jmath}}+\Theta_{i\overline{\jmath}}+\nabla_{i}\nabla_{\overline{\jmath}}f)g^{\overline{l}i}\eta_{\overline{l}}\\
&= ((\nabla^{0,1})^{\ast}\nabla^{0,1}\eta)_{\overline{\jmath}}+(\nabla^{0,1}f\lrcorner \nabla^{0,1}\eta)_{\bar j}+(R_{i\overline{\jmath}}+\Theta_{i\overline{\jmath}}+\nabla_{i}\nabla_{\overline{\jmath}}f)g^{\overline{l}i}\eta_{\overline{l}}\\
&=(\Delta_{\pp}\eta)_{\bar j}+g^{\bar l i}\nabla_{i}f\nabla_{\bar l}\eta_{\bar j}+\nabla_{i}\nabla_{\overline{\jmath}}f g^{\overline{l}i}\eta_{\overline{l}}.
    \end{aligned}
\end{equation}
Then we compute (where now $\nabla^{0,1}f$ denotes the $(0,1)$-part
of the gradient of $f$)
\begin{equation}\label{eq--difference between lie-derivative and connection}
    \begin{aligned}
        (\nabla^{0,1}f\lrcorner \nabla^{0,1}\eta)_{\bar j}= & \nabla^{0,1}f\left(\eta\left(\frac{\partial}{\partial\overline{z}_{j}}\right)\right)-\eta\left(\nabla_{\nabla^{0,1}f}\frac{\partial}{\partial\overline{z}_{j}}\right)\\
= & (\mathcal{L}_{\nabla^{0,1}f}\eta)\left(\frac{\partial}{\partial\overline{z}_{j}}\right)-\eta\left(\nabla_{\frac{\partial}{\partial\overline{z}_{j}}}\nabla^{0,1}f\right)\\
= & (\mathcal{L}_{\nabla^{0,1}f}\eta)_{\overline{\jmath}}-g^{\overline{l}i}\eta_{\overline{l}}\nabla_{i}\nabla_{\overline{\jmath}}f,
    \end{aligned}
\end{equation}
where $\mathcal{L}_{\nabla^{0,1}f}\eta$ is defined in Remark \ref{rem:defofliederivative}. Then the desired identity \eqref{eq--f-twisted laplacian} follows by combining \eqref{eq:differenceonlyinbochner} and \eqref{eq--difference between lie-derivative and connection}.
\end{proof}

\begin{remark} \label{rem:defofliederivative}
In the lemma above, we use the following convention about the Lie derivatives. If $\eta\in\mathcal{A}^{0,1}(M,L)$
is written on some open subset $U\subseteq M$ as $\eta'\otimes\sigma$
for some $\eta'\in\mathcal{A}^{0,1}(U)$ and $\sigma\in H^{0}(U,L)$,
then for
any $V\in C^{\infty}(U,T^{0,1}M)$,
\begin{equation}\label{eq--temporary for lie derivative}
\mathcal{L}_{V}\eta:=(\mathcal{L}_{V}\eta')\otimes\sigma.
\end{equation}  Note that since $\sigma$ is holomorphic and $V$ is of type $(0,1)$, $\mathcal{L}_V\eta$ is well-defined, i.e., independent of the choice of decomposition $\eta=\eta'\otimes \sigma.$

For the main purposes of this paper, we take $L$ to be the anticanonical or pluri-anticanonical bundle. Let $V$ be a real-holomorphic vector field on $M$, and denote by $(\phi_s^V)_{s\in(-\epsilon,\epsilon)}$ its flow.
Let $U \subseteq M$ be an open subset and suppose $\sigma$ is a local section of $K_M^{-\ell}$ for some $\ell\geq 1$. Then  we define
\begin{equation}
\mathcal{L}_V \sigma := \left.\frac{d}{ds}\right|_{s=0} (\phi_{-s}^V)_* \sigma .
\end{equation}
This definition extends naturally, via the Leibniz rule and complex linearity, to the derivative $K_M^{-\ell}$-valued $(p,q)$-forms with respect to $V,V^{1,0},V^{0,1}$. We note the following.
\begin{enumerate}
    \item If $\sigma$ is a holomorphic section of $K_M^{-\ell}$, then for any real-holomorphic vector field $V$, $\mathcal{L}_{V^{0,1}}\sigma=0$. Consequently, for a $K_M^{-\ell}$-valued $(0,1)$-form $\eta$, the definition of $\mathcal{L}_{V^{0,1}}\eta$ agrees with \eqref{eq--temporary for lie derivative}.
    
    \item If $V$ is a real holomorphic vector field, then for any $K_M^{-\ell}$-valued $(0,1)$-form $\eta$, the Lie derivative $\mathcal{L}_{V^{0,1}}\eta$ remains a $K_M^{-\ell}$-valued $(0,1)$-form.
\end{enumerate}
\end{remark}

\begin{lemma}\label{lem--bochner for gradient}
    	For a holomorphic section $s$ of a Hermitian holomorphic line bundle $(L,h)$ on $(X^n,\omega)$ with $\Theta_h=\lambda\omega$ for some $\lambda>0$, we have
\begin{equation}
	\label{eq:bochnerlemma1} \Delta |s|^2=|\nabla s|^2-n\lambda|s|^2,
\end{equation}and 
\begin{equation} \label{eq:bochnerlemma2}
\begin{aligned}
	\Delta|\nabla s|^2=&|\nabla^{1,0} \nabla s|^2+\Ric(\nabla s,\nabla s)
	-\lambda (n+2)|\nabla s|^2+n\lambda^2|s|^2.
\end{aligned}
\end{equation}
\end{lemma}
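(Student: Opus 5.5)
The plan is to prove both identities by a direct local computation in holomorphic normal coordinates at a point, using the Chern connection of $(L,h)$. Three facts drive everything. First, $\nabla^{0,1}s=0$ since $s$ is holomorphic, so $\nabla s=\nabla^{1,0}s$. Second, metric compatibility of the Chern connection. Third, the commutation rule $[\nabla_i,\nabla_{\bar\jmath}]$, which on $L$-valued tensors produces the curvature $\Theta_{i\bar\jmath}=\lambda g_{i\bar\jmath}$, together with the Riemann tensor acting on the form indices. I write $|\nabla s|^2=g^{\bar\jmath i}\langle\nabla_i s,\nabla_j s\rangle_h$. Before fixing constants, I will check that this is the normalization the statement intends, as opposed to the real Riemannian norm.

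\textbf{Step 1: the identity \eqref{eq:bochnerlemma1}.} Start from $\nabla_{\bar\jmath}|s|^2=\langle s,\nabla_j s\rangle_h$, which holds because $\nabla_{\bar\jmath}s=0$. Differentiating with $\nabla_i$ and tracing with $g^{\bar\jmath i}$ gives $|\nabla s|^2+\langle s,g^{\bar\jmath i}\nabla_{\bar\imath}\nabla_j s\rangle$. Commute $\nabla_{\bar\imath}$ past $\nabla_j$, and use $\nabla_{\bar\imath}s=0$ again, to get $g^{\bar\jmath i}\nabla_{\bar\imath}\nabla_j s=-\tr_\omega(\Theta)s=-n\lambda s$. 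This yields \eqref{eq:bochnerlemma1}. The sign convention for $\Theta$ is fixed exactly by the requirement that positivity of $L$ produces the minus sign here.

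\textbf{Step 2: the identity \eqref{eq:bochnerlemma2}.} Apply the same product rule to the $L\otimes\Lambda^{1,0}$-valued section $\nabla s$:
\[
\Delta|\nabla s|^2=|\nabla^{1,0}\nabla s|^2+|\nabla^{0,1}\nabla s|^2+2\Re\langle \nabla s,\, g^{\bar l k}\nabla_{\bar l}\nabla_k\nabla s\rangle .
\]
From Step 1 we have $\nabla_{\bar l}\nabla_j s=-\lambda g_{j\bar l}s$. Hence $|\nabla^{0,1}\nabla s|^2=n\lambda^2|s|^2$, which accounts for the last term of \eqref{eq:bochnerlemma2}. For the third term, compute
\[
\nabla_{\bar l}\nabla_k\nabla_j s=\nabla_k\nabla_{\bar l}\nabla_j s+[\nabla_{\bar l},\nabla_k]\nabla_j s .
\]
The first piece equals $-\lambda g_{j\bar l}\nabla_k s$, because $\lambda$ is constant and $g$ is parallel. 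The commutator consists of the $L$-curvature $-\lambda g_{k\bar l}\nabla_j s$ plus a Ricci-type contraction acting on the index $j$. Tracing over $k,l$ then gives a Ricci term paired with $\nabla s$ and a multiple of $-\lambda\nabla_j s$. Collecting the $\lambda$-coefficients from the commutator and from the lower-order term produces the coefficient of $|\nabla s|^2$, and the stated $-(n+2)\lambda$ should emerge once the normalizations are matched. If the cross term is written via the symmetric form $\langle\cdot,\Delta\cdot\rangle+\langle\Delta\cdot,\cdot\rangle$, each half contributes, and this is where the factor bookkeeping has to be done carefully.

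\textbf{Main obstacle.} There is no conceptual difficulty. The only real risk is the sign and normalization bookkeeping: the sign convention for $\Theta$ and $\Ric$ in the commutators, the half-Laplacian convention $\Delta=g^{\bar\jmath i}\partial_i\partial_{\bar\jmath}$, and whether $|\nabla s|^2$ means the Hermitian norm of $\nabla^{1,0}s$ or the real norm, which differ by a factor of $2$. I would first fix the conventions by testing the formulas on the model $L=\mathcal O$ over $\mathbb C^n$ with weight $h=e^{-\lambda|z|^2}$ and sections $s=1$ and $s=z_1$. Both identities can be checked explicitly in that case, which pins down every constant. The general identity then follows because the computation is tensorial.
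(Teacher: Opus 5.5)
Your overall route is the same as the paper's: a pointwise computation in normal coordinates using $\nabla^{0,1}s=0$ and the commutation rule for the Chern connection. Step 1 is correct as written. Step 2, however, fails at its first line. The expansion
\[
\Delta|\nabla s|^2=|\nabla^{1,0}\nabla s|^2+|\nabla^{0,1}\nabla s|^2+2\Re\langle \nabla s,\, g^{\bar l k}\nabla_{\bar l}\nabla_k\nabla s\rangle
\]
is false. Applying $\partial_{\bar l}$ and then $\partial_k$ to $\langle\nabla s,\nabla s\rangle$ produces the cross terms
\[
\langle g^{\bar l k}\nabla_k\nabla_{\bar l}\nabla s,\nabla s\rangle+\langle \nabla s,\, g^{\bar l k}\nabla_{\bar l}\nabla_k\nabla s\rangle .
\]
Here the mixed derivatives occur in opposite orders, so the two terms are not complex conjugates of each other. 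They differ by $\tr_\omega$ of the curvature of $\Lambda^{1,0}\otimes L$, which is nonzero.

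If you run your own commutator computation in the $2\Re$ form, you get $g^{\bar lk}\nabla_{\bar l}\nabla_k\nabla s=-(n+1)\lambda\nabla s$ plus the Ricci endomorphism applied to $\nabla s$. That yields $2\Ric(\nabla s,\nabla s)-2(n+1)\lambda|\nabla s|^2$ instead of $\Ric(\nabla s,\nabla s)-(n+2)\lambda|\nabla s|^2$. This is not a normalization issue. The relative coefficients of $\Ric(\nabla s,\nabla s)$ and $\lambda|\nabla s|^2$, namely $2:-2(n+1)$ versus $1:-(n+2)$, do not change when you rescale the norm or the Laplacian. So no ``matching of normalizations'' can recover \eqref{eq:bochnerlemma2} from your display.

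The missing observation is that the two cross terms must be treated separately; this is how the paper's computation is organized. In the first term, $\nabla_{\bar l}$ hits $\nabla_j s$ before $\nabla_k$ does. Hence $g^{\bar lk}\nabla_k\nabla_{\bar l}\nabla_j s=g^{\bar lk}\nabla_k(-\lambda g_{j\bar l}s)=-\lambda\nabla_j s$, and this term is exactly $-\lambda|\nabla s|^2$, with no Riemann curvature at all. Only the second term needs the commutator, and it contributes $\Ric(\nabla s,\nabla s)-(n+1)\lambda|\nabla s|^2$. Summing gives the stated coefficient $-(n+2)\lambda$, with Ricci appearing once.

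Your proposed calibration on flat $\mathbb C^n$ with $h=e^{-\lambda|z|^2}$ would expose the $\lambda$-discrepancy. For $s=z_1$ at the origin one finds $\Delta|\nabla s|^2=-(n+2)\lambda$, not $-2(n+1)\lambda$. But that test cannot detect the wrong Ricci coefficient, because $\Ric\equiv0$ there. That coefficient matters later, when $\Ric$ is replaced by $\tau^{-1}g-\nabla^2 f$ in the proof of Lemma~\ref{lem:C1estimates}.
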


\begin{proof}
	Locally, choose a holomorphic trivializing section $e$ such that at the given point $\nabla e=0$. Suppose $|e|_h=e^{-\varphi}$ and writing $s=fe$, where $f$ is a local holomorphic function, then in local normal coordinates, we have 
	\begin{equation*}
	\begin{aligned}
		\Delta|s|^2=&g^{i\bar j}\partial_i\partial_{\bar j}(|f|^2e^{-\varphi})=g^{i\bar j}\partial_i s\overline{\partial_j s}e^{-\varphi}-g^{i\bar j}|f|^2e^{-\varphi}\varphi_{i\bar j}\\
		&=|\nabla s|^2-\tr_{\omega}(\Theta_h)|s|^2.
	\end{aligned}
	\end{equation*}
For a smooth section $s$ of $L$, we have
\begin{equation}\label{commute 2derivative for sections}
	\nabla_j\nabla_{\bar k}s-\nabla_{\bar k}\nabla_j s=\Theta_{j\bar k}s.
\end{equation} 
If $s$ is holomorphic, then we know $\nabla s=\partial s$, hence
\begin{equation*}
\begin{aligned}
    \nabla_{\bar j}\nabla_{j}\nabla_k s&=\nabla_j\nabla_{\bar j}\nabla_k s-R_{kj\bar j}^p\nabla_p s-\Theta_{j\bar j}\nabla_k s\\
    &=-\nabla_j(\Theta_{ k\bar j}s)-R_{kj\bar j}^p\nabla_p s-\Theta_{j\bar j}\nabla_k s.
\end{aligned}
\end{equation*}
Then we can compute 
\begin{equation}
\begin{aligned}
    	\Delta|\nabla s|^2=&|\nabla^{1,0} \nabla s|^2+|\nabla^{0,1}
        \nabla s|^2\\
        &-\nabla_j(\Theta_{k\bar j}s)\overline{\nabla_ks}-\nabla_k s\overline{\nabla_j(\Theta_{k \bar j}s)}+\Ric_{j\bar k}\nabla_j s\overline{\nabla_{k}s}-\Theta_{j\bar j}|\nabla s|^2\\
        =&|\nabla^{1,0} \nabla s|^2+\Ric(\nabla s,\nabla s)
	-\lambda (n+2)|\nabla s|^2+n\lambda^2|s|^2,
\end{aligned}
\end{equation}
where in the last equality we have used that $\Theta_h=\lambda \omega$  and $|\nabla^{0,1}
        \nabla s|^2=|\Theta_h|^2|s|^2=n\lambda^2|s|^2$.
\end{proof}

\subsection{Estimates for conjugate heat kernels} We recall the local gradient estimate for solutions of the conjugate heat equation
\begin{equation}\label{eq--conjugate heat equation}
    \square^*v:=(-\partial_t-\triangle_{g_t}+R)v=0
\end{equation}
proved in \cite{EKNT}. Note that in the following, the assumption on the curvature radius implicitly ensures that $B(x_0,t_0,\frac{r}{2})$ is relatively compact in $M$. The result then follows from \cite[Proof of Theorem 10]{EKNT}, where in the notation of that reference we take $k_1 = k_2 = C(n) r^{-2}$, $k_3 = C(n) r^{-2}$, and $\rho = \frac{r}{2}$.

\begin{lemma}[\cite{EKNT}] \label{lem:conjugategradient} Suppose $(M^n,(g_t)_{t\in I})$ is a Ricci flow and $(x_0,t_0) \in M \times I$, $r \leq r_{\operatorname{Rm}}(x_0,t_0)$ satisfy $[t_0,t_0+r^2] \subseteq I$. Set $P:= B(x_0,t_0,\frac{1}{2}r) \times [t_0,t_0+\frac{1}{4}r^2]$, and suppose $v\in C^{\infty}(P)$ is a positive solution of \eqref{eq--conjugate heat equation} satisfying $v\leq A$. Then there exists a dimensional constant $C=C(n)$ such that 
\begin{equation*}
    |\nabla \log v|(x_0,t_0) \leq \frac{C}{r}\left( 1+ \log \left( \frac{A}{v(x_0,t_0)} \right)\right). 
\end{equation*}
\end{lemma}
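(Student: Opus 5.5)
The estimate is local and parabolic, so I would prove it by rescaling and then running a Hamilton/Souplet--Zhang-type maximum principle argument, adapted to the conjugate heat equation as in \cite{EKNT}.

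\textbf{Step 1: normalization and time reversal.} By parabolic rescaling, assume $r=1$. Replacing $v$ by $v/A$ (the equation is linear), assume $A=1$, so $0<v\le 1$. Reverse time by setting $s:=t_0+\tfrac14-t$ on $P$. Then $\partial_s v=\Delta v-Rv$ is a forward heat-type equation, and the metric evolves by $\partial_s g=2\Ric$. The condition $r\le r_{\Rm}(x_0,t_0)$ gives $|\Rm|\le C(n)$ on the parabolic neighborhood $P$, provided the curvature radius is taken as in \cite{bamler1} (its definition controls the forward parabolic neighborhood). Consequently $|R|,|\Ric|\le C$, the metrics $g_t$ are uniformly equivalent on $P$, and $B(x_0,t_0,\tfrac12)$ stays relatively compact. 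The cutoff construction needs $\Delta d\ge -C$ and $\partial_t d\ge -C$ for $d=d_t(x_0,\cdot)$, and these follow from the curvature bound via Laplacian comparison and the standard time-derivative estimate for distances under Ricci flow.

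\textbf{Step 2: Souplet--Zhang quantity.} Let $u:=\log v\le 0$ and set $w:=|\nabla\log(1-u)|^2=\frac{|\nabla u|^2}{(1-u)^2}$. A direct computation, using $\partial_s u=\Delta u+|\nabla u|^2-R$ and Bochner's formula (the Ricci terms coming from $\partial_s g$ and from Bochner are both bounded by $C|\nabla u|^2$), gives
\[
(\Delta-\partial_s)w \ \ge\ 2(1-u)\,w^2-\frac{2u}{1-u}\langle\nabla u,\nabla w\rangle - C w - C\frac{|\nabla u|\,|\nabla R|}{(1-u)^2}.
\]
The lower-order terms $R$ and $\nabla R$ are bounded by Shi-type estimates on the smaller parabolic region. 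They are absorbed because $1-u\ge1$, so they contribute at most $C\sqrt w$ and $Cw$.

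\textbf{Step 3: localization.} Take a cutoff $\psi(x,s)$ supported in $B(x_0,t_0,\tfrac12)\times\{s\in(0,\tfrac14]\}$, equal to $1$ near $(x_0,t_0)$ (which corresponds to $s=\tfrac14$), with $|\nabla\psi|^2/\psi\le C$ and $-\Delta\psi,\,|\partial_s\psi|\le C$. Estimates on $\Delta d$ and $\partial_s d$ handle this, using Calabi's trick at the cut locus. At an interior maximum of $\psi w$, Young's inequality yields $\psi w\le C$. Therefore $|\nabla u|\le C(1-u)$ at $(x_0,t_0)$, and undoing the normalization gives
\[
|\nabla\log v|(x_0,t_0)\le \frac{C}{r}\Bigl(1+\log\frac{A}{v(x_0,t_0)}\Bigr).
\]

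\textbf{Main obstacle.} The delicate part is the bookkeeping in Step 2. The potential term $R$ of the conjugate heat operator, together with the evolving metric, produces terms involving $\nabla R$ that have no analogue in the static case. Controlling them requires derivative curvature bounds, which in turn need the curvature-radius hypothesis on the full region $P$. This is exactly the adaptation carried out in \cite[Theorem 10]{EKNT} with $k_1=k_2=k_3=C(n)r^{-2}$, and I would follow that proof.
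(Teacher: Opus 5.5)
Your proposal is correct and takes the same route as the paper. The paper's proof only notes that the curvature-radius hypothesis makes $B(x_0,t_0,\frac r2)$ relatively compact, and then invokes the proof of \cite[Theorem 10]{EKNT} with $k_1=k_2=k_3=C(n)r^{-2}$ and $\rho=\frac r2$; your Souplet--Zhang sketch, with Shi-type estimates supplying the $\nabla R$ bound, outlines that cited argument. Two harmless slips: the drift term should read $+\frac{2u}{1-u}\langle\nabla u,\nabla w\rangle$, and the cutoff needs the Laplacian-comparison upper bound $\Delta d\le C$ away from $x_0$ (this is what gives $-\Delta\psi\le C$ for a nonincreasing profile), not $\Delta d\ge -C$.
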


We will also need Perelman's differential Harnack inequality \cite{Perelman}, which holds for limits of compact Ricci flows.

\begin{theorem}\label{thm-harnack inequality} Suppose $(M_i^n,(g_{i,t})_{t\in [-\tau_i^{-1},0)})$ is a sequence of closed Ricci flows, $x_i \in M_i$, $\nu_{i;t}:= \nu_{x_i,0;t}$, and assume $\liminf_{i \to \infty}\mathcal{N}_{x_i,0}(1) > -\infty$, and that $(\mathcal{X},(\mu_t)_{t\in (-\infty,0)})$ is a metric soliton such that \eqref{eq:Fconverge} holds uniformly on compact time intervals. Suppose $y\in \mathcal{X}$, and write $\tau_y(t):= \mathfrak{t}(y)-t$, $K(y;\cdot)=(4\pi \tau_y)^{-\frac{n}{2}}e^{-f}$ on $\mathcal{R}_{<\mathfrak{t}(y)}$. Then 
\begin{equation*}
    \tau_y(R+2\Delta f-|\nabla f|^2)+f-n\leq 0.
\end{equation*}
\end{theorem}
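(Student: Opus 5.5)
The plan is to prove the inequality first on the approximating smooth flows, where it is Perelman's classical differential Harnack inequality, and then pass it to the limit on the regular part $\mathcal{R}_{<\mathfrak{t}(y)}$.

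\textbf{Step 1: smooth flows.} On each closed flow $(M_i,g_{i,t})$ and for any base point $(y_i,s_i)$, write the conjugate heat kernel as
$$K(y_i,s_i;\cdot,\cdot)=(4\pi\tau_i)^{-n/2}e^{-f_i},\qquad \tau_i=s_i-t.$$
Perelman's Harnack inequality \cite{Perelman} gives
$$v_i:=\bigl(\tau_i(2\Delta f_i-|\nabla f_i|^2+R)+f_i-n\bigr)(4\pi\tau_i)^{-n/2}e^{-f_i}\le 0$$
on $M_i\times[-\tau_i^{-1},s_i)$. Dividing by the positive kernel yields the pointwise inequality for $f_i$.

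\textbf{Step 2: approximating the base point.} By the $\mathbb F$-convergence \eqref{eq:Fconverge} and \cite[Theorem 2.37, and the results on convergence of conjugate heat kernels]{bamler3}, the point $y\in\mathcal X$ is the limit of points $(y_i,s_i)$, with $s_i\to\mathfrak{t}(y)$. The conjugate heat kernels $\nu_{y_i,s_i}$ then converge to the conjugate heat flow $\nu_y$, whose density on the regular part is $K(y;\cdot)$. If $y$ is singular, we take $y_i$ converging within the correspondence (in the $\mathbb F$ sense), which is exactly how conjugate heat kernels based at singular points are defined in Bamler's theory.

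\textbf{Step 3: smooth convergence of the kernels.} Fix a compact set $K\Subset\mathcal{R}_{<\mathfrak{t}(y)}$. Bamler's theory gives diffeomorphisms $\psi_i$ onto subsets of $M_i\times I_i$ along which $\psi_i^*g_i\to g$ and $\psi_i^*K(y_i,s_i;\cdot)\to K(y;\cdot)$ in $C^\infty_{\mathrm{loc}}$ on $\mathcal R$ \cite[Theorem 9.31]{bamler3}. Since the limit density is positive on $\mathcal{R}$, we get $\psi_i^*f_i\to f$ in $C^\infty(K)$. This convergence passes to $f_i$ together with its first and second derivatives and the scalar curvature, so the Step 1 inequality holds in the limit on $K$. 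As $K$ is arbitrary, the claimed inequality follows on all of $\mathcal{R}_{<\mathfrak{t}(y)}$.

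\textbf{Main obstacle.} The delicate step is Step 3: justifying smooth local convergence of the kernels based at the varying points $(y_i,s_i)$, rather than only weak convergence of measures, including the case where $y$ is singular. The plan is to use the local smooth convergence of heat kernels in \cite{bamler3}. Where that is unavailable, the fallback is:
\begin{enumerate}
\item derive interior parabolic estimates for the conjugate heat equation on regions of bounded curvature, using Lemma~\ref{lem:conjugategradient} together with Shi-type higher estimates;
\item use these to upgrade weak convergence to $C^\infty_{\mathrm{loc}}$ convergence;
\item obtain positivity of the limit from the strong maximum principle.
\end{enumerate}
A further point is that the inequality is only integrated or passed to the limit pointwise on the regular part, so no control near $\mathcal{S}$ is needed.
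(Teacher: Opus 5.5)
Your proposal is correct and follows essentially the same route as the paper. The paper applies Perelman's Harnack inequality \cite[Theorem 16.44]{ChowII} to the conjugate heat kernels $K_i(y_i,\mathfrak{t}(y);\cdot,t)$ of the approximating flows, where $(y_i,\mathfrak{t}(y))\to y$ within the correspondence by \cite[Theorem 6.45]{bamler2}, and passes to the limit using the $C^\infty_{\mathrm{loc}}$ convergence of these kernels on $\mathcal{R}_t$ from \cite[Theorem 9.31]{bamler2}. That theorem already covers singular $y$ and allows the base time to be exactly $\mathfrak{t}(y)$, so your fallback parabolic-regularity argument is not needed.
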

\begin{proof} Let $K_i$ denote the conjugate heat kernel of $(M_i,(g_{i,t})_{t\in [-\tau_i^{-1},0)})$. By \cite[Theorem 6.45]{bamler2}, there exist $y_i \in M_i$ such that
\begin{equation*}
    (y_i,\mathfrak{t}(y))\xrightarrow[i\to \infty]{\mathfrak{C}} y.
\end{equation*}
For any $t<\mathfrak{t}(y)$, \cite[Theorem 9.31]{bamler2} gives an open exhaustion $(U_i)_{i\in \mathbb{N}}$ of $\mathcal{R}_t$ and open embeddings $\psi_i : U_i \to M_i$ such that 
\begin{equation*}
    \psi_i^{\ast}g_{i,t} \to g_t, \qquad \psi_i^{\ast}K_i(y_i,\mathfrak{t}(y);\cdot,t) \to K(y;\cdot)
\end{equation*}
in $C_{\operatorname{loc}}^{\infty}(\mathcal{R}_t)$. Writing $K(y_i,\mathfrak{t}(y);\cdot,t)=(4\pi\tau_y)^{-\frac{n}{2}}e^{-f_i}$, \cite[Theorem 16.44]{ChowII} gives
\begin{equation*}
\tau_y (R_{g_i}+2\Delta_{g_i} f_i - |\nabla f_i|_{g_i}^2)+f_i -n\leq 0,
\end{equation*}
so the claim follows by pulling this inequality back by $\psi_i$ and taking $i\to \infty$.
\end{proof}

\subsection{An integration by parts trick} \label{sec:IBPtrick} We will use the following argument several times in the paper, so we record it here as an independent lemma. 

Let $(M,g)$ be a smooth Riemannian manifold and suppose we have the following data:
\begin{enumerate}
    \item $u$ is a smooth positive function satisfying, for some constant $A>0$,
    \begin{equation}\label{eq--harnack type}
        |\nabla \log u|^2\, u \le 2\Delta u + A u.
    \end{equation}
    \item $v$ is a smooth function on $M$, and $\psi : M \to [0,1]$ is a smooth function with compact support in $M$.
\end{enumerate}
Then we have the estimate below.

\begin{lemma}\label{lem--integration by part trick}
 There exists a constant $C_0$ such that for any $\epsilon\in (0,1]$,
\begin{equation}
\begin{aligned}
    \int_M v^2 \psi^2 |\nabla \log u|\, u\, dg
    &\le \epsilon \int_M |\nabla v|^2 \psi^2 \, u\, dg + C_0(\epsilon^{-1} + A\epsilon)\int_M v^2 \psi^2 u\, dg \\
    &\quad + C_0\epsilon\int_{\operatorname{supp}(\nabla \psi)} v^2 \big(|\nabla \psi|^2 + \psi^2|\nabla \log u|^2\big) u\, dg .
\end{aligned}
\end{equation}
\end{lemma}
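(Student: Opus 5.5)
The plan is to bound $|\nabla\log u|$ pointwise by Cauchy--Schwarz, then remove the resulting $|\nabla\log u|^2$ term using \eqref{eq--harnack type} and an integration by parts against the Laplacian. First, for any $\delta>0$,
\begin{equation*}
\int_M v^2\psi^2|\nabla\log u|\,u \le \delta\int_M v^2\psi^2|\nabla \log u|^2 u + \frac{1}{4\delta}\int_M v^2\psi^2 u .
\end{equation*}
We will take $\delta$ proportional to $\epsilon$, which produces the $\epsilon^{-1}$ term in the conclusion.

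Next, we use \eqref{eq--harnack type} to write $|\nabla\log u|^2u\le 2\Delta u + Au$. The term $A\delta\int v^2\psi^2 u$ is already of the claimed form $C_0A\epsilon\int v^2\psi^2u$. For the Laplacian term we integrate by parts. This is legitimate because $\psi$ has compact support, and we do not need to know the sign of $\Delta u$. The result is
\begin{equation*}
2\int_M v^2\psi^2\Delta u = -2\int_M \langle\nabla(v^2\psi^2),\nabla u\rangle
= -4\int_M v\psi^2\langle\nabla v,\nabla\log u\rangle u - 4\int_M v^2\psi\langle\nabla\psi,\nabla\log u\rangle u .
\end{equation*}
If $\Delta$ denotes half the Riemannian Laplacian, as in the paper's conventions, only the constants change.

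Now we estimate each term by Cauchy--Schwarz. For the first term,
\begin{equation*}
4|v\psi^2\langle\nabla v,\nabla\log u\rangle| \le |\nabla v|^2\psi^2 + 4v^2\psi^2|\nabla\log u|^2 .
\end{equation*}
This is the main obstacle: it regenerates the quantity $\int v^2\psi^2|\nabla\log u|^2u$ that we were trying to eliminate, with a constant that is not small. So absorption into the left-hand side, which is only first order in $|\nabla\log u|$, is not directly possible.

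To get around this, we run the same computation starting from $I_2:=\int v^2\psi^2|\nabla\log u|^2u$ itself. Applying \eqref{eq--harnack type} and the integration by parts above, and using Cauchy--Schwarz with weight $\tfrac14$, gives
\begin{equation*}
I_2 \le \tfrac12 I_2 + C\int|\nabla v|^2\psi^2u + CA\int v^2\psi^2u + C\int_{\operatorname{supp}\nabla\psi} v^2|\nabla\psi|^2u ,
\end{equation*}
where the cross term with $\nabla\psi$ was bounded by $4v^2\psi|\nabla\psi||\nabla\log u|\le \tfrac14 v^2\psi^2|\nabla\log u|^2 + 16v^2|\nabla\psi|^2$. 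We cannot absorb $\tfrac12 I_2$ into the left side unless $I_2<\infty$. That finiteness holds because $\psi$ is compactly supported and all quantities are smooth. The alternative is to keep the cutoff-region term in the form $\psi^2|\nabla\log u|^2$ on $\operatorname{supp}(\nabla\psi)$, exactly as it appears in the statement. That route avoids any issue with absorption and matches the stated error term $C_0\epsilon\int_{\operatorname{supp}\nabla\psi}v^2(|\nabla\psi|^2+\psi^2|\nabla\log u|^2)u$.

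Either way, we obtain $I_2\lesssim \int|\nabla v|^2\psi^2u + A\int v^2\psi^2u + (\text{cutoff terms})$. Substituting this into the first inequality with $\delta=c\epsilon$ yields the claimed bound with a universal constant $C_0$. The bookkeeping of constants, in particular keeping the coefficient of $|\nabla v|^2$ exactly $\epsilon$, is routine after rescaling $\delta$.
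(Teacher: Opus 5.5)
Your proposal is correct and follows essentially the same route as the paper: bound $\int_M v^2\psi^2|\nabla\log u|^2u\,dg$ using \eqref{eq--harnack type}, integration by parts, and a weighted Cauchy--Schwarz with absorption, which is legitimate since this integral is finite by the compact support of $\psi$ and smoothness. You then use $|\nabla\log u|\le\delta|\nabla\log u|^2+\frac{1}{4\delta}$ with $\delta\sim\epsilon$. One small correction: keeping the term $\psi^2|\nabla\log u|^2$ on $\operatorname{supp}(\nabla\psi)$, as the paper does, does not by itself avoid absorption, because the $\langle\nabla v,\nabla\log u\rangle$ cross term still produces a multiple of $\int_M v^2\psi^2|\nabla\log u|^2u\,dg$ that must be absorbed; your finiteness observation already covers this.
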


In later applications, the function $u$ will be the conjugate heat kernel. We remark that on the right-hand side the derivative of $v$ appears with a small coefficient, while the derivatives of $\psi$ and $u$ only need to be integrated over $\supp(\nabla \psi)$. In our later applications, we will use the Kato inequality to absorb the term involving $\nabla v$, and then use the standard estimates for the cut-off function to bound the terms involving the derivatives of $\psi$ and $u$.

\begin{proof}
    Using \eqref{eq--harnack type}, integration by parts, and the Cauchy--Schwarz inequality, we first obtain that
    \begin{equation}
    \begin{aligned}
          \int_{M}v^2\psi^2|\nabla\log u|^2u\,dg\leq&4 \int_M|\psi^2v\nabla v+v^2\psi \nabla \psi||\nabla \log u|u\, dg+A\int_Mv^2\psi^2u\,dg\\
    &\leq \int_M \left( \frac12|\nabla \log u|^2 v^2 +8|\nabla v|^2\right)\psi^2 u\,dg \\
    &+2\int_{\supp(\nabla \psi)}v^2(|\nabla \psi|^2+\psi^2|\nabla \log u|^2)u\, dg\\
          &+A\int_Mv^2\psi^2u\, dg.
    \end{aligned}
    \end{equation}
Rearranging terms gives
\begin{equation}\label{eq--cauchy1}
\begin{aligned}
      \int_M v^2 \psi^2 |\nabla \log u|^2 u dg \leq &16\int_M |\nabla v|^2 \psi^2 u dg+4\int_{\supp(\nabla \psi)}v^2(|\nabla \psi|^2+\psi^2|\nabla \log u|^2)u\, dg\\
          &+2A\int_Mv^2\psi^2u\, dg.
\end{aligned}
\end{equation}

    For any $\epsilon>0$, we have that 
    \begin{equation}\label{eq--cauchy2}
         \int_M v^2 \psi^2 |\nabla \log u| u\, dg\leq \epsilon^{-1}\int_Mv^2\psi^2u\, dg+\frac{\epsilon}{2}\int_M v^2\psi^2|\nabla \log u|^2u\, dg.
    \end{equation}
    We then combine \eqref{eq--cauchy1} and \eqref{eq--cauchy2} to conclude the proof.
\end{proof}

\section{H\"ormander $L^2$ estimates on singular solitons}
\label{sec:Hormander}

In this section, we prove the H\"ormander $L^2$ estimates (Proposition \ref{prop:hormander}) for pluri-anticanonical sections on a singular K\"ahler--Ricci shrinker, as well as the $C^0$ and $C^1$ estimates for holomorphic pluri-anticanonical sections (Lemma \ref{lem:C1estimates}). We assume throughout this section that $(X,d)$ is a singular K\"ahler-Ricci shrinker obtained as an $\mathbb{F}$-limit of noncollapsed K\"ahler-Ricci flows as in assumption \ref{assume:KRF} of Section \ref{sec;convergence setting}, with corresponding metric soliton $\mathcal{X}$. 

The main ingredient for H\"ormander $L^2$ estimates is Proposition~\ref{prop:localbound}, which establishes an $L^\infty$ bound for $\bar{\partial}$-closed and $\bar{\partial}^*$-closed $(0,1)$-forms with values in the pluri-anticanonical bundles.

\subsection{Improved Kato inequalities}

\label{sec:Katoinequalities}

In this subsection, we prove two improved Kato inequalities: one for $\bar{\partial}$-closed and co-closed $(0,1)$-forms, and the other for the gradient of a holomorphic section. Both results are known in the literature; for completeness, we include the proofs here. Throughout this paper, we assume that the complex dimension satisfies $n \geq 2$.

\begin{lemma}\label{lem-improved kato} Suppose $(M,\omega)$ is an $n$-dimensional K\"ahler manifold and $(L,h)$ is a holomorphic Hermitian line bundle on $M$. For any $\eta \in \mathcal{A}^{0,1}(M,L)$ satisfying $\overline{\partial}\eta=0$ and $\overline{\partial}^{\ast}\eta=0$, we have
\begin{equation}
    |\nabla|\eta||^2\leq (1-\frac{1}{2n})|\nabla \eta|^2.
\end{equation}
\end{lemma}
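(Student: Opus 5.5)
The argument is pointwise linear algebra applied to the tensor $T:=\nabla\eta$ at a fixed point, using the two equations to constrain $T$. Write $\eta=\eta_{\bar j}\,d\bar z^j\otimes e$ in unitary coordinates at the point, with $e$ a unit frame for $L$. Then $\nabla\eta$ splits into the $(1,0)$-part $A_{i\bar j}:=\nabla_i\eta_{\bar j}$ and the $(0,1)$-part $B_{\bar k\bar j}:=\nabla_{\bar k}\eta_{\bar j}$, so that $|\nabla\eta|^2=|A|^2+|B|^2$. The equation $\bar\partial\eta=0$ says $B$ is symmetric. The equation $\bar\partial^*\eta=0$ says $\sum_i\nabla_i\eta_{\bar i}=0$, i.e. $A$ is trace-free as an $n\times n$ matrix. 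The matrix $A$ is otherwise arbitrary.

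Away from zeros of $\eta$, compute
\[
\nabla_i|\eta|^2=\sum_j\bigl(A_{i\bar j}\overline{\eta_{\bar j}}+\eta_{\bar j}\overline{B_{\bar i\bar j}}\bigr),
\]
and write $|\nabla|\eta||^2=|\nabla|\eta|^2|^2/(4|\eta|^2)$. Normalize by unitary rotation so that $\eta=|\eta|\,d\bar z^1$. Then
\[
\partial_i|\eta| = A_{i\bar 1}+\overline{B_{\bar i\bar 1}}=:a_i+\bar b_i ,
\]
and $|\nabla|\eta||^2$ is a fixed constant multiple of $\sum_i|a_i+\bar b_i|^2$. The constant depends on the paper's normalization convention, and the same constant appears in $|\nabla\eta|^2$, so it cancels in the ratio. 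The desired inequality therefore reduces to
\[
\sum_i|a_i+\bar b_i|^2\le \Bigl(1-\tfrac1{2n}\Bigr)\bigl(|A|^2+|B|^2\bigr)\cdot c ,
\]
with $c$ the appropriate bookkeeping factor between the real norm of $\nabla|\eta|$ and the complex components.

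First bound $\sum_i|a_i+\bar b_i|^2\le 2\sum_i|a_i|^2+2\sum_i|b_i|^2$. Then exploit the two constraints separately.

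\textbf{The $B$ part.} By symmetry of $B$, the first column $b_i=B_{\bar i\bar 1}$ satisfies
\[
|B|^2\ge |b_1|^2+2\sum_{i\ge2}|b_i|^2 ,
\]
because each off-diagonal entry appears twice.

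\textbf{The $A$ part.} The column $a_i=A_{i\bar1}$ has $a_1=A_{1\bar1}=-\sum_{i\ge2}A_{i\bar i}$ by trace-freeness. Cauchy–Schwarz then gives
\[
|A|^2\ge \sum_{i\ge2}|a_i|^2+|a_1|^2+\frac{|a_1|^2}{n-1}=\sum_{i\ge2}|a_i|^2+\frac{n}{n-1}|a_1|^2 .
\]

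\textbf{Combining.} The crude splitting loses too much, so instead do an exact quadratic optimization. Maximize $\sum_i|a_i+\bar b_i|^2$ subject to
\[
|A|^2+|B|^2\ge \tfrac{n}{n-1}|a_1|^2+\sum_{i\ge2}|a_i|^2+|b_1|^2+2\sum_{i\ge2}|b_i|^2 .
\]
The $i=1$ term gives the ratio
\[
\frac{|a_1+\bar b_1|^2}{\tfrac{n}{n-1}|a_1|^2+|b_1|^2}\le \frac{n-1}{n}+1=\frac{2n-1}{n},
\]
since $\max|x+y|^2/(\alpha|x|^2+\beta|y|^2)=1/\alpha+1/\beta$. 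For $i\ge2$ the ratio is at most $1+\tfrac12=\tfrac32\le\frac{2n-1}{n}$, using $n\ge2$.

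Hence
\[
\sum_i|a_i+\bar b_i|^2\le\frac{2n-1}{n}\,(|A|^2+|B|^2).
\]
After accounting for the real/complex normalization factor of $2$ (real norms being twice the Hermitian sums, with $|\nabla|\eta||^2$ picking up $\tfrac12$ of $|\partial|\eta||^2$-type terms), this is exactly
\[
|\nabla|\eta||^2\le\Bigl(1-\tfrac1{2n}\Bigr)|\nabla\eta|^2 .
\]
At zeros of $\eta$ the inequality holds in the usual distributional/a.e. sense: $|\eta|$ is Lipschitz, and on the zero set either $\nabla|\eta|=0$ a.e. or one argues by the $\sqrt{|\eta|^2+\epsilon}$ regularization.

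The main obstacle is getting the constants exactly right. This means tracking the factor of $2$ between Riemannian norms of real derivatives and Hermitian norms of $(1,0)/(0,1)$ components under the paper's conventions, and verifying that the optimal constant $\frac{2n-1}{2n}$ is what survives rather than a weaker one.
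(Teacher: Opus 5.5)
Your proof is correct and is essentially the paper's argument. The paper also arranges $\eta_{\bar j}=0$ for $j\ge2$, uses trace-freeness of $\nabla_i\eta_{\bar j}$ (from $\overline{\partial}^*\eta=0$) and symmetry of $\nabla_{\bar i}\eta_{\bar j}$ (from $\overline{\partial}\eta=0$), and applies the same weighted Cauchy--Schwarz ($\lambda=\frac{n}{n-1}$ on the $i=1$ term, $\frac32$ and $3$ on the $i\ge2$ terms) to reach your inequality $\sum_i|a_i+\bar b_i|^2\le\frac{2n-1}{n}(|A|^2+|B|^2)$.

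One bookkeeping fix is needed. You have $\partial_i|\eta|=\frac12(a_i+\bar b_i)$ and $|d|\eta||^2=2\sum_i|\partial_i|\eta||^2$, so $|\nabla|\eta||^2=\frac12\sum_i|a_i+\bar b_i|^2$, while $|\nabla\eta|^2=|A|^2+|B|^2$. The constants therefore do not cancel, as your earlier remark suggests. They leave exactly the factor $\frac12$ that turns $\frac{2n-1}{n}$ into $1-\frac1{2n}$, which is what your final sentence correctly asserts.
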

\begin{proof}
This follows from the general result in \cite[Theorem 1.1]{DM2024}, but for completeness we provide a direct proof here. Choose holomorphic coordinates centered at a given point $p\in M$, such that $g_{i\overline{\jmath}}(p)=\delta_{ij}$ and $\eta_{\overline{\jmath}}=0$ for all $j\geq 2$ and choose a local holomorphic trivializing section $e$ of $L$ such that $\nabla e|_p=0$.

From $\overline{\partial}^{\ast}\eta=0$, we have $\nabla_1 \eta_{\overline{1}}=-\sum_{j=2}^n \nabla_j \eta_{\overline{\jmath}}$ at $p$, hence by Cauchy--Schwarz, we know that  
\begin{equation*}
     |\nabla_1 \eta_{\overline{1}}|^2\leq (n-1) \sum_{j=2}^n |\nabla_j \eta_{\overline{\jmath}}|^2.
\end{equation*}
Equivalently,
\begin{equation}\label{eq--dbar star 0}
    |\nabla_1 \eta_{\overline{1}}|^2
\leq  \frac{n-1}{n}\sum_{j=1}^n|\nabla_j \eta_{\overline{\jmath}}|^2.
\end{equation}
From $\overline{\partial}\eta=0$, we have $\nabla_{\overline{\imath}} \eta_{\overline{\jmath}}=\nabla_{\overline{\jmath}}\eta_{\overline{\imath}}$, so that 
\begin{equation}\label{eq-dbar 0}
    \sum_{i=2}^n |\nabla_{\overline{\imath}}\eta_{\overline{1}}|^2 = \frac{1}{2}\sum_{i=2}^n |\nabla_{\overline{1}}\eta_{\overline{\imath}}|^2 + \frac{1}{2} \sum_{i=2}^n |\nabla_{\overline{\imath}}\eta_{\overline{1}}|^2 \leq \frac{1}{2} \sum_{1\leq i \neq j \leq n} |\nabla_{\overline{\imath}}\eta_{\overline{\jmath}}|^2.
\end{equation}

At the point $p$, using the fact that $\eta_{\overline{\jmath}}=0$ for all $j\ge 2$ and the Cauchy--Schwarz inequality, we obtain that for any $\lambda>0$, we have
\begin{equation}\label{eq--nabla square}
    \begin{aligned} \frac{1}{2}|\nabla |\eta|_{g \otimes h}^2|_{g_t}^2 =& g^{\overline{\jmath}i}g^{\overline{l}k}g^{\overline{q}p} \nabla_i (\eta_{\overline{l}}\overline{\eta_{\overline{k}}}) \nabla_{\overline{\jmath}}(\eta_{\overline{q}}\overline{\eta_{\overline{p}}})  = \sum_{i=1}^n |\overline{\eta_{\overline{1}}}\nabla_i \eta_{\overline{1}} + \eta_{\overline{1}}\overline{\nabla_{\overline{\imath}}\eta_{\overline{1}}}|^2 \\
\leq & |\eta|_{g \otimes h}^2 \left( (1+\lambda) |\nabla_1 \eta_{\overline{1}}|^2 +(1+\lambda^{-1}) |\nabla_{\overline{1}}\eta_{\overline{1}}|^2 \right) \\
&+|\eta|_{g \otimes h}^2 \left( \frac{3}{2}\sum_{i=2}^n |\nabla_i \eta_{\overline{1}}|^2 +  3 \sum_{i=2}^n |\nabla_{\overline{\imath}} \eta_{\overline{1}}|^2 \right)
\end{aligned}
\end{equation}
Combining \eqref{eq--dbar star 0}--\eqref{eq--nabla square} yields
\begin{equation}
    \begin{aligned}
        \frac{1}{2}|\nabla |\eta|_{g \otimes h}^2|_{g}^2 \leq |\eta|_{g \otimes h}^2 & \left( \frac{(1+\lambda)(n-1)}{n} \sum_{j=1}^n|\nabla_j \eta_{\overline{\jmath}}|^2 + (1+\lambda^{-1})|\nabla_{\overline{1}}\eta_{\overline{1}}|^2\right.\\
        &\left.+\frac{3}{2}\sum_{1\leq i\neq j \leq n} (|\nabla_i \eta_{\overline{\jmath}}|^2 + |\nabla_{\overline{\imath}}\eta_{\overline{\jmath}}|^2)\right).
    \end{aligned}
\end{equation} Choosing $\lambda=\frac{n}{n-1}$ and observing that at $p$ we have
\[
|\nabla \eta|_{g \otimes h}^2
= \sum_{i,j=1}^n \left( |\nabla_i \eta_{\overline{\jmath}}|^2
+ |\nabla_{\overline{\imath}}\eta_{\overline{\jmath}}|^2 \right),
\]
we obtain 
$$\frac{1}{2}|\nabla |\eta|_{g \otimes h}^2|_{g}^2\leq \frac{2n-1}{n}|\eta|_{g \otimes h}^2 |\nabla\eta|_{g \otimes h}^2.$$
\end{proof}

\begin{lemma}\label{lem--kato for holo gradient}
    Let $(M,\omega)$ be a K\"ahler manifold and $(L,h)$ be a Hermitian holomorphic line bundle with $\Theta_h=\lambda\omega$ for some $\lambda>0$. For any $u\in H^0(M,L)$, we have
    \begin{equation}\label{eq--improved kato for nablau}
        |\nabla^{1,0}|\nabla u|^2|^2\leq |\nabla u|^2\left(|\nabla^{1,0}\nabla u|^2+2\lambda|u||\nabla^{1,0}\nabla u|+\lambda^2|u|^2\right).
    \end{equation}
    \end{lemma}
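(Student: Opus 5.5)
We argue pointwise, following the proof of Lemma~\ref{lem--bochner for gradient}. Fix a point $p$ and choose normal holomorphic coordinates at $p$ together with a local holomorphic frame $e$ of $L$ with $\nabla e|_p=0$. Since $u$ is holomorphic, $\nabla u=\partial u$ is an $L$-valued $(1,0)$-form, and $|\nabla u|^2=g^{\bar\jmath i}\nabla_i u\,\overline{\nabla_j u}$. Differentiating in a $(1,0)$-direction $k$ gives
\begin{equation*}
\nabla_k|\nabla u|^2=\sum_i\nabla_k\nabla_i u\,\overline{\nabla_i u}+\sum_i\nabla_i u\,\overline{\nabla_{\bar k}\nabla_i u}.
\end{equation*}
The first sum is built from $\nabla^{1,0}\nabla u$. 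For the second, we use the commutation formula \eqref{commute 2derivative for sections} together with $\nabla_{\bar k}u=0$:
\begin{equation*}
\nabla_{\bar k}\nabla_i u=\nabla_i\nabla_{\bar k}u-\Theta_{i\bar k}u=-\lambda g_{i\bar k}u .
\end{equation*}
At $p$ this reduces to $-\lambda\delta_{ik}u$, so the second sum equals $-\lambda\,\nabla_k u\,\bar u$.

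The core of the argument is therefore the identity
\begin{equation*}
\nabla_k|\nabla u|^2=\langle \nabla_k\nabla u,\nabla u\rangle-\lambda\bar u\,\nabla_k u .
\end{equation*}
Set $A_{ki}=\nabla_k\nabla_i u$ and $v_i=\nabla_i u$. Then the $(1,0)$-gradient is the vector $w=A\bar v-\lambda\bar u\,v$, and we need to bound $|w|^2=\sum_k|w_k|^2$. The triangle inequality gives $|w|\le |A\bar v|+\lambda|u||v|$. By Cauchy--Schwarz in the matrix sense, $|A\bar v|\le \|A\|_{HS}|v|=|\nabla^{1,0}\nabla u|\,|\nabla u|$. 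Squaring,
\begin{equation*}
|w|^2\le |\nabla u|^2\bigl(|\nabla^{1,0}\nabla u|+\lambda|u|\bigr)^2,
\end{equation*}
and expanding the square yields exactly \eqref{eq--improved kato for nablau}.

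The only point requiring care, and thus the main obstacle, is normalization. One has to check that $|\nabla^{1,0}|\cdot|^2|^2$ means $\sum_k|\partial_k(\cdot)|^2$ in the paper's conventions, matching the computation in Lemma~\ref{lem--bochner for gradient}. One must also confirm that the hermitian pairing places the conjugates so that the curvature term appears as $\lambda\bar u\nabla_k u$ with modulus $\lambda|u||\nabla u|$; any sign or conjugation error there is harmless, since only absolute values enter the final estimate. If the paper's convention instead uses the real gradient with an extra factor, the constants change by a fixed factor, and we would re-check against \eqref{eq:bochnerlemma2}.
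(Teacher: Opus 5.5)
Your proposal is correct and is essentially the paper's proof. The paper uses the same identity $\nabla_{\bar k}\nabla_j u=-\lambda g_{j\bar k}u$ to split $\nabla^{1,0}|\nabla u|^2$ into a Hessian term and a curvature term. It then multiplies out the product and bounds the main and cross terms by Cauchy--Schwarz; you instead apply the triangle inequality before squaring, which is equivalent and produces the same right-hand side.
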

\begin{proof} Using the fact that $u$ is a holomorphic section and that $\Theta=\lambda \omega$, we have $$\nabla_i\nabla_{\bar j}u=0, \text{ and } \nabla_{\bar k}\nabla_j u=-\lambda g_{j\bar k}u.$$ Then we can compute
    \begin{align*} 
|\nabla^{1,0}|\nabla u|_{g\otimes h}^{2}|_g^{2}= & h g^{\overline{\jmath}i}\nabla_{i}(g^{\overline{l}k}\nabla_{k}u\overline{\nabla_{l}u})\nabla_{\overline{\jmath}}(g^{\overline{q}p}\nabla_{p}u\overline{\nabla_{q}u})\\
= & h g^{\overline{\jmath}i}g^{\overline{l}k}g^{\overline{q}p}\left(\nabla_{i}\nabla_{k}u\overline{\nabla_{l}u}-\nabla_{k}u\overline{[\nabla_{l},\nabla_{\overline{\imath}}]u}\right)\left(\nabla_{p}u\overline{\nabla_{j}\nabla_{q}u}-\overline{\nabla_{q}u}[\nabla_{p},\nabla_{\overline{\jmath}}]u\right)\\
= & h g^{\overline{\jmath}i}g^{\overline{l}k}g^{\overline{q}p}\left(\nabla_{i}\nabla_{k}u\overline{\nabla_{l}u}- \lambda g_{i\overline{l}}\bar{u}\nabla_{k}u\right)\left(\nabla_{p}u\overline{\nabla_{j}\nabla_{q}u}-\lambda g_{p\overline{\jmath}}u\overline{\nabla_{q}u}\right)\\
\leq & h g^{\overline{\jmath}i}g^{\overline{l}k}g^{\overline{q}p}\nabla_{i}\nabla_{k}u\overline{\nabla_{j}\nabla_{q}u}\nabla_{p}u\overline{\nabla_{l}u}+|\nabla u|_{g\otimes h}^{2}(2|\nabla \nabla u|_{g\otimes h}\cdot \lambda|u|_h+\lambda^2|u|_h^{2})\\
\leq & |\nabla u|_{g\otimes h}^{2}\left(|\nabla^{1,0} \nabla u|_{g\otimes h}^{2}+2\lambda|u|_h |\nabla^{1,0} \nabla u|_{g\otimes h}+\lambda^2|u|_h^{2}\right),
\end{align*}
which completes the proof of \eqref{eq--improved kato for nablau}.
\end{proof}

\subsection{Cut-off functions}
\label{sec:cutoff}
In this section, we discuss three types of cut-off functions on a singular K\"ahler--Ricci shrinker. 

First, we recall the existence of suitable cut-off functions exhausting the regular set, as proved in \cite[Lemma~15.27]{bamler3} and \cite[Proposition~1.6]{CMZ2024}.
\begin{lemma}\label{lem:chicutoff} There exist $\chi_{\epsilon}\in C^{\infty}(\mathcal{R}_{-1})$
such that the following hold:
\begin{enumerate}
    \item $\chi_{\epsilon}|_{\{r_{\operatorname{Rm}}\leq\epsilon\}}\equiv0$, $\chi_{\epsilon}|_{\{r_{\operatorname{Rm}}\geq2\epsilon\}}\equiv1$,
$0\leq\chi_{\epsilon}\leq1$ everywhere;
\item for any $\sigma\in(0,4)$, $x_0 \in \mathcal{X}_{-1}$, and $r\in(0,\infty)$,
\begin{equation*} \lim_{\epsilon\searrow0}\int_{B(x_0,r)\cap \mathcal{R}_{-1}}|\nabla\chi_{\epsilon}|^{4-\sigma}dg_{-1}=0. \end{equation*} 
\end{enumerate}
\end{lemma}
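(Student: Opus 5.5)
The plan is to build $\chi_\epsilon$ directly from the curvature scale $r_{\operatorname{Rm}}$ on $\mathcal{R}_{-1}$. The integral bound in (2) will then come from the Minkowski codimension-four estimate for the singular set.

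\textbf{Construction.} The function $r_{\operatorname{Rm}}$ is $1$-Lipschitz on $\mathcal{R}_{-1}$, vanishes toward the singular set, and is positive on $\mathcal{R}_{-1}$. Fix a smooth $\rho:[0,\infty)\to[0,1]$ with $\rho\equiv 0$ on $[0,1.1]$, $\rho\equiv1$ on $[1.9,\infty)$ and $|\rho'|\le 2$. The Lipschitz function $\rho(r_{\operatorname{Rm}}/\epsilon)$ already satisfies (1) and has $|\nabla\cdot|\le 2/\epsilon$. To get a smooth function, I would mollify $r_{\operatorname{Rm}}$ at scale $\ll \epsilon$. Since $r_{\operatorname{Rm}}\ge\epsilon$ on the region in question, the metric is $C^\infty$-controlled there at scale comparable to $\epsilon$, so mollification changes values by at most $0.05\epsilon$ and keeps the gradient bounded. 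Alternatively, one can use a smooth function comparable to $r_{\operatorname{Rm}}$, as in \cite[Lemma 15.27]{bamler3}. Either way we get $\chi_\epsilon$ satisfying (1), with
\[
|\nabla\chi_\epsilon|\le C\epsilon^{-1}, \qquad \supp\nabla\chi_\epsilon\subseteq\{\epsilon\le r_{\operatorname{Rm}}\le 2\epsilon\}.
\]

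\textbf{Integral estimate.} This gives
\[
\int_{B(x_0,r)\cap\mathcal{R}_{-1}}|\nabla\chi_\epsilon|^{4-\sigma}\le C\epsilon^{\sigma-4}\,\Vol\bigl(B(x_0,r)\cap\{r_{\operatorname{Rm}}<2\epsilon\}\bigr).
\]
The key input is the quantitative strata estimate. In setting \ref{assume:KRF} this is Bamler's bound \cite{bamler3}; in setting \ref{assume:KRS} it is \cite{LLW21}, and \cite{CMZ2024} also applies. For every $\delta>0$ it states
\[
\Vol\bigl(B(x_0,r)\cap\{r_{\operatorname{Rm}}<s\}\bigr)\le C(r,\delta,W)\,s^{4-\delta}.
\]
Taking $s=2\epsilon$ and $\delta=\sigma/2$ bounds the right-hand side by $C\epsilon^{\sigma/2}$, which tends to $0$ as $\epsilon\searrow0$. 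One point needs care here. The volume estimate is stated for the metric soliton's time slice, and its weighted version involves $e^{-f}$; on a bounded ball, $f$ is bounded, so the two versions agree up to constants.

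\textbf{Main obstacle.} The difficult step is the quantitative bound $\Vol\{r_{\operatorname{Rm}}<s\}\lesssim s^{4-\delta}$ itself, rather than just Minkowski dimension $\le 2n-4$. The latter alone would only give $s^{4-\delta}$ for $s$ below a threshold depending on the ball, and that is still enough here, since we only need $\epsilon\to0$. So I would invoke the Minkowski codimension statement from Section~\ref{Singular Kahler-Ricci shrinkers} directly, applying it to the set $\{r_{\operatorname{Rm}}<s\}$. This uses the $\epsilon$-regularity fact that this set lies in the $Cs$-neighborhood of the singular set up to the quantitative strata, which is exactly how these results are formulated in \cite{bamler3,LLW21}.
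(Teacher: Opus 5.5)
Your argument is correct and is essentially the construction behind \cite[Lemma~15.27]{bamler3} and \cite[Proposition~1.6]{CMZ2024}, which the paper simply cites; in setting~\ref{assume:KRS} it uses \cite[Theorem~6.23]{LW24} for the volume input. As in those references, you compose a smoothing of the $1$-Lipschitz function $r_{\operatorname{Rm}}$ with a fixed profile to get $|\nabla\chi_\epsilon|\le C\epsilon^{-1}$ supported in $\{\epsilon\le r_{\operatorname{Rm}}\le 2\epsilon\}$, and then apply the estimate $\Vol\bigl(B(x_0,r)\cap\{r_{\operatorname{Rm}}<s\}\bigr)\le C s^{4-\delta}$.

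One caution about your last paragraph: that estimate has to be the regularity-scale statement itself, not a consequence of $\dim_{\mathcal M}\mathcal S\le 2n-4$. Minkowski dimension of $\mathcal S$ only controls tubular neighborhoods of $\mathcal S$. The set $\{r_{\operatorname{Rm}}<s\}$ need not lie in a $Cs$-neighborhood of $\mathcal S$, since small smooth bubbles may accumulate at a singular point. So the fallback via the Minkowski dimension of $\mathcal S$ alone would not work.
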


Second, we then construct cut-off functions that exhaust compact subsets of $\mathcal{X}$; this is straightforward. We fix $\rho\in C^{\infty}(\mathbb{R})$
such that $\rho|_{(-\infty,1]}\equiv1$ and $\rho|_{[2,\infty)}\equiv0$,
and define $\rho_{r}:\mathcal{X}\to[0,\infty)$ by
\begin{equation}\label{rhocutoff}
    \rho_{r}(x):=\rho(r^{-2}f_{t}(x)).
\end{equation}
Because $\mathcal{L}_{\partial_{t}-\frac{1}{2}\nabla f}f=0$, we have
$\mathcal{L}_{\partial_{t}-\frac{1}{2}\nabla f}\rho_{r}=0$ on $\mathcal{R}$. Since $f_t$ is proper and the scalar curvature is non-negative, using \eqref{eq--relation between nabla f and f}, we obtain the following.
\begin{lemma}\label{lem--rho-cutoff}
There exists a constant $C$
such that for all $r\in[1,\infty)$ and $x\in\mathcal{X}_{[-1,-\frac{1}{2}]}$, 
\begin{equation}
\begin{aligned}
r^2(|\nabla\rho_{r}|^2+|\Delta\rho_{r}|)\leq C.
\end{aligned}
\end{equation}
 We also have 
\begin{equation}\label{eq-heat equ for rho}
    (\partial_{t}-\Delta)\rho_{r}^{2}=-2(\rho_{r}\Delta_{f}\rho_{r}+|\nabla^{1,0}\rho_{r}|^{2}).
\end{equation}
\end{lemma}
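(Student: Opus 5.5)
The bound is a direct chain-rule computation on $\mathcal R$, using only the soliton identities \eqref{eq--shrinker equation}--\eqref{eq--relation between nabla f and f}. Write $\tau=-t\in[\tfrac12,1]$ and $s:=r^{-2}f_t$, so that $\rho_r=\rho(s)$. Since $\rho'$ and $\rho''$ vanish outside $\{1\le s\le 2\}$, every term involving a derivative of $\rho_r$ is supported where $r^2\le f_t\le 2r^2$. The plan is to bound $\nabla f$, $\Delta f$ and $R$ on this annulus and then read off the estimate.

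\emph{Step 1: gradient.} From \eqref{eq--relation between nabla f and f} and $R\ge0$ we get $\tau|\nabla^{1,0}f|^2\le f-W$. On the annulus, $\tau\ge \tfrac12$ and $f\le 2r^2$, so $|\nabla f|^2=2|\nabla^{1,0}f|^2\le C(r^2+|W|)\le Cr^2$ for $r\ge1$. Therefore
\[
|\nabla\rho_r|^2=r^{-4}\rho'(s)^2|\nabla f|^2\le Cr^{-2}.
\]

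\emph{Step 2: Laplacian.} The chain rule gives
\[
\Delta\rho_r=r^{-2}\rho'(s)\,\Delta f+r^{-4}\rho''(s)\,|\nabla^{1,0}f|^2 .
\]
By Step 1, the second term is bounded by $Cr^{-2}$. For the first term, \eqref{eq-scalar curvature and laplacian} gives $\Delta f=n/\tau-R$. The constant part $n/\tau\le 2n$ contributes $O(r^{-2})$, so it remains to control $r^{-2}\rho'(s)R$ on the annulus.

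\emph{Step 3: the scalar curvature term (main obstacle).} The only pointwise information that comes for free is $0\le\tau R\le f-W\le Cr^2$, again from \eqref{eq--relation between nabla f and f}. Plugging this in gives only $|\Delta\rho_r|\le C$, which misses the stated bound by a factor of $r^2$. To reach the stated $|\Delta\rho_r|\le Cr^{-2}$, I will need the sharper fact that $R$ is bounded on $\{f\le 2r^2\}$ uniformly in $r$, that is, a global bound $\sup_{\mathcal R_{-1}}R<\infty$.

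The first thing I would try is to import this from the smooth approximants. In setting \ref{assume:KRS} it should follow from the Li--Li--Wang and Cao--Zhou type estimates combined with the entropy bound. In setting \ref{assume:KRF} I would try the Harnack inequality of Theorem~\ref{thm-harnack inequality} together with the identity $R+\Delta f=n/\tau$. These are the points where this step could fail, and it is the step I expect to be the real difficulty.

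If only the weaker two-sided information is available, the fallback is different. The identity \eqref{eq-heat equ for rho} and its later uses involve only $\Delta_f\rho_r$. Using $\Delta f-\tfrac12|\nabla f|^2=(f-W)/\tau-n/\tau\cdot$(bounded), one should be able to replace $R$ by quantities controlled by $f$ alone. I would check whether this is what the statement actually requires in its applications.

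\emph{Step 4: time slices and the identity.} Because $\mathcal L_{\partial_t-\frac12\nabla f}\rho_r=0$, the bounds are uniform over $t\in[-1,-\tfrac12]$, and the constant $C$ depends only on $n$, $W$ and $\rho$. For \eqref{eq-heat equ for rho}, compute
\[
\partial_t\rho_r=\tfrac12\langle\nabla f,\nabla\rho_r\rangle=2\,\Re\langle\nabla^{1,0}f,\nabla^{1,0}\rho_r\rangle,
\]
together with $\Delta\rho_r^2=2\rho_r\Delta\rho_r+2|\nabla^{1,0}\rho_r|^2$. Recalling $\Delta_f=\Delta-\tfrac12\nabla_{\nabla f}$ then gives \eqref{eq-heat equ for rho} directly. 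This last step is routine.
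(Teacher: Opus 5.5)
Your Steps 1, 2 and 4 are exactly the argument the paper has in mind. Its whole justification is that $f_t$ is proper, $R\ge0$, and \eqref{eq--relation between nabla f and f} holds. These steps correctly give $r^2|\nabla\rho_r|^2\le C$ and the identity \eqref{eq-heat equ for rho}.

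\textbf{The gap is Step 3.} Your diagnosis is right, and in fact sharper than you state. We have
\[
r^2\Delta\rho_r=\rho'(s)\Bigl(\frac{n}{\tau}-R\Bigr)+r^{-2}\rho''(s)\,|\nabla^{1,0}f|^2 ,
\]
and the last term is bounded on the annulus by $\tau|\nabla^{1,0}f|^2\le f-W$. Pick $s_0\in(1,2)$ with $\rho'(s_0)\neq0$ and evaluate at points where $f=s_0r^2$. This shows that the asserted bound $r^2|\Delta\rho_r|\le C$ holds \emph{if and only if} $\sup_{\mathcal R_{-1}}R<\infty$; recall $R$ is already locally bounded by $\tau R\le f-W$.

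So no rearrangement can avoid a global scalar curvature bound, and the routes you suggest do not supply one:
\begin{itemize}
\item The Cao--Zhou and Li--Li--Wang estimates control the growth of $f$ and the local geometry, which again yields only $\tau R\le f-W$.
\item Perelman's Harnack inequality bounds $|\nabla\log K|^2$ by $\Delta\log K$, which is how it is used in \eqref{eq--gradient bound by laplacian}. It gives no upper bound on $R$.
\end{itemize}
A uniform bound on the scalar curvature of shrinkers is not available in this generality. The paper's one-line proof does not supply such a bound either; the ingredients it cites yield exactly what you obtained.

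\textbf{The fix is to prove what these ingredients actually give.} This is all that is used later.
\begin{itemize}
\item Choose $\rho$ nonincreasing. Then $R\ge0$ and $\rho'\le0$ give $r^{-2}\rho'(s)(n/\tau-R)\ge -Cr^{-2}$, so $-Cr^{-2}\le\Delta\rho_r\le C$.
\item Moreover $-\tfrac12\langle\nabla f,\nabla\rho_r\rangle=-r^{-2}\rho'(s)|\nabla^{1,0}f|^2\ge0$. Hence $\Delta_f\rho_r\ge-Cr^{-2}$.
\item By \eqref{eq-heat equ for rho}, this gives $(\partial_t-\Delta)\rho_r^2\le Cr^{-2}$. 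That is the sign that matters when differentiating $\int(\cdots)\rho_r^2\,d\nu_{y;t}$.
\end{itemize}
This is enough for every later use. In Proposition~\ref{prop:localbound} the constants may depend on $r$, so $|\Delta\rho_r|\le C$ suffices there. Proposition~\ref{prop:hormander} and Lemma~\ref{lem:weighted-centering} use only $|\nabla\rho_r|\le Cr^{-1}$.

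Your fallback through $\Delta_f$ does not recover the $r^{-2}$ decay. The correct identity is $\Delta_f f=\Delta f-|\nabla^{1,0}f|^2=(n+W-f)/\tau$; your sign is reversed. Consequently, on $\{f=sr^2\}$ one has $\Delta_f\rho_r=-\rho'(s)s/\tau+O(r^{-2})$, which is of order one.
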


Finally, we construct cut-off functions that are more localized in nature and can be regarded as analogues of the cut-off functions constructed for non-collapsed Ricci limit spaces in \cite{CC-almostrigidity}. The proof follows the same strategy as that in \cite[Theorem 1.3]{BZI}.

\begin{lemma}
\label{lem:cutoff} For any $D<\infty$, there exists $C=C(D,W)>0$
such that the following holds. For any $x_{0}\in B(p,D)$ and $r\in(0,C^{-1}]$,
there exists $\varphi\in C^{\infty}(\mathcal{R}_{-1})\cap C^{0}(\mathcal{X}_{-1})$
such that the following hold:

\begin{enumerate}
    \item $\varphi|_{B(x_{0},r)}\equiv1$, $\supp(\varphi)\subseteq B(x_{0},Cr)$,

    \item $r^{2}(|\nabla\varphi|^{2}+|\Delta\varphi|)\leq C$. 
\end{enumerate}
\end{lemma}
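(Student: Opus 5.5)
Following the strategy of \cite[Theorem 1.3]{BZI}, I would build $\varphi$ from the heat flow of a Lipschitz bump rather than from the distance function, since distance functions carry no Laplacian control on the singular space. Concretely, I would first fix a Lipschitz function $\varphi_0$ on $\mathcal{X}_{-1}$, for instance $\varphi_0 = \rho_1\bigl(d_{-1}(x_0,\cdot)/(Ar)\bigr)$ for a suitable large constant $A$. I would then solve the heat equation on the spacetime $\mathcal{X}$ with initial data $\varphi_0$ at time $-1-r^2$, lifted via the self-similar identification. More precisely, I would set
\[
\tilde u(x):=\int_{\mathcal{X}_{-1-r^2}} \varphi_0\, d\nu_{x;-1-r^2},
\qquad x\in \mathcal{X}_{-1},
\]
where $\nu_{x;\cdot}$ is the conjugate heat kernel measure based at $x$ (available as an $\mathbb{F}$-limit, cf.\ Theorem~\ref{thm-harnack inequality}). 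Finally I would put $\varphi=\rho(\tilde u)$ for a one-variable cutoff $\rho$ that equals $1$ on values above $1-\delta$ and $0$ on values below $\delta$.

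The key estimates are the following, to be proved in this order.
\begin{enumerate}
\item $\tilde u$ is close to $1$ on $B(x_0,r)$ and close to $0$ outside $B(x_0,Cr)$. For this I would use the variance bound $\mathrm{Var}(\nu_{x;-1-r^2})\le H_n r^2$ together with the existence of $H_n$-centers \cite{bamler1}. Since the curvature of the shrinker is bounded on $B(p,D)$ at scale $1$ only in a weak sense, I would instead rely on the fact that $\mathcal{N}\ge -W$ gives a uniform lower bound for the mass of $\nu$ in balls of radius comparable to $r$ around the center, and that the center lies within $Cr$ of $x$ on the $r^2$ time scale.
\item The gradient bound $|\nabla \tilde u|\le C/r$ follows from Bamler's gradient estimate for heat flows, namely $|\nabla u|\le C r^{-1}$ for solutions bounded by $1$ over a time interval of length $r^2$ \cite[Theorem 4.1]{bamler1}, passed to the limit.
\item The Laplacian bound $|\Delta \tilde u|\le C r^{-2}$ is the genuinely delicate step. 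On a Ricci flow, $\Delta u = \partial_t u$. Using the self-similarity I would translate $\partial_{\mathfrak{t}}$ into $\partial_{\mathfrak t}-\tfrac12\nabla f$ plus $\tfrac12\nabla f$, where $\nabla f$ is locally bounded on $B(p,D)$ by \eqref{eq--relation between nabla f and f}. So it suffices to bound $\partial_t u$. For this I would use the heat-kernel time-derivative estimate $|\partial_t K|\le C\tau^{-1}K\,(\cdots)$ of \cite{bamler1}, or more simply the bound $\int |\partial_t K(x;\cdot)|\,d g \le C/\tau$ coming from Perelman's Harnack inequality (Theorem~\ref{thm-harnack inequality}) combined with $R\ge 0$.
\item Then $\varphi=\rho(\tilde u)$ satisfies
\[
\Delta\varphi=\rho'(\tilde u)\,\Delta\tilde u+\rho''(\tilde u)\,|\nabla\tilde u|^2,
\]
which yields item (2).
\end{enumerate}
Smoothness of $\varphi$ on $\mathcal{R}_{-1}$ and continuity on $\mathcal{X}_{-1}$ follow from the continuity of $x\mapsto \nu_{x}$ in the $W_1$ sense \cite{bamler3}.

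The main obstacle is step (3): a uniform $L^\infty$ bound on $\partial_t u$ at time $-1$ for heat flows of Lipschitz data started time $r^2$ earlier. I would first try the Hein--Naber/Bamler type estimate $|\partial_t u|\le C r^{-2}$ for bounded solutions, obtained by differentiating the heat kernel representation and using $\int |\square$-derivative of $K| \le C/\tau$. If that is not directly available, I would instead take $\tilde u$ to be an average $r^{-2}\int_{-1-2r^2}^{-1-r^2}(\cdot)\,dt$ of heat flows over initial times. Its time derivative is then a difference quotient bounded by $2r^{-2}$, and the gradient bound from step (2) still applies.
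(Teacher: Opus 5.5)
Your proposal has genuine gaps at its two load-bearing steps, (3) and (1), although the overall route could be made to work.

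\textbf{Comparison with the paper.} Your route (heat flow of a Lipschitz bump, then a one-variable cutoff) differs from the paper's. The paper works with the heat kernel itself, $v(x)=K(x;x_0(-1-r^2))$. It traps $v$ between two explicit Gaussians: a lower bound from the reduced length of the self-similar curve $t\mapsto\phi_t(x)$ via \cite[Lemma 22.2]{bamler3}, and an upper bound from \cite[Theorem 7.2]{bamler1}. It propagates the lower bound with \cite[Theorem 7.5]{bamler1}. It gets $|\Delta\varphi|\le Cr^{-2}$ from the Bamler--Zhang time-derivative estimate \cite[Lemma 3.1]{BZI}, which applies because the scalar curvature ($R\le f-W$) is locally bounded on the shrinker.

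\textbf{Gap in step (3).} A bound $|\partial_t u|\le Cr^{-2}$ for bounded heat flows is not in \cite{bamler1}, and it is false on general Ricci flows. For example, it fails for the first eigenfunction on a round $S^m$, $m\ge 3$, shortly before extinction. It is exactly the Bamler--Zhang estimate and needs a scalar curvature bound, which you never invoke.

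Your Harnack option controls a different quantity. Let $\nu_{x,t;s}$ be the conjugate heat kernel measure at time $s$ based at $x\in\mathcal X_t$, let $\psi_s$ be the self-similar lift of $\varphi_0$ to $\mathcal X_s$, and set $U(x,t,s):=\int\psi_s\,d\nu_{x,t;s}$. Perelman's inequality gives $\int|\nabla\log K(x;\cdot)|^2\,d\nu_{x;s}\le C/(-1-s)$. That controls $\partial_s U$, the derivative in the \emph{initial} time, not $\partial_{\mathfrak t}U=\Delta U$. As a result, your sentence ``it suffices to bound $\partial_t u$'' is circular.

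The missing link is the scaling symmetry. Let $\Psi_\lambda$ move $\mathcal X_t$ to $\mathcal X_{\lambda t}$ along integral curves of $\partial_{\mathfrak t}-\frac12\nabla f$. Then $U$ is invariant under $(x,t,s)\mapsto(\Psi_\lambda x,\lambda t,\lambda s)$, and differentiating at $\lambda=1$ gives
\[
\Delta U=\partial_{\mathfrak t}U=\tfrac12\langle\nabla f,\nabla U\rangle+s\,\partial_sU\qquad\text{at }\mathfrak t=-1 .
\]
Moreover $\partial_sU=\int\bigl(\tfrac12\langle\nabla f,\nabla\psi_s\rangle-\Delta\psi_s\bigr)\,d\nu_{x;s}$. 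Integrating by parts, $|\int\Delta\psi_s\,d\nu_{x;s}|\le\|\nabla\psi_s\|_\infty\bigl(\int|\nabla\log K|^2\,d\nu_{x;s}\bigr)^{1/2}\le Cr^{-2}$. This closes step (3) using only the Harnack inequality and self-similarity, avoiding \cite[Lemma 3.1]{BZI}, but you have to state and use the identity.

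Your averaging fallback has the same problem. Averaging over initial times turns $\partial_t$ into a difference quotient only under time-translation invariance, which a Ricci flow lacks. On the shrinker it is the scaling identity above that plays this role.

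\textbf{Gap in step (1).} The variance bound and $\mathcal N\ge -W$ concentrate $\nu_{x;-1-r^2}$ around \emph{some} $H_n$-center, but they say nothing about where that center is. You need the center to lie within $Cr$ of the self-similar image $x(-1-r^2)$. This must hold uniformly, including for $x$ far from $p$, to get $\tilde u\approx 0$ on all of $\mathcal X_{-1}\setminus B(x_0,Cr)$. That claim is the actual content of the localization, and you only assert it.

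The paper obtains the corresponding fact from the reduced-length lower bound $K(x;x(-1-r^2))\ge c\,r^{-2n}e^{-\frac12 d^2(p,x)}$ combined with the Gaussian upper bound. In your framework you could instead differentiate $s\mapsto\int d_s(x(s),\cdot)\,d\nu_{x;s}$:
\begin{itemize}
\item the drift term is controlled by $|\nabla f|\le C(1+d(p,\cdot))$;
\item the diffusion term is controlled by $\int|\nabla\log K|\,d\nu_{x;s}\le C(-1-s)^{-1/2}$.
\end{itemize}
This gives $d_{W_1}(\nu_{x;-1-r^2},\delta_{x(-1-r^2)})\le C(1+d(p,x))r^2+Cr$. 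Markov's inequality and the paper's triangle-inequality bookkeeping (using $r\le C^{-1}$) then finish. Either way, this step has to be proved, not assumed.
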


\begin{proof}
It is sufficient to prove the case where $x_{0}\in\mathcal{R}_{-1}$. For $x\in\mathcal{R}_{-1}$, we define
\[
v(x):=K(x;x_{0}(-1-r^{2})).
\]

 We consider the curve in the space-time $\gamma(t):=x(t)=\phi_{t}(x)$ for
$t\in[-1-r^{2},-1]$, where $\phi_t$ are given by \eqref{eq-self-diff}. Then using the fact that $R_{{g}_t}(\gamma(t))=\frac{1}{|t|}R_{{g}_{-1}}(x)$ and $|\dot{\gamma}(t)|_{{g}_{t}}^{2}=\frac{1}{2|t|}|\nabla^{{g}_{-1}}f|_{{g}_{-1}}^{2}(x)$ and 
\eqref{eq--relation between nabla f and f}
we can estimate: 
\begin{equation*}
    \begin{split}
\mathcal{L}(\gamma)&= \int_{-1-r^{2}}^{-1}\sqrt{-1-t}\left(R_{{g}_t}(\gamma(t))+|\dot{\gamma}(t)|_{g_{t}}^{2}\right)dt\\
&\leq  r\int_{-1-r^{2}}^{-1}\left(R_{{g}_{-1}}(x)+\frac12|\nabla^{{g}_{-1}}{f}|_{{g}_{-1}}^{2}(x)\right)dt\\
&=  r^{3}\left({f}(x)-W\right)\leq  \frac{r^3}{2} ({d}_{-1}^2(p,x)+C(n)),
\end{split}
\end{equation*}
where in the last inequality we used \cite[Theorem 1.1]{CMZ2024}.
 Therefore we have an upper bound on Perelman's reduced length $$\ell_{x}(x(-1-r^{2}))\leq\frac{r^{2}}{2}(d_{-1}^{2}(p,x)+C(n)),$$ so that \cite[Lemma 22.2]{bamler3} gives 
\begin{equation}\label{eq--lower bound of conjugate heat kernel}
    K(x;x(-1-r^{2}))\geq\frac{1}{(2\pi r^{2})^{n}}e^{-\frac{1}{2}d_{-1}^{2}(p,x)-C(n)}.
\end{equation}
Note that \cite[Theorem 7.2]{bamler1} implies an upper bound on $v(x)K(x;x(-1-r^2))$. Combining this upper bound on the product with the lower bound of $K(x;x(-1-r^2))$ in  \eqref{eq--lower bound of conjugate heat kernel}, we can obtain an upper bound for $v$ 
\begin{equation}
    v(x)\leq\frac{C(W)}{r^{2n}}e^{-\frac{1}{10r^{2}}d_{-1}^{2}(x,x_{0})+\frac{1}{2}d_{-1}^{2}(p,x)}.
\end{equation}
If we assume $r\leq10^{-5}$, then by the triangle inequality,
\begin{equation}\label{eq--upper bound of v}
    v(x)\leq \frac{C(W)}{r^{2n}}e^{-\frac{d_{-1}^{2}(x,x_{0})}{10r^{2}}+\frac{d_{-1}^{2}(p,x)}{2}}\leq \frac{C(W)}{r^{2n}}e^{-\frac{d_{-1}^{2}(x,x_{0})}{20r^{2}}+d_{-1}^{2}(p,x_{0})}.
\end{equation}

In addition, by \eqref{eq--lower bound of conjugate heat kernel}, we know that 
\begin{equation}
    v(x_0)\geq \frac{1}{(2\pi r^2)^n}e^{-\frac12 d^2_{-1}(p,x_0)},
\end{equation}
and then we can apply the gradient estimate \cite[Theorem 7.5]{bamler1}
\begin{equation}\label{eq--gradient bound of v}
    |\nabla\log v|\leq\frac{C(W)}{r}\sqrt{\log\left(\frac{C(W)}{r^{2n}v}\right)}
\end{equation}
to obtain a pointwise lower bound 
\begin{equation}\label{eq--lower bound of v}
    v(x)\geq\frac{1}{C(W)r^{2n}}e^{-\frac{1}{4}d_{-1}^{2}(p,x_{0})-\frac{d_{-1}^{2}(x,x_{0})}{r^{2}}}.
\end{equation}

If we choose $C:=10d_{-1}(p,x_{0})+2(\log C(W)+10)$,
it follows from \eqref{eq--upper bound of v} that for all $x\in\mathcal{X}_{-1}\setminus B(x_{0},C r)$,
we have
\begin{equation}\label{eq--upper bound of v-second}
v(x)r^{2n}\leq\frac{1}{20C(W)}e^{-\frac{1}{2}d_{-1}^{2}(p,x_{0})}. 
\end{equation}
Choose a cut-off function $\xi\in C^{\infty}(\mathbb{R})$ such that
$\xi|_{(-\infty,\frac{1}{5}]}\equiv0$ and $\xi|_{[\frac{1}{2},\infty)}\equiv1$ and we 
define 
\[
\varphi(x):=\xi\left(C(W)r^{2n}v(x)e^{\frac{1}{2}d_{-1}^{2}(p,x_{0})+1}\right).
\]
By \eqref{eq--lower bound of v}, we know that $\varphi|_{B(x_{0}, r)}\equiv1$, and by \eqref{eq--upper bound of v-second} we know that $\varphi|_{\mathcal{X}_{-1}\setminus B(x_{0},C r)}\equiv0$. Moreover by \eqref{eq--gradient bound of v} and the choice of $\xi$, we obtain 
\[
r|\nabla\varphi|(x)\leq C(W,d_{-1}(p,x_{0})).
\]
Since $\mathcal{X}_{-1}$ has locally bounded scalar curvature, using the smooth convergence of the conjugate heat kernel and passing the pointwise estimate \cite[Lemma 3.1]{BZI} to the limit, we obtain
\[
r^{2}|\Delta\varphi|(x)\leq C(W,d_{-1}(p,x_{0})).
\]
\end{proof}

\subsection{The heat equation for holomorphic pluri-anticanonical sections}

In the following, all geometric quantities depend on $t$, but for simplicity of notation, we omit the explicit $t$-dependence. 
  
\begin{prop}\label{prop--heat equation for 1-form}
Suppose $\eta_{-1}\in \mathcal{A}^{0,1}(\mathcal{R}_{-1},K_{\mathcal{R}_{-1}}^{-\ell})$ with $\pp \eta_{-1}=0$ and extend $\eta_{-1}$ to $\mathcal{R}$ by demanding $\mathcal{L}_{\partial_t-\frac{1}{2}\nabla f}\eta=0$. Then 
\begin{equation}\label{L2evolution}
    (\partial_{t}-\Delta)|\eta|_{g_{t}\otimes h_{t}^{\ell}}^{2}=-|\nabla \eta|^{2}+2\Re\langle \eta,\pp\pp_f^{\ast}\eta\rangle+\mathrm{Im}\langle\mathcal{L}_{J\nabla f}\eta,\eta\rangle+\frac{(n-2)\ell}{|t|}|\eta|^{2}+\ell\Delta_ff|\eta|^2.
\end{equation}
\end{prop}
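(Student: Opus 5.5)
The plan is to compute $\partial_t|\eta|^2$ and $\Delta|\eta|^2$ separately at a fixed spacetime point and then compare. Throughout we work on $\mathcal{R}$ with the soliton identities \eqref{eq--shrinker equation}--\eqref{eq--relation between nabla f and f}.

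\emph{Time derivative.} Since $\mathcal{L}_{\partial_t-\frac12\nabla f}\eta=0$, we write
\[
\partial_t|\eta|^2_{g_t\otimes h_t^\ell}=\tfrac12\nabla f(|\eta|^2)+\big(\text{variation of the metrics } g_t,\,h_t \text{ along } \partial_{\mathfrak t}-\tfrac12\nabla f\big).
\]
The variation of $g_t$ along $\partial_{\mathfrak t}-\frac12\nabla f$ is $-2\Ric-\nabla^2 f=-\nabla^2 f-\frac{2}{|t|}g+2\nabla^2 f$. In Kähler notation this is expressed through $g/|t|$ and $\nabla_i\nabla_{\bar j}f$. Acting on the $(0,1)$-form index, it contributes a term of the form $\frac{1}{|t|}|\eta|^2$ and a term $\nabla_i\nabla_{\bar j}f\,\eta_{\bar l}\overline{\eta_{\bar k}}$.

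For $h_t$, recall $h_t^{-1}\sim (2\pi|t|)^{-n}e^{-f_t}\omega_t^n$. Its logarithmic derivative along the flow combines $\partial_t$ of $\log\omega^n$, namely $-R$, with $\partial_t f$, the drift term, and $\frac{n}{|t|}$. Using $R+\Delta f=\frac{n}{|t|}$, this simplifies to $\ell$ times an expression built from $\frac{n}{|t|}$, $\Delta f$ and $|\nabla f|^2$, which we record as $\ell(\Delta_f f+\frac{c}{|t|})$. The constant $c$ is to be determined.

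\emph{Laplacian.} Next we use the Bochner formula of Lemma~\ref{lem-bochner formula} for the Chern connection on $K^{-\ell}$, with curvature $\Theta=\ell(\Ric+\ii\partial\pp f)$. Because the metric $h$ is built from the weighted volume form, this equals $\frac{\ell}{|t|}\omega$ by the shrinker equation. Then
\[
\Delta|\eta|^2=|\nabla\eta|^2-2\Re\langle\eta,(\nabla^{0,1})^*\nabla^{0,1}\eta\rangle+(\text{commutator terms}).
\]
Here we take $\Delta=\tfrac12(\nabla^{1,0}$-rough Laplacian $+\nabla^{0,1}$-rough Laplacian$)$. The difference between the two rough Laplacians is a curvature term $\Ric+\Theta$ minus $\tr\Theta$, as in the first identity of Lemma~\ref{lem-bochner formula}. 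Substituting $(\nabla^{0,1})^*\nabla^{0,1}\eta=\Delta_{\pp}\eta-(\Ric+\Theta)\eta$ together with $\pp\eta=0$ gives $\Delta_{\pp}\eta=\pp\pp^*\eta$. Converting to $\pp_f^*$ via \eqref{eq--f-twisted laplacian} produces $(\mathcal L_{\nabla^{0,1}f}\eta)^{0,1}$, and the Ricci terms are replaced by $\frac{1}{|t|}g-\nabla\bar\nabla f$.

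\emph{Assembling.} We subtract the Laplacian from the time derivative. The $\nabla_i\nabla_{\bar j}f$ contraction terms should cancel between the metric variation and the Lie-derivative conversion, using \eqref{eq--difference between lie-derivative and connection}. The drift term $\frac12\nabla f(|\eta|^2)=\Re\langle\nabla_{\nabla f}\eta,\eta\rangle$ then combines with $-2\Re\langle\mathcal L_{\nabla^{0,1}f}\eta,\eta\rangle$. Writing $\nabla f=\nabla^{1,0}f+\nabla^{0,1}f$ and $J\nabla f=\ii(\nabla^{1,0}f-\nabla^{0,1}f)$, the real parts cancel and leave $\mathrm{Im}\langle\mathcal L_{J\nabla f}\eta,\eta\rangle$. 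Finally, the $\frac1{|t|}$ terms, namely $+\frac{1}{|t|}$ from the form index, $-\frac{\ell}{|t|}$ from $\Theta$, and $\frac{n\ell}{|t|}$-type terms from $h_t$, should sum to $\frac{(n-2)\ell}{|t|}$ plus $\ell\Delta_f f$.

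\emph{Main obstacle.} The hard part is this bookkeeping: getting the exact constants and the exact form of the $\ell\Delta_f f$ term requires careful tracking of the evolution of $h_t$, including its $|t|^{-n}$ normalization and $\partial_t f=\frac12|\nabla f|^2$. I would verify the final coefficients on the Gaussian shrinker on $\C^n$ with $\eta=d\bar z_1\otimes(\partial_{z_1}\wedge\cdots\wedge\partial_{z_n})^{\ell}$ as a sanity check.
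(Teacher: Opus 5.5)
\textbf{There is a genuine gap in the bookkeeping.} Your reorganization is viable: differentiate along $V:=\partial_{\mathfrak t}-\frac12\nabla f$, where $\eta$ is invariant, and then add the drift $\frac12\nabla f(|\eta|^2)$. But the inputs you assign along $V$ are wrong, and this proposition consists of nothing but that bookkeeping. So the issue is not just undetermined constants; as stated, the terms cannot balance.

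\emph{The metric variation.} The paper uses the K\"ahler normalization $\partial_t g_{i\bar\jmath}=-R_{i\bar\jmath}$. Your $\partial_t g=-2\Ric$ is incompatible with the soliton field $\frac12\nabla f$ and with $\Ric+\nabla^2 f=\frac1{|t|}g$. Hence $\mathcal L_V g=-\Ric-\nabla^2 f=-\frac1{|t|}g$ is purely conformal, as recorded in the proof of Proposition~\ref{prop:localbound}.

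\emph{The line bundle metric.} The form $(2\pi|t|)^{-n}e^{-f}\omega^n$ is $V$-invariant, so $\mathcal L_V h=0$; this is used in the proof of Lemma~\ref{lem:C1estimates}. The identity $\partial_t\log h_t=\frac n{|t|}-\partial_t f-R=\Delta_f f$ holds along $\partial_{\mathfrak t}$, not along $V$.

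So the first piece is simply $V(|\eta|^2)=\frac1{|t|}|\eta|^2$, with no Hessian term and no $\ell\Delta_f f$ term. Your tally $\frac1{|t|}-\frac{\ell}{|t|}+\frac{n\ell}{|t|}$ also cannot equal $\frac{(n-2)\ell}{|t|}$.

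\textbf{The missing step is converting the drift term.} The drift $\frac12\nabla f(|\eta|^2)=\Re\langle\nabla_{\nabla f}\eta,\eta\rangle$ is written with the connection, and must be rewritten with Lie derivatives. This is where the terms you are looking for actually appear.
\begin{itemize}
\item By \eqref{eq--difference between lie-derivative and connection}, $\Re\langle\nabla_{\nabla^{0,1}f}\eta,\eta\rangle=\Re\langle\mathcal L_{\nabla^{0,1}f}\eta,\eta\rangle-\nabla^2f(\eta,\bar\eta)$.
\item By the $(1,0)$ identity \eqref{eq-1,0difference between lie-derivative and connection}, which comes from the $K^{-\ell}$ factor and which you do not use, $\Re\langle\nabla_{\nabla^{1,0}f}\eta,\eta\rangle=\Re\langle\mathcal L_{\nabla^{1,0}f}\eta,\eta\rangle+\ell\Delta_f f|\eta|^2$. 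This is the true source of the $\ell\Delta_f f$ term in your route.
\end{itemize}

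On the Laplacian side, the two Bochner identities give
\[
\Delta|\eta|^2=|\nabla\eta|^2+(\Ric+2\Theta)(\eta,\eta)-\tr_\omega\Theta\,|\eta|^2-2\Re\langle\eta,\pp\pp_f^*\eta\rangle+2\Re\langle\mathcal L_{\nabla^{0,1}f}\eta,\eta\rangle .
\]
The two rough Laplacians differ by $\Ric+\tr_\omega\Theta$, not by $\Ric+\Theta-\tr_\omega\Theta$. Now write $-\Ric(\eta,\eta)=-\frac1{|t|}|\eta|^2+\nabla^2f(\eta,\bar\eta)$. Then:
\begin{itemize}
\item the Hessian cancels the one from the drift conversion;
\item the $-\frac1{|t|}$ cancels $V(|\eta|^2)$;
\item $-2\Theta+\tr_\omega\Theta$ gives $\frac{(n-2)\ell}{|t|}$;
\item $\Re\langle\mathcal L_{\nabla f-2\nabla^{0,1}f}\eta,\eta\rangle=\mathrm{Im}\langle\mathcal L_{J\nabla f}\eta,\eta\rangle$.
\end{itemize}

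\textbf{How the paper avoids these conversions.} It differentiates along $\partial_{\mathfrak t}$ itself. There $\partial_t g^{-1}$ contributes $+\Ric(\eta,\eta)$, which cancels the Laplacian's Ricci term without using the shrinker equation. Also $\partial_t\log h_t=\Delta_f f$ supplies $\ell\Delta_f f$, and $\partial_t\eta-\mathcal L_{\nabla^{0,1}f}\eta=-\frac{\ii}{2}\mathcal L_{J\nabla f}\eta$ gives the imaginary part directly.
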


\begin{proof}
Letting $\Theta=\frac{\ell}{|t|}\omega$ denote the curvature of the metric $h_t^\ell$ on $K_{\mathcal{R}_{t}}^{-\ell}$,
our hypothesis that on  $\eta$ is $\pp$-closed and Lemma \ref{lem-bochner formula} yield
\begin{equation}
    -(\nabla^{0,1})^{\ast}\nabla^{0,1}\eta=(\Ric+\Theta)(\eta)+\mathcal{L}_{\nabla^{0,1}f}\eta-\overline{\partial}\overline{\partial}_f^{\ast}\eta,
\end{equation}
and 
\begin{equation}
    \begin{aligned}
        -(\nabla^{1,0})^{\ast}\nabla^{1,0}\eta= & -\left(\partial^{\ast}\partial+\partial\partial^{\ast}\right)\eta\\
= & -\left(\overline{\partial}^{\ast}\overline{\partial}+\overline{\partial}\overline{\partial}^{\ast}\right)\eta+\Theta(\eta)-\text{tr}_{\omega}(\Theta)\eta\\
= & -\pp\pp_f^{\ast}\eta+\mathcal{L}_{\nabla^{0,1}f}\eta+\Theta(\eta)-\text{tr}_{\omega}(\Theta)\eta,
    \end{aligned}
\end{equation}
so that
\begin{align*}
\Delta|\eta|_{g_{t}\otimes h^{\ell}_t}^{2}= & |\nabla\eta|^{2}-\langle(\nabla^{0,1})^{\ast}\nabla^{0,1}\eta,\eta\rangle-\langle\eta,(\nabla^{1,0})^{\ast}\nabla^{1,0}\eta\rangle\\
= & |\nabla\eta|^{2}+(\Ric+\Theta)(\eta,\eta)+\langle\mathcal{L}_{\nabla^{0,1}f}\eta,\eta\rangle+\langle\eta,\mathcal{L}_{\nabla^{0,1}f}\eta\rangle\\
 & -2\Re\langle\eta,\pp\pp_f^{\ast}\eta\rangle+\Theta(\eta,\eta)-\text{tr}_{\omega}(\Theta)|\eta|^{2}\\
= & |\nabla\eta|^{2}+(\Ric+2\Theta)(\eta,\eta)-\text{tr}_{\omega}(\Theta)|\eta|^{2}\\
 & -2\Re\langle\eta,\pp\pp_f^{\ast}\eta\rangle+2\mathrm{Re}\langle\mathcal{L}_{\nabla^{0,1}f}\eta,\eta\rangle.
\end{align*}
Note that we have 
\begin{equation}
    \partial_t\log h_t=\frac{n}{|t|}-\partial_tf_t-R.
\end{equation}
Combining this with \eqref{eq--shrinker equation}--\eqref{eq-scalar curvature and laplacian}, we know that 
\begin{equation}
    \partial_t\log h_t=\Delta_ff.
\end{equation}
Then we obtain that 
\begin{equation} \partial_{t}|\eta|_{g_{t}\otimes h^{\ell}_t}^{2}=2\text{Re}\langle\partial_{t}\eta,\eta\rangle+\Ric(\eta,\eta)+\ell(\Delta_{f}f)|\eta|^{2},
\end{equation}
and hence
\begin{align*}
(\partial_{t}-\Delta)|\eta|_{g \otimes h^{\ell}}^{2}= & -|\nabla \eta|^{2}+2\Re\langle\eta,\pp\pp_f^{\ast}\eta\rangle+2\text{Re}\langle\partial_{t}\eta-\mathcal{L}_{\nabla^{0,1} f}\eta,\eta\rangle+\frac{(n-2)\ell}{|t|}|\eta|^{2}+\ell \Delta_ff|\eta|^2.
\end{align*}
Recall that $\nabla^{0,1}f=\frac{1}{2}(\nabla f+\ii J\nabla f)$. Since $\eta$ satisfies $\mathcal{L}_{\partial_t-\frac{1}{2}\nabla f}\eta=0$, we obtain \eqref{L2evolution}.
\end{proof}

\subsection{Local $L^\infty$-bound and H\"ormander $L^2$-estimate}
\label{sec:localboundedness}
The following estimate is a key ingredient in the proof of H\"ormander's $L^2$-estimate on a singular K\"ahler--Ricci shrinker. Its proof relies on the improved Kato inequality from Lemma \ref{lem-improved kato}.

We use the same notation as in Section~\ref{Singular Kahler-Ricci shrinkers}. 
In particular, we implicitly use the Hermitian metric $h$ on $K^{-1}_{\mathcal{R}_{-1}}$ induced by the volume form 
\(
(2\pi)^{-n} e^{-f} \frac{\omega^n}{n!}.
\)
\begin{prop} \label{prop:localbound}
For $\ell\in \mathbb N_{\geq 0}$, suppose $\eta\in\mathcal{A}^{0,1}(\mathcal{R}_{-1},K_{\mathcal{R}_{-1}}^{-\ell}),$ $\varphi \in C_c^{\infty}(\mathcal{R}_{-1})$ satisfy
\begin{equation} \int_{\mathcal{R}_{-1}} |\eta|^2 e^{-\varphi} d\mu_{-1}<\infty, \qquad \operatorname{supp}(\overline{\partial}\eta)\cup\operatorname{supp}(\overline{\partial}_{f+\varphi}^{\ast}\eta) \cup \operatorname{supp}(\varphi)\subset\subset\mathcal{R}_{-1}.  \end{equation} Then 
\[
|\eta|\in L^{\infty}_{\rm{loc}}(\mathcal{X}_{-1}).
\]
\end{prop}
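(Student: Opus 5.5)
The plan is to reduce the statement to a mean-value inequality for $|\eta|^2$ along the Kähler--Ricci flow on $\mathcal{R}$. I will test it against the conjugate heat kernel $K(x;\cdot)$ based at a point $x\in\mathcal{R}_{-1}$ near the singular set, and show that all boundary contributions from the singular set vanish.

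\textbf{Step 1: localize away from the data.} Fix a compact $Q\subset\subset\mathcal{R}_{-1}$ containing $\operatorname{supp}(\overline{\partial}\eta)\cup\operatorname{supp}(\overline{\partial}^{\ast}_{f+\varphi}\eta)\cup\operatorname{supp}(\varphi)$. On $Q$, $|\eta|$ is bounded by smoothness. On $\mathcal{R}_{-1}\setminus Q$ we have $\overline{\partial}\eta=0$ and $\overline{\partial}^{\ast}_f\eta=0$.

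\textbf{Step 2: extend to spacetime.} Extend $\eta$ to $\mathcal{R}$ by $\mathcal{L}_{\partial_{\mathfrak t}-\frac12\nabla f}\eta=0$, as in \eqref{extension of eta}. Outside the flow-out of $Q$, Proposition~\ref{prop--heat equation for 1-form} gives, for $t\in[-2,-1]$,
\[
(\partial_t-\Delta)|\eta|^2\le -|\nabla\eta|^2+|\nabla f|\,|\nabla\eta|\,|\eta|+\frac{C}{|t|}|\eta|^2+\ell\,\Delta_f f\,|\eta|^2 .
\]
Here the $\mathrm{Im}\langle\mathcal{L}_{J\nabla f}\eta,\eta\rangle$ term is bounded using $\mathcal{L}_{J\nabla f}\eta=\nabla_{J\nabla f}\eta+(\nabla J\nabla f)\ast\eta$. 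Since $J\nabla f$ is Killing, the zeroth-order part is skew and contributes only a bounded multiple of $|\eta|^2$. Because $f$ is locally Lipschitz, the term $|\nabla f||\nabla\eta||\eta|$ can be absorbed into $-\tfrac12|\nabla\eta|^2$ plus $C|\eta|^2$ on bounded sets. By \eqref{eq-scalar curvature and laplacian} and \eqref{eq--relation between nabla f and f}, $\Delta_f f=\frac{n}{\tau}-R-\frac12|\nabla f|^2$ is bounded above, so that term is harmless.

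\textbf{Step 3: reproduction formula with cutoffs.} For $x\in B(x_0,r)\cap\mathcal{R}_{-1}$, compute
\[
\frac{d}{dt}\int|\eta|^2\chi_\epsilon^2\rho_r^2\,\psi^2\,K(x;\cdot)\,dg_t
\]
on $[-1-r^2,-1]$. Here $\chi_\epsilon$ comes from Lemma~\ref{lem:chicutoff}, $\rho_r$ from Lemma~\ref{lem--rho-cutoff}, and $\psi$ localizes away from $Q$ (for instance using Lemma~\ref{lem:cutoff}). Integrating by parts converts the conjugate heat equation into the following contributions:
\begin{enumerate}
\item the good term $-\int|\nabla\eta|^2(\cdots)K$;
\item cross terms of the form $\int|\eta||\nabla\eta|\,|\nabla\chi_\epsilon|\,K$ and $\int|\eta|^2|\nabla\chi_\epsilon|^2K$;
\item terms containing $\nabla\log K$.
\end{enumerate}
The $\nabla\log K$ terms are handled by Lemma~\ref{lem--integration by part trick}, with $u=K$ and $v=|\eta|$. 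Its hypothesis \eqref{eq--harnack type} follows from Perelman's Harnack inequality, Theorem~\ref{thm-harnack inequality}, together with $R\ge0$. The resulting $\epsilon\int|\nabla|\eta||^2K$ is absorbed by the good term via the Kato inequality, Lemma~\ref{lem-improved kato}. Letting $\epsilon\to0$ and then $r\to\infty$, integrating in time, and using the Gaussian upper bound $K(x;\cdot)\le C r^{-2n}$ on the relevant region (as in the proof of Lemma~\ref{lem:cutoff}), I expect to obtain
\[
|\eta|^2(x,-1)\le C\int|\eta|^2\,d\nu_{x;-1-r^2}+C\le C(r,W)\,\|\eta\|^2_{L^2(e^{-\varphi}d\mu_{-1})}+C .
\]
This bound is uniform in $x\in B(x_0,r)\cap\mathcal{R}_{-1}$, which gives $|\eta|\in L^\infty_{\rm loc}$.

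\textbf{Main obstacle.} The hardest step is showing that the $\nabla\chi_\epsilon$ terms vanish as $\epsilon\to0$. We only know $\nabla\chi_\epsilon\to0$ in $L^{4-\sigma}$, not the sharp codimension-four Minkowski content. My first approach is to replace $|\eta|^2$ by $|\eta|^{2\beta}$ with $\beta<1$ chosen using the improved Kato constant $1-\frac1{2n}$, so that $|\eta|^{\beta}$ is a subsolution with a definite gradient gain. The first aim is a priori integrability $|\eta|\in L^{p}_{\rm loc}$ for some $p>2$ (through an iteration also done with cutoffs, starting from the weighted $L^2$ bound). Hölder's inequality then pairs $|\eta|^2$ or $|\eta||\nabla\eta|$ with $|\nabla\chi_\epsilon|^{4-\sigma}$. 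If the exponents do not close in one step, I will bootstrap Moser-style, increasing $p$ until the pairing with $L^{4-\sigma}$ is admissible.
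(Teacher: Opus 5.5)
\textbf{Gap in Step 2: the Hessian term is not bounded.} Splitting $\mathcal{L}_{J\nabla f}\eta=\nabla_{J\nabla f}\eta+\eta\circ\nabla(J\nabla f)$ is fine. You also omit the action on the $K^{-\ell}$ factor, but that is harmless: it gives $-\ell\Delta_f f|\eta|^2$, which is locally bounded because $R$ and $|\nabla f|$ are, by \eqref{eq--relation between nabla f and f}. The real problem is that skew-symmetry of $\nabla(J\nabla f)$ only kills the \emph{real} part of $\langle\eta\circ\nabla(J\nabla f),\eta\rangle$. Formula \eqref{L2evolution} contains the \emph{imaginary} part. For a $(0,1)$-form, using $\nabla(J\nabla f)=J\nabla^2 f$ and $\eta\circ J=-\sqrt{-1}\,\eta$, that imaginary part is the Hessian term $-\nabla^2 f(\eta,\overline{\eta})$ of \eqref{eq:Liederivativeidentity}. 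By the shrinker equation, $\nabla^2 f(\eta,\overline{\eta})=\tau^{-1}|\eta|^2-\Ric(\eta,\overline{\eta})$. The trace $R$ is locally bounded on $X$, but the full complex Hessian of $f$ (equivalently $\Ric$) is not bounded near the singular set. So your pointwise inequality with only $C/|t|$ coefficients is not available. What actually holds is
\[
(\partial_t-\Delta)|\eta|^2\le-|\nabla\eta|^2+C|\eta|\,|\nabla\eta|-\nabla^2 f(\eta,\overline{\eta})+C|\eta|^2,
\]
and nothing in your Step 3 controls the Hessian term. The paper integrates this term by parts inside $\int|\eta|^{\beta-2}\nabla^2 f(\eta,\overline{\eta})\chi_\epsilon^2\rho_r^2\,d\nu_{y;t}$. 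One derivative is moved off $f$ (only $|\nabla f|\in L^\infty_{\rm loc}$ is used) onto $|\eta|^{\beta-2}\eta\otimes\overline{\eta}$, onto the cutoffs, and onto $K(y;\cdot)$. This produces a bulk term $C_1\int|\eta|^{\beta}\chi_\epsilon^2\rho_r^2|\nabla\log K(y;\cdot)|\,d\nu_{y;t}$ over the whole support of the cutoffs, not just on $\operatorname{supp}(\nabla\chi_\epsilon)$. That bulk term is the main reason Lemma~\ref{lem--integration by part trick} and Perelman's Harnack inequality (Theorem~\ref{thm-harnack inequality}) are needed. The lemma is applied with $\epsilon=\epsilon_1\sqrt{\tau_y}$, so the resulting $\tau_y^{-1/2}$ factor is integrable in time.

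\textbf{The main obstacle closes without improved integrability.} Your plan points the right way but aims at the wrong target. No a priori $L^p$ bound with $p>2$ and no Moser bootstrap are needed. As you describe it, it is also unclear the bootstrap would close: each step pairs a higher power of $|\eta|$ with $|\nabla\chi_\epsilon|^{4-\sigma}$ and meets the same obstruction. With $|\eta|^2$ as in your Step 3, the pairing already requires $\eta\in L^{4+\delta}_{\rm loc}$. The paper instead runs the whole argument for $|\eta|^{\beta}$ with $\beta=\frac{2n-1}{2n}<1$. This still gives a definite gradient gain via Lemma~\ref{lem-improved kato}, because $2-\beta<\frac{2n}{2n-1}$. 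Then pick $p>2$ with $\beta p<2$, so $q=\frac{p}{p-1}<2$.
\begin{itemize}
\item The factor $\int_{\mathcal{K}_{t,\epsilon,r}}|\eta|^{\beta p}\,d\nu_{y;t}$ is controlled by the given $L^2$ norm, using the Gaussian upper bound for $K(y;\cdot)$ away from $y$.
\item The factors $\int|\nabla\chi_\epsilon|^{2q}$ and $\int_{\mathcal{K}_{t,\epsilon,r}}|\nabla\log K(y;\cdot)|^{2q}\,d\nu_{y;t}$ are controlled since $2q<4$.
\end{itemize}
So the estimate closes in a single step directly from the $L^2$ hypothesis.
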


\begin{proof}
 We extend $\eta$ and $\varphi$  to $\mathcal{R}$ by demanding $\mathcal{L}_{\partial_t-\frac{1}{2}\nabla f}\eta=0$ and $\mathcal{L}_{\partial_t-\frac{1}{2}\nabla f}\varphi=0$. As discussed in Section \ref{Singular Kahler-Ricci shrinkers}, the assumption in this proposition implies that there exists $\epsilon_0>0$ such that 
$$\left(\text{supp}(\overline{\partial}\eta)\cup\text{supp}(\overline{\partial}_{f}^{\ast}\eta) \cup \text{supp}(\varphi)\right)\cap \mathcal{X}_{[-1,-\frac12]} \subseteq \{ r_{\operatorname{Rm}} \geq 4\epsilon_0\}.$$

Let $\rho_r$ be the cut-off function defined by \eqref{rhocutoff}. Let $\chi_{\epsilon}'$ be the cut-off function constructed in Lemma \ref{lem:chicutoff}, and let 
\begin{equation} \chi_{\epsilon}:=\chi_{\epsilon}'(1-\chi'_{2\epsilon_0}).
\end{equation}
Then $(\chi_{\epsilon})_{\epsilon\in(0,1]}$ satisfies
$$\text{supp}(\chi_{\epsilon})\subseteq\{\epsilon\leq  r_{\operatorname{Rm}}\leq 4\epsilon_0\},$$
$$\chi_{\epsilon}|_{\{2\epsilon\leq r_{\operatorname{Rm}}\leq \epsilon_0\}\cap B(p,r)}\equiv1$$
for any $r\in(0,\infty)$ when $\epsilon=\epsilon(r)>0$ is sufficiently
small. Moreover for each $\sigma>0$ and $r\in(0,\infty)$,
\begin{equation} \label{etacutoff}
\begin{aligned}  \lim_{\epsilon\searrow0}\int_{\mathcal{R}_{-1}\cap \{r_{\operatorname{Rm}}\leq \epsilon_0\} \cap B(x_{0},r)}|\nabla\chi_{\epsilon}|^{4-\sigma}dg_{-1}&=0, \\
\limsup_{\epsilon \searrow 0} \int_{\mathcal{R}_{-1}\cap \{r_{\operatorname{Rm}}>\epsilon_0\}\cap B(x_0,r)} |\nabla \chi_{\epsilon}|^{4-\sigma} dg_{-1}&\leq C(W,\epsilon_0,r). 
\end{aligned}
\end{equation}
We extend $\chi_{\epsilon}$ to functions on $\mathcal{R}$ by
$\mathcal{L}_{\partial_{t}-\frac{1}{2}\nabla f}\chi_{\epsilon}=0$. Then we have 
\begin{equation}\label{eq-heat equ for chi}
    (\partial_{t}-\Delta)\chi_{\epsilon}^{2}=-2(\chi_{\epsilon}\Delta_{f}\chi_{\epsilon}+|\nabla^{1,0}\chi_{\epsilon}|^{2}).
\end{equation}

Arguing as in \eqref{eq--difference between lie-derivative and connection} and using the fact that $\nabla^{1,0}f$ is a holomorphic vector field, we have 
\begin{equation}\label{eq-1,0difference between lie-derivative and connection}
    \nabla^{1,0}f\lrcorner \nabla \eta=\mathcal{L}_{\nabla^{1,0}f}\eta + \ell(\Delta f -|\nabla^{1,0}f|^2)\eta.
\end{equation}
Combining \eqref{eq--difference between lie-derivative and connection} and \eqref{eq-1,0difference between lie-derivative and connection}, we obtain
\begin{equation} \label{eq:Liederivativeidentity} \text{Im}\langle \mathcal{L}_{J\nabla f}\eta,\eta\rangle = \text{Im} \langle \nabla_{J\nabla f}\eta,\eta\rangle - \nabla^2f(\eta,\overline{\eta})-\ell \Delta_f f |\eta|^2,\end{equation}
where in local holomorphic coordinates,
$$\nabla^2f(\eta,\overline{\eta}):=g^{\overline{q}p}g^{\overline{\jmath}k} \eta_{\overline{q}}\overline{\eta_{\overline{k}}}\nabla_p \nabla_{\overline{\jmath}}f.$$
Wherever a smooth time-dependent family $\eta_{t}\in\mathcal{A}^{0,1}(\mathcal{R}_{t})$
satisfies $\mathcal{L}_{\partial_{t}-\frac{1}{2}\nabla f}\eta_{t}=0$, $\overline{\partial}\eta_t=0$, and $\overline{\partial}_f^{\ast}\eta_t=0$, we can combine \eqref{L2evolution} with \eqref{eq:Liederivativeidentity} to obtain the following estimate on $\mathcal{X}_{[-1,-\frac12]}\cap \supp(\rho_{r})\cap \supp(\chi_{\epsilon_0})$
\begin{align*}
(\partial_{t}-\Delta)|\eta|^{2}= & -|\nabla \eta|^{2}+\text{Im}\langle\mathcal{L}_{J\nabla f}\eta,\eta\rangle+\frac{(n-2)\ell}{|t|}|\eta|^{2}\\
\leq & -|\nabla \eta|^{2}+C(r)\cdot|\eta|\cdot|\nabla \eta|-\nabla^{2}f(\eta,\overline{\eta})+ 2n\ell|\eta|^2.
\end{align*}
If we set $\beta:=\frac{2n-1}{2n}$, then $\beta\in(0,1)$ and that $2-\beta<\frac{2n}{2n-1}$. Thus we may use the improved Kato inequality Lemma \ref{lem-improved kato}
to estimate
\begin{align*}
(\partial_{t}-\Delta)|\eta|^{\beta}=
& \frac{\beta}{2}|\eta|^{\beta-2}(\partial_{t}-\Delta)|\eta|^{2}+\frac{\beta(2-\beta)}{4}|\eta|^{\beta-4}|\nabla^{1,0}|\eta|^2|^{2}\\
\leq & \frac{\beta}{2}|\eta|^{\beta-2}\biggl(-|\nabla\eta|^{2}+C(r)\cdot|\nabla\eta|\cdot|\eta|-\nabla^2f(\eta,\overline{\eta})+2n\ell|\eta|^2 \\
&\quad\quad\quad\quad\quad +\frac{(2-\beta)(2n-1)}{2n}|\nabla\eta|^{2}\biggr).
\end{align*}
We then use the Cauchy--Schwarz to obtain
\begin{equation}
\begin{aligned}
      (\partial_t-\Delta)|\eta|^{\beta} \leq & -\frac{1}{C_0}|\eta|^{\beta-2}|\nabla\eta|^{2}+C(r)|\eta|^{\frac{\beta}{2}}\cdot|\eta|^{\frac{\beta-2}{2}}|\nabla\eta|-\frac{\beta}{2}|\eta|^{\beta-2}\nabla^2f(\eta,\overline{\eta})+\beta n\ell|\eta|^{\beta}\\
\leq & -\frac{1}{C_0}|\eta|^{\beta-2}|\nabla\eta|^{2}+C(r,\ell)|\eta|^{\beta}-\frac{\beta}{2}|\eta|^{\beta-2}\nabla^2 f(\eta,\overline{\eta})
\end{aligned}
\end{equation}
for some $C_0=C_0(n,r)<\infty$. Since we allow constants to depend on $r$, $\ell$, and the dimension $n$, we will not specify this dependence explicitly in what follows whenever a quantity depends only on these parameters. On the other hand, we will use the notation $C_i$ for certain constants, since later auxiliary parameters will be chosen depending on $C_i$.

Fix $t_{0}=-\frac{1}{2}$ and $y\in\mathcal{R}_{t_{0}} \cap \{ r_{\operatorname{Rm}}\leq \frac{1}{2}\epsilon_0\} \cap \{\rho_{r}=1\}$. In the following, let $d\nu_{y;t}$ denote the conjugate heat kernel measure based at the point $y$ and we write 
\begin{equation}
  d\nu_{y;t}=K(y;\cdot)dg_{t}=(2\pi\tau_{y})^{-n}e^{-f_{y}}dg_{t}.
\end{equation}
For a given $t\in [-1,-\frac{1}{2}]$, we use the fact that $\nabla f$ is locally bounded on $\mathcal{X}$ to obtain
\begin{align*}
\frac{d}{dt}\int_{\mathcal{R}_{t}}|\eta|^{\beta}\chi_{\epsilon}^{2}\rho_{r}^{2}d\nu_{y;t}= & \int_{\mathcal{R}_{t}}(\partial_{t}-\Delta)(|\eta|^{\beta}\chi_{\epsilon}^{2}\rho_{r}^{2})d\nu_{y;t}\\
\leq & \int_{\mathcal{R}_{t}}\left(-\frac{1}{C_0}|\eta|^{\beta-2}|\nabla\eta|^{2}+C|\eta|^{\beta}+\frac{\beta}{2}|\eta|^{\beta-2}\nabla^2 f(\eta,\overline{\eta})\right)\chi_{\epsilon}^{2}\rho_{r}^{2}d\nu_{y;t}\\
 & +C\int_{\mathcal{R}_{t}}|\eta|^{\frac{\beta-2}{2}}|\nabla\eta|\cdot|\eta|^{\frac{\beta}{2}}\left(\chi_{\epsilon}^{2}\rho_{r}|\nabla\rho_{r}|+\rho_{r}^{2}\chi_{\epsilon}|\nabla\chi_{\epsilon}|\right)d\nu_{y;t}\\
 & +C\int_{\mathcal{R}_{t}}|\eta|^{\beta}(|\nabla\chi_{\epsilon}|^{2}\rho_{r}^{2}+\chi_{\epsilon}^2\rho_{r}|\Delta\rho_{r}|+\chi_{\epsilon}^2|\nabla\rho_{r}|^{2})d\nu_{y;t}\\
& - 2\int_{\mathcal{R}_t} |\eta|^{\beta} \rho_r^2 \chi_{\epsilon}\Delta \chi_{\epsilon}d\nu_{y;t}
\end{align*}
We then integrate by parts the terms involving $\nabla^2 f$ and $\Delta \chi_{\epsilon}$ and use Cauchy--Schwarz and the fact that $\nabla f$ is locally bounded to obtain the estimate
\begin{equation}\label{eq--three terms for derivative}
    \begin{aligned} 
\frac{d}{dt}\int_{\mathcal{R}_{t}}|\eta|^{\beta}\chi_{\epsilon}^{2}\rho_{r}^{2}d\nu_{y;t}\leq & -\frac{1}{2C_0}\int_{\mathcal{R}_{t}}|\eta|^{\beta-2}|\nabla\eta|^{2}\chi_{\epsilon}^{2}\rho_{r}^{2}d\nu_{y;t} + C\int_{\mathcal{R}_t} |\eta|^{\beta}\chi_{\epsilon}^2\rho_r^2 d\nu_{y;t}\\  & 
+C_1\int_{\mathcal{R}_{t}}|\eta|^{\beta}\chi_{\epsilon}^{2}\rho_{r}^{2}|\nabla\log K(y;\cdot)|d\nu_{y;t} \\
 & +C\int_{\mathcal{K}_{t,\epsilon,r}}|\eta|^{\beta}\left(1+|\nabla\chi_{\epsilon}|^{2}+|\nabla\log K(y;\cdot)|^2\right)d\nu_{y;t},
    \end{aligned}
\end{equation}
where we have set 
$$\mathcal{K}_{t,\epsilon,r}:=\mathcal{R}_{t}\cap(\text{supp}(\nabla\chi_{\epsilon})\cup\text{supp}(\nabla\rho_{r})).$$

Then we will use Lemma \ref{lem--integration by part trick} to estimate the term in the second line of \eqref{eq--three terms for derivative}. 
Letting $\tau_{y}(t) := t_0 -t$, Perelman's differential Harnack inequality (Theorem \ref{thm-harnack inequality}) states that
\begin{align*}
0\geq & \tau_{y}(R+2\Delta f_{y}-|\nabla^{1,0} f_{y}|^{2})+f_{y}-n.
\end{align*}
Combining this with $R \geq 0$ and $f_y \geq -C(W,r)$ (which follows from \cite[Theorem 7.1]{bamler1} and its singular version \cite{CMZ2024}), we obtain
\begin{align*} \tau_y |\nabla^{1,0} f_y|^2 &\leq -\tau_y R - 2\tau_y (\Delta f_y - |\nabla^{1,0} f_y|^2) -f_y+n \\
&\leq C(W,r)-2\tau_y (\Delta f_y - |\nabla^{1,0} f_y|^2),
\end{align*}
or equivalently
\begin{equation}\label{eq--gradient bound by laplacian}
    |\nabla^{1,0} \log K(y;\cdot)|^2 K(y;\cdot)\leq 2\Delta K(y;\cdot) + \frac{C(r,W)}{\tau_y}K(y;\cdot).
\end{equation}

Therefore, we can apply Lemma \ref{lem--integration by part trick} with $u=K(y;\cdot)$.  Choose $\epsilon$ in Lemma \ref{lem--integration by part trick} sufficiently small so that $$\epsilon C_1   \le \frac{1}{4C_0},$$where $C_0$ and $C_1$ are the constants in \eqref{eq--three terms for derivative}.
Then combining \eqref{eq--three terms for derivative} and Lemma \ref{lem--integration by part trick}, taking $u=K(y;\cdot)$ and $\epsilon=\epsilon_1\sqrt{\tau_y}$ yields
\begin{align*} \frac{d}{dt}\int_{\mathcal{R}_{t}}|\eta|^{\beta}\chi_{\epsilon}^{2}\rho_{r}^{2}d\nu_{y;t} \leq &\frac{C}{\sqrt{\tau_y}} \int_{\mathcal{R}_t} |\eta|^{\beta} \chi_{\epsilon}^2 \rho_r^2 d\nu_{y;t} \\ 
 & + C\sqrt{\tau_y}\int_{\mathcal{K}_{t,\epsilon,r}}|\eta|^{\beta}\left(1+|\nabla\chi_{\epsilon}|^{2}+|\nabla\log K(y;\cdot)|^2\right)d\nu_{y;t}
 \\ 
 \leq & \frac{C}{\sqrt{\tau_y}} \int_{\mathcal{R}_t} |\eta|^{\beta} \chi_{\epsilon}^2 \rho_r^2 d\nu_{y;t} \\ &+
 C\left(\int_{\mathcal{K}_{t,\epsilon,r}}|\eta|^{\beta p}d\nu_{y;t}\right)^{\frac{1}{p}}\left(1+\left(\int_{\mathcal{K}_{t,\epsilon,r}}|\nabla\chi_{\epsilon}|^{2q}d\nu_{y;t}\right)^{\frac{1}{q}}\right)\\
 & +C\sqrt{\tau_y}\left(\int_{\mathcal{K}_{t,\epsilon,r}}|\eta|^{\beta p}d\nu_{y;t}\right)^{\frac{1}{p}}\left(\int_{\mathcal{K}_{t,\epsilon,r}}|\nabla\log K(y;\cdot)|^{2q}d\nu_{y;t}\right)^{\frac{1}{q}}
\end{align*}
where $\frac{1}{p}+\frac{1}{q}=1$. Since $\beta\in (0,1)$, we can choose $p,q$ such that 
\begin{equation}\label{eq--choice of q}
    \beta p<2 \text{ and } 2q<4.
\end{equation}

For all $t\in[-1,t_{0})$, on $\mathcal{K}_{t,\epsilon,r} \cap \{r_{\operatorname{Rm}}\geq \epsilon_0\}$, we know that  by choice of $y$, if $\epsilon =\epsilon(y)>0$ is sufficiently small,
\begin{equation}
    d_{t}(y(t),\mathcal{K}_{t,\epsilon,r} \cap \{r_{\operatorname{Rm}}\ge \epsilon_0\})\geq \frac{1}{2}\epsilon_0.
\end{equation}
 Therefore we have (by passing \cite[Theorem 7.2]{bamler1} to the limit)
\begin{equation}\label{eq--bound on regular region}
    \begin{aligned}
        (2\pi|t|)^{-n}e^{-f}&\geq C(r),\\
K(y;\cdot)\leq\frac{C(W)}{\tau_y^{n}}\exp\left(-\frac{\epsilon_{0}^{2}}{C(W)\tau_y}\right)&\leq C(W,\epsilon_0)\exp\left(-\frac{\epsilon_{0}^{2}}{C(W)\tau_y}\right).
    \end{aligned}
\end{equation}
 Similarly, on $\mathcal{K}_{t,\epsilon,r}\cap \{r_{\operatorname{Rm}}<\epsilon_0\}$, for $\epsilon =\epsilon(y,r,\epsilon_0,W)>0$ sufficiently small, we have
\begin{equation}\label{eq--bound on less regular region}
(2\pi|t|)^{-n}e^{-f}\geq C(r), \qquad K(y;\cdot)\leq C(W,y)\exp\left(-\frac{1}{C(W,y)\tau_y}\right).
\end{equation}
Thus if we take $\epsilon =\epsilon(y,r,\epsilon_0,W)>0$ sufficiently small, then
\begin{align} \label{eq:boundedcutofferror}
\begin{split} \int_{\mathcal{K}_{t,\epsilon,r}}|\nabla\chi_{\epsilon}|^{2q}d\nu_{y;t} & \leq C(W,\epsilon_0,r) \int_{\mathcal{K}_{t,\epsilon,r}\cap \{r_{\operatorname{Rm}}\geq \epsilon_0\}} |\nabla \chi_{\epsilon}|^{2q}dg_{t}\\&\quad  + C(W,y,r)\int_{\mathcal{K}_{t,\epsilon,r} \cap \{r_{\operatorname{Rm}}<\epsilon_0\}} |\nabla \chi_{\epsilon}|^{2q} dg_{t} \\
&\leq 1+C(W,\epsilon_0,r).
\end{split}
\end{align}
   
Recalling from \eqref{eq--bound on regular region} that $K(y;\cdot) \leq C(W,\epsilon_0)$ on $\mathcal{K}_{t,\epsilon,r} \cap \{r_{\operatorname{Rm}}\geq \epsilon_0\}$, we can apply Lemma \ref{lem:conjugategradient} with $r$ replaced by $\min\{\sqrt{\tau_y},r_{\operatorname{Rm}}(z)\}$ to obtain
\begin{align}
    \label{eq:pointwiseconjugatebound} 
\begin{split}
|\nabla_z \log K(y;z)|^{2q}K(y;z) \leq & \frac{C}{\min\{\tau_y^q,r_{\operatorname{Rm}}^{2q}(z)\}} \left( 1 + \log \left( \frac{C(W,\epsilon_0)}{K(y;z)} \right) \right)^{2q}K(y;z) \\ \leq & \frac{C(W,\epsilon_0)}{\min\{\tau_y^q,r_{\operatorname{Rm}}^{2q}(z)\} }
\end{split}
\end{align}
on $\mathcal{K}_{t,\epsilon,r} \cap \{r_{\operatorname{Rm}}\geq \epsilon_0\}$. Combining this with \cite[proof of Proposition 3.11]{CHM25} yields  
\begin{equation}\label{conjugate heat kernel 1}
\int_{(\mathcal{K}_{t,\epsilon,r} \cap \{ r_{\operatorname{Rm}}\geq \epsilon_0\})\cup (\operatorname{supp}(\nabla \rho_r)\cap \mathcal{R}_t)} |\nabla \log K(y;\cdot )|^{2q}d\nu_{y;t} \leq \frac{C(W,\epsilon_0,r)}{\tau_y^q},
\end{equation}
 where $q$ is the index satisfying \eqref{eq--choice of q}.
Since $q<2$, we can choose $\sigma>1$ such that $2q\sigma<4$. We may use H\"older's inequality and apply a similar estimate on $\mathcal{K}_{t,\epsilon,r} \cap \{r_{\operatorname{Rm}}<\epsilon_0\}$ to obtain
\begin{equation}\label{conjugate heat kernel 2}
\begin{aligned}
     & \int_{\operatorname{supp}(\nabla \chi_{\epsilon})\cap \{\rho_r=1\} \cap \{0<r_{\operatorname{Rm}}<\epsilon_0\}} |\nabla \log K(y;\cdot )|^{2q}d\nu_{y;t} \hspace{-40 mm}\\ &\leq \left( \int_{\mathcal{K}_{t,\epsilon,r} \cap \{ r_{\operatorname{Rm}}<\epsilon_0\}} |\nabla \log K(y;\cdot )|^{2q\sigma}d\nu_{y;t} \right)^{\frac{1}{\sigma}}\left( \nu_{y;t}(\{\operatorname{supp}(\nabla \chi_{\epsilon})\cap \{\rho_r=1\} \cap \{0<r_{\operatorname{Rm}}<\epsilon_0\}) \right)^{\frac{\sigma-1}{\sigma}}\\
   &\leq C(W,\epsilon_0)\left(\int_{\mathcal{K}_{t,\epsilon,r}} r_{\operatorname{Rm}}(z)^{-2q\sigma}dg_{t} + \frac{C(W,\epsilon_0,r)}{\tau_y^{q\sigma}}\right)^{\frac{1}{\sigma}} \epsilon^{\frac{\sigma-1}{\sigma}}\\
    &\leq \frac{C(r,W,y)}{\tau_y^{q}} \epsilon^{\frac{\sigma-1}{\sigma}}.
\end{aligned}
\end{equation}

By combining \eqref{conjugate heat kernel 1} and \eqref{conjugate heat kernel 2} and again insisting that $\epsilon =\epsilon(y,r,\epsilon_0,W)>0$ is sufficiently small, we therefore obtain
\begin{equation}
\left(\int_{\mathcal{K}_{t,\epsilon,r}}|\nabla \log K(y;\cdot)|^{2q}d\nu_{y;t}\right)^{\frac{1}{q}} \leq \frac{C(W,\epsilon_0,r)}{\tau_y}.
\end{equation} 
By \eqref{eq--bound on regular region}, \eqref{eq--bound on less regular region}, $\mathcal{L}_{\partial_t-\frac{1}{2}\nabla f}g_t =- \frac{1}{|t|}g_t$, and $\mathcal{L}_{\partial_t - \frac{1}{2}\nabla f}|\eta| = \frac{1}{2|t|}|\eta|$, we also have
\begin{equation}\label{holder control beta-p norm}
\int_{\mathcal{K}_{t,\epsilon,r}}|\eta|^{\beta p}d\nu_{y;t}\leq C(\epsilon_{0},r,W)\int_{\mathcal{K}_{t,\epsilon,r}}|\eta|^{\beta p}dg_{t}\leq C(W,\epsilon_{0},r) \left( \int_{\mathcal{R}_{-1}}|\eta|^{2}d\mu_{-1} \right)^{\frac{\beta p}{2}}.
\end{equation}

Again combining expressions, we get
\begin{equation*}
    \frac{d}{dt} \left( e^{C(r,W,C_0)\sqrt{\tau_y}}\int_{\mathcal{R}_t} |\eta|^{\beta} \chi_{\epsilon}^2 \rho_r^2 d\nu_{y;t} \right) \leq C(W,\epsilon_{0},r)(1+\frac{1}{\sqrt{\tau_y}}) \left( \int_{\mathcal{R}_{-1}} |\eta|^2 d\mu_{-1} \right)^{\frac{\beta}{2}}.
\end{equation*}
We may thus integrate from $t=-1$ to $t=t_0$,  and use that $d\nu_{y;-1}\leq C(r,t_0)d\mu_{-1}$
on $\text{supp}(\rho_{r})\cap \mathcal{R}_{-1}$ to estimate 
\begin{align*}
|\eta|^{\beta}(y)\leq & C(W,\epsilon_{0},r,t_0)\left(\int_{\mathcal{R}_{-1}}|\eta|^{2}d\mu_{-1}\right)^{\frac{\beta}{2}}+C(W,\epsilon_{0},r)\int_{\mathcal{R}_{-1}}|\eta|^{\beta}\rho_{r}^{2}d\mu_{-1}\\
\leq & C(W,\epsilon_{0},r,t_0)\left(\int_{\mathcal{R}_{-1}}|\eta|^{2}d\mu_{-1}\right)^{\frac{\beta}{2}}.
\end{align*}
Since $\mathcal{L}_{\partial_t-\frac12\nabla f}|\eta|^2=\frac{1}{|t|}|\eta|^2$, we obtain that for any $r>0$, $|\eta_{-1}|$ is bounded on a neighborhood of $\mathcal{S} \cap \{\rho_r=1\}$. Because $\eta_{-1}$ is smooth on $\mathcal{R}_{-1}$, it follows that $|\eta_{-1}|$ is in fact bounded on $\{\rho_r=1\}\cap \mathcal{X}_{-1}$.
\end{proof}
\begin{remark}
  Instead of using Proposition \ref{prop--heat equation for 1-form},  we can also use the following formula and then proceed as in the proof of Lemma \ref{lem:C1estimates} given below to give a proof of Proposition \ref{prop:localbound}.  Let $\eta\in \mathcal{A}^{0,1}(\mathcal{R}_{-1},K_{\mathcal{R}_{-1}}^{-\ell})$ with $\pp \eta=0$ and $\pp^*_f\eta=0$ and let $\nabla_f$ denote the Chern connection with respect to the Hermitian metric $h^\ell e^{-f}$:
\begin{equation}
    (\mathcal{L}_{\p_t-\frac12 \nabla f}-\Delta)|\eta|^2_{h^\ell e^{-f}}=\frac{|\eta|^2}{|t|}-|\nabla_f\eta|^2-\frac{(2-n)\ell+1}{|t|}|\eta|^2-\nabla^2f(\eta,\bar\eta)+\Delta f|\eta|^2
\end{equation}
\end{remark}

In the following, we let $L := K_{\mathcal{R}_{-1}}^{-1}$ and, for $\ell \in \mathbb{N}_{\geq 0}$, denote by 
$h_{\ell} := h^{\otimes \ell}$ the induced Hermitian metric on $L^\ell$, and set 
$\omega_{\ell+1} := (\ell+1) \omega_{-1}$. 
We remark that in the following computations it is understood that, for $L^\ell$-valued forms, we always use the Hermitian metric $h_\ell$ and the rescaled K\"ahler metric $\omega_{\ell+1}$. 
When there is no risk of confusion, we omit explicit reference to these metrics.
\begin{prop} \label{prop:hormander} Suppose $\ell \in \mathbb{N}_{\geq 0}$ and $\varphi \in C_c^{\infty}(\mathcal{R}_{-1})$ satisfy 
\begin{equation*}
   \sqrt{-1} \partial \overline{\partial} \varphi \geq c\omega_{\ell+1}
\end{equation*}
for some $c>-1$. Then for any $\alpha \in \mathcal{A}_c^{0,1}(\mathcal{R}_{-1},L^{\ell})$ satisfying $\overline{\partial} \alpha =0$, there exists $w\in L_{\operatorname{loc}}^2(\mathcal{R}_{-1},L^{\ell})$ satisfying $\overline{\partial}w=\alpha$ in the sense of distributions, and 
$$\int_{\mathcal{R}_{-1}} |w|_{h_\ell}^2 e^{-(\varphi+f)} \omega_{\ell+1}^n \leq \frac{1}{1+c}\int_{\mathcal{R}_{-1}} |\alpha|_{\omega_{\ell+1}\otimes h_{\ell}}^2 e^{-(\varphi+f)} \omega_{\ell+1}^n.$$
\end{prop}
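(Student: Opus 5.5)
The plan is the standard Hörmander argument via the Riesz representation theorem, carried out on the incomplete manifold $\mathcal{R}_{-1}$. Its key input is an a priori estimate for compactly supported test forms which we then extend to all forms in the relevant domain, using Proposition~\ref{prop:localbound} together with the cut-off functions of Lemma~\ref{lem:chicutoff}.

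Throughout, work in the weighted space $L^2_{\varphi+f}$ of $L^\ell$-valued forms, with metric $h_\ell e^{-(\varphi+f)}$ and Kähler form $\omega_{\ell+1}$. The curvature of $h_\ell e^{-(\varphi+f)}$ equals $\ell\omega_{-1}+\ii\partial\pp\varphi+\ii\partial\pp f$. By the shrinker equation $\Ric(\omega_{-1})+\ii\partial\pp f=\omega_{-1}$, the Bochner--Kodaira formula of Lemma~\ref{lem-bochner formula}, in its $f$-weighted form \eqref{eq--f-twisted laplacian}, gives the following for $\beta\in\mathcal A^{0,1}_c$:
\begin{equation*}
\|\pp\beta\|^2+\|\pp^*_{f+\varphi}\beta\|^2=\|\nabla^{0,1}\beta\|^2+\int\langle(\Ric+\Theta_{h_\ell}+\ii\partial\pp f+\ii\partial\pp\varphi)\beta,\beta\rangle\ge (1+c)\|\beta\|^2.
\end{equation*}
Here $\Ric+\ii\partial\pp f+\Theta_{h_\ell}=(\ell+1)\omega_{-1}=\omega_{\ell+1}$, which acts as the identity on $(0,1)$-forms with respect to $\omega_{\ell+1}$. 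The term $\ii\partial\pp\varphi\ge c\,\omega_{\ell+1}$ contributes at least $c\|\beta\|^2$.

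This inequality must then be extended to all $\beta\in\operatorname{Dom}(\pp)\cap\operatorname{Dom}(\pp^*_{f+\varphi})$, where the operators are closed extensions on $L^2$. This is the main obstacle, because $\mathcal{R}_{-1}$ is incomplete and $\omega_{-1}$ is not complete near the singular set. The cleanest route avoids density. Fix $\alpha$ and decompose $L^2$ into $\ker\pp$ and its complement. For $\beta\in\ker\pp\cap\operatorname{Dom}(\pp^*)$, the goal is $|\langle\alpha,\beta\rangle|^2\le\frac1{1+c}\|\alpha\|^2\,\|\pp^*_{f+\varphi}\beta\|^2$. Equivalently, one can minimize, i.e. take $\eta$ to be the harmonic-type component: let $w$ be the $L^2$-minimal solution if one exists. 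To establish the a priori estimate on the relevant class, first reduce to forms $\eta$ with $\pp\eta$ and $\pp^*_{f+\varphi}\eta$ compactly supported in $\mathcal{R}_{-1}$. Such forms are locally bounded near $\mathcal S$ by Proposition~\ref{prop:localbound}. One then applies the Bochner identity to $\chi_\epsilon\rho_r\eta$, with $\chi_\epsilon$ from Lemma~\ref{lem:chicutoff} and $\rho_r$ from \eqref{rhocutoff}. The error terms are bounded by $\int|\eta|^2(|\nabla\chi_\epsilon|^2+|\nabla\rho_r|^2)$. These tend to $0$ because $|\eta|\in L^\infty_{\rm loc}$ and $\int|\nabla\chi_\epsilon|^{2}\to0$ locally, by Lemma~\ref{lem:chicutoff}(2) with exponent $2<4$. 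The $\rho_r$ terms are $O(r^{-2})$ by Lemma~\ref{lem--rho-cutoff}. A cross term of the form $\langle\nabla^{0,1}\eta,\eta\,\pp\chi_\epsilon\rangle$ is handled by Cauchy--Schwarz, absorbing $\|\nabla^{0,1}(\chi\eta)\|^2$.

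To make the reduction concrete, I would argue as in \cite{LiuSz2,hallgrenKahlerRicciTangentFlows2023}. Solve a regularized problem on an exhaustion, or equivalently take the orthogonal projection. Let $\gamma$ be the $L^2_{\varphi+f}$-projection of $\alpha$-related data. The key object is a form $\eta$ with $\pp\eta=0$ and $\pp^*_{f+\varphi}\eta=\alpha$-dual, whose $\pp$ and $\pp^*$ are compactly supported. Specifically, take $w$ as the minimal $L^2$ solution in $\mathcal{R}_{-1}$ minus a compact set, extended by the complete-metric Hörmander theorem on $\{r_{\rm Rm}>\epsilon\}$ endowed with a complete metric $\omega_\epsilon\ge\omega_{\ell+1}$, using Demailly's version for weakly pseudoconvex/complete Kähler manifolds. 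A slightly different option is to work on $\mathcal{R}_{-1}$ with complete Kähler metrics $\omega_\epsilon=\omega_{\ell+1}+\epsilon\,\ii\partial\pp\psi$, where $\psi$ is an exhausting function of $\mathcal{R}_{-1}$. Demailly's estimate then gives solutions $w_\epsilon$ with $\int|w_\epsilon|^2e^{-(\varphi+f)}dV_{\omega_\epsilon}\le\frac1{1+c}\int|\alpha|^2_{\omega_\epsilon}e^{-(\varphi+f)}dV_{\omega_\epsilon}$. The right side is monotone in the correct direction, since $|\alpha|^2_{\omega_\epsilon}dV_{\omega_\epsilon}\le|\alpha|^2_{\omega_{\ell+1}}dV_{\omega_{\ell+1}}$. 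A weak limit $\epsilon\to0$ then yields $w$.

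If that complete-metric route fails for lack of a suitable $\psi$, the fallback is the local-boundedness/cut-off argument above. In that case Proposition~\ref{prop:localbound} supplies exactly the boundedness needed for the cut-off errors to vanish. The final step is then Riesz representation on $\overline{\operatorname{Ran}\pp^*}$, giving $w$ with the stated bound.
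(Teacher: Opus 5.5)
Your Bochner--Kodaira identity and the cut-off mechanism are the right ingredients. The cut-off mechanism here means local boundedness from Proposition~\ref{prop:localbound}, together with $\int|\nabla\chi_\epsilon|^2\to0$ and the $\rho_r$ bounds. But there is a genuine gap in how you get from them to $w$. You state the target inequality for all $\beta\in\ker\overline{\partial}\cap\operatorname{Dom}(\overline{\partial}^{\ast})$ and then ``reduce'' to forms whose $\overline{\partial}$ and $\overline{\partial}^{\ast}_{f+\varphi}$ are compactly supported. On the incomplete manifold $\mathcal{R}_{-1}$ there is no density statement that lets you pass from that smaller class back to the Hilbert-space domain. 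Proposition~\ref{prop:localbound} also does not apply to a general element of $\operatorname{Dom}(\overline{\partial}^{\ast})$, whose $\overline{\partial}^{\ast}_{f+\varphi}$ need not be compactly supported. Phrases like ``$\alpha$-dual'' or ``minimal $L^2$ solution in $\mathcal{R}_{-1}$ minus a compact set'' do not supply the missing object.

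The idea that closes the argument, and what the paper does, is to test only against $\eta\in\mathcal{A}^{0,1}_c(\mathcal{R}_{-1},L^\ell)$. A distributional solution only needs $\overline{\partial}^{\ast}_{f+\varphi}\eta\mapsto\langle\eta,\alpha\rangle_{f+\varphi}$ to be bounded on $\overline{\partial}^{\ast}_{f+\varphi}\mathcal{A}^{0,1}_c$; Hahn--Banach and Riesz then give $w$. For such $\eta$, write $\eta=\eta^{(1)}+\eta^{(2)}$ with $\eta^{(1)}$ in the weak kernel of $\overline{\partial}$ and $\eta^{(2)}\perp\ker\overline{\partial}$. Since $\eta^{(2)}$ is orthogonal to $\overline{\partial}v$ for every compactly supported section $v$, we get $\overline{\partial}^{\ast}_{f+\varphi}\eta^{(2)}=0$. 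Hence $\overline{\partial}\eta^{(1)}=0$, $\overline{\partial}^{\ast}_{f+\varphi}\eta^{(1)}=\overline{\partial}^{\ast}_{f+\varphi}\eta$ is compactly supported, and $\eta^{(1)}$ is $L^2$ and smooth by ellipticity. These are exactly the hypotheses of Proposition~\ref{prop:localbound}. So your cut-off argument applied to $\chi_\epsilon\rho_r\eta^{(1)}$ gives $(1+c)\|\eta^{(1)}\|^2\le\|\overline{\partial}^{\ast}_{f+\varphi}\eta\|^2$, and $\langle\eta,\alpha\rangle=\langle\eta^{(1)},\alpha\rangle$ because $\alpha\in\ker\overline{\partial}$. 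With this step made explicit, your fallback becomes the paper's proof.

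The route you present as the concrete one is Demailly's estimate for complete metrics $\omega_{\ell+1}+\epsilon\sqrt{-1}\partial\overline{\partial}\psi$ on $\mathcal{R}_{-1}$, and it is not available here. It needs an exhaustion $\psi$ of $\mathcal{R}_{-1}$ that blows up at the metric singular set while keeping $\omega_{\ell+1}+\epsilon\sqrt{-1}\partial\overline{\partial}\psi$ positive and complete. The known constructions of such metrics, on complements of analytic subsets of Stein or weakly pseudoconvex spaces, presuppose a complex-analytic structure on $X$ near the singular set. That structure is precisely what this proposition is used to build in Section~\ref{sec:Thm1Pf}, so the argument would be circular. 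Even granting $\psi$, putting the weight on $dV_{\omega_\epsilon}$ brings $\Ric(\omega_\epsilon)$ into the Bochner formula. You would instead have to work with $(n,1)$-forms valued in $K^{-(\ell+1)}$ with metric $h_{\ell+1}e^{-\varphi}$, whose curvature is $\omega_{\ell+1}+\sqrt{-1}\partial\overline{\partial}\varphi$.

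Finally, no term involving $\nabla^{0,1}\eta$ has to be absorbed in the cut-off step. Both $\overline{\partial}(\chi\eta^{(1)})$ and $\overline{\partial}^{\ast}_{f+\varphi}(\chi\eta^{(1)})-\chi\overline{\partial}^{\ast}_{f+\varphi}\eta^{(1)}$ are zeroth order in $\eta^{(1)}$ times $\nabla\chi$, and $\|\nabla^{0,1}(\chi\eta^{(1)})\|^2\ge0$ is simply dropped.
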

\begin{proof} We first show that for any $\eta \in \mathcal{A}_c^{0,1}(\mathcal{R}_{-1},L^{\ell})$, we have
$$\left| \int_{\mathcal{R}_{-1}} \langle \eta,\alpha \rangle e^{-(\varphi+f)} \omega_{\ell+1}^n \right| \leq \left( \int_{\mathcal{R}_{-1}} |\overline{\partial}_{f+\varphi}^{\ast} \eta|^2 e^{-(\varphi+f)} \omega_{\ell+1}^n \right)^{\frac{1}{2}} \left( \int_{\mathcal{R}_{-1}} |\alpha|^2 e^{-(\varphi+f)} \omega_{\ell+1}^n \right)^{\frac{1}{2}}.$$
We first write $\eta = \eta^{(1)} + \eta^{(2)}$, where $\overline{\partial} \eta^{(1)}=0$ and $\int_{\mathcal{R}_{-1}} \langle \eta^{(2)},\alpha\rangle e^{-(\varphi+f)} \omega_{\ell+1}^n = 0$ for any $L^2$ $(0,1)$-form $\alpha$ on $\mathcal{R}_{-1}$ which satisfies $\overline{\partial}\alpha=0$ in the sense of distributions. In particular, we have $\overline{\partial}_{f+\varphi}^{\ast} \eta^{(2)}=0$, so that outside $\text{supp}(\eta) \cup \text{supp}(\varphi)$, 
$$\overline{\partial}_f^{\ast} \eta^{(1)} = \overline{\partial}_f^{\ast} \eta - \overline{\partial}_f^{\ast} \eta^{(2)}=0.$$
In addition, 
$$\int_{\mathcal{R}_{-1}} |\eta^{(1)}|^2 e^{-(\varphi+f)} \omega_{\ell+1}^n \leq \int_{\mathcal{R}_{-1}} |\eta|^2 e^{-(\varphi+f)} \omega_{\ell+1}^n<\infty.$$
Thus, $\eta^{(1)}$ satisfies the hypotheses of Proposition \ref{prop:localbound}, hence for any $r>0$, we can find $A(r)<\infty$ such that \begin{equation}
    \sup_{\mathcal{R}_{-1}\cap \{f \leq 2r^2\}} |\eta^{(1)}|_{h_{\ell}} \leq A(r).
\end{equation} 
 The Bochner--Kodaira--Nakano formula in Lemma \ref{lem-bochner formula}, combining with the K\"ahler--Ricci shrinker equation then yields
$$(\overline{\partial} \overline{\partial}_{f+\varphi}^{\ast} + \overline{\partial}_{f+\varphi}^{\ast} \overline{\partial})(\chi_{\epsilon} \rho_r \eta^{(1)}) = (\nabla^{0,1})_{f+\varphi}^{\ast} \nabla^{0,1} (\chi_{\epsilon} \rho_r \eta^{(1)}) + (\omega_{\ell+1} + \sqrt{-1}\partial \overline{\partial} \varphi)(\chi_{\epsilon} \rho_r \eta^{(1)}).$$ Here $\chi_{\epsilon}$ is the cut-off function constructed in Lemma \ref{lem:chicutoff} and $\rho_r$ is the cut-off function from \eqref{rhocutoff} that appeared in the proof of Proposition \ref{prop:localbound}.
We can therefore integrate by parts against $\chi_{\epsilon}\rho_r \eta^{(1)}$ to obtain
$$\int_{\mathcal{R}_{-1}} (|\overline{\partial} (\chi_{\epsilon} \rho_r \eta^{(1)})|^2 +|\overline{\partial}_{f+\varphi}^{\ast}(\chi_{\epsilon} \rho_r \eta^{(1)})|^2)e^{-(\varphi+f)}\omega_{\ell+1}^n \geq (1+c)\int_{\mathcal{R}_{-1}} |\chi_{\epsilon} \rho_r \eta^{(1)}|^2 e^{-(\varphi+f)}\omega_{\ell+1}^n.$$
Recalling that $\overline{\partial}_{f+\varphi}^{\ast} \eta^{(1)} = \overline{\partial}_{f+\varphi}^{\ast} \eta$, for any $\sigma>0$, we can estimate
\begin{equation}
    \begin{aligned}
        \int_{\mathcal{R}_{-1}} |\overline{\partial}_{f+\varphi}^{\ast} (\chi_{\epsilon} \rho_r \eta^{(1)})|^2 e^{-(\varphi+f)} \omega_{\ell+1}^n \leq & (1+\sigma)\int_{\mathcal{R}_{-1}} \chi_{\epsilon}^2 \rho_r^2 |\overline{\partial}_{f+\varphi}^{\ast}\eta^{(1)}|^2e^{-(\varphi+f)} \omega_{\ell+1}^n  \\ &+  C(\sigma) \int_{\mathcal{R}_{-1}} (|\nabla \chi_{\epsilon}|_{\omega_{\ell+1}}^2 \rho_r^2+ \chi_{\epsilon}^2 |\nabla \rho_r|_{\omega_{\ell+1}}^2)|\eta^{(1)}|^2 e^{-(\varphi+f)} \omega_{\ell+1}^n \\
\leq & (1+\sigma) \int_{\mathcal{R}_{-1}} |\overline{\partial}_{f+\varphi}^{\ast} \eta|^2 e^{-(\varphi+f)} \omega_{\ell+1}^n\\
&+ C(\sigma)A(r)^2\int_{\{f \leq 2r^2\}}|\nabla \chi_{\epsilon}|^2 e^{-(\varphi+f)}\omega_{\ell+1}^n\\ &+ \frac{C(\sigma)}{r} \int_{\mathcal{R}_{-1}} |\eta|^2 e^{-(\varphi+f)} \omega_{\ell+1}^n,
    \end{aligned}
\end{equation}
and similarly using $\pp \eta^{(1)}=0$, we obtain
\begin{align*} \int_{\mathcal{R}_{-1}} |\overline{\partial} (\chi_{\epsilon} \rho_r \eta^{(1)})|^2 e^{-(\varphi+f)} \omega_{\ell+1}^n \leq&  2A(r)^2\int_{\{f \leq 2r^2\}} |\nabla \chi_{\epsilon}|^2 e^{-(\varphi+f)} \omega_{\ell+1}^n\\
&+ \frac{2}{r} \int_{\mathcal{R}_{-1}} |\eta|^2 e^{-(\varphi+f)} \omega_{\ell+1}^n.
\end{align*}
Combining expressions yields
\begin{align*} \int_{\mathcal{R}_{-1}} \chi_{\epsilon}^2 \rho_r^2 |\eta^{(1)}|^2 e^{-(\varphi+f)} \omega_{\ell+1}^n \leq &(1+\sigma)\int_{\mathcal{R}_{-1}} |\overline{\partial}_{f+\varphi}^{\ast} \eta|^2 e^{-(\varphi+f)} \omega_{\ell+1}^n\\
&+ C(\sigma,\ell)A(r)^2 \int_{\{f\leq 2r^2\}} |\nabla \chi_{\epsilon}|^2 e^{-\varphi}dg_{-1} \\ &+ \frac{C(\sigma)}{r} \int_{\mathcal{R}_{-1}} |\eta|^2 e^{-(\varphi+f)} \omega_{\ell+1}^n.\end{align*}
By the dominated convergence theorem, we may take $\epsilon \searrow 0$, $r\to \infty$, and then $\sigma \to 0$ in order to obtain
$$\int_{\mathcal{R}_{-1}} |\eta^{(1)}|^2 e^{-(\varphi+f)}\omega_{\ell+1}^n \leq \frac{1}{1+c} \int_{\mathcal{R}_{-1}} |\overline{\partial}_{f+\varphi}^{\ast} \eta|^2 e^{-(\varphi+f)} \omega_{\ell+1}^n.$$
Combining this with Cauchy's inequality yields
\begin{align*} \left| \int_{\mathcal{R}_{-1}} \langle \eta , \alpha\rangle e^{-(\varphi+f)}\omega_{\ell+1}^n \right| & = \left| \int_{\mathcal{R}_{-1}} \langle \eta^{(1)}, \alpha\rangle e^{-(\varphi+f)} \omega_{\ell+1}^n \right| \\
&\leq \left( \int_{\mathcal{R}_{-1}} |\eta^{(1)}|^2 e^{-(\varphi+f)} \omega_{\ell+1}^n \right)^{\frac{1}{2}} \left( \int_{\mathcal{R}_{-1}} |\alpha|^2 e^{-(\varphi+f)} \omega_{\ell+1}^n \right)^{\frac{1}{2}} \\ &\leq \left(\frac{1}{1+c}\int_{\mathcal{R}_{-1}} |\overline{\partial}_{f+\varphi}^{\ast} \eta|^2 e^{-(\varphi+f)} \omega_{\ell+1}^n \right)^{\frac{1}{2}} \left( \int_{\mathcal{R}_{-1}} |\alpha|^2 e^{-(\varphi+f)} \omega_{\ell+1}^n \right)^{\frac{1}{2}}.
\end{align*}
It follows that the linear map 
$$\overline{\partial}_{f+\varphi}^{\ast} \mathcal{A}_c^{0,1}(\mathcal{R}_{-1},L^{\ell}) \to \mathbb{C}, \quad \overline{\partial}_{f+\varphi}^{\ast} \eta \mapsto \int_{\mathcal{R}_{-1}} \langle \eta, \alpha \rangle_{h_\ell} e^{-(\varphi+f)} \omega_{\ell+1}^n$$
is well-defined and bounded, so by the Hahn--Banach extension theorem and the Riesz isomorphism theorem, there exists $w \in L^2(\mathcal{R}_{-1},L^{\ell})$ such that $$\int_{\mathcal{R}_{-1}} |w|_{h_{\ell}}^2 e^{-(\varphi+f)} \omega_{\ell+1}^n \leq \frac{1}{1+c}\int_{\mathcal{R}_{-1}} |\alpha|_{\omega_{\ell+1}\otimes h_{\ell}}^2 e^{-(\varphi+f)} \omega_{\ell+1}^n$$ and 
$$\int_{\mathcal{R}_{-1}} \langle  \overline{\partial}_{f+\varphi}^{\ast} \eta, w\rangle_{h_{\ell}} e^{-(\varphi+f)} \omega_{\ell+1}^n = \int_{\mathcal{R}_{-1}} \langle \eta,\alpha \rangle_{\omega_{\ell+1}\otimes h_{\ell}} e^{-(\varphi+f)} \omega_{\ell+1}^n$$
for all $\eta \in \mathcal{A}_c^{0,1}(\mathcal{R}_{-1},L^{\ell})$. In other words, $\overline{\partial} w= \alpha$ in the sense of distributions.
\end{proof}

\subsection{$C^0$ and $C^1$ estimates for holomorphic sections on a singular K\"ahler--Ricci shrinker}
\label{sec:C0C1}
We now establish the $C^0$ and $C^1$ bounds for pluri-anticanonical holomorphic sections in the following lemma. 
Among other ingredients, we will use the improved Kato inequality in Lemma~\ref{lem--kato for holo gradient} and the cut-off function constructed in Lemma~\ref{lem:cutoff}. 
The improved Kato inequality is used to overcome the difficulty that the cut-off functions $\chi_\epsilon$ in Lemma~\ref{lem:chicutoff} have gradients which are only in $L^{4-\sigma}$ rather than $L^4$, while the cut-off function $\varphi$ constructed in Lemma~\ref{lem:cutoff} allows us to localize the estimate.
\begin{lemma} \label{lem:C1estimates}
For any $D<\infty$, there exists $C=C(D,W)<\infty$ such that the
following hold for any $\ell\in\mathbb{N}_{\geq 0}$, $u\in H^{0}(\mathcal{R}_{-1},K_{\mathcal{R}_{-1}}^{-\ell})$,
$x_{0}\in B(p,D)$, and $r\in(0,C^{-1}]$:
\begin{enumerate}
    \item \label{lem:C1estimates1} $\sup_{B_g(x_{0},r) \cap \mathcal{R}_{-1}}|u|^2_{h_{\ell}}\leq\frac{C(\ell r^2+1)^4}{((\ell+1) r^2)^n}\int_{B_g(x_{0},Cr) \cap \mathcal{R}_{-1}}|u|_{h_{\ell}}^{2}\omega_{\ell+1}^{n}$,
    \item \label{lem:C1estimates2} $\sup_{B_g(x_{0},r) \cap \mathcal{R}_{-1}}|\nabla  u|^2_{\omega_{\ell+1}\otimes h_{\ell}}\leq\frac{Ce^{\ell r^2}}{((\ell+1) r^2)^n}\int_{B_g(x_{0},Cr)\cap \mathcal{R}_{-1}}|u|_{h_{\ell}}^{2}\omega_{\ell+1}^{n}.$
\end{enumerate}
\end{lemma}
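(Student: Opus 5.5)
We prove both estimates by Moser iteration on the singular space, in the spirit of the Donaldson--Sun $C^0$/$C^1$ bounds, using the Bochner identities of Lemma~\ref{lem--bochner for gradient}. On $\mathcal{R}_{-1}$ the metric $h_\ell$ on $L^\ell$ has curvature $\ell\omega_{-1}$, since $\Ric(\omega)+\sqrt{-1}\partial\overline\partial f=\omega$. With respect to $\omega_{\ell+1}$ this is $\lambda\omega_{\ell+1}$ with $\lambda=\ell/(\ell+1)\le 1$. Lemma~\ref{lem--bochner for gradient} then gives
\begin{equation*}
\Delta_{\omega_{\ell+1}}|u|^2\ge -n|u|^2, \qquad \Delta_{\omega_{\ell+1}}|\nabla u|^2\ge |\nabla^{1,0}\nabla u|^2+\Ric(\nabla u,\nabla u)-(n+2)|\nabla u|^2 .
\end{equation*}
The Ricci term is controlled by $\Ric=\omega-\nabla^2 f$. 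The only bad piece is $-\nabla^2 f(\nabla u,\nabla u)$, which is not pointwise bounded near $\mathcal{S}$. We will never bound it pointwise; instead we integrate it by parts against the cut-offs, as in Proposition~\ref{prop:localbound}. This uses that $\nabla f$ is locally bounded: one integration moves the derivative onto $|\nabla u|^2$ and the cut-offs, leaving a term $|\nabla f|\,|\nabla u|\,|\nabla\nabla u|$ that Cauchy--Schwarz absorbs into $|\nabla^{1,0}\nabla u|^2$.

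For (\ref{lem:C1estimates1}) we test the differential inequality for $v=|u|^2$ against $v^{p-1}\psi^2$. Here $\psi=\varphi\,\chi_\epsilon$, with $\varphi$ the localized cut-off of Lemma~\ref{lem:cutoff} at a sequence of shrinking radii between $r$ and $Cr$, and $\chi_\epsilon$ from Lemma~\ref{lem:chicutoff}. Two inputs are needed to run the iteration. The first is a local Sobolev inequality on $\mathcal{R}_{-1}\cap B(x_0,Cr)$; it should follow from the noncollapsing and the Minkowski codimension-four singular set, passing the Sobolev inequality from the approximating flows as in \cite{bamler3}. The second is that the error terms involving $\nabla\chi_\epsilon$ vanish as $\epsilon\to0$. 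This second point is the delicate one. We have $|\nabla\chi_\epsilon|\in L^{4-\sigma}$ only, and the cross term looks like $\int v^{p}|\nabla\chi_\epsilon|^2$ or $\int v^{p-1}|\nabla v||\nabla\chi_\epsilon|$. We handle it by Hölder, pairing $|\nabla\chi_\epsilon|^{2}\in L^{2-\sigma/2}$ with $v^p\in L^{q}$ for some $q>2$. This is legitimate provided we already know $v\in L^{s}_{\rm loc}$ for all $s<\infty$, so we first bootstrap integrability from $L^1$ using the $L^{4-\sigma}$ bound and the Sobolev gain. The resulting constants scale as $((\ell+1)r^2)^{-n}$ from the volume of $B_{\omega_{\ell+1}}$. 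The polynomial factor $(\ell r^2+1)^4$ comes from the zeroth-order term $n\lambda(\ell+1)$ measured at scale $r$; it is harmless after a finite number of Moser steps weighted appropriately.

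For (\ref{lem:C1estimates2}) we run the same scheme for $w=|\nabla u|^2$, which is where the improved Kato inequality of Lemma~\ref{lem--kato for holo gradient} enters. Since $\chi_\epsilon$ fails to be in $L^4$, we work with $w^{\beta/2}$ for some $\beta<1$, whose subsolution inequality has the good gradient term $c|w|^{\beta-2}|\nabla w|^2$ left over by Kato. This mirrors the $|\eta|^\beta$ trick of Proposition~\ref{prop:localbound}. The $\lambda|u|$ error terms in the Kato inequality are bounded using (\ref{lem:C1estimates1}).

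To start the iteration we need an $L^2$ bound for $\nabla u$. We get it by integrating $\Delta|u|^2=|\nabla u|^2-n\lambda|u|^2$ against $\varphi^2\chi_\epsilon^2$, using the bound on $\Delta\varphi$ in Lemma~\ref{lem:cutoff} and letting $\epsilon\to0$. The $L^\infty$ bound on $u$ makes the $\chi_\epsilon$ error go to zero, since $\int|\nabla\chi_\epsilon|^2\to0$.

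The exponential factor $e^{\ell r^2}$ should come from the $-\nabla^2f$ and $\lambda(n+2)$ terms, treated via a Grönwall-type weight $e^{C\ell f}$-style comparison over the ball. Alternatively, the Harnack integration-by-parts trick of Lemma~\ref{lem--integration by part trick} could be run with $u$ replaced by a heat kernel weight. The main obstacle is exactly this handling of the unbounded Hessian of $f$ near $\mathcal S$ together with the low integrability of $\nabla\chi_\epsilon$. We will try first to integrate by parts the $\nabla^2 f$ term as described, and fall back on the conjugate heat kernel argument of Proposition~\ref{prop:localbound} (differentiating $\int w^{\beta/2}\psi^2\,d\nu_{y;t}$ in time) if the elliptic approach leaves uncontrolled boundary terms.
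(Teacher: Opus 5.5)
Your main route, elliptic Moser iteration, has a genuine gap, and it sits exactly where you locate the difficulty.

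\emph{The Sobolev inequality is missing.} The route rests on a scale-invariant local Sobolev inequality on $\mathcal{R}_{-1}\cap B(x_0,Cr)$, and nothing cited here supplies one. In setting~\ref{assume:KRF} the approximants are Ricci flows with no Ricci lower bound. Bamler's theory \cite{bamler1,bamler3} gives heat-kernel and Nash-entropy bounds, and log-Sobolev/Poincar\'e inequalities for conjugate heat kernel measures. It does not give a Sobolev inequality on time slices, so there is nothing to pass to the limit. Noncollapsing plus a codimension-four singular set does not produce one without some Poincar\'e or segment inequality across $\mathcal{S}$.

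\emph{The bootstrap near $\mathcal{S}$ cannot close, even granting Sobolev.} To kill $\int v^p\varphi^2|\nabla\chi_\epsilon|^2$ when $|\nabla\chi_\epsilon|^2$ is only in $L^{2-\sigma/2}$, you need $v^p\in L^{2+\delta}$ on $\operatorname{supp}(\nabla\chi_\epsilon)$. So from $v\in L^s$ you may only use $p<s/2$. Sobolev then returns $v\in L^{pn/(n-1)}$ with $pn/(n-1)<\frac{n}{2(n-1)}s\le s$, so integrability decreases. Passing to $|u|^\gamma$ or $|\nabla u|^\beta$ does not help, since only the product of the power with $p$ enters this count. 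There is a standard repair: use the small power to make $|u|^\gamma$ a $W^{1,2}$ weak subsolution across $\mathcal{S}$ (which has zero capacity by Lemma~\ref{lem:chicutoff}), then iterate with Moser's truncations. That repair still needs the missing Sobolev inequality.

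The paper's proof is the fallback you mention in your last sentence. It avoids both problems because it is a one-step representation by the conjugate heat kernel, not an iteration.
\begin{itemize}
\item Extend $u$ by $\mathcal{L}_{\partial_t-\frac12\nabla f}u=0$, so $|u|^2$ is constant along soliton flow lines.
\item Take $v=|u|^{1/4}$ and differentiate $I(t)=\int_{\mathcal{R}_t}v^2\chi_\epsilon^2\varphi^2\,d\nu_{x';t}$, with the kernel based at $x'=x(-1+r^2)$.
\item Integrate the $\Delta f$ and drift terms by parts. The resulting $|\nabla\log K|$ terms are absorbed via Theorem~\ref{thm-harnack inequality} and Lemma~\ref{lem--integration by part trick}.
\item Since $K(x';\cdot)$ is bounded on $\operatorname{supp}(\nabla\chi_\epsilon)$, the cut-off errors need only $|u|^{1/2}\in L^{2+\delta}$, which $u\in L^2$ already gives. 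This is the real purpose of the small power.
\item Integrating from $-1$ to $t'$ gives $v^2(x)=v^2(x')\le e^{Cr}\int v^2\,d\nu_{x';-1}+\cdots$. Bamler's Gaussian upper bound then turns this into an $L^2$ average of $u$, with no Sobolev inequality anywhere.
\end{itemize}
Part (2) is the same with $v=|\nabla u|^{\beta/2}$, $\beta<1$, after a spacetime $L^2$ bound on $\nabla u$. There $\beta<1$ is exactly what makes $L^2$ control of $\nabla u$ enough near $\mathcal{S}$, and $e^{\ell r^2}$ is a Gr\"onwall factor in $t$.

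Separately, your claim that Moser yields the factor $(\ell r^2+1)^4$ is unfounded. For $n\ge5$ no argument can give it uniformly in $\ell r^2$. On the Gaussian shrinker $\mathbb{C}^n$ (a tangent flow at any regular point), take $u=(\partial_{z_1}\wedge\cdots\wedge\partial_{z_n})^{\otimes\ell}$, so $|u|^2_{h_\ell}\propto e^{-\ell f}$, and take $x_0=p$. Then the right side divided by the left side is $\approx(\ell r^2)^{4-n}$. In the paper's computation the term $\frac{n\ell}{\tau}I(t)$ actually produces $e^{Cn\ell r^2}$. This is harmless, since only $\ell r^2\lesssim1$ is used later.
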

\begin{remark}
  In applications later in the paper, we will apply the above lemma to the rescaled metric $\omega_{\ell+1}$. 
It is therefore natural to normalize the $L^2$-norm of a holomorphic section to be 1 on the unit ball with respect to $\omega_{\ell+1}$, which corresponds to the choice $r^2=\ell^{-1}$. 
Consequently, the constant on the right-hand side is independent of $\ell$.
\end{remark}
\begin{proof}
We extend $u$ to all of $\mathcal{R}$ by $\mathcal{L}_{\partial_{t}-\frac{1}{2}\nabla f}u=0$.
Then because $\mathcal{L}_{\partial_{t}-\frac{1}{2}\nabla f}h=0$,
we have
\begin{equation}\label{eq--heat equation for |u|^2}
       \mathcal{L}_{\partial_{t}-\frac{1}{2}\nabla f}|u|_{h_{\ell}}^{2}=0
\end{equation}
and 
\begin{equation}\label{eq--lie derivative of nabla}
\mathcal{L}_{\partial_{t}-\frac{1}{2}\nabla f}|\nabla u|_{\omega_{\ell+1}\otimes h_{\ell}}^{2}=\frac{1}{|t|}|\nabla u|_{\omega_{\ell+1}\otimes h_{\ell}}^{2}.
\end{equation}
Fix an $r_0$, depending only on $D$ and $W$, such that Lemma \ref{lem:cutoff} holds and for $r\leq r_0$, let $\varphi\in C^{\infty}(\mathcal{R}_{-1})$ be the cut-off function constructed there
and let $\chi_{\epsilon}\in C^{\infty}(\mathcal{R}_{-1})$ be as in
Lemma \ref{lem:chicutoff}. For $x\in \mathcal{R}_{-1}$ and $t<0$, we denote by $x(t)$ the intersection point of $\mathcal{R}_t$ with the integral curve of $\partial_t - \frac{1}{2}\nabla f$ through $x$. Write $t':=-1+r^2$ and $x':=x(t')$ for $x\in B(x_0,r/2)\cap \mathcal{R}_{-1}$. 

 \ref{lem:C1estimates1}. Let $v=|u|_{h_{\ell}}^{\frac14}$. Then using $|\nabla^{1,0}|u|^2|^2=|\nabla u|^2|u|^2$ and \eqref{eq:bochnerlemma1}, we have
\begin{equation}
       (\mathcal{L}_{\partial_{t}-\frac{1}{2}\nabla f}-\Delta)v^{2}\leq -|\nabla v|^2+\frac{nl}{4\tau}v^2.
\end{equation}  For any $t\in[-1,t')$, we set $\tau=|t|$. For any $\phi \in C_c^{\infty}(\mathcal{R}_{[-1,t']})$, we can compute

 \begin{align*} \frac{d}{dt} \int_{\mathcal{R}_t} \phi d\nu_{x';t} & = \int_{\mathcal{R}_t} (\partial_t - \Delta)\phi d\nu_{x';t} = \int_{\mathcal{R}_t} (\partial_t - \Delta)\phi d\nu_{x';t} + \int_{\mathcal{R}_t} \mathcal{L}_{-\frac{1}{2}\nabla f} \left( \phi K(x';\cdot)\frac{\omega_t^n}{n!}\right)\\
 &= \int_{\mathcal{R}_t} (\mathcal{L}_{\partial_t - \frac{1}{2}\nabla f}-\Delta)\phi d\nu_{x';t} - \int_{\mathcal{R}_t} \phi \left( \mathcal{L}_{\frac{1}{2}\nabla f} \log K(x';\cdot) + \Delta f\right)d\nu_{x';t}.
 \end{align*}
Taking $\phi = v^2 \chi_{\epsilon}^2 \varphi^2$, it follows that $I(t):=\int_{\mathcal{R}_t} v^2 \chi_{\epsilon}^2 \varphi^2 d\nu_{x';t}$ satisfies
\begin{equation}\label{eq--time derivative}
    \begin{aligned}
        I'(t)=&\int_{\mathcal{R}_{t}}(\mathcal{L}_{\partial_{t}-\frac{1}{2}\nabla f}-\Delta)(v^{2}\chi_{\epsilon}^{2}\varphi^{2})d\nu_{x';t}\\
&-\int_{\mathcal{R}_{t}}(v^{2}\chi_{\epsilon}^{2}\varphi^{2})(\Delta f + \frac{1}{2} \langle \nabla f,\nabla \log K(x';\cdot)\rangle )d\nu_{x';t}.
    \end{aligned}
\end{equation}
Then, using \eqref{eq--heat equation for |u|^2}, the fact that $\nabla f$ is locally bounded, and integrating by parts the term involving $\nabla^2 f$, we can estimate
\begin{align*}
 I'(t)\leq & \int_{\mathcal{R}_{t}}\chi_{\epsilon}^{2}(y)\varphi^{2}(y)\left(\frac{ n\ell}{\tau}v^{2}-|\nabla v|^{2}\right)(y)d\nu_{x';t}(y)\\
 & +C\int_{\mathcal{R}_{t}}(|\nabla v|\chi_{\epsilon}\varphi)(y)\left( |v|\chi_{\epsilon}|\nabla\varphi|+|v|\varphi|\nabla\chi_{\epsilon}| \right)(y)d\nu_{x';t}(y) - \int_{\mathcal{R}_t} (v^2 \varphi^2 \Delta \chi_{\epsilon}^2)(y)d\nu_{x';t}(y)\\
 & +C \int_{\mathcal{R}_{t}}v^{2}(y)\left(\chi_{\epsilon}^{2}\varphi|\Delta\varphi|+\chi_{\epsilon}^{2}|\nabla\varphi|^{2}+\chi_{\epsilon}\varphi |\nabla\chi_{\epsilon}| \cdot |\nabla\varphi|\right)(y)d\nu_{x';t}(y)\\
 & + C(D) \int_{\mathcal{R}_t} (v^2 \chi_{\epsilon}^2 \varphi^2)(y)   (1+|\nabla \log K(x';\cdot)|(y))d\nu_{x';t}(y).
\end{align*}

Then, using the basic properties of cut-off functions, applying Cauchy--Schwarz, 
and integration by parts to $\Delta \chi_\epsilon^2$, we can continue the estimate by
\begin{equation}\label{eq-time derivative many terms}
\begin{aligned}
     I'(t) \leq & -\frac{1}{2}\int_{\mathcal{R}_{t}}(\chi_{\epsilon}^{2}\varphi^{2}|\nabla v|^{2})d\nu_{x';t}+C\frac{\ell}{\tau} \int_{\mathcal{R}_t}\chi_{\epsilon}^2 \varphi^2 v^2 d\nu_{x';t}+C\int_{\mathcal{R}_{t}}(|\nabla \varphi|^2\chi_{\epsilon}^2+|\nabla\chi_{\epsilon}|^2\varphi^2)v^{2}d\nu_{x';t}\\
      &+(\frac{n\ell}{\tau}+C(D)r^{-2})\int_{\mathcal{R}_{t}\cap \supp(\nabla\varphi)}v^2\chi_\epsilon^2d\nu_{x';t}+C(D)\int_{\mathcal{R}_t} (v^2 \chi_{\epsilon}^2 \varphi^2) |\nabla \log K(x';\cdot)| d\nu_{x';t}\\
 & +2\int_{\mathcal{R}_{t}}\langle\nabla\chi_{\epsilon},\nabla(v^{2}\chi_{\epsilon}\varphi^{2}K(x';\cdot))\rangle dg_{t}\\
\leq & -\frac{1}{4}\int_{\mathcal{R}_{t}}(\chi_{\epsilon}^{2}\varphi^{2}|\nabla v|^{2})d\nu_{x';t} + C(D)\int_{\mathcal{R}_t} (v^2 \chi_{\epsilon}^2 \varphi^2) |\nabla \log K(x';\cdot)| d\nu_{x';t}\\
&+C(D)\left( \ell+\frac{1}{r^2}\right)\left(\int_{\text{supp}(\nabla \varphi)\cap\mathcal{R}_{t}}v^{2}d\nu_{x';t}+\int_{\text{supp}(\varphi\nabla \chi_{\epsilon})\cap\mathcal{R}_{t}}v^{2}d\nu_{x';t}\right)\\
 & +C\int_{\text{supp}(\nabla\chi_{\epsilon})\cap\mathcal{R}_{t}}\left(|\nabla\chi_{\epsilon}|^{2}+|\nabla\log K(x';\cdot)|^{2}\right)v^{2}\varphi^{2}d\nu_{x';t} \\ 
\end{aligned}
\end{equation}
Writing $\tau' :=t'-t$, we can use Perelman's differential Harnack inequality and Lemma \ref{lem--integration by part trick} as in the proof of Proposition \ref{prop:localbound} to obtain 
\begin{equation}\label{eq--intermediate step}
\begin{aligned}
     C(D)\int_{\mathcal{R}_t} (v^2 \chi_{\epsilon}^2 \varphi^2) |\nabla \log K(x';\cdot)| d\nu_{x';t} \hspace{-50 mm}& \\\leq & \frac{1}{4}\int_{\mathcal{R}_{t}}(\chi_{\epsilon}^{2}\varphi^{2}|\nabla v|^{2})d\nu_{x';t}+\frac{C}{\sqrt{\tau'}}\int_{\mathcal{R}_t} v^2 \chi_{\epsilon}^2 \varphi^2 d\nu_{x';t} \\
     & +C\sqrt{\tau'}\int_{ \text{supp}(\nabla \chi_{\epsilon})\cap \mathcal{R}_t}  (|\nabla \log K(x';\cdot)|^2+|\nabla \chi_{\epsilon}|^2)v^2\varphi^2 d\nu_{x';t}\\
     &+\int |\nabla \chi_{\epsilon}|^2 v^2\varphi^2 d\nu_{x';t}+\sqrt{\tau'}\int_{\operatorname{supp}(\nabla \varphi)\cap \mathcal{R}_t} (|\nabla \log K(x';\cdot)|^2+|\nabla \varphi|^2)v^2 d\nu_{x';t}.
\end{aligned}
\end{equation}
Combining \eqref{eq-time derivative many terms} and \eqref{eq--intermediate step} and using Cauchy's inequality yields
\begin{equation}\label{eq-upper bound of derivative 5terms}
    \begin{aligned}
        I'(t) \leq & \frac{C}{\sqrt{\tau'}} I(t) + C \left( \ell+\frac{1}{r^2}\right)\int_{ \supp ({\nabla \varphi})\cap\mathcal{R}_{t}} v^2 d\nu_{x';t}\\ 
&+ C\int_{ \text{supp}(\nabla \chi_{\epsilon})\cap \mathcal{R}_t}  |\nabla \log K(x';\cdot)|^2 v^2\varphi^2 d\nu_{x';t}\\
&+ C\int_{\mathcal{R}_t} |\nabla \chi_{\epsilon}|^2 v^2\varphi^2 d\nu_{x';t}+C \left(\ell+\frac{1}{r^2}\right)\int_{\text{supp}(\nabla\chi_{\epsilon})\cap\mathcal{R}_{t}} v^2 d\nu_{x';t}\\&+\sqrt{\tau'}\left( \int_{\operatorname{supp}(\nabla \varphi)\cap \mathcal{R}_t} |\nabla \log K(x';\cdot)|^3 d\nu_{x';t}\right)^{\frac{2}{3}}\left( \int_{\operatorname{supp}(\nabla \varphi)\cap \mathcal{R}_t} v^6 d\nu_{x';t} \right)^{\frac{1}{3}}.
    \end{aligned}
\end{equation}
For $\epsilon=\epsilon(x)>0$ sufficiently small, we have for $t\in [-1, t']$,
$$d_t(\text{supp}(\nabla\chi_{\epsilon}),x(t))\geq c(x),$$ which implies an upper bound on $K(x';\cdot)$ on $\supp(\nabla \chi_{\epsilon})$. As in \eqref{eq:pointwiseconjugatebound}, we can use Lemma \ref{lem:conjugategradient} to obtain 
\begin{equation}
    \int_{\supp(\nabla \chi_\epsilon)\cap \mathcal R_t}|\nabla \log K(x';\cdot)|^{2q}d\nu_{x';t}\leq \frac{C(W,x,q)}{(\tau')^{q}}
\end{equation}
for any $q\in [1,2)$. Similarly, by the construction of $\varphi$ and the fact that $x \in B(x_0, r/2)$ and $t\in [-1,t']$,
\begin{equation}
d_t\big(\operatorname{supp}(\nabla \varphi), x(t)\big) \geq C^{-1} r,
\end{equation}
allowing us to estimate
\begin{equation*}
    \int_{\supp(\nabla \varphi)\cap \mathcal{R}_t} |\nabla \log K(x';\cdot)|^3 d\nu_{x';t} \leq \frac{C(W,D)}{(\tau')^{\frac{3}{2}}}.
\end{equation*}
Applying H\"older's inequality to the third and fourth terms on the right-hand side of \eqref{eq-upper bound of derivative 5terms}, we obtain that there exists a constant $C$, independent of $\epsilon$, such that 
\begin{equation}
 \begin{aligned}
   \frac{d}{dt}\left(e^{C\sqrt{t'-t}}\int_{\mathcal{R}_{t}}v^{2}\chi_{\epsilon}^{2}\varphi^{2}d\nu_{x';t}\right)\leq &  C \left(\ell+\frac{1}{r^2}\right)\int_{ \supp ({\nabla \varphi})\cap\mathcal{R}_{t}} v^2 d\nu_{x';t}+\Psi(\epsilon|W,x)\\
   &+ \frac{C}{\sqrt{\tau'}}\left( \int_{\operatorname{supp}(\nabla \varphi)\cap \mathcal{R}_t} v^6 d\nu_{x';t} \right)^{\frac{1}{3}},
\end{aligned}
\end{equation}
where $\Psi(\epsilon|W,x)$ is a quantity satisfying $\lim_{\epsilon \to 0} \Psi(\epsilon|W,x)=0$ for any fixed $W,x$. Thus, integrating $t$ from $-1$ to $t'=-1+r^2$ and taking $\epsilon\searrow0$
gives
\begin{equation}\label{eq--conditional upper bound}
    \begin{aligned}
   v^2(x)=  v^{2}(x')\leq &e^{Cr}\int_{\text{supp}(\varphi)\cap\mathcal{R}_{-1}}v^{2}(y)d\nu_{x';-1}(y)\\
&+Ce^{Cr}(\ell r^2+1)\sup_{t\in [-1,t']}\int_{\supp(\nabla \varphi)\cap \mathcal{R}_t} v^2d\nu_{x';t}\\
&+Cre^{Cr}\sup_{t\in [-1,t']}\left( \int_{\operatorname{supp}(\nabla \varphi)\cap \mathcal{R}_t} v^6 d\nu_{x';t} \right)^{\frac{1}{3}}.
    \end{aligned}
\end{equation}
Then, using the upper bound for the conjugate heat kernel
\cite[Theorem 7.2]{bamler1} and arguing as in the proof of
\eqref{holder control beta-p norm}, noting that \(v=|u|^{1/4}\), we bound the last two
terms in \eqref{eq--conditional upper bound} from above by the \(L^2\)-integral
of \(|u|\). It follows that 
\begin{equation}
    |u|^2(x)\leq \frac{C(\ell r^2+1)^4}{r^{2n}}\int_{B(x_{0},Cr)}|u|_{h_{\ell}}^{2}\omega_{-1}^{n}.
\end{equation}

\ref{lem:C1estimates2}. In the proof below, we temporarily use $\omega_{-1}\otimes h^\ell$ to compute the norm of $\nabla u$. By Lemma \ref{lem:cutoff}, there exists $\widetilde{\varphi} \in C^{\infty}(\mathcal{R}_{-1})\cap C^0(\mathcal{X}_{-1})$ such that $\widetilde{\varphi}|_{B(x_0,r)} \equiv 1$, $\operatorname{supp}(\widetilde{\varphi})\subseteq B(x_0,C(D,W)r)$, and $r^2(|\nabla \widetilde{\varphi}|^2+|\Delta\widetilde{\varphi}|)\leq C(D,W)$. As before, extend to $\mathcal{R}_{-1}$ by $\mathcal{L}_{\partial_t-\frac{1}{2}\nabla f}\widetilde{\varphi}=0$. Because $|u|$ is locally bounded and $\mathcal{L}_{\partial_{t}-\frac{1}{2}\nabla f}((4\pi\tau)^{-n}e^{-f}\omega_{t}^{n})=0$,
we can estimate 
\begin{align*}
\frac{d}{dt}\int_{\mathcal{R}_{t}}|u|^{2}\widetilde{\varphi}^2\chi_{\epsilon}^{2}d\mu_{t}= & \int_{\mathcal{R}_{t}}(\mathcal{L}_{\partial_{t}-\frac{1}{2}\nabla f}-\Delta)(|u|^{2}\widetilde{\varphi}^{2}\chi_{\epsilon}^{2})d\mu_{t}\\
&-\int_{\mathcal{R}_t} |u|^2 \widetilde{\varphi}^2 \chi_{\epsilon}^2 ( \Delta f - \frac{1}{2} |\nabla f|^2 )(y)d\mu_t
\\
\leq & \int_{\mathcal{R}_{t}}\left(-|\nabla u|^{2}+\frac{n\ell}{\tau}|u|^{2}\right)\widetilde{\varphi}^{2}\chi_{\epsilon}^{2}d\nu_{t} \\
 & +C\int_{\mathcal{R}_{t}}|u|^{2}\left(\chi_{\epsilon}\widetilde{\varphi}^{2}\Delta\chi_{\epsilon}+\widetilde{\varphi}^{2}|\nabla\chi_{\epsilon}|^{2}+\frac{1}{r^{2}}\right)d\mu_{t}\\
 &+\frac{1}{\tau}\int_{\mathcal{R}_t} |u|^2 \widetilde{\varphi}^2 \chi_{\epsilon}^2 (f-n-W)d\mu_t\\
\leq & -\frac{1}{2}\int_{\mathcal{R}_{t}}|\nabla u|^{2}\widetilde{\varphi}^2d\mu_{t}+\left(\frac{n\ell}{\tau}+C(r')\right)\int_{\mathcal{R}_{t}}|u|^{2}\widetilde{\varphi}^{2}\chi_{\epsilon}^{2}d\mu_{t}\\
 & +C\int_{\text{supp}(\chi_{\epsilon})\cap\mathcal{R}_{t}}\widetilde{\varphi}^{2}|u|^{2}(|\nabla\chi_{\epsilon}|^{2}+|\nabla f|^{2})d\mu_{t}.
\end{align*}
By integrating from $t=-1$ to $t'>-1$ and taking $\epsilon\searrow 0$,
we obtain that  
\begin{equation*}
\int_{-1}^{t'}\int_{\mathcal{R}_{t} \cap B(x_0,r)}|\nabla u|^{2} d\mu_{t}dt\leq (C\ell+C(D,W))\int_{\mathcal{R}_{-1}\cap B(x_0,2Cr)}|u|^{2}d\mu_{-1}.
\end{equation*}

Next, using that $u$ is a holomorphic section, by Lemma \ref{lem--bochner for gradient} and the shrinker equation, we have
\begin{equation}
    \begin{aligned}
      \Delta|\nabla u|^{2}&= |\nabla^{1,0}\nabla u|^{2}+\Ric(\nabla u,\nabla u)-\frac{(n+2)\ell}{\tau} |\nabla u|^{2}+\frac{n\ell^2}{\tau^2}|u|^2\\
    &= |\nabla^{1,0}\nabla u|^{2}+\frac{1}{\tau}(1-\ell(n+2))|\nabla u|^{2}- h^{\ell} g^{\overline{\ell}i}g^{\overline{\jmath}k} \nabla_i \nabla_{\overline{\jmath}} f \nabla_k u \overline{\nabla_{\ell}u}+\frac{n\ell^2}{\tau^2}|u|^2.
    \end{aligned}
\end{equation}
Then we use the improved Kato inequality \eqref{eq--improved kato for nablau}
and use Cauchy's inequality to estimate
\begin{equation}\label{eq--laplacian}
    \begin{aligned}
         \Delta |\nabla u|^{\beta} =& \frac{\beta}{2} \left( |\nabla u|^{\beta-2}\Delta |\nabla u|^2 -  \frac{2-\beta}{2} |\nabla u|^{\beta-4}|\nabla^{1,0} |\nabla u|^2|^2  \right) \\
\geq & \frac{\beta}{2}\left( 1-\frac{2-\beta}{2}(1+\gamma)  \right) |\nabla^{1,0} \nabla u|^2 |\nabla u|^{\beta -2} +\frac{\beta}{2\tau}(1-\ell(n+2))|\nabla u|^{\beta} \\ &- \frac{\beta}{2}|\nabla u|^{\beta-2} h^\ell g^{\overline{l}i}g^{\overline{\jmath}k} \nabla_i \nabla_{\overline{\jmath}} f \nabla_k u \overline{\nabla_{l}u} \\& +\frac{\beta \ell^2}{2\tau^2}\left(n- \frac{2-\beta}{2}(1+\gamma^{-1})\right) |u|^2 |\nabla u|^{\beta-2}.
    \end{aligned}
\end{equation}
For any choice of $\beta \in \left(\frac{2}{n+1},1\right)$, we have
\(
\frac{2-\beta}{2(n-1)+\beta} < \frac{\beta}{2-\beta}.
\)
Thus, if we set
\[
\gamma := \frac{1}{2}\left(\frac{2-\beta}{2(n-1)+\beta} + \frac{\beta}{2-\beta}\right),
\]
then the coefficients of the first and last terms in \eqref{eq--laplacian} are both positive.  We therefore obtain the existence of $C_{\beta}<\infty$ such that $v:= |\nabla u|^{\frac{\beta}{2}}$ satisfies 
\begin{equation}\label{eq--bound on lapv}
    \begin{aligned}
        \Delta v^2 \geq &\frac{1}{C_{\beta}}|\nabla v|^2 + \frac{1}{C_{\beta}} |\nabla^{1,0} \nabla u|^{2} |\nabla u|^{\beta-2}\\& +\frac{\beta}{2\tau}(1-\ell(n+2))v^2- \frac{\beta}{2}|\nabla u|^{\beta-2} h^{\ell} g^{\overline{l}i}g^{\overline{\jmath}k} \nabla_i \nabla_{\overline{\jmath}} f \nabla_k u \overline{\nabla_{l}u}.
    \end{aligned}
\end{equation}
Then by \eqref{eq--time derivative}, $J(t):= \int_{\mathcal{R}_t} v^2 \chi_{\epsilon}^2 \varphi^2 d\nu_{x';t}$ satisfies 
\begin{align*} J'(t)
= & \int_{\mathcal{R}_t} (\partial_{t} - \mathcal{L}_{\frac{1}{2}\nabla f}-\Delta)(v^2\chi_{\epsilon}^2 \varphi^2) d\nu_{x';t} \\ &- \int_{\mathcal{R}_t} (v^2 \chi_{\epsilon}^2 \varphi^2)\left( \Delta f +\frac{1}{2} \langle \nabla f,\nabla \log K(x';\cdot)\rangle \right)d\nu_{x';t}.
\end{align*}
Using \eqref{eq--lie derivative of nabla}, \eqref{eq--bound on lapv} and the fact that $\nabla f$ is locally bounded, and integrating by parts the term involving $\Delta f$, we can bound 
\begin{equation}
    \begin{aligned}
       J'(t)\leq  & -\frac{1}{C_{\beta}} \int_{\mathcal{R}_t} (|\nabla^{1,0} \nabla u|^{2}|\nabla u|^{\beta-2}+|\nabla v|^2) \chi_{\epsilon}^2 \varphi^2 d\nu_{x';t}+\frac{\beta \ell}{2\tau}(n+1)J(t)\\
&+ \frac{\beta}{2}\int_{\mathcal{R}_t} \left( \chi_{\epsilon}^2 \varphi^2|\nabla ^h u|^{\beta-2} h g^{\overline{l}i}g^{\overline{\jmath}k} \nabla_i \nabla_{\overline{\jmath}}f \nabla_k u \overline{\nabla_{l}u} \right) d\nu_{x';t} \\
&+ C(D) \int_{\mathcal{R}_t} (v^2 \chi_{\epsilon}^2 \varphi^2)(1+|\nabla \log K(x';\cdot)|)d\nu_{x';t}\\
& +C\int_{\mathcal{R}_{t}}(|\nabla v|\chi_{\epsilon}\varphi)\left( |v|\chi_{\epsilon}|\nabla\varphi|+|v|\varphi|\nabla\chi_{\epsilon}| \right)d\nu_{x';t}\\& - \int_{\mathcal{R}_t} (v^2 \varphi^2 \Delta \chi_{\epsilon}^2)d\nu_{x';t}\\
 & +C \int_{\mathcal{R}_{t}}v^{2}\left(\chi_{\epsilon}^{2}\varphi\Delta\varphi+\chi_{\epsilon}^{2}|\nabla\varphi|^{2}+\chi_{\epsilon}\varphi |\nabla\chi_{\epsilon}| \cdot |\nabla\varphi|\right)d\nu_{x';t}
    \end{aligned}
\end{equation}
We integrate by parts to obtain
\begin{align*}
    \frac{\beta}{2}\int_{\mathcal{R}_t} & \left( \chi_{\epsilon}^2 \varphi^2|\nabla ^h u|^{\beta-2} h g^{\overline{l}i}g^{\overline{\jmath}k} \nabla_i \nabla_{\overline{\jmath}}f \nabla_k u \overline{\nabla_{l}u} \right)(y) d\nu_{x';t}(y) \\ \leq & C(D) \int_{\mathcal{R}_t} |\nabla u|^{\beta-1}(y) \left( |\nabla u|(\chi_{\epsilon}^2 \varphi|\nabla \varphi| + \varphi^2 \chi_{\epsilon} |\nabla \chi_{\epsilon}|) + \chi_{\epsilon}^2 \varphi^2 |\nabla \nabla u| \right)(y) d\nu_{x';t}(y) \\
    &+C(D) \int_{\mathcal{R}_t} (v^2 \chi_{\epsilon}^2 \varphi^2)(y)(1+|\nabla \log K(x';\cdot)|)(y)d\nu_{x';t}(y).
\end{align*}
From here, we combine expressions and proceed as in \ref{lem:C1estimates1}.
\end{proof}

\section{Proof of Theorem \ref{thm:complexstructure}}
\label{sec:Thm1Pf}

We continue to assume throughout this section that $(X,d)$ is a singular K\"ahler-Ricci shrinker obtained as an $\mathbb{F}$-limit of noncollapsed K\"ahler-Ricci flows, as in Section \ref{sec;convergence setting}(I).

\subsection{Normal complex variety structure}
\label{sec:normal}

After establishing the H\"ormander $L^2$ estimate for pluri-anticanonical sections (Proposition~\ref{prop:hormander}) and the $C^1$ estimate for holomorphic pluri-anticanonical sections (Lemma~\ref{lem:C1estimates}), the normal complex variety structure will follow from the same argument as in \cite{DS2}. See also \cite{LiuSz1,hallgrenKahlerRicciTangentFlows2023} for closely related work. 

Throughout this section, we continue to use the notation $L=K_{\mathcal{R}_{-1}}^{-1}$, $h=(2\pi)^{-n}e^{-f}\frac{\omega^n}{n!}$, $h_{\ell} = h^{\otimes \ell}$, $\omega_\ell = \ell \omega_{-1}$. 

\begin{lemma} \label{lem:perturbation}
    For any $D\geq 1$ and $\epsilon>0$, there exist $\zeta=\zeta(D,W,\epsilon)>0$ and $A_0=A_0(D,W,\epsilon) \geq 1$ such that the following holds. Suppose $x_0 \in B(p,D)$, $\ell \in \mathbb{N}$ is positive, $U \subseteq \mathcal{X}_{-1}$ is a neighborhood of $x_0$, and $v\in C_c^{\infty}(\mathcal{R}_{-1},L^{\ell})$ satisfies the following for some $A\geq A_0$ with $B_{g_{\ell}}(x_0,2A)\subseteq B_g(p,2D)$:
\begin{enumerate}
    \item \label{lem:perturbation1} $\int_{\mathcal{R}_{-1}} |v|_{h_{\ell}}^2 \frac{\omega_{\ell}^n}{n!} < (1+\zeta)(2\pi)^n$,

    \item \label{lem:perturbation2} $\sup_{U} \left| e^{-\frac{1}{2}d_{g_{\ell}}^2(x_0,\cdot)} - |v|_{h_{\ell}}^2 \right| < \zeta$,

    \item \label{lem:perturbation3} $\int_{\mathcal{R}_{-1}} |\overline{\partial}v|_{\omega_{\ell}\otimes h_{\ell}}^2 \omega_{\ell}^n < \zeta$,

    \item \label{lem:perturbation4} $\sup_U |\overline{\partial} v|_{\omega_{\ell} \otimes h_{\ell}}^2 < \zeta$,

    \item \label{lem:perturbation5} $B_{g_{\ell}}(x_0,2A) \cap \{ r_{\operatorname{Rm}}^{g_{\ell}} \geq \zeta\} \subseteq U$,

    \item \label{lem:perturbation6} $\operatorname{supp}(v)\subseteq B_{g_{\ell}}(x_0,A)$,
\end{enumerate}
Then there exists $u\in H^0(\mathcal{R}_{-1},L^{\ell})$ satisfying the following:

\begin{enumerate}[label=(\alph*)]
    \item $\int_{B_{g}(p,4D)\cap \mathcal{R}_{-1}} |u|_{h_{\ell}}^2 \frac{\omega_{\ell}^n}{n!} < (1+\epsilon)(2\pi)^n$,

    \item $\sup_{B_{g}(p,4D) \cap \mathcal{R}_{-1}} \left| e^{-\frac{1}{2}d_{g_{\ell}}^2(x_0,\cdot)} - |u|_{h_{\ell}}^2 \right|<\epsilon.$
\end{enumerate}
\end{lemma}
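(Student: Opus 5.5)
We take $u := v - w$, where $w$ is the minimal-norm solution of $\overline{\partial}w = \overline{\partial}v$ given by Proposition~\ref{prop:hormander}. Concretely, we apply it to the bundle $L^{\ell}$ with $\alpha := \overline{\partial}v \in \mathcal{A}^{0,1}_c(\mathcal{R}_{-1},L^{\ell})$, which is compactly supported in $\mathcal{R}_{-1}$ since $v$ is. We take the weight $\varphi = 0$ (so $c=0$), or a small compactly supported perturbation if needed. Working with $\omega_\ell$ rather than $\omega_{\ell+1}$ only changes constants by factors $(1+1/\ell)^{n}$, which are harmless after enlarging $A_0$ (equivalently $\ell$), since $B_{g_\ell}(x_0,2A)\subseteq B_g(p,2D)$ forces $\ell \gtrsim A^2/D^2$. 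The proposition gives $w$ with
\[
\int |w|^2_{h_\ell} e^{-f}\omega_\ell^n \le C\int |\overline{\partial}v|^2 e^{-f}\omega_\ell^n .
\]
Since $f$ is bounded above and below on $B_g(p,4D)$ and $\operatorname{supp}(\overline{\partial}v)\subseteq B_g(p,2D)$, hypothesis~\ref{lem:perturbation3} yields $\|w\|_{L^2(B_g(p,4D))}\le C(D,W)\zeta^{1/2}$. Then $u$ is holomorphic on $\mathcal{R}_{-1}$ by elliptic regularity, and conclusion (a) follows from hypothesis~\ref{lem:perturbation1} and the triangle inequality once $\zeta$ is small.

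For (b) we need a pointwise bound on $w$ on $B_g(p,4D)$, in two regimes.

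\emph{Away from $\operatorname{supp}(v)$.} Here $w=-u$ is holomorphic, so Lemma~\ref{lem:C1estimates}\ref{lem:C1estimates1}, applied with $r^2\sim \ell^{-1}$ (giving $\ell$-independent constants), shows that $|u|^2$ is pointwise bounded by the local $L^2$ norm of $u$ on $g_\ell$-balls of radius $C$. That is at most $C\zeta$ wherever those balls avoid $\operatorname{supp}(v)$, and in particular outside $B_{g_\ell}(x_0,A+C)$. Inside $B_{g_\ell}(x_0,2A)\setminus B_{g_\ell}(x_0,A)$ the function $e^{-d^2/2}$ is at most $e^{-A^2/2}$, which is small once $A\ge A_0$.

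\emph{On $B_{g_\ell}(x_0,A+C)$.} This is the main obstacle. On the regular region $\{r^{g_\ell}_{\operatorname{Rm}}\ge\zeta\}\subseteq U$, hypothesis~\ref{lem:perturbation4} bounds $|\overline{\partial}w|$ pointwise. Standard interior elliptic estimates for $\overline{\partial}w=\overline{\partial}v$, combined with the $L^2$ bound on $w$ in a $g_\ell$-ball of radius comparable to $\zeta$ where the geometry is controlled, give $|w|\le \Psi(\zeta)$ on $\{r^{g_\ell}_{\operatorname{Rm}}\ge 2\zeta\}$. Near the singular set we first try the following. Since $u$ is holomorphic, $|\nabla u|$ is bounded by Lemma~\ref{lem:C1estimates}\ref{lem:C1estimates2} with $\ell$-independent constants. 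Hence $|u|^2$ is uniformly Lipschitz in $g_\ell$, and the same holds for $e^{-d^2_{g_\ell}(x_0,\cdot)/2}$. The set $\{r_{\operatorname{Rm}}^{g_\ell}<2\zeta\}$ is $\Psi(\zeta)$-dense-complemented: by the codimension-four Minkowski bound, every point lies within $g_\ell$-distance $\Psi(\zeta|D,W)$ of a point with $r_{\operatorname{Rm}}\ge 2\zeta$. So the estimate $\bigl||u|^2-e^{-d^2/2}\bigr|\le \Psi(\zeta)$, already known on the regular part via hypothesis~\ref{lem:perturbation2} and the bound on $w$, propagates to all of $B_{g_\ell}(x_0,2A)\cap\mathcal{R}_{-1}$. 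Choosing $\zeta$ small and $A_0$ large in terms of $\epsilon,D,W$ then gives (b).

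The delicate points are that the gradient estimate must be uniform in $\ell$ under the rescaling $\omega_\ell$, and that the density of the regular part holds at scale $\zeta$ in $g_\ell$ uniformly in $\ell$. If the density claim fails, the fallback is to bound $|w|$ directly through the local $L^\infty$ estimate underlying Proposition~\ref{prop:localbound}, applied to $w$ written as $\overline{\partial}^*_f$ of a harmonic form.
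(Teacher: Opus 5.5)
Your architecture matches the paper's proof. You solve $\overline{\partial}w=\overline{\partial}v$ via Proposition~\ref{prop:hormander} with $\varphi=0$ and set $u=v-w$. Far from $\operatorname{supp}(v)$ you apply the $C^0$ estimate of Lemma~\ref{lem:C1estimates} to $w=-u$. Near the support you transfer the estimate from good regular points using the uniform gradient bound of Lemma~\ref{lem:C1estimates}\ref{lem:C1estimates2} and the density of regular points.

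However, the step that bounds $w$ pointwise on the regular part fails as written, because you tie the regularity scale to the smallness parameter $\zeta$. Hypothesis~\ref{lem:perturbation3} only gives $\|w\|_{L^2}\le C(D,W)\zeta^{1/2}$. The interior $L^2\to L^\infty$ estimate for $\overline{\partial}w=\overline{\partial}v$ on a $g_\ell$-ball of radius $\rho$ gives $|w|(z)\le C\bigl(\rho^{-n}\|w\|_{L^2}+\rho\sup|\overline{\partial}v|\bigr)$. With $\rho$ comparable to $\zeta$ this is only $|w|\le C\zeta^{\frac12-n}$, which is not small. So the claim $|w|\le\Psi(\zeta)$ on $\{r^{g_\ell}_{\operatorname{Rm}}\ge 2\zeta\}$ is unjustified, and the Lipschitz propagation then has nothing small to propagate.

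The fix, which is what the paper does, is to decouple the two scales by introducing an auxiliary $\epsilon'>0$.
\begin{itemize}
\item Use the quantitative regular-set estimate (\cite[Theorem 2.30]{bamler3}, see also \cite[Proposition 3.11]{CHM25}). For each $z$ it gives $z'$ within $g_\ell$-distance $C(D,W)(\epsilon')^{1/(4n)}$ of $z$ with $r^{g_\ell}_{\operatorname{Rm}}(z')\ge 2\epsilon'$.
\item Require $\zeta\le\epsilon'$. Then hypothesis~\ref{lem:perturbation5} places $B_{g_\ell}(z',\epsilon')$ inside $U$, where hypotheses~\ref{lem:perturbation2} and~\ref{lem:perturbation4} hold.
\item Elliptic regularity on this ball of fixed size $\epsilon'$ gives $|w|(z')\le\Psi(\zeta\,|\,\epsilon',D,W)$.
\item The uniform Lipschitz bound on $|u|$ and on $e^{-\frac12 d_{g_\ell}^2(x_0,\cdot)}$ then bounds the total error at $z$ by $C(D,W)(\epsilon')^{1/(4n)}+\Psi(\zeta\,|\,\epsilon',D,W)$.
\item Choose $\epsilon'$ first in terms of $\epsilon,D,W$, and only afterwards $\zeta$.
\end{itemize}
With this reordering of quantifiers, the rest of your argument goes through unchanged.
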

\begin{proof}
We may apply Proposition \ref{prop:hormander} with $\varphi=0$ and $c=0$, using assumption \ref{lem:perturbation3}, and the inequalities
\begin{equation*}
    \inf_{\mathcal{R}_{-1}}f \geq W, \qquad \sup_{B_g(p,4D)} f \leq C(D,W)
\end{equation*}
in order to conclude the existence of an $L_{\operatorname{loc}}^2$ section $w$ of $L^\ell$ satisfying $\overline{\partial} w = \overline{\partial} v$ and
\begin{equation}\label{eq:wsmallinL2}
\begin{aligned}
\int_{B_{g}(p,4D)\cap \mathcal{R}_{-1}} |w|_{h_{\ell}}^2 \,\omega_{\ell}^n 
\leq & C(D,W)\int_{\mathcal{R}_{-1}} |w|_{h_{\ell}}^2 e^{-f}\omega_{\ell+1}^n
\\ \leq &C(D,W)\int_{\mathcal{R}_{-1}} |\overline{\partial} v|_{\omega_{\ell+1}\otimes h_{\ell}}^2 e^{-f}\,\omega_{\ell+1}^n
\\ \leq &
C(D,W)\zeta.
\end{aligned}
\end{equation}
By \eqref{eq:wsmallinL2} and \ref{lem:perturbation1}, $u:=v-w$ satisfies $\overline{\partial} u=0$ on $\mathcal{R}_{-1}$ and 
\begin{equation} \label{eq:L2boundforu}
    \int_{B_g(p,4D) \cap \mathcal{R}_{-1}} |u|_{h_{\ell}}^2 \frac{\omega_{\ell}^n}{n!} \leq (1+C(W,D)\zeta)(2\pi)^n.
\end{equation}
Take $\epsilon'>0$ to be determined. As a consequence of \cite[Theorem 2.30]{bamler3} (see also \cite[Proposition 3.11]{CHM25}), for any $z\in B_{g}(p,4D) \cap \mathcal{R}_{-1}$, there exists $z' \in B_{g_{\ell}}(z,C(W,D)(\epsilon')^{\frac{1}{4n}})$ such that $r_{\operatorname{Rm}}^{g_{\ell}}(z')\geq 2\epsilon'$. 

First suppose $z\in B_{g_{\ell}}(x_0,\frac{3}{2}A)\cap \mathcal{R}_{-1}$, so that $B_{g_{\ell}}(z',\epsilon') \subseteq U \cap B(p,4D)$ by \ref{lem:perturbation5}. By \ref{lem:perturbation4}, \eqref{eq:wsmallinL2}, and local elliptic regularity, we then have
\begin{equation} \label{eq:wsmallonU}
    |w|_{h_{\ell}}(z') \leq \Psi(\zeta|\epsilon',D,W),
\end{equation}
where $\lim_{\zeta \to 0}\Psi(\zeta|\epsilon',D,W)=0$ for any fixed $\epsilon',D,W$.
We now apply Lemma \ref{lem:C1estimates}, replacing $x_0$ with $z$ and $r$ with $4C(W,D)\ell^{-\frac{1}{2}}$ to obtain
\begin{equation} \label{eq:C1estimateappliedtou} \sup_{B_{g_{\ell}}(z,4C(W,D)(\epsilon')^{\frac{1}{4n}})} |\nabla|u|_{h_{\ell}}|_{\omega_{\ell}}\leq 2\sup_{B_{g}(z,4C(W,D)\ell^{-\frac{1}{2}})} |\nabla u|_{\omega_{\ell+1} \otimes h_{\ell}} \leq C(D,W).
\end{equation}
 Integrating \eqref{eq:C1estimateappliedtou} along a path from $z$ to $z'$, applying \eqref{eq:wsmallonU}, and combining with \ref{lem:perturbation2} yields
\begin{align} 
\begin{split} \label{eq:bddubdddistance}
\left| |u|_{h_{\ell}}(z) -e^{-\frac{1}{4}d_{g_{\ell}}^2(x_0,z)} \right|&\leq \left| |u|_{h_{\ell}}(z') -e^{-\frac{1}{4}d_{g_{\ell}}^2(x_0,z')} \right|+C(D,W)(\epsilon')^{\frac{1}{4n}}  \\ &\leq \left| |v|_{h_{\ell}}(z') -e^{-\frac{1}{4}d_{g_{\ell}}^2(x_0,z')} \right|+\Psi(\epsilon'|D,W)+\Psi(\zeta|\epsilon',D,W)\\
&\leq \Psi(\epsilon'|D,W,A)+\Psi(\zeta|\epsilon',D,W).
\end{split}
\end{align}
By \eqref{eq:L2boundforu} and \eqref{eq:bddubdddistance}, we can thus choose $\epsilon'=\epsilon'(\epsilon,D,W,A)$ so that
\begin{equation*}
    \sup_{B_{g_{\ell}}(x_0,\frac{3}{2}A)}\left| e^{-\frac{1}{2}d_{g_{\ell}}^2(x_0,\cdot)} - |u|_{h_{\ell}}^2 \right|<\epsilon
\end{equation*}
if $\zeta = \zeta(\epsilon,D,W,A)>0$ sufficiently small. 

Suppose instead $z\in (B_g(p,4D)\cap \mathcal{R}_{-1})\setminus B_{g_{\ell}}(x_0,\frac{3}{2}A)$, so that $v=0$ on $B_{g_{\ell}}(z,\frac{1}{2}A)$ by \ref{lem:perturbation6}. We can then apply Lemma \ref{lem:C1estimates}\ref{lem:C1estimates1}, replacing $u$ with $w$, $x_0$ with $z$, and $r$ with $\ell^{-\frac{1}{2}}$, to obtain 
\begin{equation*}
    |u|_{h_{\ell}}(z)=|w|_{h_{\ell}}(z)\leq C(D,W)\zeta.
\end{equation*}
Because $e^{-\frac{1}{2}d_{g_{\ell}}^2(x_0,z)}\leq e^{-\frac{1}{2}A^2}$, the claim follows if we also choose $A_0=A_0(\epsilon)$ sufficiently large.
\end{proof}

\begin{lemma} \label{lem:peak}
    For any $D <\infty$, $x_0 \in B_g(p,D)$, and $\epsilon>0$, there exists $\ell \in \mathbb{N}$ with $\ell \geq \epsilon^{-1}$ and some $u \in H^0 (\mathcal{R}_{-1},L^{\ell})$ satisfying the following:

\begin{enumerate}
    \item $\sup_{x\in B_{g}(p,2D)\cap \mathcal{R}_{-1}} \left| |u|_{h_{\ell}}^2(x) - e^{-\frac{1}{2}d_{g_{\ell}}^2(x_0,x)} \right|<\epsilon$,
    \item $\int_{B_{g}(p,2D)\cap \mathcal{R}_{-1}} |u|_{h_{\ell}}^2 \frac{\omega_{\ell}^n}{n!} <(1+\epsilon)(2\pi)^n $.
\end{enumerate}
In particular, there exists $\ell_0, r_0>0$ and a holomorphic section $u\in H^0(\mathcal{R}_{-1}, L^{\ell_0})$  such that 
\begin{equation*}
    \inf_{B(x_0,r_0)}|u|_{h_{\ell_0}}^2 \geq \frac{1}{2}.
\end{equation*}
\end{lemma}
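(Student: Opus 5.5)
The plan is to construct an approximately holomorphic peak section $v$ satisfying hypotheses \ref{lem:perturbation1}--\ref{lem:perturbation6} of Lemma \ref{lem:perturbation}, and then let that lemma produce the genuine holomorphic section $u$. We apply Lemma \ref{lem:perturbation} with $D$ replaced by a suitable multiple of $D$, so that its conclusion on $B_g(p,4D)$ contains $B_g(p,2D)$. Fix $\epsilon>0$ and let $\zeta=\zeta(2D,W,\epsilon)$ and $A_0$ be as in Lemma \ref{lem:perturbation}; take $A\ge A_0$.

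\textbf{Step 1 (tangent cone).} Pick $\ell_j\to\infty$. The rescaled pointed spaces $(\mathcal{X}_{-1},\ell_j g,x_0)$ subconverge to a tangent cone $C(Y)$ at $x_0$. On the regular part this convergence is smooth: codimension-four singular set, smooth convergence on $\{r_{\operatorname{Rm}}\ge\zeta\}$, and Minkowski estimates. The cone is Ricci-flat Kähler, and since $\omega_\ell$-scale curvature of $(L^\ell,h_\ell)$ equals $\omega_\ell$ up to the $\sqrt{-1}\partial\bar\partial f/\ell$ correction, which tends to $0$ with $|\nabla f|$ locally bounded, the rescaled bundles converge to the trivial bundle on $C(Y)$ with curvature $\omega_{C}$. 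On the cone we have the model section $\sigma_0=e^{-r^2/4}$ in the standard flat-connection gauge, with $|\sigma_0|^2=e^{-r^2/2}$ and $\int|\sigma_0|^2\omega_C^n/n!\le(2\pi)^n$, since the volume ratio is at most Euclidean.

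\textbf{Step 2 (transplanting).} The main obstacle is that the connection on $L^\ell$ over the annular regular region must be gauge-equivalent to the cone model. This needs the holonomy around loops in the link to be trivial; cone links are generally not simply connected. I would follow Donaldson--Sun \cite{DS2}. After passing to a multiple $\ell\mapsto k\ell$, where $k$ is bounded using the noncollapsing volume bound (finite holonomy order, cf. \cite[\S 3]{DS2}), the holonomy on the good compact region $K\subset\{r_{\operatorname{Rm}}^{g_\ell}\ge\zeta\}\cap B_{g_\ell}(x_0,2A)$ becomes nearly trivial. Then a bundle isomorphism close to the model exists.

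With that in hand, set $v:=\beta\,\chi\,\psi_*\sigma_0$. Here $\psi$ is the Cheeger--Gromov diffeomorphism, $\beta$ is a cutoff in $r$ supported in $B(x_0,A)$ with $\beta\equiv1$ on $B(x_0,A/2)$, and $\chi$ is a cutoff vanishing near the singular set, chosen from Lemma \ref{lem:chicutoff} at scale $\zeta$.

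\textbf{Step 3 (checking the hypotheses).} Let $U$ be the interior of the set where $\chi\equiv1$. Then:
\begin{itemize}
\item \ref{lem:perturbation6} holds by construction.
\item \ref{lem:perturbation5} holds by the choice of $\chi$.
\item \ref{lem:perturbation2} and \ref{lem:perturbation4} hold on $U$ by smooth convergence: $\bar\partial\sigma_0=0$ on the cone, and $\bar\partial\beta$ is $O(e^{-A^2/8})$.
\item \ref{lem:perturbation1} holds because $\int|v|^2\le\int|\sigma_0|^2+o(1)$ by volume convergence.
\item \ref{lem:perturbation3} needs $\int|\nabla\chi|^2|\sigma_0|^2\omega_\ell^n$ to be small. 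This follows from Lemma \ref{lem:chicutoff}(2): there $\int|\nabla\chi_\epsilon|^{4-\sigma}\to0$, hence by Hölder $\int|\nabla\chi_\epsilon|^2\to0$ on bounded sets.
\end{itemize}
Taking $j$ large and $\chi$ fine therefore gives all six hypotheses. Lemma \ref{lem:perturbation} then yields $u$ with both estimates, and $\ell=k\ell_j\ge\epsilon^{-1}$.

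\textbf{Step 4 (the final statement).} Take $\epsilon=\tfrac14$ to obtain $\ell_0$ and $u$ with $|u|^2\ge e^{-\frac12 d_{g_{\ell_0}}^2(x_0,\cdot)}-\tfrac14$ on $B_g(p,2D)\cap\mathcal{R}_{-1}$. Choosing $r_0$ so that $d_{g_{\ell_0}}(x_0,x)<\tfrac12$ for $x\in B(x_0,r_0)$ gives $|u|^2\ge e^{-1/8}-\tfrac14>\tfrac12$ on the regular part, and hence on all of $B(x_0,r_0)$ by density.
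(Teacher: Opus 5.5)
Your plan is the paper's own argument, spelled out. The paper takes $\ell_i\to\infty$ along which $(\mathcal{X}_{-1},\ell_i^{1/2}d_{-1},x_0)$ converges to a tangent cone $C(Y)$ in the polarized sense. It then cites the construction of \cite{DS1} (via \cite[Lemma~5.8]{hallgrenKahlerRicciTangentFlows2023}) to obtain $v_i$ satisfying the hypotheses of Lemma~\ref{lem:perturbation} with $\zeta_i\searrow0$ and $A_i\to\infty$, and concludes with Lemma~\ref{lem:perturbation}. Your Steps~1--3 unpack that cited construction, and Step~4 is fine. However, two of your justifications do not work as stated.

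\textbf{Holonomy.} Nothing available a priori says the flat part $L_\infty\otimes\Lambda^{-1}$ has finite holonomy order bounded by the volume ratio. Finiteness of $\pi_1$ of the regular part of the link is only known after the klt structure is established. The mechanism in \cite{DS1} is elementary:
\begin{itemize}
\item $H_1$ of a fixed compact good region $K\subset\mathcal{R}_{C(Y)}$ is finitely generated.
\item By Dirichlet's simultaneous approximation, some $k$ brings the $k$-th powers of the holonomies of all generators as close to $1$ as you like.
\item The cone is dilation invariant and its regular part retracts radially. So replacing $\ell_j$ by $k\ell_j$ gives the same cone with holonomy raised to the $k$-th power, which is the nearly flat gauge you want.
\end{itemize}
No bound on $k$ is needed for this statement, and the relevant reference is \cite{DS1}, not \cite[\S3]{DS2}. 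Also, $(L^\ell,h_\ell)$ has curvature exactly $\omega_\ell$ by the shrinker equation. There is no $\sqrt{-1}\partial\bar\partial f/\ell$ correction, and local bounds on $|\nabla f|$ could not control such a Hessian term anyway.

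\textbf{The cutoff in hypothesis (3).} Lemma~\ref{lem:chicutoff}(2) is a fixed-scale statement for $g_{-1}$ as $\epsilon\to0$. It says nothing uniform about $\int_{B_{g_\ell}(x_0,A)}|\nabla\chi|^2_{g_\ell}\,dg_\ell=\ell^{\,n-1}\int_{B_g(x_0,A\ell^{-1/2})}|\nabla\chi|_g^2\,dg$ as $\ell\to\infty$. You also cannot fix $j$ first and then refine $\chi$. The section $\psi_*\sigma_0$ is only controlled on a fixed compact subset of $\mathcal{R}_{C(Y)}$, so the transition region of $\chi$ must sit at a $g_\ell$-scale independent of $j$.

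The fix, as in \cite{DS1}, is to build the cutoff on the cone first. Since $C(Y)\setminus\mathcal{R}_{C(Y)}$ has Minkowski codimension at least four, there is $\chi_C$ with $\chi_C\equiv1$ on $\{r_{\operatorname{Rm}}\ge\zeta/2\}\cap B(x_\ast,2A)$ and $\int_{B(x_\ast,2A)}|\nabla\chi_C|^2$ arbitrarily small. Fix $\chi_C$, then take $j$ large and transplant it together with $\sigma_0$. Alternatively, use a scale-invariant Minkowski bound for $\{r_{\operatorname{Rm}}<r\}$ on $\mathcal{X}_{-1}$. With these two repairs your argument goes through.
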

\begin{proof} Suppose by way of contradiction that there exist $x_0 \in \mathcal{X}_{-1}$, $D<\infty$, and $\epsilon>0$ such that sufficiently large $\ell \in \mathbb{N}$ do not satisfy the claim. By \cite{bamler3}, there exist $\ell_i \to \infty$ such that $\ell_i \in \mathbb{N}$, 
\begin{equation*} (\mathcal{X}_{-1},\ell_i^{\frac{1}{2}} d_{-1},x_0) \to (C(Y),d_{C(Y)},x_{\ast})\end{equation*}
in the pointed Gromov--Hausdorff sense, where $(C(Y),d_{C(Y)})$ is a metric cone with vertex $x_{\ast}$. Moreover, the regular part of $C(Y)$ has the structure of a K\"ahler manifold $(\mathcal{R}_{C(Y)},J_{C(Y)},\omega_{C(Y)})$ equipped with a holomorphic Hermitian line bundle $(L_{C(Y)},h_{C(Y)})$ satisfying $\Theta_{h_{C(Y)}}=\omega_{C(Y)}$ and
\begin{align*}
   & (\mathcal{X}_{-1}, \ell_i^{\frac{1}{2}} d_{-1},x_0,\mathcal{R}_{-1},\ell_i\omega_{-1},J,K_{\mathcal{R}_{-1}}^{-\ell_i},h^{\ell_i})\longrightarrow \\
& \hspace{10 mm} (C(Y),d_{C(Y)},x_{\ast},\mathcal{R}_{C(Y)},\omega_{C(Y)},J_{C(Y)},L_{C(Y)},h_{C(Y)})
\end{align*}
in the sense of pointed polarized K\"ahler singular spaces (as in \cite[Section 2.1]{DS1} and \cite[Definition 5.5]{hallgrenKahlerRicciTangentFlows2023}). We can therefore apply the argument of \cite{DS1} (see also the proof of \cite[Lemma 5.8]{hallgrenKahlerRicciTangentFlows2023} to obtain $v_i\in C_c^{\infty}(\mathcal{R}_{-1},L^{\ell_i})$ satisfying the hypotheses of Lemma \ref{lem:perturbation} where we replace $\ell$ with $\ell_i$, $\zeta$ with a sequence $\zeta_i \searrow 0$, and $A$ with some $A_i \to \infty$. We may therefore apply Lemma \ref{lem:perturbation} to obtain a contradiction.
\end{proof}

We now fix $x_0 \in \mathcal{X}_{-1}$, $r_0,\ell_0>0$, and the $u\in H^0(\mathcal{R}_{-1},L^{\ell_0})$ as in Lemma \ref{lem:peak}. 

\begin{corollary} \label{cor:proper}
There exist constants $N$, $\rho_0\in (0,r_0)$, and $\ell_1,\ldots,\ell_N \in \mathbb{N}$,   $C<\infty$, and $s_j \in H^0(\mathcal{R}_{-1},L^{\ell_0\ell_j})$ such that 
\begin{equation*}
    F:= (F^1,\ldots,F^N):=\left( \frac{s_1}{u^{\ell_1}},\ldots,\frac{s_N}{u^{\ell_N}} \right): B(x_0,r_0)\to \mathbb{C}^N
\end{equation*}
satisfies the following:
\begin{equation*}
    |\nabla F| \leq C, \qquad \inf_{\partial B(x_0,r_0)}\sup_{1\leq j\leq N}|F^j|> \frac{1}{2}, \qquad \sup_{B(x_0,\rho_0)}\sup_{1\leq j\leq N}|F^j| \leq \frac{1}{100}.
\end{equation*}
\end{corollary}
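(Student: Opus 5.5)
This follows the Donaldson--Sun scheme \cite{DS2} (cf. \cite{LiuSz2}), with Lemma~\ref{lem:peak} and Lemma~\ref{lem:C1estimates} as the analytic inputs. We may assume $r_0$ is small and, shrinking it if needed via Lemma~\ref{lem:C1estimates}\ref{lem:C1estimates2} and Lemma~\ref{lem:peak}, that $|u|_{h_{\ell_0}}^2\ge \frac12$ on $B(x_0,2r_0)$. Then for any $s\in H^0(\mathcal{R}_{-1},L^{\ell_0 m})$ the quotient $s/u^m$ is a holomorphic function on $B(x_0,2r_0)\cap\mathcal{R}_{-1}$ with
\[
|s/u^m|\le 2^{m/2}|s|_{h_{\ell_0 m}} .
\]
Writing $\nabla(s/u^m)=(\nabla s)/u^m - m\, s\,\nabla u/u^{m+1}$, Lemma~\ref{lem:C1estimates} bounds $|s|$, $|\nabla s|$ and $|\nabla u|$ on $B(x_0,r_0)$ by their $L^2$ norms. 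Hence every such quotient has gradient bounded by a constant depending on $m$, $s$, $u$ and $D,W$. Since $N$ and the $\ell_j$ will be finite, the bound $|\nabla F|\le C$ follows once the sections are fixed.

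To build the $s_j$, I would take each $z\in\partial B(x_0,r_0)$ and apply Lemma~\ref{lem:peak} at $x_0$ and at $z$ with a common large power. Both peak sections are needed at the same power $\ell_0 m_z$, so I would rerun the contradiction argument of Lemma~\ref{lem:peak} along a single sequence $\ell_i$ divisible by $\ell_0$, with the base point either $x_0$ or $z$. This produces sections $\sigma_{x_0},\sigma_z\in H^0(L^{\ell_0 m_z})$ with $|\sigma|^2\approx e^{-\frac12 d^2_{g_{\ell}}(\cdot,\text{center})}$ on $B(p,2D)$. Set
\[
s_z:=\sigma_z-\lambda\,\sigma_{x_0}u^{m_z}/\,\sigma_{x_0}(x_0)\cdots
\]
More simply, I would take $s_z:=\sigma_z$ directly. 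At scale $\ell$ the points $x_0$ and $z$ are $\sqrt{\ell}\,r_0\gg1$ apart, so $|\sigma_z|$ is tiny on $B(x_0,\rho_0)$ while $|\sigma_z|(z)\approx 1$. Using $|u|^2\approx e^{-\frac12 d^2_{g_{\ell_0}}(x_0,\cdot)}$ on $B(x_0,r_0)$, the denominator $|u|^{m_z}$ at $z$ is about $e^{-m_z\ell_0 r_0^2/4}$, which only helps: $|F_z(z)|=|\sigma_z|/|u|^{m_z}\ge 1$. The opposite worry is that on $B(x_0,\rho_0)$ the factor $|u|^{-m_z}$ could amplify. There $|u|^{-m_z}\le e^{m_z\ell_0\rho_0^2/4}(1+\text{small})^{m_z}$, whereas $|\sigma_z|\le e^{-m_z\ell_0 (r_0-\rho_0)^2/4}+\epsilon$. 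So I would choose $\rho_0\ll r_0$, and choose the $\epsilon$ in Lemma~\ref{lem:peak} so small relative to $m_z$ that the additive error is also beaten; this gives $|F_z|\le \frac1{100}$ on $B(x_0,\rho_0)$.

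Next comes a compactness step. By the gradient bound, $|F_z|>\frac12$ on a neighborhood $B(z,\delta_z)$. Since $\overline{B(x_0,r_0)}$ is compact (the space is proper), $\partial B(x_0,r_0)$ is covered by finitely many such balls, giving $N$, the $\ell_j:=m_{z_j}$ and $s_j:=s_{z_j}$. All the estimates extend from $\mathcal{R}_{-1}$ to $X$ by continuity, since $F$ is Lipschitz on the dense regular set.

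The main obstacle is the uniformity in $\epsilon$ versus $m$. The additive error $\epsilon$ in Lemma~\ref{lem:peak} gets multiplied by $|u|^{-m}$, which is as large as $e^{m\ell_0\rho_0^2/4}$. If that fails, the first thing I would try is to replace the $C^0$ closeness by a relative estimate. In the tangent-cone model the peak section is exactly Gaussian, so a pointwise bound $|\sigma_z|\le C e^{-c\, d_{g_\ell}^2(z,\cdot)}$ can be obtained from Lemma~\ref{lem:C1estimates}\ref{lem:C1estimates1} together with the local $L^2$ smallness of $\sigma_z$ far from $z$. This makes the decay on $B(x_0,\rho_0)$ exponential in $\ell$ rather than merely $\epsilon$-small.
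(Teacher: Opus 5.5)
\textbf{There is a genuine gap in the key estimate.} The overall structure is right: one function $F_z=s_z/u^{m_z}$ per boundary point, then a finite cover of the compact sphere using the $C^1$ bounds. But with $s_z=\sigma_z$, the bounds $|F_z(z)|\ge 1$ and $|F_z|\le\frac1{100}$ on $B(x_0,\rho_0)$ do not follow.

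Both bounds use the Gaussian profile of $u$ raised to the power $m_z$. Lemma~\ref{lem:peak} controls $|u|^2_{h_{\ell_0}}$ only up to an additive error $\epsilon_0$, which is frozen once $u$ is fixed, while $m_z\to\infty$ as $\epsilon\to0$. So $|u|^{\pm m_z}$, at $z$ and on $B(x_0,\rho_0)$, is known only up to factors of size roughly $e^{C\epsilon_0 m_z}$. These compete with the gain $e^{m_z\ell_0(r_0^2-\rho_0^2)/4}$.

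Consequently the claim ``$|u|^{m_z}(z)\approx e^{-m_z\ell_0 r_0^2/4}$, which only helps'' and your factor $(1+\text{small})^{m_z}$ are unjustified unless $\epsilon_0\ll\ell_0 r_0^2$. Nothing in the setup guarantees this, and shrinking $r_0$, as you propose, makes it worse.

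The obstacle you single out is not the real one. After rescaling $s_z$, only the ratio $|F_z(z)|/\sup_{B(x_0,\rho_0)}|F_z|$ matters, and there the additive error $\epsilon$ of $\sigma_z$ is harmless. (Choosing $\epsilon$ small relative to $m_z$ is circular, as you noticed.) Your fallback does not help either. Lemmas~\ref{lem:peak} and~\ref{lem:C1estimates} give only $|\sigma_z|^2\le e^{-\frac12 d^2_{g_\ell}(z,\cdot)}+\epsilon$ away from $z$, with no rate for $\epsilon$ in terms of $\ell$. Even an exact Gaussian bound on $\sigma_z$ would leave the $e^{C\epsilon_0 m_z}$ uncertainty in the denominator.

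\textbf{How to repair it.} Use the idea you wrote down and then dropped: make $F_z$ vanish exactly at $x_0$.
\begin{enumerate}
\item Obtain a peak section $\sigma_{x_0}$ at $x_0$ of the same power $\ell_0 m_z$, along the same sequence of scales as $\sigma_z$.
\item Set $s_z:=\sigma_z-c\,\sigma_{x_0}$, with $c$ chosen so that $s_z/u^{m_z}$ vanishes at $x_0$. The quotients extend continuously to $x_0$ by your gradient bound.
\item Then $|c|=|\sigma_z|(x_0)/|\sigma_{x_0}|(x_0)$ is small (norms in $h_{\ell_0 m_z}$), and $|s_z|(z)\ge|\sigma_z|(z)-|c|\,|\sigma_{x_0}|(z)>0$. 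Each comparison is made at a single point, so no power of $u$ ever enters.
\item Rescale so that $|F_z(z)|=1$, and cover $\partial B(x_0,r_0)$ by finitely many balls on which $|F_{z_j}|>\frac12$.
\item Only after the cover, set $\rho_0:=\min\{r_0/2,(100C)^{-1}\}$, where $C$ bounds $|\nabla F|$. Since every $F^j(x_0)=0$, this gives $\sup_{B(x_0,\rho_0)}|F^j|\le\frac1{100}$.
\end{enumerate}
This is the separation-of-points argument of \cite{DS1} to which the paper defers.

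Alternatively, you can keep $s_z=\sigma_z$ if you fix $u$ from the start with $\epsilon_0=10^{-3}$ and $r_0=\ell_0^{-1/2}$. Then $|u|^2\ge e^{-1/2}-10^{-3}>\frac12$ on $B(x_0,r_0)$, and you compare ratios rather than absolute values.
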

\begin{proof}
Given Lemma \ref{lem:peak}, the argument proceeds exactly as in \cite[Proof of Theorem 1.2]{DS1}, \cite[Proposition 2.1]{DS2}, and \cite[Proof of Proposition 16]{CDSII}.
\end{proof}

We now define $\Omega := F^{-1}(B(0^N,\frac{1}{4N}))$, so that the restriction $F:\Omega \to B:=B(0^N,\frac{1}{4N})$ is a proper map and is holomorphic when restricted to $\Omega\cap \mathcal{R}_{-1}$, and (as in \cite[proof of Proposition 2.1]{DS2}) $F$ has finite fibers. 

\begin{lemma} \label{lem:separation}
\begin{enumerate}
    \item For any $x_0,x_1 \in \Omega$, there exists $m \in \mathbb{N}^{\times}$ and $s\in H^0(\mathcal{R}_{-1},L^{\ell_0 m})$ such that $\int_{\Omega \cap \mathcal{R}_{-1}}|s|_{h^{\ell_0 m}}^2 \omega^n<\infty$ and 
    \begin{equation*}
        \frac{s}{u^m}(x_0)\neq \frac{s}{u^m}(x_1).
    \end{equation*}

    \item For any $x_0 \in \Omega\cap \mathcal{R}_{-1}$, there exist $m\in \mathbb{N}^{\times}$ and $v_1,\ldots,v_n \in H^0(\mathcal{R}_{-1},L^{\ell_0m})$ such that $\int_{\Omega \cap \mathcal{R}_{-1}} |v_j|_{h^{\ell_0 m}}^2 \omega^n <\infty$ and 
    \begin{equation*}
        \left(\frac{v_1}{u^m},...,\frac{v_n}{u^m} \right)
    \end{equation*}
    is a holomorphic embedding from some neighborhood of $x_0$ into $\mathbb{C}^n$. 
\end{enumerate}
\end{lemma}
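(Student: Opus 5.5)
The plan is the Donaldson--Sun scheme (\cite[Proposition 2.1]{DS2}, \cite{LiuSz2}). Peak sections from Lemma \ref{lem:peak} are corrected into genuine holomorphic sections with the H\"ormander estimate of Proposition \ref{prop:hormander}. The weight function is chosen with a logarithmic pole that forces either vanishing at a prescribed point or a prescribed first-order jet. The $C^0$/$C^1$ bounds of Lemma \ref{lem:C1estimates} control the resulting sections. Finiteness of the $L^2$ norm on $\Omega$ is automatic: all sections produced have finite weighted $L^2$ norm $\int|s|^2e^{-f}\omega^n$, and $f$ is bounded on the bounded set $\Omega$.

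For (1), I split into cases. If $x_0,x_1$ are at positive distance, I apply Lemma \ref{lem:peak} at $x_0$ with $\epsilon$ small and $\ell=\ell_0 m$ large (replacing the base point by the relevant one). This gives $s$ with $|s|^2(x_0)\approx 1$ and $|s|^2(x_1)\approx e^{-\frac12 \ell d^2(x_0,x_1)}\ll 1$. Since $|u^m|_{h^{\ell_0 m}}=|u|^m$, the ratio $s/u^m$ has modulus $|s|/|u|^m$. I use the pair $s$ and a second peak section $s'$ at $x_1$: if neither ratio separates the points, then
\[
\frac{|s|(x_0)}{|s|(x_1)}=\frac{|u|^m(x_0)}{|u|^m(x_1)}=\frac{|s'|(x_0)}{|s'|(x_1)},
\]
which is impossible because the left side is huge and the right side is tiny. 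Since Lemma \ref{lem:peak} needs $\ell$ large only along some subsequence, I take both peak sections in a common power, using $\ell_0 m$ as a common multiple (raising sections to powers if necessary). If $x_0=x_1$ there is nothing to prove. Finiteness of the fibers of $F$ is not needed here.

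For (2), $x_0$ is regular, so there is a small ball $B(x_0,\delta)\subset\mathcal R_{-1}$ with holomorphic coordinates $z_1,\dots,z_n$ centred at $x_0$. I set $v_j^0:=\psi\, z_j\,u^m$, where $\psi$ is a cut-off supported in the coordinate ball, and then $\bar\partial v_j^0=z_j u^m\bar\partial\psi$, supported in an annulus. I apply Proposition \ref{prop:hormander} with weight $\varphi=(n+1)\psi'\log|z|^2$ plus a correction making $\sqrt{-1}\partial\bar\partial\varphi\ge -\frac12\omega_{\ell+1}$. Because $\ell$ is large, $\omega_{\ell+1}$ dominates the curvature errors of the cutoff there. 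Strictly, $\varphi$ must be in $C_c^\infty$, so I approximate the log pole by smooth weights and pass to a weak limit. The resulting $w_j$ solves $\bar\partial w_j=\bar\partial v_j^0$ and is holomorphic near $x_0$. Because $\int|w_j|^2|z|^{-2(n+1)}<\infty$, $w_j$ vanishes to second order at $x_0$. Hence $v_j:=v_j^0-w_j$ is holomorphic with $d(v_j/u^m)(x_0)=dz_j$, and the map $(v_1/u^m,\dots,v_n/u^m)$ is a local holomorphic embedding near $x_0$ by the inverse function theorem.

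The main obstacle is (2): the weight $\varphi$ must satisfy the curvature condition $c>-1$ of Proposition \ref{prop:hormander}, which requires the annulus to be chosen at scale $\ell^{-1/2}$ in the rescaled metric. I would instead choose $\psi$ on a fixed $g$-ball and let $m\to\infty$, relying on $|u|\ge 1/\sqrt2$ on $B(x_0,r_0)$ to keep the weighted norms of $z_ju^m$ comparable. The $C^1$ estimate of Lemma \ref{lem:C1estimates} is then used only to verify that the correction does not spoil the derivative at $x_0$.
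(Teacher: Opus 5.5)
Your proof is correct. Part (1) is the argument the paper relies on; part (2) takes a different route from the one the paper cites.

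\textbf{Part (1).} The paper's proof is only a citation of \cite[Propositions 4.6 and 4.7]{DS1}, together with Lemma~\ref{lem:peak} and Proposition~\ref{prop:hormander}. Your part (1) is that argument. You take two peak sections, at $x_0$ and at $x_1$, and move them into a common $L^{\ell_0 m}$ by tensor powers. If neither quotient by $u^m$ separated the points, $|u|(x_0)/|u|(x_1)$ would have to be both larger and smaller than $1$. The case $x_0=x_1$ is excluded by the statement itself, not "trivial".

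\textbf{Part (2).} In the Donaldson--Sun argument, in the spirit of Lemma~\ref{lem:peak}, the coordinate sections at a regular point come from model sections on the flat tangent cone (coordinate functions times the Gaussian). These are corrected with a nonsingular weight, so the correction is only small, and nondegeneracy of the differential at $x_0$ is obtained quantitatively from derivative bounds such as Lemma~\ref{lem:C1estimates}.

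You instead use that Proposition~\ref{prop:hormander} holds for every $\varphi\in C_c^\infty(\mathcal R_{-1})$ with a constant depending only on $c$. At a point with a smooth neighbourhood this lets you use the classical weight $(n+1)\psi'\log(|z|^2+\epsilon^2)$. Its negative Hessian part lives on a fixed annulus and is $O(1)\,\omega_{-1}=O(\ell^{-1})\,\omega_{\ell+1}$, so $c=-\tfrac12$ works once $\ell_0 m$ is large. After the monotone weak limit $\epsilon\to0$, the correction $w_j$ vanishes to second order at $x_0$ exactly, so $d(v_j/u^m)(x_0)=dz_j$ with no estimate at all.

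This is shorter and more elementary. Its limitation is that it works only at metrically regular points. The grafting approach uses one piece of machinery for regular and singular points alike and gives quantitative control.

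Because the vanishing in your route is exact, your closing "obstacle" paragraph is superfluous. No annulus at scale $\ell^{-1/2}$ is involved. Nothing from $|u|^2\ge\tfrac12$ is needed beyond $u(x_0)\neq 0$. Lemma~\ref{lem:C1estimates} plays no role in protecting the derivative at $x_0$.
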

\begin{proof}
The argument proceeds as in \cite[Propositions 4.6 and 4.7]{DS1}; see also \cite[Proposition 5.9]{hallgrenKahlerRicciTangentFlows2023}, using Lemma \ref{lem:peak} and Proposition \ref{prop:hormander}.
\end{proof}

\begin{prop} \label{prop:complexvariety} After possibly adding additional components to $F$, $F:\Omega \to \mathbb{C}^N$ is a homeomorphism onto its image $Z \subseteq \mathbb{C}^N$, which is a normal complex analytic subvariety of $\mathbb{C}^N$. Moreover, $F$ maps $\mathcal{R}_{-1} \cap \Omega$ into the regular part $Z^{\operatorname{reg}}$ of $Z$. 
\end{prop}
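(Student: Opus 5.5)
The plan follows the Donaldson--Sun argument \cite[Section 4]{DS1}, \cite[Proposition 2.1]{DS2}; all inputs are already in hand. These are the properness and finite fibers of $F$ from Corollary \ref{cor:proper}, the separation and local embedding results of Lemma \ref{lem:separation}, and the $C^1$ bounds of Lemma \ref{lem:C1estimates}.

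\textbf{Step 1: the image is analytic.} Since $F:\Omega\to B$ is proper, continuous (indeed Lipschitz by $|\nabla F|\le C$) and holomorphic on $\Omega\cap\mathcal R_{-1}$, I first show that $Z=F(\Omega)$ is an analytic subvariety of $B$. The singular set has Minkowski codimension at least four, so $F(\Omega\cap\mathcal S)$ has Hausdorff dimension at most $2n-4$. The set $F(\Omega\cap\mathcal R_{-1})$ is locally a finite union of images of holomorphic maps with finite fibers. I plan to argue as in \cite{DS2} via Bishop's theorem: the closure of the $n$-dimensional analytic set $F(\Omega\cap \mathcal R_{-1})$ in $B\setminus F(\Omega\cap\mathcal S)$ has locally finite $2n$-dimensional volume. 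The volume bound follows from the Lipschitz bound together with the volume bound on $\Omega$. Shiffman/Bishop extension across a set of $\mathcal H^{2n-2}$-measure zero then makes $Z$ an $n$-dimensional analytic set.

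\textbf{Step 2: injectivity.} Next I add components to make $F$ injective. For each pair $x\ne y$ in $\overline\Omega'$, Lemma \ref{lem:separation}(1) gives a section separating them, and by continuity it separates nearby pairs. Compactness of $\overline{\Omega'}\times\overline{\Omega'}$ minus a neighborhood of the diagonal then gives finitely many additional components separating all pairs at definite distance. Near the diagonal I use the local embeddings of Lemma \ref{lem:separation}(2) at regular points. At singular points I use a Noetherian argument: the finite fibers of $F$ and the decreasing chain of analytic sets $\{(x,y): F(x)=F(y)\}$, closed under refinement, stabilize after finitely many steps. As in \cite[Proposition 4.6]{DS1}, the stabilized set must be the diagonal. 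After shrinking $\Omega$, $F$ is a continuous proper injection and hence a homeomorphism onto $Z$. By Lemma \ref{lem:separation}(2), again adding finitely many components via compactness on a slightly smaller domain, $F$ is an immersion at regular points. Since it is injective, $F(\Omega\cap\mathcal R_{-1})$ consists of smooth points of $Z$, so $F$ maps $\mathcal R_{-1}\cap\Omega$ into $Z^{\operatorname{reg}}$.

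\textbf{Step 3: normality, the main obstacle.} I would show that every bounded holomorphic function $g$ on $Z^{\operatorname{reg}}$ near a point extends holomorphically, following \cite{DS2,LiuSz2}. Pulling back, $g\circ F$ is holomorphic on $\mathcal R_{-1}\cap\Omega$. Multiplying by $u^m$ and a cutoff, I solve $\bar\partial$ with Proposition \ref{prop:hormander}, using a weight $\varphi$ of the form $(n+1)\log|F-F(x)|^2$ cut off suitably. The weight is chosen to force vanishing to high order, and the resulting section $s$ must satisfy $s/u^m=g\circ F$ up to an error that vanishes. Hence $g\circ F$ is a ratio of global sections, so it is a restriction of a function generated by the coordinate ring of the embedding, after possibly adding $s$ as a component. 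A cleaner route is to use normalization. Let $\nu:\tilde Z\to Z$ be the normalization. Continuous functions in $\mathcal O_X$ separate points of $\Omega$ by Lemma \ref{lem:separation}, and $\nu^{-1}(Z^{\rm reg})$ is dense, so $\nu$ is injective. Adding the finitely many sections needed to generate $\mathcal O_{\tilde Z}$ locally, with finiteness coming from coherence and properness, then makes $Z$ itself normal. The subtle point is verifying that weakly holomorphic functions on $Z$ correspond to elements of $\mathcal O_X$ and are realized by sections; I expect to check this with the $L^2$ estimate and the Lipschitz control of Lemma \ref{lem:C1estimates}.
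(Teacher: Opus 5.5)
Your architecture is the Donaldson--Sun / Liu--Sz\'ekelyhidi one that the paper invokes, and Step 1 is fine: apply Remmert to $\Omega\setminus F^{-1}F(\Omega\cap\mathcal S)$, then Shiffman's theorem, since $\mathcal H^{2n-1}(F(\Omega\cap\mathcal S))=0$; no volume bound is needed. The genuine gap is in Step 3. Your claim that the normalization $\nu:\tilde Z\to Z$ is injective ``because functions in $\mathcal O_X$ separate points of $\Omega$'' is a non sequitur. $F$ is already injective at that stage, and separating points of $\Omega$ says nothing about whether $Z$ has several local branches at $F(x)$.

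What excludes branches is a property of $\mathcal R_{-1}$ near $\mathcal S$, namely local connectedness. Since $d|_{\mathcal R_{-1}}$ is the length metric of $g$, two regular points of $B(x,\delta)$ are joined by a curve in $\mathcal R_{-1}\cap B(x,3\delta)$. This curve can be pushed off the proper analytic set $F^{-1}(Z^{\rm sing})\cap\mathcal R_{-1}$, and its image would then join two distinct branches inside $Z^{\rm reg}$, which is impossible. Equivalently, bounded holomorphic functions on $\mathcal R_{-1}\cap U$ extend continuously across $\mathcal S$; a localized form of Lemma~\ref{lem:C1estimates}, applied to $hu$ with $u$ the peak section of Lemma~\ref{lem:peak}, also gives this. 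Only with this input do weakly holomorphic germs of $Z$ pull back to elements of $\mathcal O_X$. Only then does adding finitely many local generators of the coherent sheaf $\nu_*\mathcal O_{\tilde Z}$ as components give a normal image with $F^*\mathcal O_Z=\mathcal O_X$. You flagged exactly this as ``the subtle point'' and left it open, but it is the whole content of normality. Your first route does not replace it: Proposition~\ref{prop:hormander} only allows weights in $C^\infty_c(\mathcal R_{-1})$, and it would in any case only match $g\circ F$ up to an error vanishing to finite order at $x$.

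Step 2 has the same blind spot near $\mathcal S$. The set $\mathcal R_{-1}\cap\overline{\Omega'}$ is not compact. So finitely many applications of Lemma~\ref{lem:separation}(2) cannot make $F$ an immersion at regular points accumulating on $\mathcal S$, nor separate pairs converging to the diagonal at a singular point. Your ``Noetherian'' chain $\{(x,y):F(x)=F(y)\}$ lives in $\Omega\times\Omega$, which is not yet an analytic space, so stabilization is not available. Nor can one simply reduce the generic degree on the images: a finite map of generic degree one can fail to be injective arbitrarily close to a point whose own fibre is a singleton. An example is $(a,b)\mapsto(a^2-b^2,\,a(a^2-b^2),\,b)$, a family of nodal curves degenerating to a cusp.

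A workable order is the following.
\begin{enumerate}
\item Use the local connectedness above to lift $F$ continuously to the normalization of its image.
\item Lower the generic degree of the lift by adding components from Lemma~\ref{lem:separation}(1) at generic points.
\item Conclude injectivity, because a finite degree-one map onto a normal, hence locally irreducible, space is injective.
\item Then add generators of $\nu_*\mathcal O_{\tilde Z}$ as above.
\end{enumerate}
The inclusion $F(\mathcal R_{-1}\cap\Omega)\subseteq Z^{\rm reg}$ should come last. The coordinates $v_j/u^m$ of Lemma~\ref{lem:separation}(2) lie in $\mathcal O_X=F^*\mathcal O_Z$, so they are holomorphic on $Z$ near $F(y)$. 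They therefore provide a holomorphic left inverse to a local parametrization of $Z$ there, which makes $Z$ smooth at $F(y)$.
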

\begin{proof}
    Given Corollary \ref{cor:proper} and Lemma \ref{lem:separation}, the proof proceeds as in \cite[Propositions 5.13 and 5.14]{hallgrenKahlerRicciTangentFlows2023} (which are based on arguments in \cite{DS2,LiuSz2}). In our setting, we use Lemma \ref{lem:C1estimates} to conclude the local Lipschitz properties of $L^2$ holomorphic sections and hence the Lipschitz property of the map $F:\Omega\rightarrow \mathbb C^N$. 
\end{proof}

\subsection{Further regularity results}
\label{sec:orbifold}

We now show that $\omega_{-1}$ has bounded local potentials and is a K\"ahler current on $X=\mathcal{X}_{-1}$. 

\begin{prop}\label{prop:bddlocal}
     For any $x_0 \in \mathcal{X}_{-1}$, there exists $r>0$ and $\varphi \in \mathrm{PSH}( B(x_0,r))\cap L^{\infty}(B(x_0,r))$ such that $\omega_{-1}=\sqrt{-1}\partial \overline{\partial}\varphi $ on $B(x_0,r)$. Moreover, there exists a neighborhood $U$ of $x_0$ and a holomorphic embedding $U \hookrightarrow \mathbb{C}^N$ such that
\begin{equation}
\omega_{-1} \ge C_p^{-1}\omega_{\mathbb{C}^N}\big|_{U}.
\end{equation}
\end{prop}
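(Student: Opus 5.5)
The natural potential is $\varphi = -\frac{1}{\ell_0}\log |u|^2_{h_{\ell_0}}$, built from the peak section $u\in H^0(\mathcal{R}_{-1},L^{\ell_0})$ of Lemma~\ref{lem:peak}, which satisfies $|u|^2_{h_{\ell_0}}\ge \frac12$ on $B(x_0,r_0)$. Since $h$ is the metric on $K^{-1}_{\mathcal{R}_{-1}}$ induced by the weighted volume form, its curvature is $\Ric(\omega)+\ii\partial\bar\partial f=\omega_{-1}$ by the shrinker equation at $t=-1$. Hence on $B(x_0,r_0)\cap\mathcal{R}_{-1}$, where $u$ does not vanish,
\[
\ii\partial\bar\partial\Big(-\tfrac{1}{\ell_0}\log|u|^2_{h_{\ell_0}}\Big)=\omega_{-1}.
\]
The $C^0$ estimate, Lemma~\ref{lem:C1estimates}\ref{lem:C1estimates1}, bounds $|u|$ above on a slightly smaller ball, and the peak property bounds it below by $\frac12$, so $\varphi$ is bounded there. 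By the $C^1$ estimate, Lemma~\ref{lem:C1estimates}\ref{lem:C1estimates2}, $\varphi$ is also Lipschitz, and in particular it extends continuously to $B(x_0,r)$.

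To see that $\varphi$ is plurisubharmonic on the normal variety $B(x_0,r)\cong F(B(x_0,r))\subseteq Z$ of Proposition~\ref{prop:complexvariety}, note that $\varphi$ is continuous, psh on $Z^{\reg}\supseteq F(\mathcal{R}_{-1})$, and bounded. The metric singular set has Minkowski codimension $\ge4$, so its image is a closed set of Hausdorff real codimension $\ge 4>2$, hence pluripolar-removable. By the Grauert--Remmert extension theorem for bounded psh functions across analytic subsets (or across closed sets of zero $(2n-2)$-dimensional measure), together with the normality of $Z$ and continuity, $\varphi$ is weakly psh and therefore psh on the normal space. In that case $\omega_{-1}=\ii\partial\bar\partial\varphi$ holds as currents, because $\omega_{-1}$ charges no mass on the singular set: its trace measure is $\omega_{-1}^n$-absolutely continuous and of finite mass.

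For the lower bound, take $U=\Omega$ with the embedding $F=(s_j/u^{\ell_j})$ from Corollary~\ref{cor:proper} and Proposition~\ref{prop:complexvariety}. Each component $F^j=s_j/u^{\ell_j}$ is a ratio of holomorphic sections of $L^{\ell_0\ell_j}$. Lemma~\ref{lem:C1estimates} bounds $|\nabla s_j|$ and $|s_j|$, and $|u|\ge 1/\sqrt2$ with a bounded gradient, so on $\mathcal{R}_{-1}\cap U$ we get
\[
|\partial F^j|_{\omega_{-1}}\le C.
\]
Equivalently, $\ii\partial F^j\wedge\overline{\partial F^j}\le C\omega_{-1}$, and summing over $j$ gives $F^*\omega_{\C^N}\le NC\,\omega_{-1}$ on the regular part. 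Since $\omega_{\C^N}|_U$ is defined via the embedding, this is exactly the asserted inequality on $\mathcal{R}_{-1}\cap U$, with $C_p=NC$. The inequality of currents then extends across the singular set because neither side charges it: both have locally bounded potentials, $|z|^2\circ F$ and $C\varphi$ respectively, and the singular set is pluripolar.

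I expect the main obstacle to be making the psh extension of $\varphi$ rigorous across the singular locus, and checking that the current identity holds on all of $B(x_0,r)$ rather than only on the regular part. My first approach would be to use that $Z^{\mathrm{sing}}$, and hence $F(\mathcal{S})\subset Z^{\mathrm{sing}}$ (which itself needs the metric-versus-analytic comparison), is analytic, and then apply the Riemann-type extension for bounded psh functions on normal spaces. If the inclusion $F(\mathcal{S})\subset Z^{\mathrm{sing}}$ is unavailable, I would instead use the Hausdorff-measure removability theorem for bounded psh functions together with a local resolution of $Z$.
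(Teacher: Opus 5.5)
Your proposal is correct and follows essentially the same route as the paper. The ingredients match: the peak-section potential $-\frac{1}{\ell_0}\log|u|^2_{h_{\ell_0}}$, boundedness and continuity from Lemma~\ref{lem:C1estimates}, extension of plurisubharmonicity across the singular set, and the lower bound from the $C^1$ bounds on the components $s_j/u^{\ell_j}$ of the embedding. For the extension step the paper simply cites Fornaess--Narasimhan, which is also the theorem behind your step ``weakly psh, therefore psh''.

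Do not use your ``first approach'' via $F(\mathcal{S})\subset Z^{\mathrm{sing}}$. That inclusion is the content of Proposition~\ref{prop:regularset}, whose proof uses the bounded potential from this very proposition, so the argument would be circular. Your fallback, Hausdorff-dimension removability, is the non-circular route. It works because $F$ is Lipschitz, so $\mathcal{H}^{2n-2}(F(\mathcal{S}))=0$. Moreover $Z^{\mathrm{sing}}\subseteq F(\mathcal{S})$, since $F(\mathcal{R}_{-1})\subseteq Z^{\mathrm{reg}}$.
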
 
\begin{proof} Lemma \ref{lem:peak} yields $k\in \mathbb{N}$, $r>0$, and $s\in H^0(\mathcal{R}_{-1},L^{k})$ such that $\int_{B(x_0,2r) \cap \mathcal{R}_{-1}}|s|_{h^k}^2 \omega^n <\infty$ and $|s|_{h^k}^2 \geq \frac{1}{2}$ on $B(x_0,r)\cap \mathcal{R}_{-1}$. By Lemma \ref{lem:C1estimates}, $\varphi:= -\frac{1}{k}\log |s|_{h^k}^2$ extends to a continuous function on $B(x_0,r)$. Moreover, $\sqrt{-1}\partial \overline{\partial}\varphi = \omega_{-1}$ on $B(x_0,r)\cap \mathcal{R}$, so by \cite{FoNa}, $\varphi$ is plurisubharmonic on all of $B(x_0,r)$ (in the sense that it extends to a plurisubharmonic function on local embeddings into $\mathbb{C}^N$). 

The existence of a local holomorphic embedding follows from Proposition~\ref{prop:complexvariety}. The lower bound of $\omega_{-1}$ by the restriction of the Euclidean metric follows from the $C^1$-estimate for holomorphic pluri-anticanonical sections Lemma~\ref{lem:C1estimates}, together with the lower bound for peaked pluri-anticanonical sections Lemma~\ref{lem:peak}.
\end{proof}

Next, we prove that the metric and algebraic singular sets of $\mathcal{X}_{-1}$ coincide, using a modification of the argument in \cite{LiuSz1}. In the following, we let $X^{\reg}$ denote the regular part of $X$ in the complex analytic sense. By Proposition \ref{prop:complexvariety}, we know that $\mathcal{R}_{-1}\subset X^\reg$.

\begin{prop} \label{prop:regularset} $X^{\operatorname{reg}}=\mathcal{R}_{-1}$.
\end{prop}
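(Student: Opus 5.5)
Since $\mathcal{R}_{-1}\subset X^{\reg}$ is already known from Proposition \ref{prop:complexvariety}, the task is the reverse inclusion: a point $x_0\in X^{\reg}$ that is metrically singular must be ruled out. I would adapt the argument of \cite{LiuSz1}, which compares volume densities with analytic data. Suppose $x_0\in X^{\reg}\cap\mathcal{S}$. Near $x_0$, $X$ is a complex manifold, so there are holomorphic coordinates $z=(z_1,\dots,z_n)$ on a neighborhood $U$ of $x_0$, and the holomorphic volume form $\Omega=dz_1\wedge\cdots\wedge dz_n$ gives a nonvanishing local section $\Omega^{-1}$ of $K_X^{-1}$ over $U$. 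On $U\cap\mathcal{R}_{-1}$ we can write
\begin{equation*}
(2\pi)^{-n}e^{-f}\frac{\omega^n}{n!}=e^{-\psi}\,\sqrt{-1}^{n^2}\Omega\wedge\overline{\Omega},
\end{equation*}
with $\psi=-\log|\Omega^{-1}|^2_h$ up to sign conventions, and $\sqrt{-1}\partial\overline{\partial}\psi=\omega$ on $U\cap\mathcal{R}_{-1}$.

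The first step is to show that $\psi$ is bounded near $x_0$. Apply Lemma \ref{lem:peak} to obtain $s\in H^0(\mathcal{R}_{-1},L^{k})$ with $|s|^2_{h^k}\ge\frac12$ near $x_0$. Since $\mathcal{S}$ has codimension at least four and $s/(\Omega^{-1})^{k}$ is a bounded holomorphic function on $U\cap\mathcal{R}_{-1}$ (boundedness comes from Lemma \ref{lem:C1estimates}), it extends holomorphically to $U$ by Riemann extension, with $s=g\cdot(\Omega^{-1})^k$. Then $\psi=\frac1k(\log|s|^2_{h^k}-\log|g|^2)$, which gives an upper bound on $e^{-\psi}$, and hence on the volume density relative to Lebesgue measure in $z$. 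For a lower bound, the volume form $\omega^n/n!$ dominates $c\,\omega_{\mathbb{C}^n}^n$ by the Kähler current lower bound in Proposition \ref{prop:bddlocal}, so $e^{-\psi}\ge c>0$. Thus $\omega^n=e^{F}\omega_{\mathbb{C}^n}^n$ on $U\cap\mathcal{R}_{-1}$ with $F$ bounded, and the lower bound also gives $\omega\ge c\,\omega_{\mathbb{C}^n}$.

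Combining $\omega\ge c\,\omega_{\mathbb{C}^n}$ with the bounded determinant yields $c\,\omega_{\mathbb{C}^n}\le\omega\le C\omega_{\mathbb{C}^n}$ on $U\cap\mathcal{R}_{-1}$. The metric $\omega$ therefore has a bounded potential on the ball $U$ (Proposition \ref{prop:bddlocal}), is uniformly equivalent to the Euclidean metric, and satisfies the shrinker equation there. Rewritten in the coordinates $z$, this is the complex Monge--Ampère equation
\begin{equation*}
\omega^n=e^{-\psi'}\omega_{\mathbb{C}^n}^n\quad\text{with } \sqrt{-1}\partial\overline\partial\psi'=\omega-\sqrt{-1}\partial\overline{\partial}f\text{-type terms},
\end{equation*}
or, more cleanly, $\Ric(\omega)=\omega-\sqrt{-1}\partial\overline\partial f$. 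Since $\mathcal{S}$ has zero capacity (Minkowski codimension at least four), the equation holds weakly across it. Because $\nabla f$ is bounded and $\omega$ is uniformly Euclidean, the usual Evans--Krylov/Calabi route, or a regularity result for Monge--Ampère with uniformly equivalent metrics in the spirit of \cite{LiuSz1}, gives smoothness. Concretely, set $\omega=\sqrt{-1}\partial\overline\partial\varphi$; then $\log\det(\varphi_{i\bar j})=-\varphi+f+\text{pluriharmonic}$ holds weakly with a Lipschitz right-hand side. Uniform ellipticity then yields $C^{2,\alpha}$ regularity via Evans--Krylov, and bootstrapping gives a smooth metric near $x_0$. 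It follows that $x_0$ has a neighborhood isometric to a smooth Kähler ball, so its tangent cone is $\mathbb{C}^n$ and $x_0\in\mathcal{R}_{-1}$ by the $\epsilon$-regularity for limits (volume density $1$), which is a contradiction.

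The main obstacle is the uniform equivalence step, specifically justifying that the weak equation holds across $\mathcal{S}$ and running Evans--Krylov when $\omega$ is only known to be uniformly equivalent to the Euclidean metric off a small set. If that proves delicate, I would instead follow \cite{LiuSz1} directly: show the tangent cone at $x_0$ is $\mathbb{C}^n$ by rescaling. The rescaled coordinates give a holomorphic chart on the cone with Euclidean-comparable volume, which forces the volume density $\Theta(x_0)=1$; Bamler's $\epsilon$-regularity then shows $x_0$ is regular.
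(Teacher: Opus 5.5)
There is a genuine gap in your first step: the bound you extract from Lemma~\ref{lem:peak} goes in the wrong direction.

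With your normalization $e^{-\psi}=|\Omega^{-1}|_h^2$ and $s=g\,(\Omega^{-1})^{k}$, one has $|s|_{h^k}^2=|g|^2e^{-k\psi}$. Hence $\psi=\frac1k\bigl(\log|g|^2-\log|s|^2_{h^k}\bigr)$. The formula you wrote has the opposite sign; it would give $\sqrt{-1}\partial\overline{\partial}\psi=-\omega$, contradicting your own normalization.

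Consequently, $|s|^2_{h^k}\ge\frac12$ together with $|g|\le C$ only gives $e^{-\psi}\ge c$. That is a \emph{lower} bound on $\omega^n/\omega_{\mathbb C^n}^n$, which you already have from $\omega\ge c\,\omega_{\mathbb C^n}$. The upper bound $e^{-\psi}\le C$, which you need for $\omega\le C\omega_{\mathbb C^n}$, is equivalent to $g(x_0)\neq 0$. Nothing in your argument rules out $g(x_0)=0$.

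This is not a side issue. The combination $\omega\ge c\,\omega_{\mathbb C^n}$ with $\omega^n/\omega_{\mathbb C^n}^n\to\infty$ at $x_0$ (equivalently, $dz_1\wedge\cdots\wedge dz_n$ having positive Lelong number with respect to $\omega$) is exactly how the paper shows a metrically singular point of $X^{\reg}$ would behave. So your argument assumes away the case that must be excluded. Your rescaling fallback has the same problem, since it presupposes Euclidean-comparable volume. Separately, boundedness of $g$ does not follow from Lemma~\ref{lem:C1estimates} alone, because $|g|=|s|_{h^k}e^{k\psi/2}$; it does follow once you use $\omega\ge c\,\omega_{\mathbb C^n}$.

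The missing step can be supplied using the codimension of $\mathcal S$:
\begin{itemize}
\item $g$ has no zeros on $U\cap\mathcal R_{-1}$, because $|s|_{h^k}^2\ge\frac12$ there and $h$ is a smooth positive metric there. So $g^{-1}(0)\subseteq U\setminus\mathcal R_{-1}$.
\item A nonempty zero set of a holomorphic function is a hypersurface of real dimension $2n-2$.
\item $U\setminus\mathcal R_{-1}$ has Euclidean Hausdorff dimension at most $2n-4$, since the coordinates are Lipschitz with respect to $d$.
\end{itemize}
Hence $g$ is nowhere zero near $x_0$, and $e^{-\psi}\le C$. This is the same content as the paper's extension, via \cite{GrRe}, of the pluriharmonic function $\log|du_1\wedge\cdots\wedge du_n|^2_{\omega}+f-\varphi$; in your notation that function is $\frac1k\log|g|^2$ up to a constant.

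After this repair your route is viable but differs from the paper's. The paper gets an immediate contradiction: boundedness of that pluriharmonic function is incompatible with the positive Lelong number forced by the tangent-cone argument of \cite[Proposition 6.9]{hallgrenKahlerRicciTangentFlows2023}, and no PDE regularity is needed. You instead replace the Lelong-number input with the uniform equivalence $c\,\omega_{\mathbb C^n}\le\omega\le C\omega_{\mathbb C^n}$, followed by Monge--Amp\`ere regularity and $\epsilon$-regularity.

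For the regularity part, classical Evans--Krylov needs $C^{1,1}$ a priori, whereas you only have $\Delta\varphi\in L^\infty$. You should instead invoke the $C^{2,\alpha}$ theory for complex Monge--Amp\`ere used in Proposition~\ref{prop:orbifoldreg} (e.g.\ \cite{zeng2016}). The right-hand side is H\"older because $f$ is Lipschitz in the coordinates.
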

\begin{proof} Suppose by way of contradiction that $x \in X^{\operatorname{reg}} \setminus \mathcal{R}_{-1}$, and let $(u_i)_{i=1}^n$ be holomorphic coordinates on a neighborhood $U$ of $x$. We may argue as in \cite[Proposition 6.9]{hallgrenKahlerRicciTangentFlows2023} to then conclude that the holomorphic $(n,0)$-form $du_1 \wedge \cdots \wedge du_n$ has strictly positive Lelong number at $x$:
\begin{equation*}
    \lim_{r \to 0} \inf_{y\in B(x,r)\cap \mathcal{R}_{-1}} \frac{\log |du_1 \wedge \cdots \wedge du_n|_{\omega_{-1}}(y)}{\log|u(y)|}>0.
\end{equation*}
After possibly shrinking $U$, we may let $\varphi$ be a K\"ahler potential for $\omega_{-1}$ near $x$ as in Proposition \ref{prop:bddlocal}. Then
\begin{equation*}
    v := \log |du_1 \wedge \cdots \wedge du_n|_{\omega_{-1}}^2 + f - \varphi,
\end{equation*}
satisfies \begin{equation*} \sqrt{-1}\partial \overline{\partial} v = \Ric(\omega_{-1}) + \sqrt{-1}\partial \overline{\partial} (f-\varphi)=0
\end{equation*} 
on $U\cap \mathcal{R}_{-1}$. Because $U\setminus \mathcal{R}_{-1}$ has Hausdorff dimension at most $2n-4$ with respect to the Euclidean metric in the coordinates $(u_i)_{i=1}^n$, and because $v$ is pluriharmonic on $U \cap \mathcal{R}_{-1}$, it follows from \cite[Th\'eor\`eme 2]{GrRe} that $v$ extends to a pluriharmonic function on all of $U$, which is in particular bounded. Because $f$ and $\varphi$ are bounded on $U$, this implies $\log |du_1 \wedge \cdots \wedge du_n|_{\omega_{-1}}$ is bounded on $U \cap \mathcal{R}_{-1}$, contradicting the fact that $du_1 \wedge \cdots \wedge du_n$ has positive Lelong number.
\end{proof}

\begin{prop} \label{prop:klt} $X$ has klt singularities. Moreover, at every point of $X$, the tangent cone is unique, and is a K\"ahler Ricci-flat cone on an affine variety with log terminal singularities.
\end{prop}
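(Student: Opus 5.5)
Both claims follow the scheme of \cite{HS} (and of \cite{DS2} for uniqueness of cones), made local at a point $x\in X$ using the analytic package already in place: Proposition~\ref{prop:hormander}, Lemma~\ref{lem:C1estimates}, the peak sections of Lemma~\ref{lem:peak}, the bounded potentials of Proposition~\ref{prop:bddlocal}, and Proposition~\ref{prop:regularset}.

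\emph{Klt property.} Fix $x\in X$ and a Stein neighborhood $U$ on which we have a local generator. The peak section $u\in H^0(\mathcal R_{-1},L^{\ell_0})$ of Lemma~\ref{lem:peak} satisfies $|u|_{h_{\ell_0}}^2\ge \tfrac12$ near $x$. Hence $\sigma:=u^{-1}$ is a nowhere-vanishing holomorphic section of $K_X^{\ell_0}$ on $\mathcal R_{-1}\cap U$, and by Proposition~\ref{prop:regularset} this set is $X^{\reg}\cap U$. So $K_X$ is $\mathbb Q$-Cartier near $x$, and $\sigma$ extends as a local generator across the normal singular set.

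The klt condition is equivalent to local integrability of $(\sigma\wedge\bar\sigma)^{1/\ell_0}$ near $x$. With $h$ induced by $(2\pi)^{-n}e^{-f}\omega^n/n!$,
\[
(\sigma\wedge\bar\sigma)^{1/\ell_0}=|\sigma|_{h^{-\ell_0}}^{2/\ell_0}\,(2\pi)^{-n}e^{-f}\tfrac{\omega^n}{n!},
\qquad |\sigma|_{h^{-\ell_0}}^{2/\ell_0}=|u|_{h_{\ell_0}}^{-2/\ell_0}\le 2^{1/\ell_0}.
\]
Since $f$ is locally bounded and the $\omega$-volume of metric balls is finite, this adapted measure has finite mass on $U\cap\mathcal R_{-1}$. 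The singular set has zero $\omega^n$-measure (Minkowski codimension four), and its analytic complement is also negligible. Therefore the integral over all of $U$ is finite, and $X$ is klt. This step is routine.

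\emph{Tangent cones.} Let $C(Y)$ be any tangent cone at $x$, obtained as in the proof of Lemma~\ref{lem:peak} by rescaling $\ell_i\omega$. I follow \cite{DS2}.

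First, $C(Y)$ is a normal affine variety with a Ricci-flat K\"ahler cone metric. This is because the polarized limit carries $L^2$ estimates, which pass to the limit from Proposition~\ref{prop:hormander} and Lemma~\ref{lem:C1estimates} applied to the rescaled metrics $\omega_{\ell+1}$ with constants independent of $\ell$. Ricci-flatness comes from the vanishing of the rescaled soliton term $\ell^{-1}\nabla^2 f$.

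Second, I form the filtration of the local ring $\cO_{X,x}$ by vanishing order, measured via growth rates $|s|=O(r^{d})$ of holomorphic functions $s=s'/u^m$. These growth rates are controlled by the three-circle type argument on the sequence of rescalings. Then $C(Y)$ is identified with a two-step degeneration: $X$ degenerates to $W=\operatorname{Spec}\operatorname{gr}\cO_{X,x}$, and $W$ degenerates equivariantly to $C(Y)$.

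Third, uniqueness follows because the valuation determined by the Reeb field minimizes the normalized volume \cite{LiuSz2,HS}. Minimizers are unique up to scaling for klt singularities, and the K-polystable degeneration of the resulting quasi-regular or irregular cone is unique. Together these determine $C(Y)$ independently of the rescaling sequence. The log terminal property of $C(Y)$ follows by the same integrability argument applied on the cone, or from K-semistability.

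\emph{Main obstacle.} The hardest step is the second: establishing the three-circle / degree-filtration estimates uniformly along rescalings, since $\omega$ is not Ricci-flat. I would handle this by treating the soliton data as an $O(\ell^{-1})$ perturbation at scale $\ell^{-1/2}$. Concretely, I would use the weight $e^{-f}$ with $|\nabla f|$ locally bounded, so that after rescaling the Hessian and gradient terms become errors that vanish in the limit. With that in hand, the argument of \cite{DS2} goes through, with \cite{LiuSz2} as the closest model since it treats the case without a Ricci upper bound.
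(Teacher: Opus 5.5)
Your klt argument is the paper's argument. The peak section of Lemma~\ref{lem:peak} gives a local generator of $\ell_0K_X$ whose adapted measure is bounded by a multiple of $e^{-f}\omega^n$. That measure has finite local mass, so \cite[Lemma 6.4]{EGZ} gives klt. The one-sided bound $|u|^2_{h_{\ell_0}}\ge\frac12$ you use is all that is needed for this.

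For the tangent cones the paper is terser. It takes existence of tangent cones, with smooth convergence on the regular part, from \cite[Theorem 2.18]{bamler3}. It then observes that its own arguments (Hörmander, $C^1$ estimates, peak sections, the klt integrability argument) run directly on any tangent cone. Finally it invokes \cite[Section 3]{DS2} wholesale for uniqueness: the degree filtration, the two-step degeneration, and DS2's own rigidity argument based on connectedness of the set of tangent cones.

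Your first two steps are the same in substance. Passing the estimates through the rescaled sequence instead of rerunning them on the cone is a minor technical variant.

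Your third step is a genuinely different route: you replace DS2's rigidity argument with the algebraic normalized-volume theory. That works, but it imports several external theorems, and you should state them as such:
\begin{itemize}
\item Li--Xu's criterion that a valuation whose associated graded Fano cone is K-semistable minimizes $\widehat{\Vol}$. Here one needs $W$ to be K-semistable. This holds because $W$ specially degenerates to $C(Y)$, which is K-polystable by Collins--Sz\'ekelyhidi since it carries a Ricci-flat cone metric.
\item Xu--Zhuang's uniqueness of the minimizer up to scaling. The Ricci-flat normalization of the Reeb field fixes the scale.
\item Li--Wang--Xu's uniqueness of the K-polystable degeneration.
\item Uniqueness of Ricci-flat cone metrics on a fixed Fano cone up to automorphism.
\end{itemize}
The references \cite{LiuSz2,HS} you cite do not prove the minimization statement. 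Your route buys a stronger conclusion: the cone depends only on the analytic germ $(X,x)$. The paper's route needs nothing beyond checking DS2's analytic inputs in the soliton setting.

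One correction in your first step. Since $\nabla^2 f=g-\Ric$ is not locally bounded near the singular set, $\ell^{-1}|\nabla^2 f|_g$ is only $O(1)$ where $r_{\Rm}\sim\ell^{-1/2}$, so the Hessian term is not pointwise small after rescaling. Ricci-flatness of the cone instead comes from the local bound on $|\nabla f|$, which gives $|\nabla f|_{\ell g}=O(\ell^{-1/2})$, together with smooth convergence on the regular part; or take it directly from Bamler. In the estimates themselves, Hessian terms are handled by integration by parts, as in Section~\ref{sec:Hormander}, not as vanishing errors. You do not need to redo this, since Proposition~\ref{prop:hormander} and Lemma~\ref{lem:C1estimates} already hold uniformly at scale $\ell^{-1/2}$.
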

\begin{proof}
The property that  $X$ has log terminal singularities follows from Lemma \ref{lem:peak}. Indeed, at each point $x\in X$, there exists $\ell=\ell(x)>0$ and a holomorphic section $u$ of $K_{X}^{-\ell}$ in neighborhood $U_x$ of $x$ such that on $U_x\cap X^{\reg}$, we have 
\begin{equation}\label{eq--two sided bound}
    C^{-1}\leq |u|_{h^{\ell}}\leq C.
\end{equation}
Let $s$ be the dual holomorphic section of $u$ in $H^0(U_x, K_X^\ell)$. By the definition of $h^\ell$ and \eqref{eq--two sided bound}, we know that on $U_x$,
\begin{equation}
C^{-1}\omega_X^n \le \left((\ii)^{n^2\ell}s\wedge \bar s\right)^{\frac{1}{\ell}} \le C\omega_X^n .
\end{equation}
Since $\omega_X^n$ has finite local volume, by \cite[Lemma 6.4]{EGZ} we know that $X$ has klt singularities near $x$. Moreover, there exists $p=p(x)>1$ such that for some locally smooth K\"ahler metric $\omega_0$ on $U_x$, we have
\begin{equation}
    \frac{\omega_X^n}{\omega_0^n} \in L^p(U_x,\omega_0^n).
\end{equation}

By \cite[Theorem 2.18]{bamler3}, we know that tangent cones exist at every point of $X$ in the Gromov–Hausdorff sense, and that the convergence is smooth on the regular part. We note that the arguments used in this paper also apply to any tangent cone of $X$. Therefore, we conclude that every tangent cone of $X$ is a K\"ahler Ricci-flat cone over an affine variety with log terminal singularities (see also \cite{hallgrenKahlerRicciTangentFlows2023}). The uniqueness of the tangent cone then follows from the same argument as in \cite[Section 3]{DS2}.
\end{proof}

In the following, we show that $\omega_X$ is a smooth orbifold metric on the orbifold locus $X^{\mathrm{orb}}$ of $X$. We first recall the definition and note that, by \cite[Lemma~5.8]{graf2020} and \cite[Proposition 9.3]{GKK}, if $X$ is a complex analytic variety with klt singularities, then $X \setminus X^{\mathrm{orb}}$ is contained in an analytic subvariety of codimension at least $3$.

\begin{definition}
\begin{enumerate}
    \item $X^{\mathrm{orb}}$ denotes the set of $x \in X$ which admit a neighborhood biholomorphic to a neighborhood of the origin of the form
\[
\mathbb{C}^{n} / G,
\]
where $G$ is a finite subgroup of $GL(n,\mathbb{C})$ that does not contain any reflections.
\item A K\"ahler form $\omega$ on $X^{\mathrm{orb}}$ is called a smooth orbifold K\"ahler metric if, for every orbifold chart modeled on $\mathbb{C}^n/G$ with local covering map 
\[
p:\mathbb{C}^n \to \mathbb{C}^n/G,
\]
the pullback $p^*\omega$ extends to a smooth K\"ahler form on a neighborhood of the origin in $\mathbb{C}^n$.
\end{enumerate}
\end{definition}

\begin{prop} \label{prop:orbifoldreg}
    $\omega_X$ is a smooth orbifold metric on the orbifold locus $X^{\mathrm{orb}}$ of $X$.
\end{prop}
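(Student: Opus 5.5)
Fix $x\in X^{\mathrm{orb}}$ with an orbifold chart $p:\tilde U\to U$, $\tilde U\subset\mathbb{C}^n$ a ball around $0$, $U=\tilde U/G$, $G\subset GL(n,\mathbb{C})$ finite without reflections. Since $G$ has no reflections, the branch locus of $p$ has codimension at least two. By Proposition~\ref{prop:regularset}, $\mathcal R_X\cap U = X^{\reg}\cap U$, and $p$ is étale over $U\setminus \operatorname{Sing}(U)$. Hence $\tilde\omega:=p^*\omega_X$ is a smooth Kähler form on $\tilde U\setminus p^{-1}(\operatorname{Sing}(U))$, the complement of an analytic set $\Sigma$ of codimension $\ge 2$. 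The plan is to show that $\tilde\omega$ is a Kähler--Ricci soliton type solution of a complex Monge--Ampère equation on all of $\tilde U$ with bounded potential and uniformly equivalent to the Euclidean metric, and then to bootstrap.

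\textbf{Step 1 (equation and bounded data).} By Proposition~\ref{prop:bddlocal}, near $x$ we have $\omega_X=\ii\partial\pp\varphi$ with $\varphi$ bounded and plurisubharmonic, so $\tilde\varphi=\varphi\circ p$ is bounded psh on $\tilde U$ and $\tilde\omega=\ii\partial\pp\tilde\varphi$ on $\tilde U$ (bounded psh functions extend across $\Sigma$, and the current $\ii\partial\pp\tilde\varphi$ puts no mass on $\Sigma$). On $\tilde U\setminus\Sigma$ the shrinker equation $\Ric(\omega)=\omega-\ii\partial\pp f$ gives $\ii\partial\pp\big(\log(\tilde\omega^n/\omega_{\mathbb{C}^n}^n)+\tilde\varphi-\tilde f\big)=0$. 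So $v:=\log(\tilde\omega^n/\omega_{\mathbb{C}^n}^n)+\tilde\varphi-\tilde f$ is pluriharmonic off $\Sigma$. Using the proof of Proposition~\ref{prop:klt}, the two-sided bound \eqref{eq--two sided bound} for a local generator $u$ of $K_X^{-\ell}$ pulls back to a nonvanishing section of $K_{\tilde U}^{-\ell}$, since $p$ is étale in codimension one and $K_{\tilde U}=p^*K_U$. This gives $C^{-1}\le \tilde\omega^n/\omega_{\mathbb{C}^n}^n\le C$ off $\Sigma$. Then $v$ is bounded and pluriharmonic off codimension two, so it extends pluriharmonically across $\Sigma$ (as in Proposition~\ref{prop:regularset}, by \cite{GrRe}). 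We obtain
\[
(\ii\partial\pp\tilde\varphi)^n=e^{v+\tilde f-\tilde\varphi}\,\omega_{\mathbb{C}^n}^n
\]
on $\tilde U$ in the pluripotential sense, with $v$ smooth, $\tilde f$ Lipschitz (since $f$ is locally Lipschitz and $p$ is Lipschitz), and $\tilde\varphi$ bounded.

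\textbf{Step 2 (uniform equivalence).} The Kähler current property (Theorem~\ref{thm:complexstructure}\ref{thm:complexstructure2} / Proposition~\ref{prop:bddlocal}) gives $\omega_X\ge c\,\omega_{\mathbb{C}^N}|_U$ for a local embedding $U\hookrightarrow\mathbb{C}^N$. The embedding coordinates pull back to $G$-invariant holomorphic functions $\tilde F$ on $\tilde U$, so $\tilde\omega\ge c\,\ii\partial\pp|\tilde F|^2$. This only degenerates along the ramification locus, so it does not by itself give $\tilde\omega\ge c\,\omega_{\mathbb{C}^n}$. \emph{This is the main obstacle.} I would first try a Chern--Lu / Schwarz-lemma argument on $\tilde U\setminus\Sigma$ for the identity map $(\tilde U,\tilde\omega)\to(\tilde U,\omega_{\mathbb{C}^n})$, using $\Ric(\tilde\omega)=\tilde\omega-\ii\partial\pp\tilde f\ge -C\tilde\omega$. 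This requires a bound on $\ii\partial\pp \tilde f$; the lower bound on $\Ric$ off $\Sigma$ would come from an estimate of the complex Hessian of $f$ near singular points, which is doubtful.

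A cleaner route is the following. The volume bound from Step 1 gives $\tr_{\omega_{\mathbb{C}^n}}\tilde\omega\le C(\tr_{\tilde\omega}\omega_{\mathbb{C}^n})^{n-1}$, so a bound on $\tr_{\tilde\omega}\omega_{\mathbb{C}^n}$ suffices. For that bound I would apply the maximum principle to $\log\tr_{\tilde\omega}\omega_{\mathbb{C}^n}-A\tilde\varphi+\delta\log|\sigma|^2$, where $\sigma$ cuts out $\Sigma$, in the Chern--Lu form. Here the bisectional curvature of $\omega_{\mathbb{C}^n}$ vanishes, and only a lower Ricci bound of $\tilde\omega$ is needed. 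That lower bound follows from $\Ric=\tilde\omega-\ii\partial\pp \tilde f$ provided $\ii\partial\pp \tilde f\le C\tilde\omega+\ii\partial\pp\psi$ for a bounded psh correction $\psi$. If this fails, I would instead use $\Ric(\tilde\omega)=-\ii\partial\pp(v+\tilde f-\tilde\varphi)$ and rewrite the equation in terms of the potential $\tilde\varphi-\tilde f$ so that the right-hand side becomes smooth. The term $\delta\log|\sigma|^2$ handles the noncompactness of $\tilde U\setminus\Sigma$, and then $\delta\to0$.

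\textbf{Step 3 (bootstrap).} Once $C^{-1}\omega_{\mathbb{C}^n}\le\tilde\omega\le C\omega_{\mathbb{C}^n}$ holds on $\tilde U\setminus\Sigma$, the function $\tilde\varphi$ has bounded Laplacian on $\tilde U$, because $\Sigma$ has codimension $\ge2$ and so carries no mass. Hence $\tilde\varphi\in W^{2,p}$ for all $p<\infty$, and $\tilde f$ is Lipschitz. The function $\tilde f$ satisfies $\Delta_{\tilde\omega}\tilde f=n-R$ off $\Sigma$, with $R$ bounded because of the identity \eqref{eq--relation between nabla f and f}, so $\tilde f$ solves a uniformly elliptic equation with bounded measurable coefficients across a removable set. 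Evans--Krylov (or Calabi $C^3$) applied to the Monge--Ampère equation, alternated with Schauder estimates for $\tilde f$, then makes $\tilde\varphi$ and $\tilde f$ smooth on $\tilde U$. Therefore $p^*\omega_X$ extends to a smooth Kähler form near $0$.
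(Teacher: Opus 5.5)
\textbf{Steps 1 and 3.} These are essentially sound and close to the paper.
\begin{itemize}
\item For the volume bound, the paper uses the Grauert--Remmert extension of the pluriharmonic function $\log(p^*\omega_X^n/\omega_{\mathbb{C}^n}^n)+p^*(\varphi-f)$ instead of $K_{\tilde U}=p^*K_U$. Either works.
\item For the bootstrap, the paper writes $\tilde f=V\tilde\varphi+f_0$, where $V$ is the Hartogs extension of the lift of $\tfrac12\nabla f$ and $f_0$ is smooth. It then uses a $C^{2,\alpha}$ theorem for complex Monge--Amp\`ere equations with H\"older density and bounded $\ii\partial\bar\partial\tilde\varphi$. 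You need such a result rather than classical Evans--Krylov, because your density is at first only H\"older.
\item The Lipschitz bound on $\tilde f$ you state in Step 1 is not yet available there: Lipschitzness of $p$ requires $p^*\omega_X\le C\omega_{\mathbb{C}^n}$. This is harmless, since you only use it in Step 3.
\end{itemize}

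\textbf{The gap is Step 2.} Neither route you sketch gives $p^*\omega_X\ge c\,\omega_{\mathbb{C}^n}$.
\begin{itemize}
\item Chern--Lu needs $\Ric(\tilde\omega)=\tilde\omega-\ii\partial\bar\partial\tilde f\ge -K\tilde\omega$, possibly modulo $\ii\partial\bar\partial$ of a bounded psh function. That is an upper bound on the complex Hessian of $f$ near a singular point of $X$. Nothing established so far gives this, and it is essentially as strong as the regularity you want.
\item Passing to the potential $\tilde\varphi-\tilde f$ does not help. Its complex Hessian is just $\Ric(\tilde\omega)$, and $\tilde\omega=\ii\partial\bar\partial\big((\tilde\varphi-\tilde f)+\tilde f\big)$ still contains the Hessian of $\tilde f$. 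No equation with smooth data appears.
\item The barrier is not justified. $\Sigma$ has codimension $\ge2$, so it is not cut out by a single $\sigma$. More seriously, $\delta\log\sum_j|\sigma_j|^2$ only keeps the maximum away from $\Sigma$ if you already know that $\tr_{\tilde\omega}\omega_{\mathbb{C}^n}$ grows at most polynomially near $\Sigma$. You also have no control on $\partial\tilde U$.
\end{itemize}

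In fact, the local information you are using cannot suffice. For $n\ge3$, take $\phi=(1+|z_1|^2)|z'|^{2-2/n}$. It is bounded and plurisubharmonic, and smooth and strictly psh off $\{z'=0\}$, which has codimension $n-1\ge2$. Its Monge--Amp\`ere measure $(\ii\partial\bar\partial\phi)^n$ is a smooth positive multiple of $\omega_{\mathbb{C}^n}^n$. Yet $\phi_{1\bar 1}=|z'|^{2-2/n}\to0$, so $\ii\partial\bar\partial\phi$ is not bounded below by $c\,\omega_{\mathbb{C}^n}$. Some soliton-specific, non-local input is required.

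\textbf{How the paper fills it.} It transplants the integral gradient estimate of Lemma \ref{lem:C1estimates}\ref{lem:C1estimates2} to the uniformizing chart.
\begin{itemize}
\item It pulls back $\omega_X$, the weight $e^{-f}$, the conjugate heat kernel and the cut-off functions. It then runs the same argument for the coordinate functions $z_1,\dots,z_n$, viewed as holomorphic sections with Hermitian metric $p^*e^{-f}$.
\item In the Bochner formula for $|\nabla u|^2$, $u=z_i$, the Ricci term is $|\nabla u|^2-\nabla^2 f(\nabla u,\overline{\nabla u})$. No pointwise bound on the Hessian of $f$ is ever used. That term is integrated by parts against the conjugate heat kernel, and the resulting $|\nabla\log K|$ terms are controlled by Perelman's Harnack inequality via Lemma \ref{lem--integration by part trick}.
\item The singular set is handled by the cut-offs $\chi_\epsilon$, with a Kato-type inequality absorbing their merely $L^{4-\sigma}$ gradients, not by a barrier.
\end{itemize}
This gives $|dz_i|_{p^*\omega_X}\le C$, i.e.\ $\tr_{p^*\omega_X}\omega_{\mathbb{C}^n}\le C$. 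Your volume bound then yields the upper bound, and your Step 3 goes through.
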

\begin{proof}
Since we can pull back the geometric quantities and the cut-off functions to the local uniformizing chart while preserving the same estimates, we are able to apply the arguments of Lemma \ref{lem:C1estimates} to the standard coordinates on $\mathbb{C}^n$ in any local uniformizing chart $p$ to obtain gradient estimates for holomorphic functions with respect to the pull-back of the shrinker metric $\pi^*\omega_X$ and the Hermitian metric $p^*(e^{-f})$. Since $\nabla f$ is locally bounded on $X$, this yields gradient estimates for holomorphic functions and, consequently, a lower bound for $\omega_X$:
\begin{equation}\label{eq-lower bound of omega_X}
p^*\omega_X\geq C^{-1}\omega_{\mathbb C^n}.
\end{equation}
Let $\varphi$ denote a bounded local potential for $\omega_X$. Then the shrinker equation implies that, for some neighborhood $B\subseteq \mathbb{C}^n$ of the origin, we have
\begin{equation} \label{eq:shrinkereqinchart}
\sqrt{-1}\partial\bar\partial\left(\log\left(\frac{p^*\omega_X^n}{\omega_{\mathbb{C}^n}^n}\right) + p^*(\varphi - f)\right)=0
\end{equation}
on $Z$, where the complement of $Z:= p^{-1}(X^{\operatorname{reg}})$ has complex codimension at least $2$ in $B$. By \cite[Th\'eor\`eme 2]{GrRe}, the above function extends to a pluriharmonic function on $B$. In particular, it is bounded. Combining this fact with \eqref{eq-lower bound of omega_X} therefore yields 
\begin{equation*}
  C^{-1}\omega_{\mathbb C^n}\leq  p^*\omega_X\leq C\omega_{\mathbb C^n}
\end{equation*}
on $B$ in the sense of currents (and pointwise on $B\setminus Z$).

In the following, we omit the obvious pullback notation $p^*$ and work on a local uniformizing chart. From local elliptic regularity applied to $\Delta_{\omega_{\mathbb{C}^n}}\varphi = \operatorname{tr}_{\omega_{\mathbb{C}^n}}(\omega_X)$, we know that $$\varphi \in W^{2,p}\cap C^{1,\alpha}$$ for every $p \in (1,\infty)$ and $\alpha \in (0,1)$. By Hartog's theorem, the lift $V$ of $\frac{1}{2}\nabla f$ extends to a real-holomorphic vector field on a neighborhood of the origin. By $\sqrt{-1}\partial \overline{\partial}(V\varphi)=\mathcal{L}_{V}\omega$ and \eqref{eq:shrinkereqinchart}, we have
\begin{equation}\label{eq--complex monge-ampere}
    (\sqrt{-1}\partial\bar\partial \varphi)^n
    =
    e^{V\varphi+f_0-\varphi}\,\omega_{\mathbb C^n}^n,
\end{equation}
on $Z$, where $f_0$ is a smooth function in a neighborhood of the origin in $\mathbb C^n$. Since $V$ and $f_0$ are smooth and $\varphi \in C^{1,\alpha}$, we know the right-hand side of \eqref{eq--complex monge-ampere} belongs to $C^{0,\alpha}$ for every $\alpha \in (0,1)$. By \cite[Theorem 3.2]{zeng2016} (see also \cite{chen-he,wang2012c,LLZ} for a related discussion), we conclude that $\varphi \in C^{2,\alpha}$ for every $\alpha \in (0,1)$. A standard bootstrapping argument using elliptic regularity then implies that $\varphi$ is in fact smooth.
\end{proof}

Theorem \ref{thm:complexstructure} follows from the above results, as we now summarize.

\begin{proof}[Proof of Theorem \ref{thm:complexstructure}] 
We first consider $X$ arising under assumption \ref{assume:KRF} of Section \ref{sec;convergence setting}. In this case, 
Claim \ref{thm:complexstructure3} is exactly Proposition \ref{prop:regularset}. 
Assertions \ref{thm:complexstructure1} and \ref{thm:complexstructure4} follow from Proposition \ref{prop:klt}, while \ref{thm:complexstructure2} follows from Proposition \ref{prop:bddlocal}. Finally, \ref{thm:complexstructure5} is just Proposition \ref{prop:orbifoldreg}.

Now suppose instead that $X$ arises under assumption \ref{assume:KRS} of Section \ref{sec;convergence setting}. 
By \cite[Theorem 21]{LW20}, Perelman's differential Harnack inequality holds for any (smooth) complete shrinking Ricci soliton, without the assumption of bounded curvature. By the convergence of heat kernels from \cite[Theorem 6.12(4a)]{LW24}, this passes to the limit, ensuring that the conclusion of Theorem \ref{thm-harnack inequality} also holds in this setting. Similarly, the heat kernel bounds used in the proofs of Proposition \ref{prop:bddlocal} and Lemma \ref{lem:C1estimates} hold on $X$ by passing to the limit, where we use \cite[Theorem 5]{LW20} instead of \cite[Theorem 7.1]{bamler1}, and \cite[Theorem 1.3]{LW24} instead of \cite[Theorem 7.2]{bamler1}.
The existence of cutoff functions as in Lemma \ref{lem:chicutoff} follows from the construction in \cite[Lemma 15.27]{bamler3}, using \cite[Theorem 6.23]{LW24}. The conclusion of Lemma \ref{lem:cutoff} holds by \cite[Theorem 3.6]{HLW21}. From these considerations, the proofs of Proposition \ref{prop:bddlocal}, Proposition \ref{prop:hormander}, and Lemma \ref{lem:C1estimates} also apply in this setting. Using \cite[Theorem 6.23]{LW24} instead of \cite[Theorem 2.30]{bamler3} in the proof of Lemma \ref{lem:perturbation}, we then obtain the conclusion of Lemma \ref{lem:perturbation} as well. 

By \cite[p. 1686]{LW24}, any tangent cone $\mathcal{C}$ based at a point of $X$ is a singular space in the sense of \cite[Definition 2.15]{bamler3}. Moreover, $\mathcal{C}$ is a metric cone, the regular part $\mathcal{R}_{\mathcal{C}}\subseteq \mathcal{C}$ of $\mathcal{C}$ is a Ricci-flat K\"ahler-cone, and $\mathcal{C}\setminus \mathcal{R}_{\mathcal{C}}$ has Minkowski codimension at least four. As a consequence, the conclusions of Lemma \ref{lem:peak} hold, and from here the remaining claims of Subsection \ref{sec:normal} proceed exactly as in the case of assumption \ref{assume:KRF}. Similarly, we note that the ingredients of \cite[Proposition 6.9]{hallgrenKahlerRicciTangentFlows2023} are all available under assumption \ref{assume:KRS}, hence the results of Subsection \ref{sec:orbifold} hold as well, and the claim follows.  
\end{proof}

\section{Local Equivariant Embeddings and Long-Time Pseudolocality}
\label{sec:equivariant}
Throughout this section, we fix a singular shrinking K\"ahler--Ricci soliton $(X,d)$ arising as in either assumption \ref{assume:KRF} or \ref{assume:KRS} of Section \ref{sec;convergence setting}.
Let $(X^{\operatorname{reg}},\omega,J)$ denote the K\"ahler structure on the regular set, and $f\in C_{\operatorname{loc}}^{0,1}(X)\cap C^{\infty}(X^{\operatorname{reg}})$ denote the soliton potential function. Throughout this section, we also assume $f$ is not a constant, or equivalently that $\nabla f$ does not vanish identically on $X^{\operatorname{reg}}$. By \cite[proof of Theorem 1.3]{halljian}, $J\nabla f$ is a complete vector field on $X^{\operatorname{reg}}$, and the one-parameter family of holomorphic isometries $(\sigma_s)_{s\in \mathbb{R}}$ on $X^{\operatorname{reg}}$ generated by $J\nabla f$ extends uniquely to a one-parameter family of isometries of $(X,d)$. 
By \cite[proof of Proposition 6.12]{hallgrenKahlerRicciTangentFlows2023}, the closure of $\{\sigma_s \: : \: s\in \mathbb{R}\}$ in the isometry group of $(X^{\operatorname{reg}},\omega)$ is a real $m$-dimensional torus $\mathbb{T}$, and each $\sigma \in \mathbb{T}$ acts by biholomorphic isometries on $X^{\operatorname{reg}}$. As a consequence of the fact that $X$ admits a normal complex analytic variety structure, we know that $\mathbb{T}$ acts on $X$ by biholomorphisms.

Since the action of $\mathbb{T}$ preserves $X^{\operatorname{reg}}$ and consists of biholomorphisms, it admits a natural lift to each line bundle $L^{\ell} = K_{X^{\operatorname{reg}}}^{-\ell}$.
Consequently, it induces an action of $\mathbb{T}$ on $H^0(X^{\operatorname{reg}}, L^{\ell}) = H^0(X, L^{\ell})$.
More precisely, for $\zeta \in \mathbb{T}$ and $u \in H^0(X^\reg,L^{\ell})$, for $x \in X^{\operatorname{reg}}$, we define
\begin{equation*}
    (\zeta \cdot u)(x) := \zeta_*\big(u(\zeta^{-1}(x))\big).
\end{equation*}
We identify elements of $\rm{Lie}(\mathbb T)$, the Lie algebra of $\mathbb{T}$, with holomorphic Killing vector fields on $X^{\mathrm{reg}}$. Then the $\mathbb T$-action on $H^0(X^{\operatorname{reg}},L^{\ell})$ induces a Lie derivative operator $\mathcal{L}_\xi$ on $H^0(X^{\operatorname{reg}},L^{\ell})$ for each $\xi \in \mathrm{Lie}(\mathbb{T})$.

\begin{lemma} \label{lem:decomposition} Given $\ell\in \mathbb N_{\geq 0}$ and $u \in H^0(X,L^{\ell})$, there exist $u_{\alpha}\in H^0(X,L^\ell)$ for each character $\alpha$ of $\mathbb{T}$ such that the following hold:

\begin{enumerate}
    \item \label{lem:decomposition1} $\mathcal{L}_{\xi}u_{\alpha}=\sqrt{-1}\langle \alpha,\xi\rangle u_{\alpha}$ for any $\xi\in \rm{Lie}(\mathbb T)$, 

    \item \label{lem:decomposition2} for any $\mathbb{T}$-invariant relatively compact set $K\subset X$,
\[
\lim_{j\to\infty}\sup_{K\cap X^{\reg}}\left|u-\sum_{|\alpha|\le j}u_\alpha\right|_{h^\ell}=0.
\]
\end{enumerate}
\end{lemma}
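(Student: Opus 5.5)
We will build the $u_\alpha$ as Fourier coefficients of $u$ along the compact torus $\mathbb{T}$, and get uniform convergence by regularizing in the torus variable. Let $d\zeta$ be the normalized Haar measure on $\mathbb{T}$. For each character $\alpha$ define
\[
u_\alpha := \int_{\mathbb{T}} e^{-\sqrt{-1}\langle \alpha,\log\zeta\rangle}\,(\zeta\cdot u)\,d\zeta,
\]
where $e^{\sqrt{-1}\langle\alpha,\log\zeta\rangle}$ denotes the character $\chi_\alpha(\zeta)$.

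The first step is to check that this is well defined and holomorphic. Each $\zeta\in\mathbb{T}$ acts by biholomorphic isometries of $X^{\reg}$. The lift to $L^\ell=K_{X^{\reg}}^{-\ell}$ preserves $h^\ell$, because $h$ is built from $e^{-f}\omega^n$ and $f$ is $\mathbb{T}$-invariant ($J\nabla f$ preserves $f$, hence so does its closure). So $|\zeta\cdot u|_{h^\ell}(x)=|u|_{h^\ell}(\zeta^{-1}x)$. The sets $\zeta^{-1}(K)$ stay in the $\mathbb{T}$-invariant relatively compact set $K$, and $u$ is locally bounded there by Lemma \ref{lem:C1estimates}. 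Hence the integrand is bounded and depends continuously on $\zeta$ in $C^\infty_{\loc}(X^{\reg})$, since the action is smooth on the regular part. The averaged section is therefore holomorphic on $X^{\reg}$, and $H^0(X^{\reg},L^\ell)=H^0(X,L^\ell)$ as noted above.

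The equivariance in \ref{lem:decomposition1} follows from invariance of Haar measure. Substituting $\zeta\mapsto\eta\zeta$ gives $\eta\cdot u_\alpha=\chi_\alpha(\eta)u_\alpha$. Differentiating along $\eta=\exp(s\xi)$ then gives $\mathcal{L}_\xi u_\alpha=\sqrt{-1}\langle\alpha,\xi\rangle u_\alpha$, with the sign convention fixed so that it matches the definition of $\mathcal{L}_\xi$.

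For \ref{lem:decomposition2}, consider the continuous map $\Phi:\mathbb{T}\to C^0(K\cap X^{\reg},L^\ell)$, $\zeta\mapsto\zeta\cdot u$, with the sup norm.
\begin{itemize}
\item[(a)] \textbf{Continuity of $\Phi$.} I expect this to be the main point. It needs equicontinuity of $u$ up to the singular set: the $C^1$ bound of Lemma \ref{lem:C1estimates}\ref{lem:C1estimates2} makes $|u|$ and $u$ locally Lipschitz with respect to $d$ on $K$. The torus acts by isometries of $(X,d)$, and $\zeta\to\zeta_0$ uniformly on the compact $\overline{K}$. Together these give $\sup_K|\zeta\cdot u-\zeta_0\cdot u|\to 0$. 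To compare the sections, not just their norms, I would use a local trivialization by the peak section from Lemma \ref{lem:peak}: write $u=(u/s)\,s$ with $u/s$ a Lipschitz function, and handle $\zeta\cdot s$ the same way.
\item[(b)] \textbf{Fej\'er approximation.} With $\Phi$ continuous, the argument is the classical one. Fej\'er kernels $F_j$ on $\mathbb{T}\cong(\mathbb{S}^1)^m$ form an approximate identity, so $\int F_j(\zeta)\,\Phi(\zeta)\,d\zeta\to\Phi(1)=u$ uniformly on $K$. Each such integral is a finite combination $\sum_{|\alpha|\le j}c_{j,\alpha}u_\alpha$, with $c_{j,\alpha}\to1$.
\item[(c)] \textbf{Plain partial sums.} To obtain the unweighted partial sums $\sum_{|\alpha|\le j}u_\alpha$ stated in \ref{lem:decomposition2}, I would show that the $u_\alpha$ decay rapidly in $\alpha$ on $K$. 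The route is to apply the $C^1$ estimate to $\mathcal{L}_\xi^k u$, which is holomorphic and locally bounded. Then $|\langle\alpha,\xi\rangle|^k\sup_K|u_\alpha|\le\sup_{K'}|\mathcal{L}_\xi^k u|$ for an invariant $K'\supset K$. To control $\mathcal{L}_\xi^k u$ I would bound $\sup_{K'}|\mathcal{L}_\xi^k u|$ by Cauchy estimates for the holomorphic complexified action. If that bound is not available, I would settle for the Fej\'er means, which give the same statement after reindexing the approximating sums.
\end{itemize}
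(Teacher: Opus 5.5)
Your definition of $u_\alpha$ and your proof of \ref{lem:decomposition1} agree with the paper's. The gap is in \ref{lem:decomposition2}, which is about the \emph{unweighted} partial sums $\sum_{|\alpha|\le j}u_\alpha$.

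Step (b) only gives uniform convergence of the Fej\'er means $\sum_{|\alpha|\le j}c_{j,\alpha}u_\alpha$. No reindexing turns these into partial sums. For a merely continuous $\Phi$ (already for a continuous function on $\mathbb{S}^1$) the Fourier partial sums need not converge uniformly. So the continuity you establish in (a) cannot by itself give the statement.

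Your rapid-decay route in (c) would suffice, but as written it rests on Cauchy estimates for the complexified action, which is not available at this point. The extension of the flows of $J\lie(\mathbb{T})$ to biholomorphisms of $X$ is Proposition \ref{prop:holovectorfields}, proved later. A Cauchy estimate would also need bounds on $|\exp(z\xi)\cdot u|_{h^\ell}$ for non-real $z$, where the action no longer preserves $h^\ell$.

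The missing idea is to combine $L^2$ orthogonality with holomorphicity instead of doing $C^0$ harmonic analysis, which is what the paper does.
\begin{itemize}
\item Take a $\mathbb{T}$-invariant relatively compact open $U\supset\overline{K}$, e.g.\ a metric neighborhood of $K$, which is invariant because $\mathbb{T}$ acts by isometries.
\item Since $\mathbb{T}$ preserves $\omega$ and $h^\ell$, it acts unitarily on $L^2$ sections of $L^\ell|_U$. Each $u_\alpha$ is the orthogonal projection of $u$ onto the $\alpha$-weight space, so by Peter--Weyl (Parseval) the plain partial sums satisfy $\sum_{|\alpha|\le j}u_\alpha\to u$ in $L^2(U)$.
\item A local $L^\infty$--$L^2$ bound for holomorphic sections then upgrades this to uniform convergence on $K\cap X^{\reg}$. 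The paper uses elliptic regularity on compact subsets of $X^{\reg}$. Near singular points it trivializes by a nonvanishing locally bounded section and uses the maximum-modulus argument on holomorphic disks from \cite{DS2}.
\item Lemma \ref{lem:C1estimates}\ref{lem:C1estimates1}, applied to the global section $u-\sum_{|\alpha|\le j}u_\alpha$ on finitely many balls covering $K$, would also work. This route makes your step (a) unnecessary.
\end{itemize}

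Alternatively, you can repair (c) without Cauchy estimates. You only need $\sup_K|\mathcal{L}_{\xi_i}^k u|<\infty$ for each fixed $k$ and each $\xi_i$ in a basis of $\lie(\mathbb{T})$, with no control in $k$. This holds because $\mathcal{L}_{\xi_i}^k u$ is again in $H^0(X^{\reg},L^\ell)$, hence locally bounded on $X$. To see this, divide a power of it by a power of a peak section from Lemma \ref{lem:peak}, and extend the resulting holomorphic function across the singular set using normality. That gives $\sup_{K\cap X^{\reg}}|u_\alpha|\le C_k(1+|\alpha|)^{-k}$, hence absolute uniform convergence. The limit is $u$ because $\zeta\mapsto(\zeta\cdot u)(x)\in L^\ell_x$ is smooth for each $x\in X^{\reg}$.
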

\begin{proof} Let $\mu_{\mathbb{T}}$ denote the Haar measure of $\mathbb{T}$. For each $u\in H^0 (X,L^{\ell})$ and each character $\alpha$ of $\mathbb{T}$, define
   \begin{equation*}
       u_{\alpha}(z) := \int_{\mathbb{T}} (\zeta \cdot u)(z)\zeta^{-\alpha}d\mu_{\mathbb{T}}(\zeta)
   \end{equation*}
   for $z\in X^{\reg}$, so that the sections $u_{\alpha} \in H^0(X,L^\ell)$ satisfy $\zeta \cdot u_{\alpha} =\zeta^{\alpha} u_{\alpha}$. In particular, \ref{lem:decomposition1} holds. 
   
   Let $U \subset \subset X$ be $\mathbb{T}$-invariant, and let $\mathcal{H}$ denote the Hilbert space of $L^2$ sections of $L^\ell|_U$. Because the regular representation of $\mathbb{T}$ on $\mathcal{H}$ is unitary, and $u_{\alpha}$ are exactly the projections onto the weight spaces of the $\mathbb{T}$-action on $\mathcal{H}$, a consequence of the Peter-Weyl theorem (see \cite[Theorem III.5.10]{BrtD}) yields
\begin{equation*}
    \lim_{j \to \infty} \int_U \left| u - \sum_{|\alpha|\leq j} u_{\alpha}\right|_{h^\ell}^2 \omega^n =0.
\end{equation*}
Then local elliptic regularity gives
   \begin{equation*}
       \lim_{j\to \infty} \sup_{\mathcal{K}} \left| u-\sum_{|\alpha|\leq j} u_{\alpha}\right|_{h^\ell} = 0
   \end{equation*}
for any compact subset $\mathcal{K} \subseteq U\cap X^{\reg}$. For any $y_0 \in U$, we can find a neighborhood $U_{y_0} \subseteq U$ of $y_0$ and some locally bounded holomorphic section $s\in H^0(U_{y_0}\cap X^{\operatorname{reg}},L^\ell)$ with $\inf_{U_{y_0} \cap X^{\operatorname{reg}}}|s|_{h^\ell}>0$. Writing $\widehat{u} := s^{-1}u$ and $\widehat{u}_{\alpha} := s^{-1}u_{\alpha}$, it follows that $\widehat{u},\widehat{u}_{\alpha}$ are holomorphic functions on $U$ which satisfy
\begin{equation*}
    \lim_{j\to \infty} \sup_{\mathcal{K}} \left| \widehat{u} - \sum_{|\alpha|\leq j} \widehat{u}_{\alpha} \right| =0
\end{equation*}
for each compact subset $\mathcal{K} \subseteq U_{y_0}\cap X^{\operatorname{reg}}$. Moreover, any compact subset of $U$ can be covered by finitely many such neighborhoods $U_{y_0}$. By \cite[p.~340]{DS2}, using the fact that a singular point lies in a holomorphic disk whose boundary is a fixed distance away from the singular set, a simple maximum modulus argument shows that the convergence is also uniform on compact subsets $\mathcal{K}\subseteq U$. This implies \ref{lem:decomposition2}.
\end{proof}

\begin{prop} \label{prop:equivariant} Any $x_0 \in X$ possesses a $\mathbb{T}$-invariant neighborhood $U \subseteq X$ and a holomorphic map $G:U \to \mathbb{C}^K$ for some $K\in \mathbb{N}^{\times}$, such that $G$ is $\mathbb{T}$-equivariant with respect to some linear action of $\mathbb{T}$ on $\mathbb{C}^K$, and such that $G$ restricts to an embedding on a neighborhood of $x_0$.  
\end{prop}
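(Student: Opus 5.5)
We start from the local embedding of Section~\ref{sec:normal}. By Corollary~\ref{cor:proper}, Lemma~\ref{lem:separation} and Proposition~\ref{prop:complexvariety}, there is a peak section $u\in H^0(X,L^{\ell_0})$ with $|u|_{h^{\ell_0}}^2\ge \frac12$ on $B(x_0,r_0)$, and finitely many $s_1,\dots,s_N$ with $s_j\in H^0(X,L^{\ell_0 m_j})$, such that $F=(s_j/u^{m_j})_j$ is a holomorphic embedding of a neighborhood $\Omega$ of $x_0$ onto a normal subvariety of $\mathbb{C}^N$. These sections are generally not $\mathbb{T}$-eigensections, and $u$ need not be $\mathbb{T}$-invariant, so $F$ is not equivariant. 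The plan is to replace $u$ and the $s_j$ by finite sums of weight components from Lemma~\ref{lem:decomposition}, and to work on the $\mathbb{T}$-invariant set $U:=\mathbb{T}\cdot B(x_0,\rho)$. This set is open and relatively compact, since $\mathbb{T}$ acts by isometries of $(X,d)$.

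\textbf{Step 1: an invariant nonvanishing denominator.} Since $\mathbb{T}$ acts isometrically and preserves $h$ (it preserves $\omega$ and $f$), the function $|\zeta\cdot u|_{h^{\ell_0}}(x)=|u|_{h^{\ell_0}}(\zeta^{-1}x)$. Hence $u$ is bounded below on $B(x_0,r_0)$ but not on $U$. Replacing the single section $u$ by a weight section does not obviously fix this, so I would avoid dividing by one section on all of $U$. Instead, consider the finitely many characters $\alpha$ with $|\alpha|\le j$. By Lemma~\ref{lem:decomposition}\ref{lem:decomposition2}, $\sum_{|\alpha|\le j}u_\alpha$ is uniformly close to $u$ on $\overline{U}$. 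Therefore some finite collection of weight sections $u_\alpha$ has no common zero on $\overline{B(x_0,\rho)}$, and since each $u_\alpha$ is an eigensection, none of them has a common zero on $U$. Near $x_0$ at least one weight component $u_{\alpha_0}$ has $u_{\alpha_0}(x_0)\ne 0$, so $|u_{\alpha_0}|$ is bounded below on a small ball $B(x_0,\rho)$. Because $|\zeta\cdot u_{\alpha_0}|=|u_{\alpha_0}|$ pointwise, we get $|u_{\alpha_0}|(\zeta x)=|u_{\alpha_0}|(x)$. Thus $|u_{\alpha_0}|$ is $\mathbb{T}$-invariant and bounded below on all of $U=\mathbb{T}\cdot B(x_0,\rho)$. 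This is the key observation.

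\textbf{Step 2: equivariant coordinates.} For each $s_j$, take the finite truncation $\sum_{|\alpha|\le J}(s_j)_\alpha$. By Lemma~\ref{lem:decomposition}\ref{lem:decomposition2} combined with the $C^1$ bounds of Lemma~\ref{lem:C1estimates}, which give uniform convergence of gradients on compact sets after Cauchy estimates in local embeddings, the truncation is $C^1$-close to $s_j$ near $x_0$. Define
\[
G:=\Bigl(\tfrac{(s_j)_\alpha}{u_{\alpha_0}^{m_j}}\Bigr)_{1\le j\le N,\ |\alpha|\le J}:U\to\mathbb{C}^K .
\]
Each component is holomorphic on $U$: it is holomorphic on $U\cap X^{\reg}$ and bounded, and $X$ is normal. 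Each component transforms under $\zeta\in\mathbb{T}$ by the character $\zeta^{\alpha-m_j\alpha_0}$. So $G$ is equivariant for the diagonal linear action of $\mathbb{T}$ on $\mathbb{C}^K$ with these weights.

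\textbf{Step 3: embedding near $x_0$ (main obstacle).} The difficulty is that $F$ used the denominator $u$ while $G$ uses $u_{\alpha_0}$, and that injectivity and immersivity at singular points are not stable under small $C^0$ perturbations. I would handle this algebraically. The original components $s_j/u^{m_j}$ are holomorphic functions near $x_0$ that generate the local ring: they give an embedding, hence a surjection onto $\mathcal{O}_{X,x_0}$ from the polynomial ring. Each of them is a uniform limit on a neighborhood of $x_0$ of finite linear combinations of the functions $(s_j)_\alpha/u_{\alpha_0}^{m_j}$ times the invertible function $(u_{\alpha_0}/u)^{m_j}$. To avoid the unclear factor, I would use weight-decomposed $u$ as well and include the components $(u_\beta/u_{\alpha_0})$ in $G$; then each $s_j/u^{m_j}$ is a holomorphic function of the components of $G$ near $x_0$, being a limit of polynomials in them divided by a nonvanishing polynomial in them. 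Showing that this limit is genuinely holomorphic in $G$, rather than only a uniform limit, is where I expect real work. I would try to obtain this from the fact that $F$ factors through $G$ on the range of truncations, combined with closedness of $\mathfrak{m}$-adic ideals via Krull's intersection theorem. Alternatively, I would choose $J$ so large that the linear map sending the component span of $G$ to the cotangent space $\mathfrak{m}_{x_0}/\mathfrak{m}_{x_0}^2$ is surjective, using finite dimensionality and the density from Lemma~\ref{lem:decomposition}. Surjectivity onto $\mathfrak{m}/\mathfrak{m}^2$ makes $G$ a closed immersion of germs at $x_0$, by the standard embedding criterion. Hence $G$ restricts to an embedding on a neighborhood of $x_0$.
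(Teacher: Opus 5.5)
Your proposal is correct and follows essentially the paper's route: choose a weight component of the peak section whose norm is $\mathbb{T}$-invariant and therefore stays bounded below on $\mathbb{T}\cdot B(x_0,\rho)$, divide weight components of the embedding sections by powers of it, and truncate using Lemma~\ref{lem:decomposition}. Your Step~3 fills in detail that the paper only asserts: you add the components $u_\beta/u_{\alpha_0}$ to absorb the change of denominator, and you check surjectivity onto $\mathfrak{m}_{x_0}/\mathfrak{m}_{x_0}^2$. One correction there: the continuity of $h\mapsto[h-h(x_0)]\in\mathfrak{m}_{x_0}/\mathfrak{m}_{x_0}^2$ under local uniform convergence comes from Cartan's closure theorem (equivalently, open-mapping lifting to ambient functions plus Cauchy estimates), not from Krull's intersection theorem.
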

\begin{proof}
By Lemma \ref{lem:peak}, we can choose $s \in H^0(X,L^{\ell_0})$, $r>0$ such that $\inf_{B(x_0,r)}|s|_{h^{\ell_0}}>0$. By Lemma \ref{lem:decomposition}\ref{lem:decomposition2} and Lemma \ref{lem:C1estimates}\ref{lem:C1estimates2}, we can shrink $r$ in order to guarantee the existence of $k\in \mathbb{Z}^n$ such that $\inf_{B(x_0,r)}|s_k|_{h^{\ell_0}}>0$. By Proposition \ref{prop:complexvariety}, there exist $u_j \in H^0(X,L^{\ell_0\ell_j})$, $1\leq j\leq N$, such that (after again shrinking $r>0$)
\begin{equation*}
   (F^1,\ldots,F^N):= \left( \frac{u_1}{s_k^{\ell_1}},\ldots,\frac{u_N}{s_k^{\ell_N}} \right) : B(x_0,r) \to \mathbb{C}^N
\end{equation*}
is a holomorphic embedding. Because $|s_k|_{h^{\ell_0}}$ is invariant under the action of $\mathbb{T}$, $(F^1,...,F^N)$ defines a holomorphic map $U:= \mathbb{T}\cdot B(x_0,r) \to \mathbb{C}^N$. Moreover, Lemma \ref{lem:decomposition}\ref{lem:decomposition2} implies that for $M\in \mathbb{N}^{\times}$ sufficiently large, 
\begin{equation*}
   G:= (G_1,\ldots,G_{NM}):= \left( \frac{u_{1,1}}{s_k^{\ell_1}},\ldots,\frac{u_{1,M}}{s_k^{\ell_1}},\ldots,\frac{u_{N,1}}{s_k^{\ell_N}},\ldots,\frac{u_{N,M}}{s_k^{\ell_N}} \right) : B(x_0,r) \to \mathbb{C}^{MN}
\end{equation*}
defines a holomorphic embedding, where each $u_{i,j}$ is equal to $u_{i,\alpha}$ for some character $\alpha$ of $\mathbb{T}$. By Lemma \ref{lem:decomposition}\ref{lem:decomposition1}, each $G_j$ satisfies $\zeta \cdot G_j = \zeta^{k_j}G_j$ for some $k_j \in \mathbb{Z}^n$. In other words, $G$ is $\mathbb{T}$-equivariant with respect to a linear action on $\mathbb{C}^{MN}$.
\end{proof}

\begin{prop}\label{prop:holovectorfields} $X$ admits a holomorphic $(\mathbb{C}^*)^m$-action for some $m \ge 1$. More precisely, 
\begin{enumerate}
    \item \label{prop:holovectorfields1} $\nabla f$ is a complete vector field on $X^{\mathrm{reg}}$, and its flow extends to biholomorphisms of $X$;
    \item \label{prop:holovectorfields2} Holomorphic vector fields in $J\mathrm{Lie}(\mathbb{T})$ are complete on $X^{\reg}$, and their flows extend to biholomorphisms of $X$.
\end{enumerate} 
\end{prop}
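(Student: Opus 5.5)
The plan is to use the $\mathbb{T}$-equivariant local embeddings of Proposition~\ref{prop:equivariant} to complexify the torus action locally. Then we globalize the resulting flows by combining properness of $f$ with a monotonicity property along the flow of $\nabla f$.

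\emph{Step 1: local extension.} Fix $x_0\in X$ and take $G:U\to\mathbb{C}^K$ from Proposition~\ref{prop:equivariant}. The linear $\mathbb{T}$-action on $\mathbb{C}^K$ is diagonal with weights $k_j$, so it complexifies to a linear $\mathbb{T}^{\mathbb{C}}=(\mathbb{C}^*)^m$-action. For $\xi\in\mathrm{Lie}(\mathbb{T})$, the holomorphic vector field $\xi^{1,0}$ on $X^{\reg}$ corresponds under $G$ to the restriction of the linear field $\sqrt{-1}\sum_j\langle k_j,\xi\rangle z_j\partial_{z_j}$. Hence $J\xi$ also corresponds to a linear (real-holomorphic) vector field on $\mathbb{C}^K$, which is tangent to $G(U)$ on the dense set $G(U\cap X^{\reg})$. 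Since $G(U)$ is a closed analytic subvariety of a neighborhood of $G(x_0)$, tangency extends by continuity and analyticity. Therefore, for small complex times, the linear flow $\exp(\sqrt{-1}s\,\cdot)$ preserves $G(U)$ near $G(x_0)$. This gives a local holomorphic action of a neighborhood of the identity in $\mathbb{T}^{\mathbb{C}}$ near each point, extending the flows of $J\xi$ on $X^{\reg}$. Since $X^{\reg}$ is dense, uniqueness lets these local extensions patch. In particular the flow of $\nabla f = -J(J\nabla f)$, with $J\nabla f\in\mathrm{Lie}(\mathbb{T})$, extends locally to $X$ as biholomorphisms, and by Theorem~\ref{thm:complexstructure}\ref{thm:complexstructure3} it preserves $X^{\reg}=\mathcal{R}_X$.

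\emph{Step 2: completeness.} This is the main obstacle: showing the local flows exist for all real time, i.e.\ orbits do not escape to infinity or into the singular set in finite time. For $\nabla f$, use $|\nabla f|^2\le f - W$ from \eqref{eq--relation between nabla f and f} (since $R\ge 0$). Along a flow line,
\[
\frac{d}{ds}f=|\nabla f|^2\le f-W ,
\]
so $f - W$ grows at most exponentially in forward time, and in backward time $f$ decreases but stays $\ge \min f$. Hence orbits remain in the compact sublevel sets $\{f\le C\}$ (properness of $f$) on bounded time intervals. By Step 1, the local flow has a uniform existence time on compact sets: cover $\{f\le C\}$ by finitely many charts $U$, each $\mathbb{T}$-invariant with a uniform flow time. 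This uniform time lets us extend the flow indefinitely, so $\nabla f$ is complete on $X$, and its restriction to $X^{\reg}$ is complete because $X^{\reg}$ is invariant. This proves \ref{prop:holovectorfields1}.

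For a general $J\xi$, I would use the moment map/Hamiltonian $\mu_\xi$ together with $f$. Since $[J\xi,\nabla f]=0$ and $\mathbb{T}$ preserves $f$, I would try to show that $J\xi(f)=\langle\nabla f, J\xi\rangle$ is bounded by $C(1+f)$ on $X^{\reg}$. This would follow from $|\xi|\le C\sqrt{f}$-type growth, obtained by writing $\xi$ as a locally bounded Killing field (local boundedness on compact sets via the equivariant embedding) and transporting via the commuting flow of $\nabla f$, which rescales $g$ linearly in the soliton time. Then Grönwall again confines orbits to sublevel sets of $f$, and the same uniform-local-time argument gives completeness, proving \ref{prop:holovectorfields2}.

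\emph{Step 3: the group action.} The flows of $\xi$ and $J\xi$ for $\xi\in\mathrm{Lie}(\mathbb{T})$ pairwise commute, since they commute on the dense set $X^{\reg}$ and are continuous. So they generate an action of $\mathrm{Lie}(\mathbb{T})\otimes\mathbb{C}$, which factors through $\mathbb{T}^{\mathbb{C}}\cong(\mathbb{C}^*)^m$ because the $\xi$-directions already factor through $\mathbb{T}$. This action is holomorphic by Step 1, and $m\ge1$ because $f$ is nonconstant.
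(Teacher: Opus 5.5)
Your Step~1 (complexifying through the equivariant charts of Proposition~\ref{prop:equivariant}) and your Gr\"onwall argument for part~\ref{prop:holovectorfields1} are sound. They are a legitimate variant of the paper's route, which instead extends $\nabla f$ to an ambient holomorphic vector field and uses linear growth of $|\nabla f|$ in the distance. For orbits through singular points, note that the bound $f(\phi_s x)-W\le e^{2s}(f(x)-W)$ passes from $X^{\reg}$ to $X$ by density and continuity of the local flow.

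The gap is in part~\ref{prop:holovectorfields2}. Your Gr\"onwall confinement for $J\xi$ needs $|\xi|_\omega\le C\sqrt{1+f}$, and neither ingredient you propose gives it.
\begin{enumerate}
\item The equivariant chart only shows that $\xi$ is a bounded linear field in the \emph{Euclidean} metric. The available comparison, $\omega\ge c\,\omega_{\mathbb{C}^N}$ from Theorem~\ref{thm:complexstructure}\ref{thm:complexstructure2}, yields $|\xi|_\omega\ge c|\xi|_{\mathrm{eucl}}$, which is the wrong direction. Local boundedness of $|\xi|_\omega$ at singular points is genuinely nontrivial. The paper proves it only later (Lemma~\ref{lem-vector field bounded}), using an equivariant resolution, the klt discrepancies, and a parabolic mean-value argument for $(\log|\eta|)_+^2$.
\item The flow $\phi_s$ of $\nabla f$ is not a homothety of $g$. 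One has $\mathcal{L}_{\nabla f}g=2\nabla^2 f=2(g-\Ric)$, so from $[\xi,\nabla f]=0$ you get $\frac{d}{ds}|\xi|^2(\phi_s x)=2|\xi|^2-2\Ric(\xi,\xi)$, whose curvature term is uncontrolled. The identity $\mathcal{L}_{\partial_t-\frac12\nabla f}g_t=-|t|^{-1}g_t$ relates different time slices of the Ricci flow; it says nothing about $g_{-1}$ alone.
\end{enumerate}
So the growth bound is unproved, and with it the completeness of $J\xi$.

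The paper avoids any bound on $|J\xi|$, using part~\ref{prop:holovectorfields1} instead.
\begin{enumerate}
\item Flow a given point $x$ backward along $\nabla f$. Since $f$ is proper, bounded below and nondecreasing along the flow, $\int_{-\infty}^0|\nabla f|^2(\phi_t x)\,dt\le f(x)-\min_X f<\infty$.
\item Combined with $\omega\ge c_0\omega_0$ on compact sets, this gives $t_i\to-\infty$ with $\phi_{t_i}(x)\to x_0$ and $\nabla f(x_0)=0$.
\item Such an $x_0$ is fixed by the closure $\mathbb{T}$ of the flow of $J\nabla f$. Hence every $V\in J\mathrm{Lie}(\mathbb{T})$ vanishes at $x_0$, and by your Step~1 its local flow exists for unit time on a neighborhood of $x_0$.
\item Because $[\nabla f,V]=0$, the conjugation $\phi^V_s=\phi_{-t_i}\circ\phi^V_s\circ\phi_{t_i}$ carries this unit-time existence, and the continuous extension, to a neighborhood of $x$.
\end{enumerate}
This gives completeness on $X^{\reg}$ and, by normality, extension to biholomorphisms of $X$. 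With this replacement your Step~3 goes through unchanged.
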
 
\begin{proof}
   \ref{prop:holovectorfields1} 
   This roughly follows by arguing as in \cite[Section~2.3]{DS2}. Since $|\nabla f|$ grows at most linearly with the distance to a fixed point on $X$, the distance from any integral curve of $\nabla f$ to a fixed point on $X$ is bounded whenever the time interval is finite. Then, since $X$ is a complete metric space, any integral curve $\nabla f$ over a finite time interval remains in a compact subset of $X$. Therefore, in order to show that $\nabla f$ is complete on $X^\reg$,  it suffices to show that for any point $q \in X \setminus X^{\mathrm{reg}}$, there exists a neighborhood $\Omega$ of $q$ in $X$ and $\epsilon > 0$ such that the flow $\phi_t$ of $\nabla f$ is well-defined on $\Omega \cap X^{\mathrm{reg}}$ for all $t \in [-\epsilon,\epsilon]$. We can embed a neighborhood $\Omega$ of $q$ into $\mathbb{C}^N$. Since $\nabla f$ is a holomorphic vector field, after possibly shrinking $\Omega$, we may assume that there exists a holomorphic vector field $\xi$ on a neighborhood $U$ of $\Omega$ inside $\mathbb{C}^N$ such that for any $x \in \Omega \cap X^{\mathrm{reg}}$, we have 
    \begin{equation}
        \xi(x)=\nabla f(x).
    \end{equation}
Then there exists a neighborhood $V \subset U$ of $q$ in $\mathbb{C}^N$ such that $\xi$ generates a local holomorphic flow $\phi_t : V \to U$ for $t \in [-\epsilon,\epsilon]$. We claim $\phi_t$ maps $V\cap \Omega\cap X^{\reg} $ into $U\cap\Omega\cap X^{\reg}$. Since $\phi_t$ is a biholomorphism, it is enough to show that $\phi_t$ maps $V\cap \Omega $ into $U\cap \Omega$. Since they are analytic subvarieties, it suffices to show that if a holomorphic function $u$ on $U$ vanishes on $U \cap \Omega$, then it also vanishes on $\phi_t(V \cap \Omega)$. Given such a $u$, since $\xi$ is tangential to $\Omega\cap X^{\reg}$, we have $\xi(u)=0$ on $U\cap \Omega\cap X^{\reg}$ and hence $\xi(u)=0$ on $U\cap \Omega$ by normality of $X$. This implies that on $U\cap \Omega$, for $t\in [-\epsilon,\epsilon]$,
\begin{equation}
    \phi_t^*(u)=0.
\end{equation} That is, $u$ vanishes on $\phi_t(V\cap \Omega)$. Therefore by shrinking $\Omega$, we obtain that $\phi_t$ is well-defined on $\Omega\cap X^{\reg}$ for $t\in [-\epsilon,\epsilon]$.

The previous argument shows that near any point $q \in X$, the flow of $\nabla f$ for a short time is locally given by the restriction of the flow in the ambient space via a local holomorphic embedding. Therefore, it extends continuously to $q$. By a standard compactness argument, we then know that for any compact subset $K \subset X$, there exists $\epsilon = \epsilon(K) > 0$ such that for all $|t| \leq \epsilon$, the flow $\phi_t$ of $\nabla f$ extends continuously to all of $K$. Then we show that for any given time $T \in \mathbb{R}$, the flow $\phi_T$ of $\nabla f$ 
extends to a continuous map from $X$ to itself. To see this, note that since $\nabla f$ has linear growth, for any fixed point $q \in X$, we can find a neighborhood $\Omega$ of $q$ in $X$ such that
\[
\bigcup_{|t| \le |T|} \phi_t(\Omega \cap X^{\reg})
\]
is contained in a compact subset $K$ of $X$. Then choose $N$ sufficiently large such that $\frac{|T|}{N}\leq \epsilon(K)$ and write 
\begin{equation}
\phi_T = \underbrace{\phi_{T/N} \circ \cdots \circ \phi_{T/N}}_{N \text{ times}}.
\end{equation} 
Since each $\phi_{T/N}$ extends continuously to a map defined on $K$ and for each $j \in \{0,\cdots, N\}$,
\begin{equation*}
    \phi_{jT/N}(\Omega\cap X^{\reg})\subset K,
\end{equation*} we obtain that $\phi_T$ extends as a continuous map defined on $\Omega$ and hence $\phi_T$ extends to a continuous map defined on $X$.
Then the normality of $X$, together with the fact that the flow is biholomorphic when restricted to $X^\reg$, implies that the flow of $\nabla f$ is a biholomorphism from $X$ to itself.

\ref{prop:holovectorfields2} Let $V_0=\nabla f$ and $V \in J\mathrm{Lie}(\mathbb{T})$.
Let $\gamma$ be an integral curve of $V$ with maximal defining interval $(-T_1, T_2)$, i.e. for $t\in (-T_1,T_2)$,  \begin{equation}\label{eq--integral curve}
      \dot{\gamma}(t)=V_{\gamma(t)}, \quad \gamma(0)\in X^{\reg}.
  \end{equation} 
We show that $T_2 = \infty$ below; the proof that $T_1 = \infty$ is similar. Suppose by way of contradiction that $T_2<\infty$, set $x:=\gamma(T_2-\frac{1}{2}\min\{1,T_1+T_2\})$, and consider $\phi_t(x)$ for $t\in (-\infty, 0]$, where as before $\phi_t$ denotes the flow generated by $\nabla f$. We have 
  \begin{equation}
      \partial_tf(\phi_t(x))=|\nabla f|^2(\phi_t(x)).
  \end{equation}Since $f$ is proper and bounded from below, for $t\in (-\infty, 0]$, $\phi_t(x)$ stays in a compact subset of $X$ and $\int_{-\infty}^0|\nabla f|^2(\phi_t(x))\, dt<\infty$. 
Combining a compactness argument with Lemma~\ref{lem:separation} and Proposition~\ref{prop:complexvariety}, we conclude that for any compact subset $K \subset X$, there exists a holomorphic embedding $K \hookrightarrow \mathbb{P}^N$. Restricting the Fubini--Study metric then yields a smooth K\"ahler metric $\omega_0$ on $K$. By the gradient estimate as before, we obtain the existence of $c_0>0$ such that
\begin{equation}
\omega \geq c_0 \omega_0 \text{ on } K.
\end{equation} 
Since $V_0=\nabla f$ can be extended to a local holomorphic vector field in the ambient space, $|V_0|_{\omega_0}$ is locally Lipschitz, and
\begin{equation}
\int_{-\infty}^0 |V_0|_{\omega_0}^2(\phi_t(x))\, dt
\le
c_0^{-1}\int_{-\infty}^0 |V_0|^2(\phi_t(x))\, dt < \infty .
\end{equation}
It follows that
\begin{equation}\label{eq--limit is a critical point}
    \lim_{t\to -\infty} |V_0|_{\omega_0}(\phi_t(x)) = 0 .
\end{equation}
Therefore, there exists a sequence $t_i\rightarrow -\infty$ such that $\phi_{t_i}(x)\rightarrow x_0$ where $V_0=\nabla f$ vanishes at $x_0$. Since $V\in J \mathrm{Lie}(\mathbb T)$, $V$ also vanishes at $x_0$. Fix a ball $B$ with respect to a smooth K\"ahler metric on $X$ around $x_0$. If $\delta$ is sufficiently small, then the flow of $V$ starting at $y$ will exist and stay in $B$ for at least time $1$ since $V$ vanishes at $x_0$. Therefore the argument for (i) implies that the flow of $V$ is well-defined for any point in $B$ up to time 1. Because $[\nabla f,V]=0$, if we then flow forward by $\nabla f$, this segment inside $B$ is moved to an integral curve of $V$ existing for time $1$ and starting at $x$. But then this contradicts the assumption that $T_2<\infty$ is the maximal defining time of \eqref{eq--integral curve}.

We now show that for any $T\in \mathbb{R}$, the flow $\phi_T^V$ of $V$ on $X^{\operatorname{reg}}$ extends continuously to $X$. 
Fix a point $q\in X$. We claim that there exists a sequence $t_i\to -\infty$ such that 
$\phi_{t_i}(q)$ converges to a point $x_0\in X$ where $\nabla f$ vanishes.
If $q\in X^{\reg}$, this follows from the argument above. In general, since $\nabla f$ is locally bounded, the function 
$t\mapsto f(\phi_t(q))$ is locally Lipschitz. In particular, its derivative exists for almost 
every $t$. Using an approximation argument and the fact that $|\nabla f|_{\omega_0}$ is 
continuous, we obtain that for almost every $t\in (-\infty,0)$,
\begin{equation}\label{eq--derivative upper bound}
\frac{d}{dt}f(\phi_t(q))\geq c_0 |\nabla f|_{\omega_0}^2(\phi_t(q)).
\end{equation}
Consequently,
\[
\int_{-\infty}^0 |\nabla f|_{\omega_0}^2(\phi_t(q))\, dt < \infty .
\]
It follows that there exists a sequence $t_i\to -\infty$ such that 
$\phi_{t_i}(q)$ converges to a point $x_0\in X$, where $\nabla f$ vanishes. Since $V$ vanishes at $x_0$, the argument in (i) implies that there exists a neighborhood 
$U$ of $x_0$ such that the flow $\phi_T^V$ extends continuously to $U$. 
Moreover using the continuity of $\phi_{t_i}$, we know that there exists a neighborhood $U_q$ of $q$ and $t_i$ such that
\[
\phi_{t_i}(U_q)\subset U .
\]
Because $[\nabla f, V]=0$, the corresponding flows commute, i.e.
\begin{equation}
\phi_T^V  = \phi_{-t_i} \circ \phi_T^V \circ \phi_{t_i}.
\end{equation}
Using this relation, we conclude that $\phi_T^V$ extends continuously to $U_q$. 
Since $q$ was arbitrary, it follows that $\phi_T^V$ extends continuously to the whole space $X$.
\end{proof}

Recall that we let $\mathcal{X}$ denote the metric flow corresponding to $X$ (see Section~\ref{Singular Kahler-Ricci shrinkers}).

\begin{corollary} \label{cor:timelikecomplete} The maximal existence time of any integral curve of $\partial_{\mathfrak{t}}$ is $(-\infty,0)$. 
\end{corollary}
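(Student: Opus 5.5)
The plan is to reduce completeness of $\partial_{\mathfrak t}$ on the regular spacetime $\mathcal R$ to completeness of $\nabla f$ on $X^{\reg}$, which is Proposition~\ref{prop:holovectorfields}\ref{prop:holovectorfields1}. The link is the self-similar structure: $\mathcal R$ is identified with $\mathcal R_{-1}\times(-\infty,0)$ via the maps $\phi_t$ of \eqref{eq-self-diff}, namely $(x,t)\mapsto\phi_t(x)$ viewed in $\mathcal R_t$. Under this identification the vector field $\partial_{\mathfrak t}-\frac12\nabla f$ corresponds to the coordinate field $\partial_t$ on the product. Here $\nabla f$ is taken with respect to $g_t$.

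First, I would write $\partial_{\mathfrak t}$ in these product coordinates. Because $\mathcal L_{\partial_{\mathfrak t}-\frac12\nabla f}f=0$ and $g_t=|t|\,\phi_t^{*}$-rescaled, the $g_t$-gradient satisfies $\frac12\nabla^{g_t}f_t=\frac{1}{2|t|}(\phi_t)_*\nabla^{g_{-1}}f$. So, up to sign conventions, $\partial_{\mathfrak t}$ becomes
\[
\partial_t+\frac{1}{2|t|}\nabla^{g_{-1}}f
\]
on $\mathcal R_{-1}\times(-\infty,0)$.

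Second, I would solve the ODE for an integral curve starting at $(x,t_0)$. The time component is $t(s)=t_0+s$. The space component satisfies $\dot y=\frac{1}{2|t_0+s|}\nabla f(y)$. Reparametrizing by $\sigma(s)=\int_0^{s}\frac{dr}{2|t_0+r|}=-\frac12\log\frac{|t_0+s|}{|t_0|}$ gives $y(s)=\Phi_{\sigma(s)}(x)$, where $\Phi$ is the flow of $\nabla^{g_{-1}}f$ on $X^{\reg}$. As $t=t_0+s$ ranges over $(-\infty,0)$, $\sigma$ ranges over all of $\mathbb R$. By Proposition~\ref{prop:holovectorfields}\ref{prop:holovectorfields1}, $\nabla f$ is complete on $X^{\reg}=\mathcal R_{-1}$ (the equality is Proposition~\ref{prop:regularset}), so $\Phi_\sigma(x)$ exists for all $\sigma$ and stays in $X^{\reg}$. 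Hence the curve exists for all $t\in(-\infty,0)$.

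Third, I would note that the interval cannot exceed $(-\infty,0)$, since $\mathfrak t$ takes values in $(-\infty,0)$ and $\partial_{\mathfrak t}\mathfrak t=1$.

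The main obstacle is the first step: the bookkeeping of exactly which flow identifies $\mathcal R$ with the product, and checking that the regular spacetime of the metric soliton really is all of $\mathcal R_{-1}\times(-\infty,0)$ under it, with no points of $\mathcal R_t$ missed. I would handle this using the identification of the regular spacetime with the spacetime induced by the shrinker (Section~\ref{Singular Kahler-Ricci shrinkers}), together with the equality $X^{\reg}=\mathcal R_{-1}$. Invariance of the regular set under the extended biholomorphic flow guarantees the curve never enters the singular set.
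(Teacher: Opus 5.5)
Your argument is correct and is essentially the paper's. The paper writes $\tau\partial_{\mathfrak t}$ as the sum of the commuting complete vector fields $\tau\nabla f$ and $\tau(\partial_{\mathfrak t}-\nabla f)$, using Propositions~\ref{prop:holovectorfields} and~\ref{prop:regularset} for the first and \cite[Theorem 15.69]{bamler3} for the second; your explicit reparametrized ODE solution is this same fact written out in product coordinates.

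One point needs tidying: the product identification must be defined by the flow of $\partial_{\mathfrak t}-\frac12\nabla f$, whose global validity is exactly the completeness input from \cite[Theorem 15.69]{bamler3}. It should not be defined by $(x,t)\mapsto\phi_t(x)$. Under that map it is $\partial_{\mathfrak t}$ itself that becomes $\partial_t$, and using it already presupposes the conclusion.
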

\begin{proof} By Proposition \ref{prop:holovectorfields} and Proposition \ref{prop:regularset}, $\tau \nabla f \in \mathfrak{X}(\mathcal{R})$ is a complete vector field. Because $\tau (\partial_{\mathfrak{t}}-\nabla f)$ is a complete vector field by \cite[Theorem 15.69]{bamler3}, and because 
\begin{align*}
    [\tau \nabla f,\tau(\partial_{\mathfrak{t}}-\nabla f)]=0,
\end{align*}
the claim follows from the fact that the sum of complete commuting vector fields is complete. 
\end{proof}

\begin{proof}[Proof of Theorem \ref{thm:pseudolocality}] Suppose by way of contradiction that for some $D,W<\infty$, there exist compact K\"ahler--Ricci flows $(M_i^n,(g_{i,t})_{t\in [-\delta_i^{-1},0]})$, $\delta_i \searrow 0$ and $x_{0,i} \in M_i$ such that $\mathcal{N}_{x_{0,i},0}(1)\geq -W$, $(x_{0,i},0)$ is $(\delta_i,1)$-selfsimilar, but also that there exist $(x_i,t_i) \in P^{\ast}(x_{0,i},0;D) \cap (M_i \times [-D^2,-D^{-2}])$ satisfying $r_{\operatorname{Rm}}(x_i,t_i)\geq \sigma_0$ but
\begin{equation*}
   \lim_{i \to \infty} r_{\operatorname{Rm}}(x_i,t_i')=0  
\end{equation*}
for some $t_i'\in [-D^2,-D^{-2}]$. After passing to a subsequence, we may assume that 
$$(M_i,(g_{i,t})_{t\in [-\delta_i^{-1},0]},(\nu_{x_{0,i},0;t})_{t\in [-\delta_i^{-1},0]})\xrightarrow[i\to \infty]{\mathbb{F},\mathfrak{C}}(\mathcal{X},(\nu_t)_{t\in (-\infty,0)})$$
uniformly on compact time intervals with respect to some correspondence $\mathfrak{C}$, where $\mathcal{X}$ is a K\"ahler metric soliton. After passing to a subsequence, we can assume $t_i \to t_{\infty} \in [-D^2,-D^{-2}]$, and \cite[Lemma 4.2]{FlHa} gives a conjugate heat flow $(\mu_t)_{t\in (-\infty,t_{\infty})}$ such that $\lim_{t \nearrow t_{\infty}}\operatorname{Var}(\mu_t)=0$ and
\begin{align*}
    (\nu_{x_{i},t_i;t})_{t\in [-\delta_i^{-1},t_i]}\xrightarrow[i\to \infty]{\mathfrak{C}} (\mu_t)_{t\in (-\infty,t_{\infty})}.
\end{align*}
By arguing as in \cite[Proof of Lemma 26.1]{bamler3}, for each $\delta>0$, we can find $t^{\ast} \in (t_{\infty}-\delta,t_{\infty})$ such that for any $H_{2n}$-center $z_{\infty} \in \mathcal{X}_{t^{\ast}}$ of $(\mu_t)$, and any sequence $z_i \in M_i$ such that $(z_i,t^\ast)\xrightarrow[i\to \infty]{\mathfrak{C}}z_{\infty}$, we have $d_{t^\ast}(z_i,x_i) < C(\sigma_0)\sqrt{\delta}$. By choosing $\delta>0$ sufficiently small, and using $r_{\operatorname{Rm}}^{g_i}(x_i,t_i) \geq \sigma_0$, we can ensure that $B(z_{\infty},\frac{\sigma_0}{2}) \subseteq \mathcal{R}_{t^{\ast}}$. By Corollary \ref{cor:timelikecomplete},  $\mathfrak{t}\partial_{\mathfrak{t}} \in \mathfrak{X}(\mathcal{R})$ is complete, hence $B(z_{\infty},\frac{\sigma_0}{2})((-\infty,0)) \subseteq \mathcal{R}$ is unscathed. As in \cite[Proof of Lemma 26.1]{bamler3}, the smooth convergence implies a uniform (in $i\in \mathbb{N}$) curvature bound on 
\begin{align*}
    B(z_i,t^{\ast},\frac{\sigma_0}{4})\times[-2D^2,-\frac{1}{2}D^{-2}] \supseteq B(x_i,t^{\ast},\frac{\sigma_0}{8})\times [-2D^2,-\frac{1}{2}D^{-2}].
\end{align*}
In particular, $\liminf_{i \to \infty}r_{\operatorname{Rm}}^{g_i}(x_i,t_i')>0$, a contradiction. 
\end{proof}

\section{Properties of the moment map}
\label{sec:moment}
As in the previous section, let $X$ be a singular K\"ahler--Ricci shrinker arising as in either assumption \ref{assume:KRF} or \ref{assume:KRS} of Section \ref{sec;convergence setting}. By Section \ref{sec:equivariant}, $X$ admits a holomorphic and Killing action by a torus $\mathbb{T}$. In the following, we typically identify each element $\xi \in \lie(\mathbb{T})$ with the corresponding real holomorphic Killing vector field on $X$.
Since $\omega$ lies in the cohomology class of the anticanonical bundle $-K_X$, and the $\mathbb{T}$-action admits a canonical lift to $-K_X$, we can show that the $\mathbb{T}$-action admits a moment map.
\begin{lemma}
    There exists a smooth map $\mu:X^{\reg}\rightarrow \lie(\mathbb T)^*$ such that for any $\eta\in \lie(\mathbb T)$, we have 
    \begin{equation}
   \iota_{\eta}\omega=-d \langle \mu,\eta\rangle.
    \end{equation}
\end{lemma}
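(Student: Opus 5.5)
The plan is to write down the moment map explicitly, using the Hermitian metric $h$ on $L=K_{X^{\reg}}^{-1}$ and the lifted $\mathbb{T}$-action. On $X^{\reg}$ we have $\omega = \Ric(\omega)+\ii\partial\bar\partial f$. Moreover, $\omega$ is the curvature form of $h$, the metric induced by the volume form $(2\pi)^{-n}e^{-f}\omega^n/n!$, because $\Theta_h = \Ric(\omega) + \ii\partial\bar\partial f = \omega$ by the shrinker equation. For $\eta\in\lie(\mathbb T)$ we take the canonical lift of $\eta$ to $L$ given by the Lie derivative $\mathcal{L}_\eta$ on sections of $K^{-1}$, as in Remark \ref{rem:defofliederivative}. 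The Kostant formula then gives the candidate
\[
\langle\mu,\eta\rangle \cdot s := \frac{1}{\sqrt{-1}}\bigl(\mathcal{L}_\eta s - \nabla_\eta s\bigr),
\]
with $s$ any local nonvanishing section. This expression is independent of $s$, since both operators are derivations of the same type along $\eta$; indeed $\mathcal{L}_\eta-\nabla_\eta$ is $C^\infty$-linear. It is real-valued because $\eta$ is Killing and preserves $f$, so the flow of $\eta$ preserves $h$, and both $\mathcal{L}_\eta$ and $\nabla_\eta$ are $h$-compatible.

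The key step is to verify $d\langle\mu,\eta\rangle = -\iota_\eta\omega$. This is the standard computation: for a connection-preserving lift, $d(\mathcal{L}_\eta - \nabla_\eta) = -\iota_\eta F_\nabla$. Here we use $[\mathcal{L}_\eta,\nabla]=0$, which holds because $\eta$ is a holomorphic Killing field preserving $h$, together with $F_\nabla = -\sqrt{-1}\Theta_h = -\sqrt{-1}\omega$. The constants $\sqrt{-1}$ and the sign are fixed by the convention $\Theta_h = \omega$.

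More concretely, in a local holomorphic frame $s=\partial_{z_1}\wedge\cdots\wedge\partial_{z_n}$, the metric is $|s|_h^2 = (2\pi)^n n!\, e^{f}\det(g)$ up to normalizations. A direct local formula then expresses
\[
\langle\mu,\eta\rangle = -\tfrac12\Div(J\eta) + \tfrac12 (J\eta) f
\]
up to sign and factor conventions, or equivalently $\langle\mu,\eta\rangle = \Delta\theta_\eta - \tfrac12\langle\nabla f,\nabla\theta_\eta\rangle$ in terms of a Hamiltonian $\theta_\eta$. Using this expression, we can alternatively check $d\langle\mu,\eta\rangle = -\iota_\eta\omega$ via the Bochner-type identity $\Delta_f \theta + \theta = \text{const}$ for holomorphy potentials on a shrinker. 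The formula also shows that $\mu$ is smooth on $X^{\reg}$, since everything is built from the smooth data $g,f,\eta$ there.

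The main obstacle is only bookkeeping: the lift must commute with the Chern connection. To secure this, we use that $\eta$ is real holomorphic, so $\mathcal{L}_\eta$ preserves the holomorphic structure on $K^{-1}$, and that $\eta$ preserves $\omega$ and $f$, so it preserves $h$. The Chern connection is uniquely determined by these two structures, hence it is preserved as well. Finally, linearity of $\eta\mapsto\langle\mu,\eta\rangle$ follows from the linearity of both $\mathcal{L}_\eta$ and $\nabla_\eta$ in $\eta$, so $\mu$ takes values in $\lie(\mathbb T)^*$.
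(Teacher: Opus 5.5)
Your proposal is correct and is essentially the paper's argument. The paper's potential $u_\eta=\frac12\bigl(\Div(J\eta)-J\eta(f)\bigr)=\frac12\,\mathcal{L}_{J\eta}(e^{-f}\omega^n)/(e^{-f}\omega^n)$ is precisely your Kostant function $\frac{1}{\ii}(\mathcal{L}_\eta-\nabla_\eta)$ for the canonical lift to $(K^{-1},h)$ with $\Theta_h=\omega$. The only difference is the verification: the paper checks $\iota_\eta\omega=-du_\eta$ by direct computation with the shrinker equation, whereas you derive it from $d(\mathcal{L}_\eta-\nabla_\eta)=\iota_\eta F_\nabla$ with $F_\nabla=-\ii\omega$.

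Your side formulas have a sign slip. $|s|_h^2$ is proportional to $e^{-f}\det g$, not $e^{f}\det g$. Computed correctly, your Kostant function is $+\frac12\Div(J\eta)-\frac12(J\eta)f$, which is the paper's $u_\eta$, not its negative.
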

\begin{proof}
   We define a Hamiltonian potential $u_{\eta}$ for $\eta$, defined on $X^\reg$, by
\begin{equation}\label{eq--def of moment map}
    u_{\eta} = \frac{1}{2}\big(\Div(J\eta) - J\eta(f)\big)
              = \frac{1}{2} \frac{\mathcal{L}_{J\eta}(e^{-f}\omega^n)}{e^{-f}\omega^n}.
\end{equation}Then using the shrinker equation,  we obtain that
\begin{equation}
    \iota_{\eta}\omega=-du_{\eta}.
\end{equation}Clearly $u_\eta$ depends linearly on $\eta$ and hence we have a well-defined moment map $\mu$ on $X^{\reg}$ given by $\langle \mu,\eta\rangle := u_{\eta}$.
\end{proof}

\begin{lemma} \label{lem-vector field bounded}
For any $\eta \in \operatorname{Lie}(\mathbb{T})$, we have
\begin{equation*}
|\eta|_{\omega}\in L^{\infty}_{\mathrm {loc}}(X). 
\end{equation*}
\end{lemma}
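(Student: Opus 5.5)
Since $\eta$ is a real holomorphic vector field on $X^{\reg}$, the plan is to estimate $|\eta|_\omega$ pointwise via holomorphic pluri-anticanonical sections and the $C^1$ estimate in Lemma~\ref{lem:C1estimates}. The idea is that a holomorphic vector field acting on holomorphic sections produces holomorphic sections, and its length can be read off from derivatives of local embedding coordinates.

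Fix $x_0\in X$. By Proposition~\ref{prop:equivariant}, choose a $\mathbb{T}$-invariant neighborhood $U$ and an equivariant map $G=(G_1,\dots,G_K)$, embedding near $x_0$, with $G_j=u_j/s^{m_j}$. Here $s\in H^0(X,L^{\ell_0})$ is a weight vector with $|s|_{h^{\ell_0}}\ge c>0$ on $B(x_0,r)$, and the $u_j$ are weight vectors. The weight property gives $\eta^{1,0}(G_j)=\sqrt{-1}\langle k_j,\eta\rangle G_j$, using $\mathcal{L}_\eta G_j = \sqrt{-1}\langle k_j,\eta\rangle G_j$ and the fact that $G_j$ is holomorphic. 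Thus in ambient coordinates $\eta$ is the restriction of the linear vector field $\sum_j \sqrt{-1}\langle k_j,\eta\rangle z_j\partial_{z_j}$, which is bounded with respect to $\omega_{\mathbb{C}^K}$ near $G(x_0)$. This identifies $\eta$ but only bounds $|\eta|_{\omega_{\mathbb{C}^K}}$; to bound $|\eta|_\omega$ we would need an upper bound $\omega\le C\omega_{\mathbb{C}^K}$, which fails at singular points. Instead, I would use the duality between $\omega$ and the $1$-forms $dG_j$.

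The key step is a pointwise inequality at regular points,
\begin{equation*}
|\eta|_\omega^2 \le C\sum_j |dG_j(\eta)|^2 \cdot \sup_{|v|_{\omega_{\mathbb{C}^K}}\le 1}\ldots,
\end{equation*}
which is the wrong direction, so I would abandon this route. Better is the moment map. Using \eqref{eq--def of moment map} together with $\iota_\eta\omega=-du_\eta$, we get $|\eta|_\omega=|\nabla u_\eta|_\omega$. Since $\eta$ is Killing, $u_\eta$ is a Hamiltonian potential with $\Delta u_\eta$ controlled via the shrinker equation. I would derive $\Delta_f u_\eta = -u_\eta$ (up to normalization), which is the standard identity for Hamiltonian potentials of soliton-commuting Killing fields on a shrinker. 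A Bochner formula for $|\nabla u_\eta|^2$ under $\Delta_f$ then gives
\begin{equation*}
\Delta_f|\nabla u_\eta|^2 = |\nabla\nabla u_\eta|^2 - |\nabla u_\eta|^2 \ge -|\nabla u_\eta|^2,
\end{equation*}
using $\Ric+\nabla^2 f=g$. So $|\eta|^2$ is an $f$-subsolution, and the Moser iteration/heat-kernel argument of Lemma~\ref{lem:C1estimates} (cutoffs $\chi_\epsilon$ and $\varphi$, conjugate heat kernel measures, Lemma~\ref{lem--integration by part trick}, and the extension with $\mathcal{L}_{\partial_t-\frac12\nabla f}$) gives a local sup bound by the local $L^2$ norm of $|\eta|$, or even $L^1$ with a power.

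It then remains to check local integrability of $|\eta|^2_\omega$ near the singular set. I would obtain this from boundedness of $u_\eta$: by \eqref{eq--def of moment map} and the previous paragraph, $u_\eta=\tfrac12\mathcal{L}_{J\eta}\log(e^{-f}\omega^n/\cdot)$. Locally, with the bounded-below section $s$, we can write $u_\eta$ as $\tfrac{1}{2\ell_0}J\eta\log|s|^2_{h^{\ell_0}}$ plus the weight constant, and this is bounded by Lemma~\ref{lem:C1estimates}\ref{lem:C1estimates2} once $|\eta|$ is controlled, which is circular. To avoid circularity, I would instead integrate by parts: $\int\chi_\epsilon^2\psi^2|\nabla u_\eta|^2 e^{-f}$ is bounded by $\int u_\eta^2$ plus cutoff errors. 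Here $u_\eta$ is locally bounded because $s$ is a weight vector, so $\eta^{1,0}\log s$ has constant holomorphic part, making $u_\eta$ equal to a constant plus $\tfrac{1}{\ell_0}\langle\nabla\log|s|,J\eta\rangle$-type terms. I expect this boundedness of $u_\eta$ (equivalently, bounded $L^2$ energy) to be the main obstacle. The first approach I would try is to express $u_\eta=c-\tfrac{1}{2\ell_0}\,\mathcal{L}_{J\eta}\log|s|^2_{h^{\ell_0}}$ and control it via a maximum principle along the flow of $J\eta$, which is complete by Proposition~\ref{prop:holovectorfields}.
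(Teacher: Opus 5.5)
\textbf{Verdict: genuine gap.} The step you flag as the main obstacle is unresolved, and it is the heart of the lemma. Nothing in your proposal shows that $u_\eta$ is bounded near points of $X\setminus X^{\mathrm{reg}}$, i.e.\ that $|\eta|_\omega=|\nabla u_\eta|_\omega$ has any integrability across the singular set. Without that, the cutoff errors in the heat-kernel argument of Lemma~\ref{lem:C1estimates} do not vanish as $\epsilon\to0$.

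Both of your formulas for $u_\eta$ are circular: the term $\langle\nabla\log|s|,J\eta\rangle$ is bounded only once $|\eta|_\omega$ is. A ``maximum principle along the flow of $J\eta$'' is not an argument as stated. The missing idea is \emph{convexity of the potential along orbits}:
\begin{itemize}
\item Take $s$ a $\mathbb{T}$-weight vector with $c\le|s|_{h^{\ell_0}}\le C$ on a $\mathbb{T}$-invariant neighborhood $U$ of $x_0$ (upper bound from Lemma~\ref{lem:C1estimates}). Then $\varphi=-\frac{1}{\ell_0}\log|s|^2_{h^{\ell_0}}$ is a bounded $\mathbb{T}$-invariant potential, and $u_\eta=\frac{\lambda}{\ell_0}-\frac12(J\eta)\varphi$ on $U\cap X^{\mathrm{reg}}$.
\item For regular $p$, set $F(t):=\varphi(\phi^{J\eta}_t(p))$. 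Then $F'=\frac{2\lambda}{\ell_0}-2u_\eta\circ\phi^{J\eta}_t$ and $F''=-2\,du_\eta(J\eta)=2\,\omega(\eta,J\eta)=2|\eta|_\omega^2\ge0$, so $F$ is convex.
\item Hence $|F'(0)|\le 2\delta^{-1}\sup_U|\varphi|$ whenever $\phi^{J\eta}_t(p)\in U$ for $|t|\le\delta$. By continuity of the extended flow (Proposition~\ref{prop:holovectorfields}), this holds for all $p$ near $x_0$.
\end{itemize}
So $u_\eta\in L^\infty_{\mathrm{loc}}$. Your integration by parts against $\chi_\epsilon^2\psi^2u_\eta$, using $\Delta_fu_\eta=-u_\eta$ and $\nabla\chi_\epsilon\to0$ in $L^2_{\mathrm{loc}}$, then gives $|\eta|_\omega\in L^2_{\mathrm{loc}}$. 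This is the argument of Lemma~\ref{lem:constancy}, used at a non-fixed point. The paper only obtains continuity of $\mu$ afterwards, from the present lemma (Theorem~\ref{thm:convexityofmoment}), so you must supply this bound independently as above.

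\textbf{Second gap: the power in the mean-value step.} Even with $|\eta|_\omega\in L^2_{\mathrm{loc}}$, you cannot run the argument with $v^2=|\eta|^2$. The terms $\int_{\operatorname{supp}\nabla\chi_\epsilon}(|\nabla\chi_\epsilon|^2+|\nabla\log K|^2)v^2\,d\nu$ are controlled by H\"older against $|\nabla\chi_\epsilon|^{2q}$ and $|\nabla\log K|^{2q}$ with $q<2$. That requires $v^{2p}\in L^1_{\mathrm{loc}}$ for some $p>2$, i.e.\ $|\eta|\in L^{4+}_{\mathrm{loc}}$. You must instead take $v^2=|\eta|^{\gamma}$ with $\gamma<1$.

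Your Bochner identity combined with the ordinary Kato inequality only makes $|\eta|^\gamma$ a subsolution for $\gamma\ge1$. You need the refined Kato inequality $|d|\eta||^2\le\frac12|\nabla\eta|^2$ for the holomorphic section $\eta^{1,0}$, equivalently log-subharmonicity of $|\eta|^2$ up to the Ricci term. This gives the subsolution property for every $\gamma>0$.

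\textbf{Comparison with the paper.} With these two repairs your route works, but it differs from the paper's. The paper never bounds $u_\eta$. It gets only a polynomial pole $|\eta|_\omega\le C\prod|s_{D_i}|^{-C}$ on a $\mathbb{T}$-equivariant resolution, from the klt volume form and the lower bound $\omega\ge c\,\omega_B$ of Proposition~\ref{prop:bddlocal}. Hence $(\log|\eta|)_+\in L^q_{\mathrm{loc}}$ for all $q$, and it runs the mean-value argument for $(\log|\eta|)_+^2$, a heat subsolution on spacetime. Your repaired version replaces the resolution by orbit convexity and yields the stronger intermediate fact $u_\eta\in L^\infty_{\mathrm{loc}}$.
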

\begin{proof} 
Fix $x_0 \in X$, and use Proposition \ref{prop:equivariant} to choose a $\mathbb T$-invariant open set $B$ containing $x_0$ and relatively compact in $X$. Then we can take a $\mathbb T$-equivariant resolution $\pi:\widetilde{B}\rightarrow B$ \cite[Theorem 2.0.1]{wlodarczyk2008}, so that the holomorphic vector field $\eta$ lifts to a holomorphic vector field $\widetilde{\eta}$ on $\widetilde{B}$.
Then in the following, we work in a small neighborhood of $x_0$ and consider its preimage under the resolution map $\pi$. Let $\omega_B,\omega_{\widetilde{B}}$ be smooth K\"ahler metrics on $B,\widetilde{B}$, respectively. Because $X$ has klt singularities, there exist effective reduced and irreducible exceptional divisors $D_i$ and constants $a_i>-1$, $b_i\geq 0$ such that 
\begin{equation}
     (\pi^{\ast}\omega)^n = \prod |s_{D_i}|^{2a_i} \omega_{\widetilde{B}}^n, \quad (\pi^*\omega_B)^n=\prod |s_{D_i}|^{2b_i} \omega_{\widetilde{B}}^n.
\end{equation}
Since the norm of $\eta$ with respect to the smooth K\"ahler metric $\omega_B$ is always bounded, away from the exceptional locus of $\pi$, we can therefore estimate
\begin{equation}
       \pi^*(|\eta|_{\omega})=|\widetilde{\eta}|_{\pi^*\omega}\leq C \operatorname{tr}_{\pi^*\omega_{B}}(\pi^{\ast}\omega)\leq 
       C \frac{(\pi^{\ast}\omega)^n}{(\pi^*\omega_B)^n}\leq C \prod |s_{D_i}|^{-C}. 
\end{equation}
 Combining these facts and an integral computation in local holomorphic coordinates then yields (after possibly shrinking $B,\widetilde{B}$) for $\delta>0$ sufficiently small and for any $q\in (1,\infty)$
\begin{align*}
    \int_{X^{\operatorname{reg}}\cap B} (\log|\eta|_{\omega}^2)_+^q \omega^n\leq C_{\delta}\int_{\widetilde{B}} \prod |s_{D_i}|^{2a_i-\delta} \omega_{\widetilde{B}}^n<\infty.
\end{align*}

In local holomorphic coordinates, we use the Cauchy--Schwarz inequality to estimate
\begin{align*}
    \Delta \log |\eta|_\omega^2 &= g^{\overline{\jmath}i}\nabla_i \nabla_{\overline{\jmath}}\log(g_{p\overline{q}}\eta^p\overline{\eta^q}) = g^{\overline{\jmath}i}\nabla_i \left( \frac{g_{p\overline{q}}\eta^p \overline{\nabla_j \eta^q}}{g_{k\overline{l}}\eta^k\overline{\eta}^{l}} \right)\\
    &= \frac{1}{|\eta|_g^4}g^{\overline{\jmath}i}\left( |\eta|_g^2 g_{p\overline{q}}\nabla_i \eta^p \overline{\nabla_j \eta^q} - g_{p\overline{q}}g_{k\overline{l}} \eta^p \overline{\eta^l} \nabla_i\eta^k \overline{\nabla_l{\eta}^q} \right) - \frac{1}{|\eta|_g^2}g^{\overline{\jmath}i} g_{p\overline{q}}\eta^p \overline{[\nabla_j,\nabla_{\overline{\imath}}]\eta^q}\\
    &\geq -\frac{1}{|\eta|_g^2}  R_{p\overline{k}}\eta^p\overline{\eta^k}.
\end{align*}
Extending $\eta$ to $X^{\operatorname{reg}}\times (-\infty,0)$ by $\mathcal{L}_{\partial_t}\eta=0$, we therefore have
\begin{equation*} (\partial_t - \Delta)(\log |\eta|)_+^2 \leq -2|\nabla (\log|\eta|)_+|^2.
\end{equation*}
in the sense of distributions. Choose $r>0$ such that $B(x_0,4r)\subseteq B$. By Lemma \ref{lem:cutoff}, we can find $C_0 <\infty$ and $\varphi \in C^{\infty}(B(x_0,4)\cap X^{\operatorname{reg}})$ such that $\varphi|_{B(x_0,2C_0^{-1}r) \equiv 1}$ and $r^2(|\nabla \varphi|^2+|\Delta \varphi|)\leq C$.  
Then we can argue as in Lemma \ref{lem:C1estimates}\ref{lem:C1estimates1}, to obtain 
\begin{equation*} (\log|\eta|)_+^2(x) \leq C\sup_{[-1,-1-C^{-1}r^2]} \int_{\operatorname{supp}(\nabla \varphi) \cap X^{\operatorname{reg}}} (\log|\eta|)_+^2 \omega^n <\infty \end{equation*}
for all $x\in B(x_0,C_0^{-1}r)$. 
\end{proof}

\begin{theorem} \label{thm:convexityofmoment} The following hold:
\begin{enumerate}
    \item The moment map $\mu$ extends to a continuous map on $X$ and is proper;
    \item  $\mu(X)$ is a closed and convex subset of $\lie(\mathbb T)^*$.
\end{enumerate}
\end{theorem}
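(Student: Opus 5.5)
To prove Theorem~\ref{thm:convexityofmoment}, I would first extend $\mu$ continuously, then establish properness, and finally obtain convexity from the Heinzner--Loose/Heinzner--Huckleberry--Loose quotient theory for the $\mathbb{T}^{\mathbb C}$-action supplied by Proposition~\ref{prop:holovectorfields}.

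\textbf{Continuity.} For $\eta\in\lie(\mathbb T)$ we have $d u_\eta=-\iota_\eta\omega$, so $|\nabla u_\eta|=|\eta|_\omega$ on $X^{\reg}$. By Lemma~\ref{lem-vector field bounded} this quantity is locally bounded. Since $d_X|_{X^{\reg}}$ is the length metric of $g_X$ and the singular set has codimension at least four (so any two nearby points are joined by almost-minimizing curves in $X^{\reg}$), $u_\eta$ is locally Lipschitz on $X^{\reg}$ with respect to $d_X$. It therefore extends uniquely to a locally Lipschitz function on $X$, and hence $\mu:X\to\lie(\mathbb T)^*$ is continuous.

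\textbf{Properness.} Take $\xi:=J\nabla f$, the generator of $\mathbb T$ (up to normalization). Then $\iota_\xi\omega=\pm df$, so $\langle\mu,\xi\rangle=\pm\tfrac12 f+c$; this can be checked from \eqref{eq--def of moment map} together with $\Delta f=n-R$ and \eqref{eq--relation between nabla f and f}. Since $f$ is proper and bounded below, $|\mu|\ge c|f|-C$ is proper, and so $\mu^{-1}$ of any bounded set is closed and bounded in $f$, hence compact. Properness also gives closedness of $\mu(X)$.

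\textbf{Convexity.} For convexity I would follow \cite[Theorem 4.1]{HNP} and the Heinzner--Loose/Heinzner--Huckleberry--Loose theory. First, by Proposition~\ref{prop:holovectorfields} and Proposition~\ref{prop:equivariant}, $X$ is a normal complex space with a holomorphic $\mathbb{T}^{\mathbb C}$-action. Locally, $\omega=\ii\partial\bar\partial\varphi$ with $\varphi$ bounded and $\mathbb T$-invariant (average over $\mathbb T$), and the $\mathbb T$-equivariant embeddings make $\omega$ a ``K\"ahler current'' with a lower bound (Theorem~\ref{thm:complexstructure}\ref{thm:complexstructure2}).

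The key step is to verify the analytic input of \cite{HL94,HHL94}: for each $\eta$, the function $t\mapsto u_\eta(\exp(tJ\eta)x)$ is nondecreasing and strictly increasing unless $\eta(x)=0$. This follows from $\frac{d}{dt}u_\eta=|\eta|^2_\omega\ge c_p|\eta|^2_{\omega_{\mathbb C^N}}$, using the lower bound in Theorem~\ref{thm:complexstructure}\ref{thm:complexstructure2} near singular points. It also holds along orbits through singular points, by approximating them with orbits in $X^{\reg}$ and using continuity of $\mu$.

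With properness and this monotonicity in hand, the local convexity theorem (Guillemin--Sternberg local normal form replaced by the HHL slice/quotient theory) shows that $\mu(X)$ is locally convex near each image point. The $\mu$-fibers are connected by the Morse--Kirwan argument applied to the functions $u_\eta$, which are proper when perturbed by $\xi$. A closed, connected, locally convex set is convex.

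I expect the main obstacle to be this last part. The smooth symplectic local normal form is unavailable at singular points, so one must extract local convexity and connectedness of fibers from the complex reductive quotient $\mu^{-1}(a)/\mathbb T\cong X/\!/_a\mathbb T^{\mathbb C}$. My first attempt would be to show that the set of semistable points $X^{ss}(a)$ is nonempty exactly for $a$ in a convex set. I would do this by showing that $\mu(\overline{\mathbb T^{\mathbb C}x})$ is the convex hull of the weights' moment images (Atiyah's orbit-closure argument, which uses only the monotonicity above, working in a $\mathbb T$-equivariant local embedding where orbit closures are toric), and then taking unions over $x$, with properness ensuring closedness.
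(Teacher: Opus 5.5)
Your continuity and properness arguments are essentially the paper's: the local Lipschitz bound comes from Lemma~\ref{lem-vector field bounded}, and properness comes from $f$, which up to an additive constant is $\langle\mu,J\nabla f\rangle$. Closedness of $\mu(X)$ then follows from properness, as in the paper. The gap is in the convexity step.

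\textbf{Your first route.} The Tietze--Nakajima local-to-global step needs more than connectedness of $\mu(X)$, which is trivial. It needs $\mu(X)\cap V$ to be convex near each $a$. That requires two inputs, neither of which you supply:
\begin{enumerate}
\item a local model of $\mu$ as $\mu(x)$ plus a convex cone at \emph{every} point of $\mu^{-1}(a)$, including singular ones;
\item connectedness of the fibers, so that these local cones agree along a fiber.
\end{enumerate}
At a singular point, $\omega$ is not the restriction of a smooth ambient K\"ahler form under the equivariant embedding. So there is no Marle--Guillemin--Sternberg normal form there. \cite{HL94,HHL94} give existence of quotients, not such a local model. Morse--Kirwan connectivity needs smoothness.

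\textbf{Your fallback.} It is circular as stated. $X^{ss}(a)\neq\emptyset$ holds iff $a\in\mu(X)$, so saying the set of such $a$ is convex just restates the theorem. A union over $x$ of the convex sets $\mu(\overline{\mathbb T^{\mathbb C}x})$ need not be convex. What is missing is a reason why two given values $a,b\in\mu(X)$ are both attained on the closure of a \emph{single} orbit, for instance density of the semistable sets inside the maximal stratum. Moreover, computing $\mu(\overline{\mathbb T^{\mathbb C}x})$ \`a la Atiyah would again require controlling $\mu$ at singular fixed points.

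\textbf{The missing idea.} The paper never analyzes singular points for convexity.
\begin{enumerate}
\item $\mathbb T^{\mathbb C}$ preserves $X^{\reg}$, where $\omega$ is a genuine smooth K\"ahler form.
\item Take $X_{\max}\subset X^{\reg}$, the open dense set of points whose $\mathbb T$-orbits have maximal dimension. The Internal Convexity Theorem of \cite{HH96} gives that $\mu(X_{\max})$ is convex, even though $\mu|_{X_{\max}}$ is not proper.
\item By density of $X_{\max}$ and continuity of $\mu$, $\mu(X)\subseteq\overline{\mu(X_{\max})}$. Closedness of $\mu(X)$ gives equality, $\mu(X)=\overline{\mu(X_{\max})}$.
\item The closure of a convex set is convex.
\end{enumerate}
Your verification of monotonicity along $\exp(tJ\eta)$-orbits through singular points is not needed here. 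The paper uses that kind of argument only later, for the existence of the quotients in Theorem~\ref{thm:regvalues}.
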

\begin{proof}
   By Lemma \ref{lem-vector field bounded}, $\mu$ is locally Lipschitz continuous on $X^{\operatorname{reg}}$, so extends to a continuous function on $X$. The properness of $\mu$ follows from the fact that the soliton potential $f$, which is the Hamiltonian potential of $J\nabla f$, is proper.
    The closedness of $\mu(X)$ is a consequence of the properness of $\mu$. We show that $\mu(X)$ is convex using the result in \cite{HH96}. We let 
    \begin{equation}
        X_{\max}:=\{x\in X^{\reg}\mid \dim \mathbb T\cdot x \text{ is maximal }\}.
    \end{equation} Since the $\mathbb T^{\mathbb C}$-action preserves $X^{\reg}$, \cite[Internal Convexity Theorem]{HH96} gives that $\mu(X_{\max})$ is convex. Because $X_{\max}$ is an open and dense subset of $X^{\reg}$, which is open and dense in $X$, as a consequence of the continuity of $\mu$, we know that $\mu(X_{\max})$ is a dense subset of $\mu(X)$, i.e.
    \begin{equation*}
\mu(X)=\overline{\mu(X_{\max})}.
    \end{equation*}Since the closure of a convex set is convex, it follows that $\mu(X)$ is convex.
\end{proof}

As a consequence of the convexity of $\mu(X)$, it follows from \cite[Lemma 3.3]{SunZhang} that one can perturb $J\nabla f$ to obtain an element in $\lie(\mathbb{T})$ which generates an $\mathbb S^1$-action and still admits a proper Hamiltonian potential:
\begin{equation*}
    \Lambda_{\mathbb Q}:=\{\eta\in \lie(\mathbb T)\mid u_{\eta} \text{  is proper and bounded below, $\eta$ generates an $\mathbb S^1$-action}\}\neq \emptyset.
\end{equation*}

\begin{lemma} \label{lem:constancy} Let $\eta \in \operatorname{Lie}(\mathbb{T})$, and let $Z \subseteq X$ be a connected component of the vanishing set of $\eta$, viewed as a holomorphic vector field on $X$. Then the restriction $u_{\eta}\vert_Z$ is constant. In particular, if $\eta \in \Lambda_{\mathbb{Q}}$, then the set of critical values of $u_{\eta}$ is discrete and each irreducible component of the analytic set $\{\eta=0\}$ is compact.
\end{lemma}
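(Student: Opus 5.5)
The plan is to show that $u_\eta$ is locally constant on $Z$ and then use connectedness. Since $\iota_\eta\omega=-du_\eta$ on $X^{\reg}$, the regular part is straightforward. At any point of $Z\cap X^{\reg}$ where $Z$ is locally a submanifold, $\eta$ vanishes along $Z$. Hence $du_\eta$ vanishes on all tangent vectors to $Z$, so $u_\eta$ is locally constant on the smooth locus of $Z\cap X^{\reg}$.

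To handle singular points of $Z$ and points of $Z\cap(X\setminus X^{\reg})$, I would use path-connectedness of analytic sets. Each connected component $Z$ of the analytic set $\{\eta=0\}$ is itself analytic and is locally path connected by real-analytic arcs. Two points of $Z$ can therefore be joined by a piecewise real-analytic arc $\gamma\subset Z$. Each piece lies generically in a single stratum of a Whitney stratification of $Z$ compatible with $X^{\reg}$. On a stratum $S\subseteq Z\cap X^{\reg}$ the previous argument gives $d(u_\eta|_S)=0$.

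For strata in the singular set I would use Lemma~\ref{lem-vector field bounded} and the continuity of $\mu$ (Theorem~\ref{thm:convexityofmoment}). Since $\mu$ is locally Lipschitz with respect to $d_X$, the bound $|du_\eta|\le|\eta|_\omega$ holds a.e. This bound needs to be made quantitative near $Z$. The claim I would aim for is that $|\eta|_\omega(y)\to0$ as $y\to Z$.

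To prove this claim, I would use the local equivariant embedding $G$ of Proposition~\ref{prop:equivariant} together with the lower bound $\omega\ge c\,\omega_{\mathbb C^N}$. Write $\eta$ as the restriction of a linear vector field $A$ on $\mathbb C^N$ with $A\,G(z)=0$ for $z\in Z$. Then
\[
|\eta|_\omega^2=\omega(\eta,J\eta)=\langle du_\eta,J\eta\rangle
\]
is controlled by the derivative of $u_\eta$ along the orbit. Alternatively, one can argue via the explicit model. With respect to the $\mathbb T$-action, $u_\eta$ restricted to an $\eta$-orbit closure is a moment map for the circle or torus action. Hence on a small $\mathbb T^{\mathbb C}$-invariant neighborhood the function $u_\eta-u_\eta(z_0)$ is $O(\mathrm{dist}^2)$ in Euclidean distance along directions transverse to $Z$, and constant along $Z$ by continuity.

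The simplest route, which I would try first, is a direct limiting argument. Approximate $\gamma$ inside $Z$ by arcs $\gamma_k$ in $X^{\reg}$ such that $|\eta|$ along $\gamma_k$ tends to $0$. Such arcs should exist because $G(\gamma)$ lies in $\ker A$ and $|\eta|_\omega\le C$ locally. The variation of $u_\eta$ along $\gamma_k$ is then at most $\int|\eta|_\omega\,|\dot\gamma_k|_\omega$, which tends to $0$. Continuity of $u_\eta$ then gives $u_\eta(\gamma(0))=u_\eta(\gamma(1))$. \textbf{This is the main obstacle:} one needs $\omega$-lengths of the $\gamma_k$ to stay bounded, which follows from $f$ and $d_X$ being comparable with Euclidean distance on $G(U)$ via the $C^1$ estimates of Lemma~\ref{lem:C1estimates}, and one needs $|\eta|_\omega\to0$ near $Z$. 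For the latter I would use that $|\eta|^2_\omega$ is bounded and subharmonic-like (the inequality for $\log|\eta|$ in Lemma~\ref{lem-vector field bounded}) with boundary values approaching $0$.

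For the consequences when $\eta\in\Lambda_{\mathbb Q}$, note that $u_\eta$ is proper and bounded below. Its critical points on $X^{\reg}$ are exactly the zeros of $\eta$, and the singular points are treated via $\{\eta=0\}$. Each sublevel set $\{u_\eta\le c\}$ is compact. The analytic set $\{\eta=0\}\cap\{u_\eta\le c\}$ therefore has finitely many connected components meeting it. Since $u_\eta$ is constant on each component, there are only finitely many critical values below $c$, so the set of critical values is discrete. Finally, each irreducible component $Z'$ of $\{\eta=0\}$ lies in a level set $\{u_\eta=c\}$, which is compact by properness. Since $Z'$ is closed, it is compact.
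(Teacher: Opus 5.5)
Your argument works at points of $Z\cap X^{\reg}$, where $du_\eta=-\iota_\eta\omega=0$, and your treatment of the consequences for $\eta\in\Lambda_{\mathbb Q}$ is fine once local constancy is known. The gap is at points of $Z$ in $X\setminus X^{\reg}$, including strata or whole components of $Z$ inside the singular set, where the regular-part argument gives no information; this is where the lemma has content.

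\emph{What your route needs and does not prove.} You need (i) $|\eta|_\omega(y)\to 0$ as $y\to Z$, and (ii) uniform $\omega$-length bounds for arcs $\gamma_k\subset X^{\reg}$ approximating an arc in $Z$. The tools you cite give the wrong inequalities. The lower bound $\omega\ge c\,\omega_{\mathbb C^N}$ and the $C^1$ estimates of Lemma~\ref{lem:C1estimates} yield $|v|_{\mathbb C^N}\le C|v|_\omega$ and $|G(x)-G(y)|\le C\,d_X(x,y)$. To turn $A\,G(z)=0$ into smallness of $|\eta|_\omega$, or to bound $\omega$-lengths by Euclidean lengths, you need the reverse bounds, and these fail in general near singular points. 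For example, for $\mathbb C^2/\mathbb Z_2$ embedded by quadratic monomials, $\omega$ blows up relative to $\omega_{\mathbb C^N}$ at the vertex, and $d_X$ is only comparable to the square root of the Euclidean distance.

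The subharmonicity of $\log|\eta|$ from Lemma~\ref{lem-vector field bounded} does not help either. It gives a mean-value upper bound in terms of values on a surrounding region, and that region contains points off $Z$ where $|\eta|_\omega$ is not small. So it yields local boundedness, not decay. Your ``explicit model'' alternative is circular, since ``constant along $Z$ by continuity'' is the claim itself. There is also no equivariant local normal form available for the non-smooth current $\omega$.

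\emph{The missing idea.} You should pin down $u_\eta$ on $Z$ by a weight, using only continuity, as the paper does.
\begin{enumerate}
\item Near $x_0\in Z$, the construction of Proposition~\ref{prop:equivariant} gives an eigensection $s$ of $K_X^{-\ell}$ with $\mathcal L_\eta s=\sqrt{-1}\lambda s$ and $|s|_{h^\ell}\ge 1$.
\item This gives a continuous potential $\varphi=-\frac1\ell\log|s|^2_{h^\ell}$ with $u_\eta=\frac{\lambda}{\ell}-\frac12(J\eta)\varphi$ on $X^{\reg}$.
\item Along the orbits $t\mapsto \exp(\sqrt{-1}t\eta)\cdot p$, the function $\varphi_p(t)$ is convex. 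Your identity expressing $|\eta|_\omega^2$ as the derivative of $u_\eta$ along $J\eta$ is, up to a constant factor, exactly $\varphi_p''\ge 0$.
\item For $p\in Z$ the orbit is a point, so $\varphi_p$ is constant. For regular $p_j\to p$, $\varphi_{p_j}\to\varphi_p$ uniformly, and convexity forces $\varphi_{p_j}'(0)\to 0$.
\item By continuity of $u_\eta$, this gives $u_\eta\equiv\lambda/\ell$ on a neighborhood of $x_0$ in $Z$.
\end{enumerate}
This yields local constancy without any decay of $|\eta|_\omega$, rectifiable arcs, or stratifications.
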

\begin{proof} Fix $x_0 \in Z$. By Proposition \ref{prop:equivariant}, we can find a $\mathbb{T}$-invariant neighborhood $U$ of $x_0$ and $s\in H^0(U,K_X^{-\ell})$ for some $\ell\in \mathbb{N}_{>0}$ such that $\mathcal{L}_{\eta} s = \sqrt{-1} \lambda s$ for some $\lambda \in \mathbb{R}$, and $\inf_{U} |s|_{h^\ell} \geq 1$. Then $\mathcal{L}_{J\eta}s=-\lambda s$, and $\varphi:= -\frac{1}{\ell}\log |s|_{h^\ell}^2$ is a Lipschitz continuous function on $U$ such that $\omega=\ii\partial\pp \varphi$. By the definition of the moment map \eqref{eq--def of moment map}, we obtain that on $U\cap X^{\operatorname{reg}}$, we have
\begin{equation*}
    u_{\eta}=
\frac12\left(-(J\eta)\varphi- \frac{1}{\ell} \frac{\langle \mathcal{L}_{J\eta} s,s\rangle_h + \langle s,\mathcal{L}_{J\eta}s\rangle_h}{|s|_h^2}\right) = -\frac12(J\eta)\varphi+\frac{\lambda}{\ell}.
\end{equation*}
Because $x_0$ is a fixed point of the $\mathbb{T}$-action, we can choose a neighborhood $V\subseteq U$ of $x_0$ such that $\exp(\sqrt{-1}t\eta)\cdot p \in U$ for all $p\in V$ and $|t| \leq 1$. For each $p\in V$, the function
\begin{equation*}
    \mathbb{D}\to \mathbb{R}, \qquad z\mapsto \varphi(\exp(\sqrt{-1}z\eta)\cdot p)
\end{equation*}
is a strictly plurisubharmonic function on $\mathbb{D}$ which depends only on $\operatorname{Re}(z)$, hence its restriction $\varphi_p$ to $(-1,1)$ is strictly convex. 
For any $p\in V \cap X^{\operatorname{reg}}$, $\varphi_p$ is smooth, and 
\begin{equation} \label{eq:slopeandmomentmap} \varphi_p'(0)=(J\eta)\varphi(p) = \frac{2\lambda}{\ell}-2u_{\eta}(p). \end{equation}
Now fix $p\in V\cap Z$, and choose $p_j \in V\cap X^{\operatorname{reg}}$ such that $\lim_{j\to \infty}p_j=p$. Because $\varphi$ is continuous, we then have $\varphi_{p_j}\to \varphi_{p}$ uniformly on $[-\frac{1}{2},\frac{1}{2}]$ as $j\to \infty$. On the other hand, $\varphi_{p}$ is constant, so because $\varphi_{p_j}$ are convex, we have 
\begin{equation*}
    \lim_{j\to \infty}\varphi_{p_j}'(0)=0.
\end{equation*} Recalling that $u_{\eta}$ is also continuous, we may therefore replace $p$ with $p_j$ in \eqref{eq:slopeandmomentmap} and take $j\to \infty$ to obtain
\begin{equation*}
    0 = \frac{2\lambda}{\ell} - 2u_{\eta}(p)
\end{equation*}
for all $p\in V\cap Z$. Thus $u_{\eta}|_Z$ is locally constant, but $Z$ is connected, so $u_{\eta}|_Z$ is constant. 

Now assume that in addition $\eta \in \Lambda_{\mathbb{Q}}$. Let $s_j$ be a sequence of distinct critical values of $u_{\eta}$ converging to some $s_{\infty} \in \mathbb{R}$. Choose $x_j \in u_{\eta}^{-1}(s_j) \cap \{\eta=0\}$. Since $u_{\eta}$ is proper, after passing to a subsequence we may assume that $x_j \to x_{\infty}$ in $X$. Then $\eta(x_{\infty}) = 0$.
Since the vanishing locus of $\eta$ is locally connected, all points $x_j$ lie in the same connected component for $j \gg 1$, and hence $s_j = u_{\eta}(x_j)$ are all equal. This contradicts the assumption that the $s_j$ are distinct critical values of $u_{\eta}$.
Therefore, the critical values of $u_{\eta}$ must form a discrete subset of $\mathbb{R}$. Since $u_\eta$ is constant on each irreducible component of $\{\eta=0\}$ and $u_\eta$ is proper, we know that each irreducible component of the analytic set $\{\eta=0\}$ is compact.
\end{proof}

\begin{prop}\label{prop:connected-levels}
    For any $\eta \in \Lambda_{\mathbb{Q}}$ and any  $r \in \mathbb{R}$, the level set $u_{\eta}^{-1}(r)$ is connected. In particular, $X$ has only one end.
\end{prop}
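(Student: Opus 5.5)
We will run the Morse--Bott style argument for proper moment maps of circle actions, in the singular setting, with the stratified structure supplied by the results above. Fix $\eta\in\Lambda_{\mathbb Q}$, so $u_\eta$ is proper and bounded below, $\eta$ generates an $\mathbb S^1$-action, and by Lemma~\ref{lem:constancy} its critical values $c_0<c_1<\cdots$ form a discrete set, with each component of $\{\eta=0\}$ compact and contained in a single level. We intend to show that $X_{\le r}:=u_\eta^{-1}((-\infty,r])$ and $u_\eta^{-1}(r)$ are connected for every $r$, by induction over the critical values.

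\textbf{Regular intervals.} Between consecutive critical values, the flow of the vector field $\nabla u_\eta/|\nabla u_\eta|^2=J\eta/|\eta|^2$ on $X^{\reg}$ identifies the level sets. By Proposition~\ref{prop:holovectorfields}, the flow of $J\eta$ extends to biholomorphisms of $X$, and away from $\{\eta=0\}$ the function $|\eta|$ is bounded below on compact sets. Together these give that $u_\eta^{-1}([a,b])$ is $\mathbb R$-equivariantly homeomorphic to $u_\eta^{-1}(a)\times[a,b]$ whenever $[a,b]$ contains no critical value. To do this we reparametrize the flow of $J\eta$ on all of $X$ (not only on $X^{\reg}$), using that $t\mapsto u_\eta(\exp(tJ\eta)x)$ is strictly increasing with derivative $|\eta|^2>0$ off the fixed locus. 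Strict monotonicity persists at singular points by the convexity argument of Lemma~\ref{lem:constancy}: $\varphi_p$ is strictly convex, and $\varphi_p'$ is tied to $u_\eta$ by~\eqref{eq:slopeandmomentmap}. Consequently, connectedness of level sets and of sublevel sets is unchanged across regular intervals.

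\textbf{Near the minimum.} The minimum level $u_\eta^{-1}(c_0)$ consists of fixed points. We will show that it is connected and that small sublevel sets deformation retract onto it. The retraction is the downward gradient flow, i.e.\ the flow of $-J\eta$, whose limit as $t\to\infty$ exists, by the same finite-energy argument as in Proposition~\ref{prop:holovectorfields}\ref{prop:holovectorfields2}, and lands in $\{\eta=0\}$. To get connectedness of the minimum we use Proposition~\ref{prop:equivariant}. Near a fixed point $x_0$ we linearize equivariantly, $G:U\to\mathbb C^K$ with weights $k_j$. In this chart $u_\eta$ is, up to the bounded potential $\varphi$, governed by the convex functions $\varphi_p$. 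Hence the local minimum set is exactly the fixed locus $\{G_j=0 : k_j\neq0\}\cap G(U)$, which is connected locally (a cone through $x_0$). Since $X^{\reg}$ is connected ($X$ is a normal irreducible space), a sum of local minima on different components would force an index-zero critical level above $c_0$, and the crossing step below rules this out.

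\textbf{Crossing a critical value; main obstacle.} We need that passing $c_i$, for $i\ge1$, never creates a new component of the sublevel set, i.e.\ that no component of $\{\eta=0\}\cap u_\eta^{-1}(c_i)$ is a local minimum of $u_\eta$. Equivalently, the \emph{negative weight cone} $\{x: \lim_{t\to\infty}\exp(-tJ\eta)x\in Z\}$ must be nonempty (and connected to lower levels) whenever $Z$ is not the minimum. This is the step where singularities matter. The plan is to use the local equivariant embedding to write $Z$'s neighborhood as a $\mathbb C^*$-invariant affine-like germ, where $\exp(\mathbb C\eta)$ acts linearly with weights. If all weights on $X$ near $Z$ were nonnegative, then the attracting set of $\exp(-tJ\eta)$ near $Z$ would be all of a neighborhood, so $u_\eta|_U\ge c_i$ and $Z$ would be a local minimum. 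In that case the set of points flowing down to $Z$ as $t\to-\infty$ (the unstable set) is open and closed in $X^{\reg}$, which yields a decomposition of $X$ contradicting the connectedness of $X^{\reg}$ unless $Z$ is the global minimum. Given this, the sublevel set $X_{\le c_i+\epsilon}$ is obtained from $X_{\le c_i-\epsilon}$ by attaching the closures of descending cones that meet $X_{\le c_i-\epsilon}$, so it stays connected. For connectedness of the level set itself, we use the complex quotient. By the Kähler reduction at regular values (the quotient theory of \cite{HL94,HHL94}, available thanks to Lemma~\ref{lem-vector field bounded} and Theorem~\ref{thm:complexstructure}\ref{thm:complexstructure2}), $u_\eta^{-1}(r)/\mathbb S^1$ is homeomorphic to a GIT-type quotient of the connected set of semistable points, which is open and dense in $X$ and hence connected. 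Since the $\mathbb S^1$-fibers are connected, $u_\eta^{-1}(r)$ is connected. Critical levels follow as decreasing intersections of connected compact sets $u_\eta^{-1}([r-\epsilon,r+\epsilon])$, which retract onto the level. Finally, $X$ has one end because $X\setminus X_{\le r}\cong u_\eta^{-1}(r')\times(r',\infty)$ for $r'$ above all critical values in a neighborhood; more precisely, it is connected by the regular-interval step together with the above.
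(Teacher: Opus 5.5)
\textbf{There is a genuine gap.} You never prove that, for a regular value $r$, the stable set $X^{s}(\eta,r)$ is dense in $X$ and connected, and this is exactly the step where the singular setting makes things hard.
\begin{itemize}
\item The phrase ``open and dense in $X$ and hence connected'' is a non sequitur: an open dense subset of a connected space can be disconnected. What you actually need is one of two things: either the complement lies in a nowhere dense analytic subset of the irreducible normal space $X$, or $X^{s}(\eta,r)\cap X_{\max}(\eta)$ is $\mathbb{C}^*$-irreducible.
\item Density itself rests only on your crossing step, and that argument fails. The basin of a local-minimum component is open but in general not closed. On $\mathbb{P}^1$ with the rotation action, the basin of the minimum is $\mathbb{P}^1$ minus the maximum, so no ``open and closed'' decomposition arises.
\item You never exclude fixed components that are local \emph{maxima} of $u_\eta$. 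Such components would equally destroy density of $X^{s}(\eta,r)$ for $r$ above them.
\item The base case is mis-argued. If $u_\eta^{-1}(c_0)$ had two components, they would have to be joined later by a merging event, not an index-zero level. So ruling out new local minima does not give connectedness of the minimum level.
\item At critical values, connectedness of the slabs $u_\eta^{-1}([r-\epsilon,r+\epsilon])$ is asserted rather than proved.
\end{itemize}

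\textbf{The missing input.} The paper takes it from Heinzner--Huckleberry's Isotropy Stratum Density Lemma \cite{HH96}. The set $X_{\max}(\eta)=\{x\in X^{\reg}:\eta(x)\neq0\}$ is connected and irreducible, hence $\mathbb{C}^*$-irreducible. Therefore the set $X^{s}_{\max}(\eta,r)$ of its points whose $\mathbb{C}^*$-orbit meets $u_\eta^{-1}(r)$ is, when nonempty, open, dense and $\mathbb{C}^*$-irreducible. The paper then argues as follows.
\begin{itemize}
\item This gives connectedness of $u_\eta^{-1}(r)\cap X_{\max}(\eta)$ at once: two components would produce two disjoint open invariant saturations.
\item For regular $r$, the whole level is the closure of this set. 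A point outside the closure would be a local extremum of $u_\eta$, hence a zero of $\eta$; this uses monotonicity of $u_\eta$ along $J\eta$-orbits even through singular points.
\item At the minimum, the paper uses the limit map from a nearby regular level.
\item At the other critical values, it uses density of $X^{s}_{\max}(\eta,r-\epsilon')$ to find an orbit running from above $r$ to below $r$, which makes the slabs connected.
\end{itemize}
Without this density and irreducibility statement, neither your Morse-theoretic induction nor your quotient argument closes. A correct substitute would need to show that the attracting sets of all fixed components are proper analytic subsets of $X$.
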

\begin{proof}

We consider 
\begin{equation*}
    X_{\max}(\eta):=\{x\in X^{\reg} \mid \eta(x)\neq 0\}
\end{equation*} and 
\begin{equation}\label{stable points in regular part}
    X^{s}_{\max}(\eta, r):=\{x\in X_{\max}(\eta)\mid \mathbb C^*\cdot x\cap u_{\eta}^{-1}(r)\neq \emptyset\}.
\end{equation}
Clearly $X_{\max}(\eta)$ is connected and irreducible and  hence $ \mathbb C^*$-irreducible, i.e. it cannot be written as the union of two non-empty proper closed $\mathbb C^*$-invariant analytic subsets. Then we can apply \cite[Isotropy Stratum Density Lemma]{HH96} to the $\mathbb C^*$-action on $X_{\max}(\eta)$ to obtain that the set  $X^{s}_{\max}(\eta, r)$, if non-empty, is an open, dense, and $\mathbb C^*$-irreducible  subset of $X_{\max}(\eta)$. Note that we have used the fact that the semi-stable points coincide with the stable points since the vector field $\eta$ is nowhere vanishing on $X_{\max}(\eta)$. As a consequence, we obtain that 
\begin{equation*}
    u_{\eta}^{-1}(r)\cap X_{\max}(\eta) \text{ is connected}.
\end{equation*}Indeed, if $ u_{\eta}^{-1}(r)\cap X_{\max}(\eta)$ had two disjoint connected components $E_1$ and $E_2$, then we could show that the sets 
\begin{equation*}
    U_i:=\{x\in X_{\max}(\eta)\mid \mathbb C^*\cdot x\cap E_i\neq \emptyset\}
\end{equation*}
would be disjoint, open and $\mathbb C^*$-invariant, such that $X^{s}_{\max}(\eta, r)=U_1\cup U_2$, contradicting the fact that $ X^{s}_{\max}(\eta, r)$ is $\mathbb C^*$-irreducible.

We first show the connectedness of the level sets for regular values of $u_\eta$.  It suffices to prove that $u_{\eta}^{-1}(r)$ coincides with the closure of $u_{\eta}^{-1}(r)\cap X_{\max}(\eta)$ in $X$. Suppose not. Then there exists a point $x_0 \in u_{\eta}^{-1}(r)$ which is not contained in the closure of $u_{\eta}^{-1}(r)\cap X_{\max}(\eta)$. Since $X_{\max}(\eta)$ is dense in $X$, it follows that $x_0$ must be a local maximum or minimum of $u_{\eta}$. We claim that this implies $\eta$ must vanish at $x_0$, and hence $r = u_\eta(x_0)$ would be a critical value, which contradicts with the assumption that $r$ is a regular value. The claim is clear if $x_0 \in X^{\reg}$. 
In general, we argue as in \eqref{eq--derivative upper bound}. Let $\phi_t^{J\eta}$ denote the flow generated by $J\eta$. For $t \in (-1,1)$, the function $t \mapsto u_{\eta}(\phi_t^{J\eta}(x_0))$ is Lipschitz and satisfies, for a.e. $t \in (-1,1)$,
\begin{equation*}
    \frac{d}{dt} u_{\eta}(\phi_t^{J\eta}(x_0)) \leq -c_0 \, |\eta|^2_{\omega_0}(\phi_t^{J\eta}(x_0)).
\end{equation*} Thus $x_0$ cannot be a local maximum or a local minimum of $u_{\eta}$, which yields a contradiction.

Next, we prove the connectedness of $u_{\eta}^{-1}(r)$ for a singular value $r$ of $u_\eta$. 
\begin{itemize}
    \item \textbf{Case 1:} $r=\min_X u_\eta$. Fix $\delta>0$ such that  there is no critical value of $u_{\eta}$ in $(r,r+\delta]$. Then for any point $x\in u_{\eta}^{-1}(r+\delta)$,
    \begin{equation*}
        \lim_{t\rightarrow \infty}u_\eta(\phi_t^{J\eta}(x))=r.
    \end{equation*} By applying the \L{}ojasiewicz inequality to a Hamiltonian potential of $J\eta$ with respect to a smooth K\"ahler metric on $X$, we know that 
    \begin{equation*}
        \lim_{t\rightarrow \infty}\phi_t^{J\eta}(x) \in u_\eta^{-1}(r) \text{ exists.}
    \end{equation*} Then if $u_\eta^{-1}(r)=E_1\sqcup E_2$, where $E_i$ are non-empty closed subsets and hence compact subsets of $X$, then
\begin{equation*}
      U_i:=  \{x\in u_{\eta}^{-1}(r+\delta)\mid \lim_{t\rightarrow \infty}\phi_t^{J\eta}(x) \in E_i\}
    \end{equation*} 
would be disjoint open subsets of $u^{-1}(r+\delta)$ whose union equals $u^{-1}(r+\delta)$, contradicting the connectedness of $u_\eta^{-1}(r+\delta)$. Therefore, $u_\eta^{-1}(\min_X  u_\eta)$ is connected.
\item \textbf{Case 2:} $r>\min_X u_\eta$. We fix $\delta>0$ such that $r-\delta>\min_X u_\eta$ and $r$ is the unique critical value of $u_\eta$ inside $[r-\delta,r+\delta]$. Then we show that for any $\epsilon\in (0, \delta]$, $u_\eta^{-1}([r-\epsilon,r+\epsilon])$ is connected. Suppose by way of contradiction this fails, so that there are non-empty compact subsets $E_1,E_2$ of $u_{\eta}^{-1}([r-\epsilon,r+\epsilon])$ such that 
\begin{equation*}
    u_\eta^{-1}([r-\epsilon,r+\epsilon])=E_1\sqcup E_2,
\end{equation*}
Since the set of critical values of $u_\eta$ is discrete and level sets of regular values are connected, after relabeling $E_i$ if needed, we must have 
\begin{equation*}
    \max_{E_1}u_{\eta}=r=\min_{E_2} u_{\eta},
\end{equation*}and hence 
\begin{equation*}
    u_\eta^{-1}([r-\epsilon, r))\subset E_1 \text{ and }u_\eta^{-1}((r,r+\epsilon])\subset E_2.
\end{equation*}Since $X^s_{\max}(\eta, r-\epsilon')\neq \emptyset$ for some $\epsilon'\in (0,\epsilon)$, by \cite{HH96}, we know that $X^s_{\max}(\eta, r-\epsilon')$ is dense in $X_{\max}(\eta)$. Therefore  there exists a point $x\in u_\eta^{-1}((r,r+\epsilon])\cap X_{\max}(\eta)$ such that 
\begin{equation}\label{eq-consequence of dense}
    (\mathbb C^*\cdot x) \cap u_\eta^{-1}(r-\epsilon')\neq \emptyset.
\end{equation}Consider the trajectory $x_t=\phi_t^{J\eta}(x)$. Since $u_{\eta}$ is monotone decreasing along $x_t$, \eqref{eq-consequence of dense} implies that there exists $A>0$ such that
\begin{equation*}
  x_0\in E_2, \: x_A\in E_1, \text{ and } \{x_t\mid t\in [0,A]\}\subseteq E_1\sqcup E_2,
\end{equation*}which contradicts with the connectedness of the interval $[0,A]$. Therefore we have proved that for any $\epsilon\in (0, \delta]$, $u_\eta^{-1}([r-\epsilon,r+\epsilon])$ is connected. 

We will now conclude the connectedness of  $u_\eta^{-1}(r)$ using
\begin{equation*}
    u_\eta^{-1}(r)=\bigcap_{\epsilon\in (0,\delta]}u_\eta^{-1}([r-\epsilon,r+\epsilon]).
\end{equation*}Indeed suppose $u_\eta^{-1}(r)=E_1\sqcup E_2$, where $E_i$ are non-empty compact subsets. Then there exist disjoint open subsets $U_i$ of $X$ such that $E_i\subset U_i$. Then for $\epsilon>0$ sufficiently small, we would have 
\begin{equation*}
    u_\eta^{-1}([r-\epsilon,r+\epsilon])\subset U_1\sqcup U_2.
\end{equation*} Then by connectedness of $u_\eta^{-1}([r-\epsilon,r+\epsilon])$, there exist $U_i$ such that $$U_i\cap u_\eta^{-1}([r-\epsilon,r+\epsilon])=\emptyset,$$ which gives a contradiction.
\end{itemize}
For $\eta \in \Lambda_{\mathbb{Q}}$, the function $u_{\eta}$ is proper and bounded below and we have shown that the level sets of $u_{\eta}$ are connected. Putting these facts together, we conclude that $X$ has only one end.
\end{proof}

For $\eta \in \Lambda_{\mathbb{Q}}$ and $s\in \mathbb{R}$, we define the corresponding semistable and stable sets by
\begin{align*}
     X^{s}(\eta,s) &:= \{x\in X \: | \: \mathbb{C}^{\ast}\cdot x \cap u_{\eta}^{-1}(s)\neq \emptyset \text{ and } \dim_{\mathbb C}(\mathbb C^*\cdot x)=1\},
    \\ X^{ss}(\eta,s) &:= \{x\in X \: | \: \overline{\mathbb{C}^{\ast}\cdot x } \cap u_{\eta}^{-1}(s) \neq \emptyset\}.
\end{align*}
By the argument for \eqref{eq--limit is a critical point}, we know that any point in $\overline{\mathbb{C}^{\ast}\cdot x}\setminus \mathbb C^*\cdot x$ is a critical point of $u_{\eta}$. Therefore if $s$ is a regular value of $u_{\eta}$, then we have 
\begin{equation*}
     X^{s}(\eta,s)=X^{ss}(\eta,s).
\end{equation*}

We now recall some notions from birational geometry that will be used below. We say a normal variety $X$ is of klt type if there exists an effective $\mathbb{Q}$-divisor $B$ on $X$ such that $(X,B)$ is a klt pair.
Let $Z$ be a normal projective variety. We say that $Z$ is of \emph{Fano type} 
if there exists an effective $\mathbb{Q}$-divisor $B$ such that $(Z,B)$ is klt 
and $-(K_Z + B)$ is ample.
 By \cite[Lemma-Definition 2.6]{PrSh}, $Z$ is of Fano type if and only if there exists 
an effective $\mathbb{Q}$-divisor $B_1$ such that $(Z,B_1)$ is klt and 
$-(K_Z + B_1)$ is big and nef. 

We refer to \cite[Section 2.3]{greb-projec} and \cite{HH99} for the definition of an analytic Hilbert quotient that we are going to use in the following.
\begin{theorem} \label{thm:regvalues}
For $\eta \in \Lambda_{\mathbb{Q}}$ and $r \in \mathbb{R}$, the analytic Hilbert quotient 
$$X_r:=X^{ss}(\eta,r)//\mathbb{C}^* $$ exists and is a compact normal K\"ahler space homeomorphic to $u_{\eta}^{-1}(r)/\mathbb{S}^1$. Moreover if $r$ is a regular value of $u_\eta$, then $X_r$ is a normal projective variety of klt type.
\end{theorem}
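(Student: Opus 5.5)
The plan is to use the analytic reduction theory of Heinzner--Loose and Heinzner--Huckleberry--Loose \cite{HL94,HHL94} for the $\mathbb{C}^*$-action generated by $\eta$ (Proposition \ref{prop:holovectorfields}). The first step is to put ourselves in their setting. By Proposition \ref{prop:bddlocal}, $\omega$ has continuous local potentials $\varphi$. By Proposition \ref{prop:equivariant} and the proof of Lemma \ref{lem:constancy}, we may take these potentials $\mathbb{S}^1$-invariant, of the form $\varphi=-\frac1\ell\log|s|^2_{h^\ell}$ with $s$ a $\mathbb{T}$-eigensection. In these charts the moment map is, up to a locally constant shift,
\[
u_\eta=-\tfrac12 (J\eta)\varphi+\tfrac{\lambda}{\ell}.
\]
So $(X,\omega,u_\eta)$ is a Hamiltonian $\mathbb{S}^1$-Kähler space in the sense of \cite{HL94}, with continuous strictly psh potentials. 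Strictness comes from the Kähler current bound in Theorem \ref{thm:complexstructure}\ref{thm:complexstructure2}, and continuity of $u_\eta$ comes from Lemma \ref{lem-vector field bounded} together with Theorem \ref{thm:convexityofmoment}. Where \cite{HL94,HHL94} use smoothness of the potential, namely convexity of $\varphi$ along $\exp(\ii t\eta)$-orbits and the Lipschitz control of the gradient flow of $u_\eta$, I would substitute the one-variable convexity argument from Lemma \ref{lem:constancy} and the inequality \eqref{eq--derivative upper bound}.

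With this in place, the quotient should follow directly. Since $u_\eta$ is proper (Theorem \ref{thm:convexityofmoment}), the level $u_\eta^{-1}(r)$ is compact. The results of \cite{HL94,HHL94} then say that $X^{ss}(\eta,r)$ is open and $\mathbb{C}^*$-invariant, that the analytic Hilbert quotient $X_r=X^{ss}(\eta,r)/\!/\mathbb{C}^*$ exists, and that the inclusion induces a homeomorphism $u_\eta^{-1}(r)/\mathbb{S}^1\cong X_r$. In particular $X_r$ is compact. Normality of $X_r$ is inherited from the normality of $X$ (Theorem \ref{thm:complexstructure}), since invariant holomorphic functions on a normal space form a normal sheaf. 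The reduced Kähler structure is obtained by descending the invariant continuous psh potentials $\varphi$ restricted to the level set, as in \cite{HHL94}; this gives the compact normal Kähler space.

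Now let $r$ be a regular value. By Lemma \ref{lem:constancy} and the remark preceding the theorem, $X^{s}=X^{ss}$ and $\eta$ does not vanish on $u_\eta^{-1}(r)$. Hence every $\mathbb{C}^*$-orbit in $X^{ss}$ is closed with finite stabilizer, and these stabilizer orders are uniformly bounded by compactness. The slice theorem then gives local models $X^{ss}\supset \mathbb{C}^*\times_{\Gamma}S\to S/\Gamma\subset X_r$, where $\Gamma$ is finite.

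\textbf{Projectivity.} For $\ell$ divisible by all stabilizer orders, twist the canonical linearization of $K_X^{-\ell}$ by the character $t^{-\ell r}$, after rescaling $\eta$ so that $r$ is rational. Then $\Gamma$ acts trivially on the fibres, and the bundle descends to a line bundle $L_r$ on $X_r$. The metric $h^\ell$ descends to a metric whose weight is the reduced potential, which is continuous and strictly psh. So $L_r$ is positive in the sense of Grauert, and projectivity follows from \cite{greb-projec} (Grauert's criterion for normal compact spaces).

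\textbf{Klt type.} $X^{ss}$ is klt because $X$ is (Proposition \ref{prop:klt}). The slice $S$ is locally a normal slice of a free $\mathbb{C}^*$-factor, so $S$ is klt. The quotient $S/\Gamma$ is then of klt type, with boundary $B_r$ given by the ramification divisor with standard coefficients $1-\frac1m$, so that $K_S=\pi^*(K_{S/\Gamma}+B_r)$. I would assemble these local boundaries into a global $\mathbb{Q}$-divisor on $X_r$, or invoke \cite{BGLM2024} for quotients of klt spaces by reductive groups with finite generic stabilizers.

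I expect the main obstacle to be the first step: verifying that the Heinzner--Loose machinery, which is written for smooth Kähler forms or smooth strictly psh potentials, really goes through with only continuous potentials and a moment map that is merely locally Lipschitz. In particular, one must show that semistable points are exactly those whose $\mathbb{C}^*$-orbit closures meet the level set, and that $X^{ss}$ is open. My first attempt would be to redo their key convexity and properness arguments along individual orbits, using $\omega\ge c_p\omega_{\mathbb{C}^N}$ in equivariant embeddings together with the flow estimates of Proposition \ref{prop:holovectorfields}, rather than trying to cite their results verbatim.
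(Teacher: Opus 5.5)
Your proposal is correct in outline, and its first half follows the paper. This covers existence of $X_r$, the homeomorphism with $u_\eta^{-1}(r)/\mathbb{S}^1$, normality and the K\"ahler property. Like the paper, you adapt \cite{HL94,HHL94} by replacing smoothness of the potential along orbits with convexity of $t\mapsto\varphi(\phi^{J\eta}_t(x))$ and \eqref{eq--derivative upper bound}, and you descend invariant potentials.

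The one place where ``as in \cite{HHL94}'' hides real work is strict plurisubharmonicity of the descended potentials at points of $X_r$ lying under $X^{\rm sing}$. The paper handles this in three steps:
\begin{itemize}
\item it descends the lower bound of Proposition \ref{prop:bddlocal} over $(X^{ss}(\eta,r)\cap X^{\reg})/\!/\mathbb{C}^*$;
\item it extends across the analytic set $(X^{ss}(\eta,r)\cap X^{\rm sing})/\!/\mathbb{C}^*$ by the extension theorem for psh functions;
\item it then applies \cite{FN1980} and \cite{Varouchas}.
\end{itemize}
Your projectivity step uses exactly this output, so it should be spelled out.

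For the regular-value part your route genuinely differs. The paper only uses that $\omega_r$ is the curvature of a $\mathbb{Q}$-line bundle over the image of $X^{\reg}$. It reaches projectivity indirectly: resolution, \cite{schumacher2017} and \cite{boucksom2002volume} for bigness, Moishezon, rational singularities via \cite{greb-rational}, then \cite{Namikawa}. It then uses \cite[Theorem 3]{greb-projec} to make $X^s(\eta,r)\to X_r$ an algebraic good quotient, and \cite{BGLM2024} for klt type.

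You instead descend $\cO(-\ell K_X)$, twisted by a character, over all of $X_r$. This is legitimate for two reasons. First, $X^{ss}(\eta,r)=\mathbb{C}^*\cdot u_\eta^{-1}(r)$ has a uniform Cartier index for $K_X$. Second, $h^\ell$ is a continuous metric on $\cO(-\ell K_X)$ near singular points by Proposition \ref{prop:bddlocal} and \eqref{eq--two sided bound}. You then apply Grauert's criterion, and get klt-ness from the finite-quotient formula on slices.

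Your route is shorter. It directly produces an ample bundle (essentially $\ell(-(K_{X_r}+D_r)+rL_r)$) and the klt pair $(X_r,D_r)$; the paper only recovers these later, in Proposition \ref{prop-fano type} via Lemma \ref{lem-q-cartier}. What it does not give is algebraicity of $X^s(\eta,r)$ and of the quotient map. The paper extracts that here and reuses it throughout Section \ref{sec:fano-fibration}. For that, and before invoking \cite{BGLM2024} (an algebraic statement), you would still need \cite[Theorem 3]{greb-projec}.

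Three small corrections.
\begin{itemize}
\item You cannot make $r$ rational by rescaling $\eta$. Only rational rescalings keep $\eta$ generating a circle action, and they do not change whether $r$ is rational. Instead, note that $X^{ss}(\eta,s)$, hence $X_s$, is independent of $s$ in a critical-value-free interval around $r$, and choose $s\in\mathbb{Q}$ there.
\item The criterion you need is Grauert's \cite{Gra62}, not \cite{greb-projec}. Apply it after Richberg smoothing of the continuous strictly psh weight, as the paper does for compact $X$ at the start of Section \ref{sec:fano-fibration}.
\item Your slice argument gives an analytic klt pair. To conclude that the projective variety $X_r$ is of klt type, use GAGA for $\cO(m(K_{X_r}+D_r))$ and the analytic/algebraic comparison \cite{Kaw24}.
\end{itemize}
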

\begin{proof}
The first statement is essentially contained in \cite{HL94} except that there they assume the K\"ahler metric is smooth on each $\mathbb C^*$-orbit. Examining the proof there, they need that each point $x_0$ admits a $\mathbb C^*$-invariant Stein neighborhood, on which one can write $\omega=\ii\partial\pp\varphi$ for some function $\varphi$ whose restriction to each orbit $\phi_t^{J\eta}(x)$ for $\eta\in \lie(\mathbb T)$ is smooth and convex. In our setting, even though $\varphi$ is not smooth, by the lower bound of $\omega$ (Proposition \ref{prop:bddlocal}) and the approximation trick used before (see \eqref{eq--derivative upper bound}), we know that 
\begin{equation*}
    \varphi(\phi_t^{J\eta} (x))\in W^{2,\infty}_{\loc}(\mathbb R) \text{ and } \varphi(\phi_t^{J\eta} (x)) \text{ is convex,}
\end{equation*}
\begin{equation*}
      \frac{d}{dt}\varphi(\phi_t^{J\eta}(x))=-u_{\eta}(\exp( tJ\eta)\cdot x),
\end{equation*}
\begin{equation*}
    \frac{d^2}{dt^2}\varphi(\phi_t^{J\eta}(\cdot x))=-\frac{d}{dt}u_{\eta}(\phi_t^{J\eta}(x))\geq c_0(t,x) \, |\eta|^2_{\omega_0}(\phi_t^{J\eta}(x))
\end{equation*}
for some locally bounded function $c_0(t,x)$. These properties suffice to run the argument in \cite{HL94} to get the existence of the analytic Hilbert quotient $X_r$,  and the desired homeomorphism. In particular, $X_r$ is a compact normal complex space. Indeed, by \cite[(3.3)]{HL94}, for the analytic Hilbert quotient
\(
    \pi:X^{ss}(\eta,r)\longrightarrow X_r,
\)
one has
\[
    \mathcal O_{X_r}
    =
    \bigl(\pi_*\mathcal O_{X^{ss}(\eta,r)}\bigr)^{\mathbb C^*}.
\]
Here the right-hand side denotes the sheaf whose sections over an open set
\(U\subset X_r\) are given by
\[
    \bigl(\pi_*\mathcal O_{X^{ss}(\eta,r)}\bigr)^{\mathbb C^*}(U)
    =
    \mathcal O_{X^{ss}(\eta,r)}\bigl(\pi^{-1}(U)\bigr)^{\mathbb C^*},
\]
that is,
\[
    \mathcal O_{X^{ss}(\eta,r)}\bigl(\pi^{-1}(U)\bigr)^{\mathbb C^*}
    =
    \left\{
    h\in \mathcal O_{X^{ss}(\eta,r)}\bigl(\pi^{-1}(U)\bigr)
    \;\middle|\;
    h(\lambda\cdot x)=h(x)
    \text{ for all }\lambda\in\mathbb C^*
    \right\}.
\]
Therefore weakly holomorphic functions on \(X_r\) pull back to
\(\mathbb C^*\)-invariant weakly holomorphic functions on \(X^{ss}(\eta,r)\).
Since \(X^{ss}(\eta,r)\) is normal, these pullbacks extend holomorphically and  descend
to holomorphic functions on \(X_r\). Hence \(X_r\) is normal.

As in \cite[Section~3]{HHL94}, by choosing $\mathbb{S}^1$-invariant local potentials for $\omega$ and descending to the quotient, we obtain that $\omega$ descends to a closed $(1,1)$-current $\omega_r$ on $X_r$ whose local potentials are continuous. By the proof of the theorem in \cite[Section~2]{HHL94}, and by considering an $\mathbb{S}^1$-invariant local smooth K\"ahler form on $X$, we obtain locally defined strictly plurisubharmonic functions in the sense of perturbation on $X_r$; see \cite[Page 124]{HHL94} for the definition. Then in order to show $\omega_r$ is a positive $(1,1)$-current, we first consider $(X^{ss}(\eta,r)\cap X^{\reg})//\mathbb{C}^*$ and use the fact that $(X^{ss}(\eta,r)\cap X^{\mathrm{sing}})//\mathbb{C}^*$ is an analytic subset of $X_r$, together with the extension property of plurisubharmonic functions \cite[Chapter I--Theorem 5.24]{demaillyjean-pierreComplexAnalyticDifferential}, we obtain that the lower bound of $\omega$ from Proposition~\ref{prop:bddlocal} descends to the quotient. Consequently, the local potentials of $\omega_r$ are also strictly plurisubharmonic in the sense of perturbation. Then, by \cite[Theorem 5.3.1]{FN1980}, these local potentials can be realized as restrictions of strictly plurisubharmonic functions under local embeddings. Therefore, $X_r$ is a compact normal Kähler space by \cite[Theorem 1]{Varouchas}.

 We need to show that $X_r$ is a projective variety of klt type if $r$ is a regular value of $u_\eta$. Since $r$ is a regular value of $u_{\eta}$ and the set of critical values is discrete, there exists $\delta > 0$ such that the analytic quotients $X_s$ are biholomorphic for all $s \in (r-\delta, r+\delta)$. Therefore, by replacing $r$ with a nearby value if necessary, we may assume $r \in \mathbb{Q}$. Then we  recall the existence of slices proved in \cite{HL94}. For any point $x_0 \in u_{\eta}^{-1}(r)$, there exists a $G_{x_0}$-invariant subvariety $S$ through $x_0$ such that the natural map
\begin{equation*}
    \mathbb{C}^* \times_{\mathbb C^*_{x_0}} S \to X
\end{equation*}
is a biholomorphism onto its open image, where $\mathbb C^*_{x_0}$ denotes the stabilizer of $x_0$ under the $\mathbb{C}^*$-action. Therefore, a neighborhood of $x_0$ in the analytic quotient $X^{s}(\eta, r)/\mathbb{C}^*$ can be identified with a neighborhood of the finite quotient $S/G_{x_0}$. 
Since the quotient map $S \to S/G_{x_0}$ is finite, Remmert's proper mapping theorem implies that $S^{\mathrm{sing}}/G_{x_0}$ is an analytic subset of $S/G_{x_0}$. Consequently, $(X^{s}(\eta, r) \cap X^{\mathrm{sing}})/\mathbb{C}^*$ is an analytic subset of $X_r$ of codimension at least 2. The computation in 
    \cite[Lemma 3.4]{SunZhang} can be done on the locus $(X^{s}(\eta, r)\cap X^{\mathrm{reg}})/\mathbb C^*$, showing that $\omega_r$ is the curvature form for some $\mathbb Q$-line bundle on $(X^{s}(\eta, r)\cap X^{\mathrm{reg}})/\mathbb C^*$. By Hironaka's resolution of singularities \cite{BM,wlodarczyk2008}, there exists a projective resolution
\begin{equation*}
    \widetilde{X_r} \to X_r.
\end{equation*}
In particular, $\widetilde{X_r}$ is a compact smooth K\"ahler manifold \cite{fujiki1978}. Since $\omega_r$ is a globally defined positive (1,1)-current, by \cite[Proposition~3]{schumacher2017}, there exists a line bundle $L$ on $\widetilde{X_r}$ equipped with a singular Hermitian metric whose curvature current is semipositive and strictly positive on a nonempty open set. By \cite{boucksom2002volume}, using Demailly's regularization theorem, it follows that $L$ is a big line bundle. Hence $\widetilde{X_r}$ is Moishezon, and since it is K\"ahler, it is projective \cite{moishezon1967n}.
Consequently, $X_r$ is Moishezon. By \cite{greb-rational}, $X_r$ has rational singularities. It then follows from \cite{Namikawa} that $X_r$ is projective. Finally, by \cite[Theorem~3]{greb-projec}, $X^s(\eta,r)$ is quasi-projective and the $\mathbb{C}^*$-action is algebraic and $X^{s}(\eta,r)\rightarrow X_r$ is an algebraic Hilbert quotient \cite[Proposition 10.1]{greb-projec} since $r$ is a regular value. Therefore, by \cite{BGLM2024}, the quotient $X^s(\eta,r)/\mathbb{C}^*$ is of klt type.
\end{proof}

    The following result is likely well known, but we include it for completeness.

\begin{lemma}\label{lem-q-cartier}
   Let $X$ be a normal projective variety of klt type, and let 
$D \subset \Omega\subset X^{\reg}$ be a smooth divisor, where $\Omega $ is a Zariski open subset such that $X\setminus \Omega$ has complex codimension at least 2. Suppose that the line 
bundle $\mathcal{O}_{\Omega}(D)$ admits a Hermitian metric $h$ with bounded local potentials,
whose curvature current $\Theta_h$ satisfies the following property: for 
any point $x \in X$, there exists a neighborhood $U$ of $x$ and a function 
$\varphi \in \mathrm{PSH}(U)$ such that
\[
\Theta_h = \sqrt{-1}\,\partial \bar{\partial} \varphi \quad \text{on } U\cap \Omega.
\]
Then $D$ extends to a $\mathbb{Q}$-Cartier divisor on $X$.
\end{lemma}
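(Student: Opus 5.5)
We extend $D$ by taking its closure $\overline{D}\subset X$; this is a Weil divisor, since $X\setminus\Omega$ has codimension at least $2$. The task is to show that some multiple $m\overline{D}$ is Cartier near every point $x\in X\setminus\Omega$. The plan is to reduce this to a local statement on a klt germ: a Weil divisor which is locally the divisor of a "multiplicative" holomorphic object with controlled size is $\mathbb{Q}$-Cartier. The bounded-potential metric supplies exactly that size control.

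\emph{Step 1 (local holomorphic section).} Fix $x\in X$ and a small Stein neighborhood $U$ with $\varphi\in\mathrm{PSH}(U)$ and $\Theta_h=\sqrt{-1}\partial\bar\partial\varphi$ on $U\cap\Omega$. Let $s_D$ be the canonical section of $\mathcal{O}_\Omega(D)$ and set
\[
\psi:=\log|s_D|_h^2+\varphi \quad\text{on } U\cap\Omega .
\]
Poincaré--Lelong gives $\sqrt{-1}\partial\bar\partial\psi=[D]$ on $U\cap\Omega$. Hence $\psi$ is psh on $U\cap\Omega$, and it is pluriharmonic off $D$. Since $X\setminus\Omega$ has codimension $\ge 2$ and $\psi$ is locally bounded above there (because $h$ and $\varphi$ have bounded potentials), $\psi$ extends to a psh function on $U$ by the Grauert--Remmert extension theorem \cite{GrRe} and the normality of $X$. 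After shrinking $U$ to be simply connected away from $\overline D$ where needed, we locally write $\psi=\log|g|^2$ off $D$ with $g$ holomorphic and nonvanishing on $U\cap\Omega\setminus D$. Then $g$ extends holomorphically across $D\cap\Omega$ with divisor exactly $D$ there, and it extends across $X\setminus\Omega$ by normality (Riemann extension, since $|g|^2=e^{\psi}$ is locally bounded).

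\emph{Step 2 (the monodromy issue).} The genuine obstruction is that $\psi-\log|s_D|^2$ need not be the log of the modulus of a single-valued function on $U\setminus(\overline D\cup (X\setminus\Omega))$. The local fundamental group of the klt germ around $X\setminus\Omega$ may produce a nontrivial character, so in general $g$ is only a multivalued function. This is the main obstacle. To deal with it, we use that a klt germ has finite local class group \cite{BGLM2024,greb-rational}; more precisely, the character lands in $\mathrm{Cl}(X,x)$, which is finitely generated and whose torsion-free part we must kill. For the latter we use the bounded potentials: $|g|$ is bounded above and below away from $D$ by $e^{\pm C}$ times $|s_D|$. From this we argue that $\mathcal{O}(\overline{D})|_U$ is the reflexive rank-one sheaf of an analytic line bundle with a flat (unitary) structure on $U\cap\Omega$. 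Its class then lies in the image of $H^1(\pi_1(U\cap\Omega),U(1))$, and $\pi_1$ of the link of a klt singularity is finite by the results of Braun et al.\ \cite{BGLM2024}. Hence some power $g^m$ is single-valued, with $\operatorname{div}(g^m)=m\overline D$ on $U$.

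\emph{Step 3 (conclusion).} The function $g^m$ is holomorphic on $U$ (by normality) and its divisor is $m\overline{D}\cap U$. So $m\overline D$ is Cartier on $U$, and by compactness a uniform $m$ works, giving that $\overline D$ is $\mathbb{Q}$-Cartier. I expect Step 2 to require the most care: specifically, identifying the obstruction as a flat unitary character via the two-sided bound of $|g|$ against $|s_D|_h$, and then invoking finiteness of the local fundamental group of klt germs.
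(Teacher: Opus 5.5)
Your argument is correct and is essentially the paper's proof. Your $s_D/g$ is a local parallel frame for the flat metric $he^{\varphi}$ on $\mathcal{O}(D)|_{U\cap\Omega}$. Its monodromy character lives on $\pi_1(U\cap\Omega)\cong\pi_1(U\cap X^{\reg})$, where the isomorphism holds because $X^{\reg}\setminus\Omega$ has complex codimension at least $2$. That group is finite by Braun's theorem, so the character dies after a power, and $g^m$ then extends across $U\setminus\Omega$ by normality, with divisor $m\overline{D}$.

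Three corrections. First, drop the aside that klt germs have finite local class group: the threefold node $xy=zw$ is klt with local class group $\mathbb{Z}$. Second, flatness comes from the global potential $\varphi$ (your Step 1), not from the two-sided bounds. Third, the group you need to be finite is $\pi_1(U\cap X^{\reg})$, not the fundamental group of the link.
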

\begin{proof}
    It suffices to show that, for any $U$, with $\varphi$ as in the statement, $\mathcal{O}_{U\cap \Omega}(D)$ extends to a $\mathbb{Q}$-Cartier divisor on $X$. Consider the Hermitian metric $h e^{\varphi}$ on $\mathcal{O}_{U\cap \Omega}(D)$. Then this Hermitian metric has vanishing curvature and hence defines a smooth Hermitian metric
   on $\mathcal{O}_{U\cap \Omega}(D)$. 
    Shrinking $U$ if necessary, by \cite{braunLocalFundamentalGroup2021}, we know that $\pi_1(U\cap X^{\reg})$ is finite. Since by the assumption that $X\setminus \Omega$ has codimension at least 2 in $X^{\reg}$, we have 
    \begin{equation*}
        \pi_1(U\cap \Omega)\simeq \pi_1(U\cap X^{\reg})
    \end{equation*} 
    is also finite. Therefore there exists a positive integer $k$ such that  $\mathcal{O}_{\Omega}(kD)$ admits a parallel section $s_U$ on $U\cap \Omega$ and hence nowhere vanishing on $U\cap \Omega$.

    Let $\iota: \Omega\rightarrow X$ denote the inclusion. Then 
    \begin{equation*}
        (\iota_*(\mathcal{O}_{\Omega}(kD)))^{**}
    \end{equation*}is a rank 1 reflexive sheaf on $X$ as a consequence of the normality of $X$.  Since $s_U$ is a nowhere-vanishing section of $\mathcal{O}_{\Omega}(kD)$ over $U\cap \Omega$, it extends to an invertible section of $(\iota_*(\mathcal{O}_{\Omega}(kD)))^{**}$ on $U$ by \cite[Lemma 5.1.10]{Ishii}. It follows that the sheaf $ (\iota_*(\mathcal{O}_{\Omega}(kD)))^{**}$ is defined by a Cartier divisor.
\end{proof}
\begin{prop}\label{prop-fano type}
    For $\eta \in \Lambda_{\mathbb{Q}}$ and $r$ a regular value of $u_\eta$ sufficiently close to $0$, the variety $$X_r:=X^s(\eta,r)/\mathbb{C}^* $$ is of Fano type. 
\end{prop}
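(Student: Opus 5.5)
Following \cite[Lemma 3.4 and Proposition 3.5]{SunZhang}, the plan is to compute the class of the reduced current $\omega_r$ on $X_r$ and show that, for $r$ near $0$, it equals $-(K_{X_r}+B_r)$ up to a small correction, with $(X_r,B_r)$ klt.

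\textbf{Step 1: the quotient current as a Kähler form in an explicit class.} By Theorem \ref{thm:regvalues}, for a regular value $r$ the quotient $X_r$ is a normal projective variety of klt type, and $\omega$ descends to a closed positive $(1,1)$-current $\omega_r$ with continuous local potentials which are strictly plurisubharmonic in the sense of perturbation. On the smooth locus $\Omega_r:=(X^s(\eta,r)\cap X^{\reg}\cap\{\text{free }\mathbb S^1\text{-orbits}\})/\mathbb C^*$ I will repeat the computation of \cite[Lemma 3.4]{SunZhang}. The Hermitian metric $h$ on $K_X^{-1}$ induced by $e^{-f}\omega^n$ is $\mathbb T$-invariant, and $\eta$ acts on $K_X^{-1}$ with weight given by the normalized Hamiltonian $u_\eta$ (cf. the formula $u_\eta=-\tfrac12(J\eta)\varphi+\lambda/\ell$ in Lemma \ref{lem:constancy}). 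Descending $K_X^{-1}|_{u_\eta^{-1}(r)}$ and the orbit data, one obtains on $\Omega_r$
\[
 \omega_r\in 2\pi\, c_1\bigl(-K_{X_r}-B_r\bigr)+ r\,c_1(\mathcal P_r),
\]
where $\mathcal P_r$ is the (orbi-)line bundle of the $\mathbb S^1$-bundle $u_\eta^{-1}(r)\to X_r$. Here $B_r=\sum (1-1/m_i)D_i$ is the branch divisor coming from divisors with finite stabilizers $\mathbb Z/m_i$. The complement of $\Omega_r$, apart from the divisors $D_i$, has codimension $\ge2$ (Theorem \ref{thm:regvalues}). Lemma \ref{lem-q-cartier} then shows that $D_i$, $K_{X_r}$ and $\mathcal P_r$ extend to $\mathbb Q$-Cartier divisors, so the identity holds as an equality of classes on $X_r$. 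The curvature potentials needed to apply Lemma \ref{lem-q-cartier} come from the bounded $\mathbb S^1$-invariant potentials of $\omega$ (Proposition \ref{prop:bddlocal}) and of $\varphi$.

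\textbf{Step 2: the pair is klt and the correction is small.} Since $X_r$ is of klt type (Theorem \ref{thm:regvalues}, via \cite{BGLM2024}), I will verify that $(X_r,B_r)$ is itself klt. The boundary coefficients are $<1$ and are exactly those of the quotient by a finite stabilizer, so via the local slices $S/G_{x_0}$ the discrepancies of $(X_r,B_r)$ are pulled back to those of $X$ near the slice; $X$ is klt by Theorem \ref{thm:complexstructure}. By discreteness of critical values (Lemma \ref{lem:constancy}), the quotients $X_s$ for $s$ in a punctured interval around $0$ (or in a small regular interval near $0$) are all isomorphic to a fixed $X_r$ with fixed $\mathcal P_r,B_r$. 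Hence $[\omega_s]=2\pi c_1(-K-B)+s\,c_1(\mathcal P)$ varies affinely in $s$.

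\textbf{Step 3: conclude Fano type.} Since $\omega_s$ is a Kähler current on the projective variety $X_s$, its class is big. Letting $s\to r_0$, the endpoint of the interval, the class $2\pi c_1(-K-B)+r_0c_1(\mathcal P)$ is a limit of Kähler-current classes and so is nef. For $r_0$ close to $0$, openness of bigness together with this affine dependence gives that $-(K_{X_r}+B_r)$ is big, and nef up to an arbitrarily small multiple of $\mathcal P$. To remove the perturbation, I will perturb $B_r$ by a small multiple of an effective divisor in $|\pm m\mathcal P|$, which exists on the projective variety once bigness is known, and invoke \cite[Lemma-Definition 2.6]{PrSh}.

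\textbf{Main obstacle.} I expect the hardest step to be Step 3, making the perturbation precise when $0$ itself may be a critical value. The class $r\,c_1(\mathcal P)$ need not be effective or nef, and the wall at $0$ may be singular. I would first try to handle this using the fact that an ample-type class, namely the class of the Kähler current, is open, and choose $r$ with the sign of $r$ adapted to $\mathcal P$ so that the error term has a favorable sign.
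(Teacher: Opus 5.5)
Steps 1 and 2 follow the paper's proof: the reduced current represents $A_s=-(K_{X_s}+D_s)+sL_s$, Lemma~\ref{lem-q-cartier} gives $\mathbb{Q}$-Cartierness, and the slice theorem shows the pair is klt. Step 3, however, has a genuine gap: bigness of $-(K_{X_r}+B_r)$ does not follow from openness of the big cone.

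What you actually have is that $A_s$ is ample for every $s$ in the chamber adjacent to $0$, and that it is affine in $s$. The set of $s$ with $A_s$ big is open, but the endpoint of the chamber can be a boundary point of that set. Openness gives no neighbourhood of uniform size around $A_s$ as $s$ approaches the endpoint, so the endpoint being ``close to $0$'' does not help.

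Your reasoning (ample on a chamber, affine in $s$, openness) applies verbatim at the lower endpoint $s=\min_X u_\eta$ of the lowest chamber, and there the conclusion is false. For the Gaussian shrinker on $\mathbb{C}^n$ with the diagonal circle action, $X_s\cong\mathbb{P}^{n-1}$ for all $s>\min u_\eta$. Here $A_s$ is a positive multiple of $(s-\min u_\eta)$ times the hyperplane class: an affine family of ample classes whose limit at the endpoint is $0$. Your removal of the residual $\mathcal{P}$-term is also unjustified. Bigness of $-(K_{X_r}+B_r)$ produces sections of multiples of $-(K_{X_r}+B_r)$, not of $\pm m\mathcal{P}$.

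The missing input is a uniform lower bound on volumes, and this is where the normalization of the moment map enters.
\begin{itemize}
\item Since $\Delta_f u_\eta+u_\eta=0$, Lemma~\ref{lem:weighted-centering} gives $\int_X u_\eta\,e^{-f}\omega^n=0$. So $u_\eta$ changes sign, and $u_\eta^{-1}(0)$ contains a regular point $x$ with $\eta(x)\neq 0$ (Lemma~\ref{lem:zero-point}).
\item Near the orbit of $x$, all reductions with $|s|$ small contain a fixed orbifold patch on which $\int\omega_s^{n-1}$ is bounded below.
\item Because $\omega_s$ has continuous local potentials, this gives $A_s^{n-1}\ge v>0$ uniformly (Lemma~\ref{lem:zero-patch}; the proof of the proposition phrases this via Proposition~\ref{prop:birational}).
\end{itemize}

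With this bound the argument is short. Work in a one-sided chamber $(-\delta,0)$ or $(0,\delta)$ whose endpoint is $0$ itself. Your ``punctured interval around $0$'' is not a single chamber when $0$ is critical, since the two sides differ by a wall crossing. Letting $s\to 0$ on the fixed variety $X_r$ gives $-(K_{X_r}+B_r)$ nef, with no residual $\mathcal{P}$-term. Continuity of intersection numbers then gives $(-(K_{X_r}+B_r))^{n-1}\ge v>0$, so the class is big. Since $(X_r,B_r)$ is klt, \cite[Lemma-Definition 2.6]{PrSh} gives Fano type directly, with no perturbation of the boundary. If $0$ is a regular value, $A_0$ is already ample.

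A smaller point: Lemma~\ref{lem-q-cartier} applies to the $\mathbb{Q}$-line bundle $A_s$ with its metric. Applied at two values of $s$, it yields $\mathbb{Q}$-Cartierness of $K_{X_r}+B_r$ and $\mathcal{P}_r$, not of $K_{X_r}$ and each $D_i$ separately. That weaker statement is all you need.
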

\begin{proof}
 Fix $\delta > 0$ such that the interval $(-\delta, \delta)$ contains at most one critical value of $u_\eta$. In the following, we consider the case where $r < 0$, as the case $r > 0$ is analogous. Then for any $r \in (-\delta, 0)$, the varieties $X_r$ are isomorphic. Hence, we may assume that $r \in \mathbb{Q}$.
    
    The argument in \cite[Section 3]{SunZhang} and \cite[Section 3]{HNP} shows that on the locus 
    \begin{equation}
        (X^s(\eta,r)\cap X^{\reg})/\mathbb{C}^* 
    \end{equation} the $\mathbb Q$-line bundle $-(K_{X_r}+D_r)+rL_r$ admits a Hermitian metric whose curvature form extends to $X_r$ as a K\"ahler current with continuous potentials. Moreover we know that $(X^s(\eta,r)\cap X^{\reg})/\mathbb{C}^*$ is a Zariski open set in $X_r$ whose complement is the holomorphic image of a $\mathbb{C}^{\ast}$-invariant analytic subset of codimension at least 2, hence itself has codimension at least 2. Therefore we can apply Lemma \ref{lem-q-cartier} to obtain that it extends to $X_r$ as a $\mathbb Q$-Cartier divisor. By choosing two different values of $r$, and using the fact that $\mathbb Q$-line bundle $K_{X_r}+D_r$ and $L_r$ are indeed independent of $r$, we know that both 
       $K_{X_r}+D_r$
   and $L_r$ extend as $\mathbb Q$-Cartier divisors on $X_r$. Moreover, by the slicing theorem and the formula for the canonical bundle under finite group quotients, see for example \cite[Proposition 2.18]{moraga2021fano}, there exists an effective $\mathbb{Q}$-divisor $D'_r$ on $X_r$ such that $(X_r, D'_r)$ is a klt pair, and $D'_r$ coincides with $D_r$ on 
\(
\bigl(X^s(\eta,r)\cap X^{\mathrm{reg}}\bigr)/\mathbb{C}^*.
\)
In particular, $(X_r, D_r)$ is a klt pair.

By \cite[Theorem 1]{Varouchas}, $c_1(-(K_{X_r}+D_r)+rL_r)$ is a K\"ahler class. Since $r$ can be taken arbitrarily small, it follows that $-(K_{X_r}+D_r)$ is a nef class. By Proposition \ref{prop:birational} proved below, we know that the volume of the K\"ahler current in the class $c_1(-(K_{X_r}+D_r)+rL_r)$ has a uniform lower bound independent of $r\in (-\delta,0)$, we know that the class $-(K_{X_r}+D_r)$ is  big \cite[Theorem 2.12]{DP04}, hence $X_r$ is of Fano type.
\end{proof}

In what follows, we use the notion of proper modifications between reduced complex spaces; see \cite[Definition 2.1.17]{MaMar}.
\begin{prop} \label{prop:birational} Fix $\eta \in \Lambda_{\mathbb{Q}}$, and equip $X$ with the corresponding effective $\mathbb{C}^{\ast}$-action. Fix $s_- <s_0 <s_+$ such that $s_{\pm}$ are regular values of $u_{\eta}$ with $u_{\eta}^{-1}(s_{\pm})\neq \emptyset$, and $s_0$ is the unique critical value of $u_{\eta}$ in $(s_-,s_+)$. Then there are proper modifications
\begin{equation*}
    \pi_{\pm}: X_{s_{\pm}}\to X_{s_0} 
\end{equation*}
with the following properties.
\begin{enumerate}
    \item Let $Z_1,\ldots,Z_N$ be the irreducible components of $\{\eta=0\}$ which are contained in $u_{\eta}^{-1}(s_0)$. Then 
\begin{equation*} Z_i^{-} := \{x\in X^s(\eta,s_-) \: | \: \lim_{z\to 0} z\cdot x \in Z_i\}, \qquad Z_i^+ := \{x \in X^s(\eta,s_+) \: | \: \lim_{z\to \infty} z\cdot x\in Z_i\} \end{equation*}
are $\mathbb{C}^{\ast}$-invariant complex subvarieties of $X^s(\eta,s_-)$ and $X^s(\eta,s_+)$, respectively.
\item $\dim_{\mathbb C}Z_i^{\pm}<n$, $i=1,\ldots, N$.
\item $E_i^{\pm} := Z_i^{\pm}//\mathbb{C}^{\ast}$ can be identified with complex subvarieties of $X_{s_{\pm}}$, $Z_i$ can be viewed as subvarieties of $X^{ss}(\eta,s_0)//\mathbb{C}^{\ast}$, and with these identifications, we have $\pi_{\pm}(E_i^{\pm}) = Z_i$, and $\pi_{\pm}$ restrict to biholomorphisms
\begin{equation*}
    X_{s_{\pm}}\setminus \cup_{i=1}^N E_i^{\pm} \to X_{s_0}\setminus \cup_{i=1}^N Z_i.
\end{equation*}
\end{enumerate} 
\end{prop}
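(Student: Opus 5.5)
The plan is to construct $\pi_\pm$ from the inclusions of semistable sets, following the variation-of-GIT picture in \cite{HL94,HHL94} and the smooth argument of \cite[Section 3]{SunZhang}. The first step is to show
\[
X^{s}(\eta,s_\pm)\subseteq X^{ss}(\eta,s_0).
\]
Take $x\in X^s(\eta,s_-)$. The function $t\mapsto u_\eta(\phi_t^{J\eta}(x))$ is monotone along the orbit, as in the proof of Proposition \ref{prop:connected-levels}. Its limits as $t\to\pm\infty$ are critical points of $u_\eta$: this follows from the argument for \eqref{eq--limit is a critical point} together with the \L{}ojasiewicz inequality for a potential with respect to a smooth ambient K\"ahler metric. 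Hence $u_\eta$ takes every value between $s_-$ and the value at the upper limit point. That upper value is either $s_0$ or larger than $s_0$, because $s_0$ is the only critical value in $(s_-,s_+)$. So the orbit closure meets $u_\eta^{-1}(s_0)$, and the symmetric argument handles $s_+$.

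Since $X^{ss}(\eta,s_0)\to X_{s_0}$ is a categorical quotient (Theorem \ref{thm:regvalues} and \cite{HL94}), the $\mathbb C^*$-invariant map $X^s(\eta,s_\pm)\to X_{s_0}$ descends to a holomorphic map $\pi_\pm:X_{s_\pm}\to X_{s_0}$. This map is proper because both quotients are compact by Theorem \ref{thm:regvalues}.

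For (1), set $Z=\cup_i Z_i$. Limits $\lim_{z\to 0}z\cdot x$ exist by the same \L{}ojasiewicz argument. Define $Z_i^-$ as the attracting (Bia{\l}ynicki-Birula type) set of $Z_i$, intersected with $X^s(\eta,s_-)$. To see that it is analytic, work locally near a point of $Z_i$ using the $\mathbb T$-equivariant embedding of Proposition \ref{prop:equivariant}. In the linear model, the attracting set of the fixed locus is cut out by the vanishing of the coordinates of negative weight, so it is analytic. Translating by the $\mathbb C^*$-action then covers all of $Z_i^-$, because every orbit in $Z_i^-$ enters such a neighborhood. Invariance under $\mathbb C^*$ is immediate.

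For (2), observe that if $Z_i^-$ had dimension $n$, it would contain a nonempty open subset of $X$. Every orbit through that open set would then flow into $Z_i$ at $z\to 0$. The Isotropy Stratum Density Lemma of \cite{HH96}, used as in Proposition \ref{prop:connected-levels}, shows that $X^s(\eta,s_+)$ is open and dense, so a generic orbit in that open set also meets $u_\eta^{-1}(s_+)$. Monotonicity of $u_\eta$ along the orbit, together with its limit value $s_0$, rules this out, so the open orbits would have to have limit values above $s_0$, giving a contradiction. I expect this dimension count to need care at singular points of $Z_i$.

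For (3), take $x\in X^s(\eta,s_-)\setminus Z^-$. Its limit value at $z\to0$ exceeds $s_0$, so its orbit already meets $u_\eta^{-1}(s_0)$ in a closed stable orbit. Such orbits are the same in $X^{ss}(\eta,s_0)$ and in $X^{s}(\eta,s_\pm)$. Hence $\pi_-$ is bijective off $\cup E_i^-$, and points of $E_i^-$ map to the closed orbit in $Z_i$, which gives $\pi_-(E_i^-)=Z_i$. Surjectivity onto $Z_i$ needs nonempty attracting cells, which I would get from the slice theorem in \cite{HL94} at points of $Z_i$.

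Biholomorphicity off the exceptional sets is local. It follows from the Luna-type slice theorem of \cite{HL94}, since near a closed orbit with finite stabilizer both quotients are $S/G_x$ for the same slice $S$, combined with normality of $X_{s_0}$ (Theorem \ref{thm:regvalues}) and Zariski's main theorem.

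The main obstacle is the analyticity and dimension statements (1)–(2) at singular points of $X$ lying in $Z_i$. For these I plan to rely on the local equivariant embedding and on the finite-stabilizer slice structure, rather than on the smooth Bia{\l}ynicki-Birula theory.
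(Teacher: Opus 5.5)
Your proposal is correct and follows essentially the same route as the paper. The shared steps are: the inclusions $X^s(\eta,s_\pm)\subseteq X^{ss}(\eta,s_0)$ and $X^s(\eta,s_0)\subseteq X^s(\eta,s_\pm)$; the universal property to produce $\pi_\pm$; a local linear model at fixed points to show the $Z_i^\pm$ are analytic; the Isotropy Stratum Density Lemma for $\dim Z_i^\pm<n$; and comparison of closed stable orbits off the exceptional sets.

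The differences are technical, plus one small correction:
\begin{itemize}
\item The paper gets existence of $\lim_{z\to0}z\cdot x$ and the linear model from $\mathbb T^{\mathbb C}$-invariant Stein neighborhoods \cite{HH99}, rather than from \L{}ojasiewicz and the merely $\mathbb T$-invariant chart of Proposition~\ref{prop:equivariant}. This avoids the orbit-containment issue you would have to handle in that chart.
\item The paper inverts $\pi_\pm$ off $\cup E_i^\pm$ via the universal property of $X^s(\eta,s_0)//\mathbb C^*$, instead of slices plus Zariski's main theorem.
\item The descent to $\pi_\pm$ uses the categorical-quotient property of $X^s(\eta,s_\pm)\to X_{s_\pm}$, not that of $X^{ss}(\eta,s_0)\to X_{s_0}$ as you wrote.
\end{itemize}
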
 

\begin{proof} 
It suffices to prove the above statements for $X^s(\eta,s_-)$ (the case of $X^s(\eta,s_+)$ is similar). We first claim that $X^s(\eta,s_-) \subseteq X^{ss}(\eta,s_0)$. Let $(\phi_t^{J\eta})_{t\in \mathbb{R}}$ be the flow generated by $J\eta$. Then $t\mapsto u_{\eta}(\phi_t^{J\eta}(x))$ is strictly decreasing, hence we can set $s_{\infty}:= \sup_{t<0}u_{\eta}(\phi_t^{J\eta}(x))$. Because $X^s(\eta,s)=X^s(\eta,s_-)$ for all $s\in [s_-,s_0)$, we must have $s_{\infty} \geq s_0$. If $s_{\infty}>s_0$, then $x\in X^s(\eta,s_0)$. Suppose instead that $s_{\infty}=s_0$, so that there exist $z_i \to 0$ such that $x_{\infty} := \lim_{i \to \infty} z_i \cdot x \in u_{\eta}^{-1}(s_0)$. Then $x \in X^{ss}(\eta,s_0)\setminus X^s(\eta,s_0)$, and in particular, $X^s(\eta,s_-)\subseteq X^{ss}(\eta,s_0)$. 

For $x\in X^s(\eta,s_-)\setminus X^s(\eta,s_0)$, we now show that any subsequential limit $x_{\infty} = \lim_{i\to \infty} z_i \cdot x$ actually satisfies $x_{\infty} = \lim_{z\to 0} z\cdot x$ and $x_{\infty} \in Z_i$ for some $i\in \{1,\ldots,N\}$.
By \cite[Theorem 3.3.14]{HH99} and the fact that any point of $X$ admits a $\mathbb{T}^{\mathbb{C}}$-invariant Stein neighborhood \cite[Theorem 4.1.4]{HH99}, there is a $\mathbb{T}^{\mathbb{C}}$-invariant neighborhood $U$ of $x_{\infty}$ in $X$ which is $\mathbb{C}^{\ast}$-equivariantly embedded as a complex analytic subspace of a linear $\mathbb{T}^{\mathbb{C}}$ representation on some finite-dimensional complex vector space $V$. It follows that $z_i \cdot x\in U$ for sufficiently large $i\in \mathbb{N}$, hence $x\in U$ as well. Because $\mathbb{C}^{\ast}\cdot x \subseteq \{u_{\eta}\leq s_0\}$, it follows that $\mathbb{C}^{\ast} \to V$, $z\mapsto z\cdot x$ is bounded, so that the Riemann extension theorem gives $\lim_{z\to 0}z\cdot x =x_{\infty}$, and it follows that $x_{\infty}$ is a fixed point of the $\mathbb{C}^{\ast}$-action; that is, $x_{\infty} \in Z_i$ for some $i\in \{1,...,N\}$. 

Next, we prove that $Z_i^-$ are complex subvarieties of $X^s(\eta,s_-)$. By a diagonal argument, we obtain that $Z_i^-$ are closed subsets of $X^s(\eta,s_-)$, so it suffices to show that for any $x\in Z_i$, there exists a neighborhood $U$ of $x$ such that $U\cap Z_i^-$ is a complex subvariety. Letting $x_{\infty},U,V$ be as in the previous paragraph, we know that $Z_i \cap U$ corresponds to an irreducible component of the intersection of $U$ with the subspace of $V$ fixed by the $\mathbb{C}^{\ast}$ action. It follows that $U\cap Z_i^-$ is an irreducible complex analytic subset of $U$, hence $Z_i^-$ is an irreducible complex analytic subset of $X^s(\eta,s_-)$. Clearly $Z_i^-$ is also $\mathbb{C}^{\ast}$-invariant, and by \cite[Section 6.4]{H91} (see also \cite[Section 1]{HMP} and \cite[Section 2.3]{greb-projec}), we know the image of $Z_i^-$ under the quotient map $X^s(\eta,s_-)\to X_{s_-}$ is a complex subvariety $E_i^-\subseteq X_{s_-}$ isomorphic to $Z_i^-//\mathbb{C}^{\ast}$.

We now show that 
\begin{equation*}
    \dim_{\mathbb C} Z_i^{-}<n.
\end{equation*}
Suppose not. Then there exists a nonempty open subset $U \subseteq X$ contained in $Z_{i}^-$. For any $r>s_0$, we would then have
\begin{equation}\label{nonintersection property}
    U\cap X^{ss}(\eta, r)=\emptyset,
\end{equation}
hence for any $x\in U$, 
\begin{equation*}
    \overline{\mathbb C^{\ast}\cdot x}\cap u_\eta^{-1}(r)=\emptyset.
\end{equation*}
We can choose $r$ such that $X^s_{\max}(\eta, r)\neq \emptyset$, where $X_{\max}^s(\eta,r)$ is as in \eqref{stable points in regular part}. Then \eqref{nonintersection property} would contradict \cite[Isotropy Stratum Density Lemma]{HH96}, which says that $X^s_{\max}(\eta, r)$ is a dense subset of $X_{\max}$ and hence $X^{s}(\eta, r)$ is a dense subset of $X$.

The composition $X^s(\eta,s_-)\hookrightarrow X^{ss}(\eta,s_0) \to X^{ss}(\eta,s_0)//\mathbb{C}^{\ast}$ is $\mathbb{C}^{\ast}$-invariant, hence the universal mapping property of $X^s(\eta,s_-)//\mathbb{C}^{\ast}$ gives a holomorphic map 
\begin{equation*} \pi_-:X_{s_-}=X^s(\eta,s_-)//\mathbb{C}^{\ast} \to X^{ss}(\eta,s_0)//\mathbb{C}^{\ast}=X_{s_0}.\end{equation*}
We claim that $X^s(\eta,s_0) \subseteq X^s(\eta,s_-)$. In fact, if $x\in X^s(\eta,s_0)$, then there exists $t\in \mathbb{R}$ such that $\eta|_{\phi_t^{J\eta}(x)}\neq 0$ and $u_{\eta}(\phi_t^{J\eta}(x))=s_0$. It follows that $u_{\eta}(\phi_{t'}^{J\eta}(x))<s_0$ for all $t'>t$, hence (because there are no zeros of $\eta$ in $u_{\eta}^{-1}([s_-,s_0))$) $x\in X^s(\eta,s_-)$. By the universal mapping property of $X^{ss}(\eta,s_0)//\mathbb{C}^{\ast}$, the composition $X^s(\eta,s_0)\hookrightarrow X^s(\eta,s_-)\to X^s(\eta,s_-)//\mathbb{C}^{\ast}$ induces a holomorphic map
\begin{equation*}
    \sigma:X^{s}(\eta,s_0)//\mathbb{C}^{\ast} \hookrightarrow X_{s_-},
\end{equation*}
which satisfies $\pi_-\circ \sigma = \operatorname{id}$, where we identify $X^s(\eta,s_0)//\mathbb{C}^{\ast}$ with a dense open subset of $X_{s_0}$. We may moreover identify its complement with the image of $\cup_{i=1}^N (Z_i^- \cup Z_i^+)$, which is an analytic subvariety of $X_{s_0}$. Moreover, the image of $Z_i^- \cup Z_i^+$ equals the image of the intersection $Z_i = (Z_i^- \cup Z_i^+)\cap u_{\eta}^{-1}(s_0)$, but because $Z_i$ is a $\mathbb{C}^{\ast}$-invariant analytic subset of fixed points, the quotient map restricts to a biholomorphism onto its image. 
\end{proof}

\begin{remark}
Note that the results in Proposition \ref{prop:birational} give another proof of the fact that connectedness of $u_\eta^{-1}(r)$ for regular values implies the connectedness of $u_\eta^{-1}(r)$ for singular values. Indeed because $\pi_-$ is a holomorphic map of normal compact complex spaces whose image is the complement of a proper complex analytic set, it is surjective. Because $X^s(\eta,s_-)//\mathbb{C}^{\ast}$ is connected, it follows that $X^{ss}(\eta,s_0)//\mathbb{C}^{\ast}$ is connected as well. Because the restriction of $X^{ss}(\eta,s_0)\to X^{ss}(\eta,s_0)//\mathbb{C}^{\ast}$ to $u_{\eta}^{-1}(s_0)$ is a topological quotient map whose image and fibers are connected, it follows that $u_{\eta}^{-1}(s_0)$ is also connected.
\end{remark}

\begin{prop} \label{prop:locallyalgebraic} For any $\eta \in \Lambda_{\mathbb{Q}}$ and $x_0\in X$ not a minimal point of $u_\eta$, there exists a $\mathbb C^*$-invariant Zariski open neighborhood $V\subseteq X$ of $x_0$ which is $\mathbb{C}^{\ast}$-equivariantly isomorphic to a quasiprojective algebraic variety with an algebraic $\mathbb{C}^{\ast}$-action.
\end{prop}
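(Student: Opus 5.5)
The plan is to realize $V$ as a semistable locus $X^{ss}(\eta,s)$ for a suitable level $s$. The projectivity of the analytic Hilbert quotient $X_s$, together with the algebraization theorem of \cite[Theorem~3]{greb-projec}, should then make $V$ quasi-projective with an algebraic $\mathbb{C}^*$-action. I split into two cases according to whether $\eta(x_0)\neq 0$ or $\eta(x_0)=0$.

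\emph{Case 1: $\eta(x_0)\neq 0$.} Since $u_\eta$ is strictly decreasing along $\phi_t^{J\eta}(x_0)$, the orbit $\mathbb{C}^*\cdot x_0$ meets every level in an open interval around $u_\eta(x_0)$. By Lemma~\ref{lem:constancy} the critical values are discrete, so I can pick a regular value $r$ of $u_\eta$ in that interval; then $x_0\in X^s(\eta,r)=X^{ss}(\eta,r)$. Theorem~\ref{thm:regvalues} shows that $X_r$ is projective. Its proof, via \cite[Theorem~3]{greb-projec} and \cite[Proposition~10.1]{greb-projec}, already gives that $X^s(\eta,r)$ is quasi-projective with an algebraic $\mathbb{C}^*$-action. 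The set $X^{ss}(\eta,r)$ is open, $\mathbb{C}^*$-invariant and saturated, with complement $\{x:\overline{\mathbb{C}^*\cdot x}\subseteq\{u_\eta<r\}\}\cup\{x:\overline{\mathbb{C}^*\cdot x}\subseteq\{u_\eta>r\}\}$. Using the Białynicki-Birula-type description from the proof of Proposition~\ref{prop:birational}, where limit points are fixed points, this complement is a finite union of $\mathbb{C}^*$-invariant analytic sets $Z_i^{\pm}$ attached to fixed components. Hence $V:=X^s(\eta,r)$ is Zariski open, and we are done.

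\emph{Case 2: $\eta(x_0)=0$.} Here $s_0:=u_\eta(x_0)>\min_X u_\eta$, by hypothesis and by Lemma~\ref{lem:constancy}, so $s_0$ is a critical value. I take $V:=X^{ss}(\eta,s_0)$, which contains $x_0$, is open and $\mathbb{C}^*$-invariant, and is Zariski open by the same analytic description of its complement. By Theorem~\ref{thm:regvalues}, $X_{s_0}$ is a compact normal Kähler space. Choose a regular value $s_-<s_0$ close to $s_0$; then $s_->\min_X u_\eta$ and $u_\eta^{-1}(s_-)\neq\emptyset$. Proposition~\ref{prop:birational} gives a proper modification $\pi_-:X_{s_-}\to X_{s_0}$ from a projective variety, so $X_{s_0}$ is Moishezon.

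To show $X_{s_0}$ is projective, I would show it has rational singularities via \cite{greb-rational}: it is an analytic Hilbert quotient of the klt space $X^{ss}(\eta,s_0)$, using Theorem~\ref{thm:complexstructure}. Then \cite{Namikawa} gives projectivity, as in the proof of Theorem~\ref{thm:regvalues}. Finally I apply \cite[Theorem~3]{greb-projec} to the analytic Hilbert quotient $X^{ss}(\eta,s_0)\to X_{s_0}$ with projective base. This yields that $X^{ss}(\eta,s_0)$ is $\mathbb{C}^*$-equivariantly quasi-projective with algebraic action.

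\emph{Main obstacle.} I expect the difficulty to be in Case 2, in checking the hypotheses of Greb's algebraization theorem at a critical level. There the quotient is only semistable: fibres contain the non-closed orbits in $Z_i^\pm$, and stabilizers at $Z_i$ are all of $\mathbb{C}^*$. Greb's theorem needs a $\mathbb{C}^*$-linearized line bundle that is relatively ample over the quotient, or equivalently projectivity of the quotient plus a suitable invariant Kähler structure. I would try the anticanonical bundle $K_X^{-\ell}$ with its canonical $\mathbb{C}^*$-linearization and the metric $h^\ell$; its curvature $\ell\omega$ is a Kähler current by Theorem~\ref{thm:complexstructure}\ref{thm:complexstructure2}, with moment map $\ell u_\eta$ up to a shift. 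Then I would verify, as in \cite{HHL94} and Lemma~\ref{lem-q-cartier}, that the shifted line bundle descends as a $\mathbb{Q}$-Cartier ample class on $X_{s_0}$. Showing this descended class is ample at the critical level (not merely big and nef) is the point I am least sure of. If it fails, the fallback is to work in the equivariant Stein charts of Proposition~\ref{prop:equivariant} and glue the local algebraic structures over the projective base.
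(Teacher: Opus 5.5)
Your proposal is correct and follows the paper's proof. Take $V=X^{ss}(\eta,u_\eta(x_0))$, obtain projectivity of the quotient from the Moishezon property (via the proper modification of Proposition~\ref{prop:birational}), rational singularities \cite{greb-rational} and \cite[Theorem~1.6]{Namikawa}, get Zariski openness from the attracting-set description of the complement (a \emph{locally} finite union, not a finite one, since finiteness of the fixed components is not yet known here), and conclude with \cite[Theorem~3]{greb-projec}. The paper invokes that theorem directly once the quotient is projective, so the ample descended class at the critical level that you flag as an obstacle is not needed, and your case split is unnecessary because the same argument at level $u_\eta(x_0)$ also covers points with $\eta(x_0)\neq 0$.
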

\begin{proof} By Theorem \ref{thm:regvalues}, we know that for $r\in \operatorname{Int}(u(X))$,  the analytic Hilbert quotient $X^{ss}(\eta,r) //\mathbb C^*$
exists and is a compact normal K\"ahler space. 
By Proposition \ref{prop:birational}, $X^{ss}(\eta,r)//\mathbb{C}^{\ast}$ is bimeromorphic to $X^s(\eta,r')//\mathbb{C}^{\ast}$, where we choose $r'\in \mathbb{Q}$ to be any regular value of $u_{\eta}$ for which $u_{\eta}^{-1}(r')\neq \emptyset$. By Theorem \ref{thm:regvalues}, $X^{ss}(\eta,r)//\mathbb{C}^{\ast}$ is therefore a Moishezon space. Because $X^{ss}(\eta,r)//\mathbb{C}^{\ast}$ is a Moishezon and K\"ahler space, and since  \cite{greb-rational} implies that $X^{ss}(\eta,r)//\mathbb{C}^{\ast}$ has 1-rational singularities, we can apply \cite[Theorem 1.6]{Namikawa} to conclude that $X^{ss}(\eta,r)//\mathbb{C}^{\ast}$ is projective. Thus $X^{ss}(\eta,r)//\mathbb{C}^{\ast}$ is projective for any $r\in \operatorname{Int}(u(X))$. 

The complement of $X^{ss}(\eta,u_{\eta}(x_0))$ is a locally finite union of subsets of the form 
\begin{equation*}
\{x\in X\: | \: \lim_{t \to -\infty} \phi_t^{J\eta}(x)\in Z\}
\end{equation*}
where $Z \subseteq X$ is an irreducible component of the zero set of $\eta$. Because $\{ X^{ss}(\eta,s')\}_{s'\in \mathbb{R}}$ is an open cover of $X$, in order to show that $X^{ss}(\eta,u_{\eta}(x_0))$ is Zariski open, we need only show that each $Z\cap X^{ss}(\eta,s')$ is a complex analytic subset of $X^{ss}(\eta,s')$. However, this follows immediately from the proof of Proposition \ref{prop:birational}. Thus \cite[Theorem 3]{greb-projec} gives the claim for $V=X^{ss}(\eta,u_{\eta}(x_0))$. 
\end{proof}
\begin{corollary}\label{cor-birational}
    In the setting of Proposition \ref{prop:birational}, $X_{s_-}$ is birational to $X_{s_+}.$
\end{corollary}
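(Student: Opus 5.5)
The plan is to compose the two proper modifications of Proposition~\ref{prop:birational} and then upgrade the resulting bimeromorphic map to a birational one using projectivity. Proposition~\ref{prop:birational} gives proper modifications $\pi_\pm: X_{s_\pm}\to X_{s_0}$, each restricting to a biholomorphism
\[
X_{s_\pm}\setminus \textstyle\bigcup_i E_i^\pm \to X_{s_0}\setminus \bigcup_i Z_i .
\]
Hence $\pi_+^{-1}\circ \pi_-$ is a biholomorphism between the dense Zariski-open sets $X_{s_-}\setminus \cup E_i^-$ and $X_{s_+}\setminus\cup E_i^+$. Its graph closure is $\Gamma\subset X_{s_-}\times X_{s_+}$, the image of the fibre product $X_{s_-}\times_{X_{s_0}}X_{s_+}$, restricted to the irreducible component dominating $X_{s_0}$. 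This $\Gamma$ is an irreducible closed analytic subset, and both projections from it are proper modifications. So $X_{s_-}$ and $X_{s_+}$ are bimeromorphic.

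To pass to birationality, I use that both $X_{s_\pm}$ are normal projective varieties by Theorem~\ref{thm:regvalues}, since $s_\pm$ are regular values. Then $X_{s_-}\times X_{s_+}$ is projective, and Chow's theorem makes $\Gamma$ an algebraic subvariety. Its projections are algebraic morphisms that are isomorphisms over the nonempty Zariski-open complements of $\cup E_i^\pm$. Here I use that $E_i^\pm$ are closed analytic, hence algebraic by GAGA, and of smaller dimension by item (2) of Proposition~\ref{prop:birational}. Thus each projection is a birational morphism, and the composition is a birational map $X_{s_-}\dashrightarrow X_{s_+}$. Alternatively, a bimeromorphic map between projective varieties induces an isomorphism of meromorphic function fields, which by GAGA are the rational function fields, so it is birational.

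The main obstacle is nonemptiness of the open sets, i.e. that $\cup E_i^\pm$ is a proper subset of $X_{s_\pm}$. Two things have to be checked here. First, that $\dim E_i^\pm=\dim Z_i^\pm-1<n-1=\dim X_{s_\pm}$; this uses that the $\mathbb C^*$-action on $Z_i^\pm\subset X^s$ has one-dimensional orbits and finite stabilizers. Second, that the two open pieces correspond under $\pi_\pm$ to the same open subset $X^s(\eta,s_0)//\mathbb{C}^*$ of $X_{s_0}$, which is exactly the section $\sigma$ constructed in the proof of Proposition~\ref{prop:birational}. Irreducibility of $X_{s_\pm}$, needed for density, follows from connectedness of the level sets (Proposition~\ref{prop:connected-levels}) together with normality.
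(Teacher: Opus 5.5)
Your argument is correct, but it takes a different route from the paper's.

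\textbf{The paper's route.} The paper first shows that the critical quotient $X_{s_0}$ is itself projective. This is done in the proof of Proposition~\ref{prop:locallyalgebraic}: $X_{s_0}$ is bimeromorphic to a projective regular quotient via Proposition~\ref{prop:birational}, hence Moishezon, and as a compact K\"ahler space with rational singularities it is projective by Namikawa's criterion. GAGA then makes $\pi_\pm$ algebraic proper birational morphisms onto a common projective variety, and $\pi_+^{-1}\circ\pi_-$ is the birational map.

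\textbf{Your route.} You use only the projectivity of the two regular quotients from Theorem~\ref{thm:regvalues}. The closure $\Gamma$ of the graph of $\pi_+^{-1}\circ\pi_-$ is the unique irreducible component of $X_{s_-}\times_{X_{s_0}}X_{s_+}$ dominating $X_{s_0}$. It is therefore a compact irreducible analytic subset of the projective variety $X_{s_-}\times X_{s_+}$, hence algebraic by Chow's theorem, and its two projections are birational morphisms. Equivalently, the bimeromorphic map identifies the meromorphic function fields, which are the rational function fields.

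\textbf{Comparison.} Your argument avoids the Moishezon, rational-singularities and Namikawa input for the possibly worse critical quotient, so it is the more economical proof of the corollary as stated. The paper's route buys more: $X_{s_0}$ is projective and $\pi_\pm$ are algebraic morphisms. Later results depend on exactly this byproduct, since Lemmas~\ref{lem:critical-descent} and~\ref{lem:affine-wall-git} cite the proof of this corollary for the algebraicity of $\pi_\pm$. Your argument does not supply that.

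\textbf{Your ``main obstacle''.} This is essentially already contained in Proposition~\ref{prop:birational}. Item~(3), with $\pi_\pm(E_i^\pm)=Z_i$, gives $\pi_\pm^{-1}(\bigcup_i Z_i)=\bigcup_i E_i^\pm$ and biholomorphic complements onto the same set. Item~(2), together with your dimension count, makes $\bigcup_i E_i^\pm$ nowhere dense.
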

\begin{proof}
By Theorem \ref{thm:regvalues} and the proof of Proposition \ref{prop:locallyalgebraic}, we know that $X_{s_-}$, $X_{s_0}$, and $X_{s_+}$ are normal projective varieties. By the GAGA principle, the holomorphic proper modifications $\pi_{\pm}$ in Proposition \ref{prop:locallyalgebraic} are algebraic morphisms. In particular, they induce a birational map between $X_{s_-}$ and $X_{s_+}$.
\end{proof}
\begin{theorem} \label{thm:simplyconnected} $X$ is simply connected.
\end{theorem}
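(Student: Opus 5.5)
We plan to prove Theorem \ref{thm:simplyconnected} by following the strategy of \cite{SunZhang}: we reduce the fundamental group of $X$ to that of a level set of a Hamiltonian potential, and then to the fundamental group of a Fano-type quotient. Fix $\eta\in\Lambda_{\mathbb Q}$. The function $u_\eta$ is proper and bounded below, and it is decreasing along the flow of $J\eta$. Its critical values are discrete (Lemma \ref{lem:constancy}), and its level sets are connected (Proposition \ref{prop:connected-levels}).

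\emph{Step 1: reduction to a sublevel set near the minimum.} The set $X$ deformation retracts onto any sublevel set $\{u_\eta\le s\}$ via the gradient-like flow of $J\eta$. To see this, flow each point forward under $\phi_t^{J\eta}$ until it reaches $\{u_\eta\le s\}$. The hitting time is continuous away from the fixed locus. Near critical points, we would use the \L{}ojasiewicz argument already used in the proof of Proposition \ref{prop:connected-levels} to control limits. Hence $\pi_1(X)\cong \pi_1(\{u_\eta< s\})$ for $s$ slightly above $\min u_\eta$. Next, write $Z_{\min}=u_\eta^{-1}(\min u_\eta)$, which is a compact connected fixed component. The flow $\phi^{J\eta}_t$ as $t\to\infty$ retracts $\{u_\eta<s\}$ onto $Z_{\min}$: by the Riemann extension argument in the proof of Proposition \ref{prop:birational}, limits $\lim_{z\to 0}z\cdot x$ exist in local equivariant embeddings (Proposition \ref{prop:equivariant}), and the limit map is continuous on this attracting neighborhood. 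So $\pi_1(X)\cong\pi_1(Z_{\min})$ might be expected, but this route is delicate. Instead we aim to use the quotient.

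\emph{Step 2: the quotient and the fibration structure.} Take $r$ a regular value slightly above $\min u_\eta$ (or close to $0$, as in Proposition \ref{prop-fano type}). The open set $X^s(\eta,r)$ is the complement in $X$ of the attracting and repelling sets of the fixed components other than those at levels below $r$. These sets are $\mathbb C^*$-invariant analytic subsets of dimension $<n$ by Proposition \ref{prop:birational}(2). Since $X$ is normal, removing a proper analytic subset induces a surjection $\pi_1(X^s(\eta,r))\to\pi_1(X)$. Here we use that a normal space is locally irreducible, so a small neighborhood minus an analytic subset is connected, which gives surjectivity. Then we use the Seifert $\mathbb C^*$-bundle $X^s(\eta,r)\to X_r$, which has finite stabilizers. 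Its long exact sequence in orbifold homotopy gives that $\pi_1(X^s(\eta,r))$ is generated by the image of the fiber $\mathbb C^*$ together with lifts of $\pi_1^{\rm orb}(X_r)$.

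\emph{Step 3: killing both contributions.} First, the loop in a generic $\mathbb C^*$-orbit is null-homotopic in $X$. Indeed, as $z\to0$ the orbit closure contains a fixed point in a component at level $\le r$. Near that point, the local equivariant embedding from Proposition \ref{prop:equivariant} contracts the orbit circle to the fixed point inside a contractible invariant Stein neighborhood. Second, $X_r$ is of Fano type by Proposition \ref{prop-fano type}, with a klt pair $(X_r,D_r)$ whose boundary records the finite stabilizers. The orbifold fundamental group of $X_r$ on the relevant open locus maps onto the image we need. By \cite{braunLocalFundamentalGroup2021} (finiteness of the fundamental group of the smooth locus of Fano-type varieties) together with the simple connectedness of Fano-type varieties \cite{Tak,QZhang}, the quotient part contributes a finite group. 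The corresponding loops lift to loops that can be pushed into neighborhoods of fixed points, and hence are trivial in $X$. To conclude that $\pi_1(X)$ itself is trivial rather than merely finite, we would pass to the universal cover $\widetilde X\to X$. This cover is finite, and it is again a singular shrinker satisfying the hypotheses, with a lifted $\mathbb T$-action after passing to a finite cover of $\mathbb T$. Its minimal fixed component maps isomorphically onto $Z_{\min}$, because $\pi_1(X)\cong\pi_1$ of a neighborhood of $Z_{\min}$ and the cover is trivial over a contractible neighborhood of a fixed point. Combined with the connectedness of $\widetilde X$ and of the level sets of the lifted $u_\eta$, the cover has degree one.

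The main obstacle is Step 3: making the orbifold fibration argument rigorous on the singular space, in particular controlling loops near $X^{\rm sing}\cap X^s$ and the stabilizer divisors. We would first try to avoid orbifold $\pi_1$ entirely by using the degree-one argument for the finite cover, with the finiteness supplied by \cite{braunLocalFundamentalGroup2021} applied to $X_r$.
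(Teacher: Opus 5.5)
Your Step~2 reduction and the fiber half of Step~3 are fine. Every orbit circle bounds the disk $\{z\cdot x:|z|\le 1\}$, taken in the direction in which $u_\eta$ decreases, and this disk extends continuously over $z=0$ by the Riemann-extension argument in the proof of Proposition~\ref{prop:birational}.

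\textbf{Gap.} The gap is the base contribution. The claim that lifts of loops from $X_r$ ``can be pushed into neighborhoods of fixed points, and hence are trivial in $X$'' does not follow. Flowing by $\phi^{J\eta}_t$ moves a loop (at best, if it avoids points attracted to non-minimal components) toward the whole minimal set $Z_{\min}=u_\eta^{-1}(z_\eta)$. A neighborhood of $Z_{\min}$ has the homotopy type of $Z_{\min}$, not of a point, so nothing is killed without information on $\pi_1(Z_{\min})$.

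Braun's finiteness does not rescue this, because the degree-one argument is circular:
\begin{itemize}
\item Triviality of $p\colon\widetilde X\to X$ over a contractible neighborhood of one fixed point says nothing about $p^{-1}(Z_{\min})\to Z_{\min}$. In fact, connectedness of the lifted level sets makes $p^{-1}(Z_{\min})$ a \emph{connected} $d$-sheeted cover of $Z_{\min}$, which is perfectly consistent with $d>1$.
\item The argument uses $\pi_1(X)\cong\pi_1(\text{neighborhood of }Z_{\min})$. This rests on your Step~1 assertion that $X$ retracts onto every sublevel set, which is false in general: points above level $s$ can flow to non-minimal fixed components, and crossing those critical levels changes the topology.
\item $\widetilde X$ is not known to arise in setting~\ref{assume:KRF} or~\ref{assume:KRS}, so the paper's results cannot simply be reapplied to it.
\end{itemize}

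\textbf{What is missing.} You need a \emph{continuous} factorization of the relevant loops through a simply connected quotient. The paper does this as follows.
\begin{itemize}
\item Delete $X^{\rm sing}\cup Z_{\min}$, and use transversality to avoid the real-codimension-two set $Y$ of regular points attracted to non-minimal components.
\item Flow the loop into $u_\eta^{-1}((z_\eta,z_\eta+2\delta))\subset X^s(\eta,z_\eta+\delta)\subset U:=X^{ss}(\eta,z_\eta)$.
\item The limit map $x\mapsto\lim_{t\to\infty}\phi^{J\eta}_t(x)$ is a strong deformation retraction of $U$ onto $Z_{\min}$. On $X^s(\eta,z_\eta+\delta)$ it factors as $\pi\circ\rho_\delta$ through the quotient $X_{z_\eta+\delta}$.
\item $X_{z_\eta+\delta}$ is simply connected, being birational (Corollary~\ref{cor-birational}) to the Fano-type $X_r$, with \cite{takayama2003} applied. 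Hence the loop dies in $U$.
\end{itemize}

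Alternatively, your Seifert set-up can be completed without orbifold groups, Braun, or coverings.
\begin{itemize}
\item In a slice chart $\mathbb{C}^*\times_H S\subset X^s(\eta,r)$ with $S$ contractible, every loop is homotopic to a power of the once-traversed orbit circle. That circle bounds a disk in $X$.
\item Lift a fine grid subdivision of a null-homotopy in $X_r$ chart by chart, using that paths lift through $S\to S/H$. This shows that any loop in $X^s(\eta,r)$ whose image in $X_r$ is null-homotopic is null-homotopic in $X$.
\item Take $r$ near $0$, so that $X_r$ is of Fano type and hence simply connected. Combined with the surjectivity of $\pi_1(X^s(\eta,r))\to\pi_1(X)$ from your Step~2, this gives the theorem directly.
\end{itemize}
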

\begin{proof}
Because $X$ is normal, \cite[Proposition 2.10]{kollar2014} and \cite{FL} give that the map
\begin{equation*}
    \pi_1(X\setminus Z)\to \pi_1(X)
\end{equation*}
induced by inclusion is surjective for any closed analytic subset $Z$. Let 
\begin{equation}
    z_\eta:=\min_{X}u_{\eta}\in \mathbb R.
\end{equation}Note that $u_\eta^{-1}(z_\eta)$ is an analytic subset of $X$ and set
    $Z = X^{\mathrm{sing}} \cup u_\eta^{-1}(z_\eta)$.
Then it suffices to show that any loop 
\[
\gamma : \mathbb{S}^1 \to X^{\mathrm{reg}} \setminus u_\eta^{-1}(z_\eta)
\]
is contractible in $X$.

Letting $(\phi_t^{J\eta})_{t\in \mathbb{R}}$ be the flow generated by $J\eta$, the set
\begin{equation*}
    Y:=\{x \in X^{\operatorname{reg}} \mid \lim_{t \to \infty} u_{\eta}(\phi_t^{J\eta}(x))>z_\eta\}
\end{equation*}
is a locally finite union of locally closed complex subvarieties of $X$ of codimension at least 1. By the Thom transversality Theorem, we may therefore assume $\gamma$ is a smooth curve satisfying $(Y\cup Z)\cap \gamma(\mathbb{S}^1)=\emptyset$. Noting that $X^{\operatorname{reg}}\setminus (Y\cup Z)$ is preserved by $\phi_t^{J\eta}$, we may therefore flow by $\phi_t^{J\eta}$ in order to assume that $\gamma$ lies entirely in $$u_{\eta}^{-1}((z_\eta, z_\eta+2\delta)),$$ where $\delta >0$ is small such that there is no critical value of $u_{\eta}$ in $(z_\eta, z_\eta+2\delta]$. Therefore $u_{\eta}^{-1}((z_\eta, z_\eta+2\delta))$ admits a deformation retraction to $u_{\eta}^{-1}( z_\eta+\delta)$.

Set $U:= X^{ss}(\eta,z_\eta)$, let $\iota:X^s(\eta,z_\eta+\delta)\to U$ be the inclusion, and let $\rho_{\delta}:X^s(\eta,z_\eta+\delta) \to X_{z_\eta+\delta}$, $\rho_0 : U \to X_{z_\eta}$ be the analytic Hilbert quotient maps. Because $$\rho_0 \circ \iota: u_{\eta}^{-1}(z_\eta+\delta) \to X_{z_\eta}$$ is a holomorphic map constant on $\mathbb{C}^{\ast}$-fibers, the mapping property of $X_{z_\eta+\delta}$ \cite[Section 1.3]{greb-projec} induces a unique holomorphic map $\pi$ making the following diagram commute:
\[
\begin{tikzcd}
X^s(\eta, z_\eta+\delta) \arrow[r, hook, "\iota"] \arrow[d, "\rho_{\delta}"] & U \arrow[d, "\rho_0"] \\
X_{z_\eta+\delta} \arrow[r, "\pi"'] & u_{\eta}^{-1}(z_{\eta})
\end{tikzcd}
\]
Because $u_{\eta}^{-1}(z_\eta)$ is contained in the fixed point set of the $\mathbb{C}^{\ast}$-action, it follows that $X_{z_\eta}$ is homeomorphic to $u_{\eta}^{-1}(z_\eta)$, and 
\begin{equation*}
    \pi(x) = \lim_{t\to \infty} \phi_t^{J\eta}(x)
\end{equation*}
for all $x\in U$. To see that this map is continuous, note that under a local equivariant embedding as in Proposition \ref{prop:birational}, $\pi$ is the restriction of a linear projection map. Similarly, we obtain that $\pi_t(x):= \phi_{\frac{t}{1-t}}^{J\eta}(x)$ for $(x,t)\in U\times [0,1)$, $\pi_1 :=\pi$ defines a strong deformation retraction of $U$ onto $u_{\eta}^{-1}(z_{\eta})$. In particular, $\pi$ induces an isomorphism $\pi_1(U)\to \pi_1(u_{\eta}^{-1}(z_{\eta}))$.

Consider the quotient at a regular value $r$ sufficiently close to $0$. By Proposition \ref{prop-fano type}, we know that $X_r$ is of Fano type. In particular, $X_r$ is simply connected \cite{takayama2003,QZhang,HM07}. 
By Corollary~\ref{cor-birational} together with \cite[Corollary 1.1]{takayama2003}, it follows that $X_{z_\eta+\delta}$ is also simply connected. By the commutative diagram, the image  
\[
\mathrm{Im}(\iota_*:\pi_1\big(X^s(\eta,z_\eta+\delta)\big)\rightarrow \pi_1(U))
\] is trivial. Consequently, the curve $\gamma$ is contractible in $U \subset X$.
\end{proof}

\begin{remark}
When X is smooth, the above argument provides a new proof that a smooth complete K\"ahler-–Ricci shrinker is simply connected, compared to \cite{esparza2,SunZhang}. In particular, it does not rely on \cite{wylie2008}.
\end{remark}

\begin{proof}[Proof of Theorem \ref{thm:additionalstructure}] Assertions \ref{thm:additional2} and \ref{thm:additional1}  follow from Theorem \ref{thm:convexityofmoment}, Theorem \ref{thm:simplyconnected}. 
\end{proof}

\section{Polarized Fano fibrations}\label{sec:fano-fibration}
In this section, we prove Theorem \ref{thm:pffibration} using the results established in the previous sections.

\subsection{Setup and analytic inputs}\label{sec:setup}

Let $(X,d)$ be a singular K\"ahler--Ricci shrinker arising as a limit under either assumption \ref{assume:KRF} or \ref{assume:KRS} of Section \ref{sec;convergence setting}.  By
Theorem~\ref{thm:complexstructure}, $X$ is a normal complex analytic variety with log terminal
singularities.
For a normal analytic or algebraic space $V$ and $m\in\mathbb Z$, we write
$\cO_V(mK_V)$ for the $m$-th reflexive canonical power, using the reflexive
dual when $m<0$.  When a complex Lie group acts holomorphically on $V$, this sheaf carries the canonical
pullback linearization induced on $V^{\rm reg}$ and extended reflexively.

\begin{definition}[\cite{SunZhang}]
\label{def:polarized-fano-fibration}
A polarized Fano fibration is a triple
$(\pi\colon X\to Y,\xi)$ with the following properties.
The spaces $X$ and $Y$ are normal quasi-projective varieties, $\pi$ is
surjective and projective, $\pi_*\cO_X=\cO_Y$, the variety $X$ is klt,
and $-K_X$ is a relatively ample $\mathbb{Q}$-Cartier divisor. There is a compact torus
$\mathbb T$ acting on $X$ and $Y$ by biholomorphisms such that $\pi$ is $\mathbb T$-equivariant,
$Y=\Spec R$ is affine with a unique $\mathbb T$-fixed
point, and $\xi\in\lie(\mathbb T)$ lies in the Reeb cone: if
\begin{equation*}
        R=\bigoplus_{\alpha\in \lie(\mathbb T)^*} R_\alpha
\end{equation*}
is the $\mathbb T$-weight decomposition, then
$\langle\alpha,\xi\rangle>0$ for every nonzero weight with
$R_\alpha\ne0$.
\end{definition}

To prove Theorem \ref{thm:pffibration}, we may assume that $X$ is noncompact and $n\ge2$.  Indeed, suppose that $X$ is compact.  By
Proposition~\ref{prop:bddlocal}, $\omega$ has continuous
strictly plurisubharmonic local potentials and is bounded below, in any local holomorphic embedding, by the ambient Euclidean K\"ahler form. 
Choose $m>0$ such that $-mK_X$ is Cartier. The current $m\omega$
defines a positive continuous Hermitian metric on $-mK_X$; Richberg regularization
and Grauert's extension of the Kodaira embedding theorem
\cite[\S3]{Gra62} show that $-mK_X$ is ample.
Thus $X$ is projective Fano and $(X\to\{\mathrm{pt}\},\xi)$ is a polarized
Fano fibration.  

Throughout this section, let $\mathbb T$ denote the compact torus of holomorphic isometries obtained as the closure of the one-parameter group generated by the soliton vector field $\xi=J\nabla f$. By Theorem~\ref{thm:convexityofmoment}, the $\mathbb T$-action has a continuous
proper moment map
\(
        \mu\colon X\longrightarrow\lie(\mathbb T)^*
\)
with closed convex image. By definition, $\langle \mu, \eta \rangle = u_{\eta}$, where $u_{\eta}|_{X^{\operatorname{reg}}}$ is defined by \eqref{eq--def of moment map}. 
Then we have
\begin{equation}\label{eq:drift-Hamiltonian}
        \Delta_fu_\eta+u_\eta=0
\end{equation}
on $X^{\operatorname{reg}}$. Because $\mu(X)$ is closed and convex, the argument of \cite[Lemma~3.1]{SunZhang} gives a rational
element $\eta_0\in\lie(\mathbb T)$, arbitrarily close to $\xi$, such that
there is a constant $C=C(\eta_0)$ such that
\begin{equation}\label{lem:canonical-growth}
      C^{-1}f-C   \leq u_{\eta_0}\le C(1+f-\min_Xf).
\end{equation}

\begin{lemma}\label{lem:weighted-centering} Let $d\nu=e^{-f}\omega^n$.
The Hamiltonian potentials satisfy
\begin{equation*}
 u_\eta,\ \nabla u_\eta\in L^2(X,\nu),
 \qquad
 \int_Xu_\eta d\nu=0.
\end{equation*}
\end{lemma}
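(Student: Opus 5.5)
The plan is to first get a pointwise growth bound $|u_\eta|\le C(1+f)$ from the convexity of the moment image. I would then use Gaussian decay of $e^{-f}$ to get integrability. Finally I would obtain the gradient bound and the centering by integrating the drift equation \eqref{eq:drift-Hamiltonian} against cut-offs that exhaust both the regular set and infinity.

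\emph{Step 1: linear bound on $\mu$.} By Theorem~\ref{thm:convexityofmoment}, $\mu(X)$ is closed and convex. By \eqref{lem:canonical-growth}, $u_{\eta_0}=\langle\mu,\eta_0\rangle$ is proper and bounded below. So for every $s$ the set $\mu(X)\cap\{\alpha:\langle\alpha,\eta_0\rangle\le s\}$ is compact. For a closed convex set this forces the recession cone of $\mu(X)$ to lie in $\{\langle\cdot,\eta_0\rangle>0\}\cup\{0\}$. By compactness of the unit sphere, we then get $|\alpha|\le C(1+\langle\alpha,\eta_0\rangle)$ on $\mu(X)$. Combining with the upper bound in \eqref{lem:canonical-growth} gives
\[
|u_\eta|\le C_\eta(1+f)\quad\text{on } X
\]
for every $\eta\in\lie(\mathbb T)$.

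\emph{Step 2: $u_\eta\in L^2(\nu)$.} It suffices to know that $\int_X(1+f)^2e^{-f}\omega^n<\infty$. For this I would use the standard quadratic growth $f\sim\frac14 d^2(p,\cdot)$ from \cite{CMZ2024} (already used in Lemma~\ref{lem:cutoff}). I would combine it with at most polynomial (Euclidean) volume growth of balls in $X$. That growth holds for smooth shrinkers and passes to the limit by volume convergence, or alternatively follows from the normalization $\int e^{-f}\omega^n=(2\pi)^nn!$ applied on annuli together with $f\ge C^{-1}d^2-C$. I expect this volume-growth input in the singular setting to be the main point needing care, though it should follow from the corresponding smooth estimates by the limiting arguments of Section~\ref{sec:prelim}.

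\emph{Step 3: gradient and centering.} On $X^{\reg}$ we have $|\nabla u_\eta|=|\eta|$, which is locally bounded on $X$ by Lemma~\ref{lem-vector field bounded}. Let $\psi=\chi_\epsilon\rho_r$, with $\chi_\epsilon$ from Lemma~\ref{lem:chicutoff} and $\rho_r$ from \eqref{rhocutoff}. Multiplying \eqref{eq:drift-Hamiltonian} by $u_\eta\psi^2$ and integrating by parts with respect to $\nu$ gives
\[
\int|\nabla u_\eta|^2\psi^2d\nu=\int u_\eta^2\psi^2d\nu-2\int\psi u_\eta\langle\nabla u_\eta,\nabla\psi\rangle d\nu .
\]
The last term is absorbed by Cauchy--Schwarz, leaving $\int u_\eta^2|\nabla\psi|^2d\nu$.

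As $\epsilon\to0$, the $\chi_\epsilon$-part vanishes on compact sets, since $u_\eta$ is bounded there and $\int|\nabla\chi_\epsilon|^2\to0$ locally. The $\rho_r$-part is bounded by $Cr^{-2}\int u_\eta^2d\nu\to0$ by Lemma~\ref{lem--rho-cutoff} and Step 2. Hence $\int|\nabla u_\eta|^2d\nu\le\int u_\eta^2d\nu<\infty$.

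Likewise, integrating \eqref{eq:drift-Hamiltonian} against $\psi$ gives $\int u_\eta\psi\,d\nu=\int\langle\nabla u_\eta,\nabla\psi\rangle d\nu$. The right-hand side is bounded by $\|\nabla u_\eta\|_{L^2(\nu)}\|\nabla\psi\|_{L^2(\nu)}$, which tends to $0$ as $\epsilon\to0$ and then $r\to\infty$. Since $u_\eta\in L^1(\nu)$, dominated convergence then yields $\int_Xu_\eta\,d\nu=0$.
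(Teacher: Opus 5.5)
Your proposal follows essentially the paper's proof. The paper likewise gets $\int_X(1+f-\min_X f)^k e^{-f}\omega^n<\infty$ from the potential estimate and volume-ratio bound of \cite{CMZ2024}, combines it with the linear growth of $u_\eta$, and then integrates \eqref{eq:drift-Hamiltonian} against $\chi_\epsilon\rho_r$ (multiplied by $u_\eta$ for the gradient bound), letting $\epsilon\to0$ and then $r\to\infty$. Your Step 1 usefully makes explicit, via the recession cone of the closed convex set $\mu(X)$, the passage from the bound \eqref{lem:canonical-growth} for $\eta_0$ to arbitrary $\eta$, which the paper leaves implicit; write that bound as $|\alpha|\le C(1+|\langle\alpha,\eta_0\rangle|)$, since $u_{\eta_0}$ takes negative values.

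One caveat: the ``alternative'' you offer in Step 2 does not work. Finite total weighted volume together with $f\ge C^{-1}d^2-C$ only yields $\mathrm{Vol}(\{f\le s\})\le Ce^{s}$, and that is not enough to make $\int_X f^k e^{-f}\omega^n$ finite. You genuinely need the polynomial volume growth from \cite{CMZ2024}, which is exactly what the paper cites.
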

\begin{proof}
The potential estimate and volume-ratio upper bound for a singular shrinker
\cite[Theorem~1.1 and Corollaries~1.2--1.3]{CMZ2024} imply
\[
        \int_X(1+f-\min_Xf)^k e^{-f}\omega^n<\infty
        \quad\text{for every }k.
\]
Together with \eqref{lem:canonical-growth}, this implies $u_\eta\in L^2(X,\nu)$.

We will show $\nabla u_{\eta}$ using \eqref{eq--def of moment map} and  integration by parts. Let
$\chi_\epsilon$ be
the cutoff functions of Lemma~\ref{lem:chicutoff}, and let $\rho_R$ be
as in Lemma~\ref{lem--rho-cutoff}, so that
$\rho_R |_{\{f\le R^2\}}\equiv1$, $\operatorname{supp}(\rho_R)\subseteq \{f\le2R^2\}$, and
\begin{equation*}
        |\nabla\rho_R|\le C R^{-1},\qquad
        |\Delta\rho_R|\le C R^{-2}.
\end{equation*}
Set $\psi_{\epsilon,R}:=\chi_\epsilon\rho_R$.  Multiplying
\eqref{eq:drift-Hamiltonian} by
$\psi_{\epsilon,R}^2u_\eta$ and integrating by parts gives the
estimate
\[
 \int_{X^{\rm reg}}\psi_{\epsilon,R}^2
             |\nabla u_\eta|^2\,d\nu
 \le
 C\int_{X^{\rm reg}}
       \bigl(\psi_{\epsilon,R}^2+|\nabla\psi_{\epsilon,R}|^2\bigr)
       u_\eta^2\,d\nu .
\]
For fixed $R$, choose $\sigma\in(0,2)$.  Letting $\epsilon\rightarrow 0$, the  gradient tends to zero in
$L^{4-\sigma}$ on compact sets and hence in $L^2$ there, while
$u_\eta$ is locally bounded.  After letting $\epsilon\downarrow0$, the
$\rho_R$ error tends
to zero as $R\to\infty$ by its derivative bound, the growth estimate for
$u_\eta$, and the finite weighted moments of $f$.  Therefore we obtain $\nabla u_\eta\in L^2(d\nu)$.

We now integrate \eqref{eq:drift-Hamiltonian} against
$\psi_{\epsilon,R}$.  Weighted integration by parts yields
\[
 \int_{X^{\rm reg}}\psi_{\epsilon,R}u_\eta\,d\nu
 =
 \frac12\int_{X^{\rm reg}}
 \langle\nabla\psi_{\epsilon,R},\nabla u_\eta\rangle\,d\nu .
\]
 The
right-hand side tends to zero, first as
$\epsilon\downarrow0$ and then as $R\to\infty$, by the cutoff estimates
and the $L^2$ bound just proved.  The left-hand side converges to
$\int_Xu_\eta\,d\nu$, proving the claim.
\end{proof}

After a positive rescaling, we
may assume that $\eta:= a\eta_0$ is the primitive integral generator of the corresponding effective circle action, so that the weights of the induced action on all sections on $X$ are integers. We write $u:=u_\eta$. Since $X$ is
connected and noncompact, the properness of $u$ gives $u(X)=[\min_Xu,\infty)$,
so every critical value above the minimum has nonempty level sets on both sides. Let $\rho_\lambda$ be the holomorphic action of $\C^*$ on $X$ such that $(\rho_{e^{\sqrt{-1}\theta}})_{\theta \in \mathbb{R}}$ is generated by $\eta$.  Holomorphicity
of the action and $\eta=J\nabla u$ give
\begin{equation}\label{eq:real-subgroup-direction}
 \left.\frac{d}{ds}\right|_{s=0}\rho_{e^s}
      =-J\eta=\nabla u,
 \qquad
 \frac{d}{ds}u(\rho_{e^s}x)=|\nabla u|^2(\rho_{e^s}x).
\end{equation}

Fix a regular value $r$ of $u$.  By Theorem~\ref{thm:regvalues}, every regular
reduction is a normal projective variety of klt type.  We write
\[
        X_r:=X^s(\eta,r)/\C^*, \quad U_r:=X^s(\eta,r) .
\]
 Its quotient boundary is
\[
        D_r:=\sum_B\left(1-\frac1{e_B}\right)B,
\]
where $e_B$ is the stabilizer order of the general orbit above $B$. For different $r$, $X_r$ are birational to each other and  we identify their function fields.

 We use the geometric pullback convention: a function
or form has weight
$k$ if
\[
        \rho_t^*f=f\circ\rho_t=t^kf\qquad(t\in\C^*).
\]
Let $\widetilde L_r:=\cO_{U_r}e$ be the trivial line bundle with chosen
nowhere-vanishing global frame $e$, and equip it with the inverse-standard
$\C^*$-linearization.  Explicitly, for each $t\in\C^*$, let
\[
        \lambda_t\colon \rho_t^*\widetilde L_r
              \xrightarrow{\ \sim\ }\widetilde L_r
\]
be the bundle isomorphism over $U_r$ determined by
\[
        \lambda_t\bigl(\rho_t^*e\bigr)=t^{-1}e.
\]
Thus $\rho_t$ is the point action on $U_r$, whereas $\lambda_t$ is its
linearization on the trivial line bundle, with fiber character $t^{-1}$.
Consequently, if $f$ has weight $k$, then
$f e^{\otimes k}$ is invariant. 

Choose $m$ divisible by all stabilizer orders.  Such an $m$ exists because
the  quotient is compact.
The descent theorem for equivariant line bundles
\cite[\S4, Proposition~4.2, pp.~83--84]{KKV89} then gives a line bundle
$L_r^{(m)}$ on $X_r$ whose pullback is the linearized line bundle
$\widetilde L_r^{\otimes m}$.  Set
\[
        L_r:=\frac1m L_r^{(m)}\in\operatorname{Pic}(X_r)_{\Q}.
\]
By the uniqueness of descent, it is easy to show that $L_r$ is independent of $m$.
The following result follows from Theorem~\ref{thm:regvalues}, its proof,
and Proposition~\ref{prop-fano type}.

\begin{prop}
\label{prop:regular-reduction}
For every regular value $r$, $(X_r,D_r)$ is klt, and there exists a K\"ahler current $\omega_r$ with local continuous potentials in the K\"ahler class
\begin{equation}\label{eq:ample-class}
       A_r:= -(K_{X_r}+D_r)+rL_r.
\end{equation}

\end{prop}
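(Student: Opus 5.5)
The plan is to read off both assertions from what Theorem~\ref{thm:regvalues} and Proposition~\ref{prop-fano type} already establish, while removing the restriction in Proposition~\ref{prop-fano type} that $r$ be close to $0$. First, fix a regular value $r$. Since the critical values of $u$ are discrete (Lemma~\ref{lem:constancy}), $X_s\cong X_r$ for $s$ in a small interval around $r$, so we may assume $r\in\Q$. Then the proof of Theorem~\ref{thm:regvalues} applies. It shows that $\omega$ descends to a closed positive $(1,1)$-current $\omega_r$ on $X_r$ with continuous local potentials, and that these potentials are strictly plurisubharmonic in the sense of perturbation. This uses the descended lower bound from Proposition~\ref{prop:bddlocal} together with \cite[Theorem 5.3.1]{FN1980}. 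So $\omega_r$ is a K\"ahler current with continuous local potentials, which is the analytic half of the statement.

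Second, we identify the class of $\omega_r$. On the Zariski open set $(U_r\cap X^{\reg})/\C^*$, whose complement has codimension at least $2$, the computation of \cite[Lemma 3.4]{SunZhang} and \cite[Section 3]{HNP} applies verbatim. It shows that $\omega_r$ is the curvature of a Hermitian metric on $-(K_{X_r}+D_r)+rL_r$. The $rL_r$ term comes from the moment map level, via the linearization $\lambda_t$ with character $t^{-1}$. The $D_r$ term comes from the orbifold stabilizers $e_B$ of the generic orbits, measured against the quotient volume form $e^{-f}\omega^n/\omega\wedge\iota_\eta\omega$ in the standard way. By Lemma~\ref{lem-q-cartier}, applied to two different rational regular values in the same chamber, both $K_{X_r}+D_r$ and $L_r$ extend as $\Q$-Cartier divisors on $X_r$. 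Hence $A_r$ is a well-defined $\Q$-Cartier class containing $\omega_r$, and by \cite[Theorem 1]{Varouchas} it is a K\"ahler class.

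Third, for the klt property, we use the slice theorem of \cite{HL94}. Near any point of $u^{-1}(r)$, the quotient is locally $S/G_{x_0}$ with $G_{x_0}$ finite and $S$ klt, since $X$ is klt by Theorem~\ref{thm:complexstructure}. The ramification formula for finite quotients \cite[Proposition 2.18]{moraga2021fano} gives $K_S=\pi^*(K_{S/G}+D_r)$ locally, where the branch coefficients $1-1/e_B$ match exactly the definition of $D_r$. Klt-ness of $(S/G_{x_0},D_r)$ then follows from klt-ness of $S$ under finite morphisms (\cite[Proposition 5.20]{KM98}).

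The main obstacle is verifying that the boundary constructed locally via slices agrees globally with $D_r$, including along divisors meeting $X^{\rm sing}$. In Proposition~\ref{prop-fano type} this agreement was only shown on the regular locus. Since any such divisor is determined by its generic point, and the generic orbit over a prime divisor $B$ lies in $U_r\cap X^{\reg}$ (because $X^{\rm sing}$ maps to codimension at least $2$), the coefficients agree. This gives $(X_r,D_r)$ klt.
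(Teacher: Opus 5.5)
Your proposal is correct and follows the paper's route. The paper obtains this proposition from the proof of Theorem~\ref{thm:regvalues}, which descends $\omega$ to a K\"ahler current with continuous potentials. It also uses the part of the proof of Proposition~\ref{prop-fano type} that does not need $r$ near $0$: the Sun--Zhang curvature computation on $(U_r\cap X^{\reg})/\C^*$, Lemma~\ref{lem-q-cartier} at two rational levels, the slice and finite-quotient ramification formula for klt, and Varouchas. Your codimension-two remark identifying the slice boundary with $D_r$ is the same observation the paper uses when it says its $D_r'$ agrees with $D_r$.
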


\subsection{Wall crossing on a singular total space}\label{sec:wall-crossing}

Let $c>0$ be a critical value of $u$ 
and choose $\varepsilon>0$ so that $c\pm\varepsilon$ are regular values with
nonempty levels and $c$ is the unique critical value in
$(c-\varepsilon,c+\varepsilon)$.  Write
\[
        U_-:=X^s(\eta,c-\varepsilon),\quad
        U_+:=X^s(\eta,c+\varepsilon),\quad U_c:=X^{ss}(\eta,c),\quad
 \varpi_c\colon U_c\longrightarrow X_c,
\]
\[
        X_-:=X_{c-\varepsilon},\qquad
        X_+:=X_{c+\varepsilon}.
\]
By Proposition~\ref{prop:birational}, there are proper modifications
$\pi_\pm\colon X_\pm\to X_c$ to a compact quotient $X_c$.  Let $W$ be the
normalization of the graph of the wall-crossing map, with projections
$p_\pm\colon W\to X_\pm$.  Thus
\begin{equation}\label{diag:wall-crossing}
\begin{tikzcd}[column sep=large,row sep=large]
 & W \arrow[dl,"p_-"'] \arrow[d,"g"] \arrow[dr,"p_+"] & \\
 X_- \arrow[r,"\pi_-"] & X_c & X_+ \arrow[l,"\pi_+"']
\end{tikzcd}
\end{equation}
and
\[
g=\pi_-\circ p_-=\pi_+\circ p_+.
\]
There is one change of convention.  The radial parameter used in the proof
of that proposition has generator $J\eta$, whereas our parameter $\lambda$
has generator $-J\eta$ by \eqref{eq:real-subgroup-direction}. 
Thus $Z_i^-$ in Proposition~\ref{prop:birational} is the
$\lambda\to\infty$ stratum on $X_-$, while $Z_i^+$ is the
$\lambda\to0$ stratum on $X_+$.

The wall-crossing argument has two ingredients, both of which are proved without assuming the smoothness of the critical locus.  The first is the descent of certain line bundles to the critical quotient.   Using the canonical-weight formula proved below, we
construct a line bundle $A_{c,N}$ on $X_c$ such that
\[
 \cO_{X_\pm}\!\left(N(K_{X_\pm}+D_\pm-cL_\pm)\right)
 \simeq \pi_\pm^*A_{c,N}.
\]
 The second ingredient
is the effectivity of the \textit{character jump}
\[
             E_c:=p_-^*L_--p_+^*L_+,
\]
proved by combining $g_*E_c\ge0$ from the graded wall algebra, the
$g$-nefness of $-E_c$ from relative ampleness, and the negativity lemma.

\subsubsection{Canonical weights at fixed components}

We first record the relation between the  character of the canonical bundle at a fixed
component and the value of the  Hamiltonian potential. 
Let $Z$ be an irreducible component of the fixed locus of the
$\C^*$-action generated by $\eta$.  By
Lemma~\ref{lem:constancy}, the Hamiltonian is constant on $Z$; denote its value
by $c_Z$.  Fix a point $z\in Z$ and an integer $m>0$ such that
$mK_X$ is Cartier near $z$.  The canonical pullback linearization makes
the one-dimensional fiber $\bigl(\cO_X(mK_X)\bigr)_z$ a
$\C^*$-representation.  Denote its weight by $a_{Z,m}$, so for every
$0\ne v\in\bigl(\cO_X(mK_X)\bigr)_z$,
\[
 \lambda_t^{K,m}\bigl(\rho_t^*v\bigr)=t^{a_{Z,m}}v.
\]

\begin{lemma}
\label{lem:canonical-weight}
One has $c_Z\in\Q$ and
\[
        a_{Z,m}=-mc_Z .
\]
\end{lemma}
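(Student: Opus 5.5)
We reduce to the local computation used in the proof of Lemma~\ref{lem:constancy}. By Proposition~\ref{prop:equivariant} and Lemma~\ref{lem:decomposition}, choose a $\mathbb T$-invariant neighborhood $U$ of $z$ and a section $s\in H^0(U,K_X^{-\ell})$, with $\ell$ a multiple of $m$, such that $s$ is an eigenvector for the $\C^*$-action and $\inf_U|s|_{h^\ell}>0$. Concretely, $\mathcal L_\eta s=\sqrt{-1}\lambda s$ for some $\lambda\in\mathbb R$. Since the $\C^*$-action is that of a primitive circle, the weights are integers, so $\lambda\in\mathbb Z$. The proof of Lemma~\ref{lem:constancy} then gives $u_\eta=-\tfrac12(J\eta)\varphi+\lambda/\ell$ on $U\cap X^{\reg}$, where $\varphi=-\tfrac1\ell\log|s|^2_{h^\ell}$. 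It also shows that the slope term tends to $0$ at fixed points. Hence $c_Z=\lambda/\ell\in\Q$, which proves rationality.

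For the weight formula, we use that $s(z)\neq 0$, since $|s|_{h^\ell}$ is bounded below near $z$ and $s$ trivializes $\cO_X(-\ell K_X)$ near $z$. Evaluating the eigen-relation at the fixed point $z$ shows that the fiber $\cO_X(-\ell K_X)_z$ is the one-dimensional representation spanned by $s(z)$. Its weight is determined by $\lambda$, so the weight of $\cO_X(\ell K_X)_z$ is determined by $-\lambda$. Since $a_{Z,m}$ is additive under tensor powers, we have $a_{Z,\ell}=(\ell/m)a_{Z,m}$. It therefore suffices to show $a_{Z,\ell}=-\ell c_Z=-\lambda$, which amounts to matching conventions.

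The main obstacle is this bookkeeping of signs. Three conventions must be reconciled. First, $\mathcal L_\eta$ is defined via pushforward $(\phi^\eta_{-s})_*$ in Remark~\ref{rem:defofliederivative}. Second, the geometric pullback convention $\rho_t^*$ is defined with $t=e^{\sqrt{-1}\theta}$ generated by $\eta$. Third, the canonical pullback linearization on $K_X$ is dual to the pushforward action on $K_X^{-1}$.

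To fix the signs, I would test everything on the model $X=\C^n$ with the Gaussian shrinker. There $\eta$ rotates the coordinates with weight one, $f=|z|^2/4$ up to normalization, and the origin is the fixed component. In this model both sides can be computed explicitly: $u_\eta(0)$ comes from \eqref{eq--def of moment map}, and the weight of $dz_1\wedge\cdots\wedge dz_n$ under pullback is $n$. I expect the normalization to give $c_Z=-n$, consistent with $\int u\,d\nu=0$ in Lemma~\ref{lem:weighted-centering}. Note that $\mathcal L_\eta(dz_1\wedge\cdots\wedge dz_n)=\sqrt{-1}n\,dz_1\wedge\cdots\wedge dz_n$ for weight-one rotation, so the weight of $K$ equals $n=-c_Z$. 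Once the sign is fixed in this model, the general case follows from the same chain of identities, because every step is linear in $\lambda$ and involves no further choices. Independence of the choice of $z\in Z$ follows from the constancy of $u_\eta$ on $Z$ in Lemma~\ref{lem:constancy}, together with the discreteness of weights along the connected set $Z$.
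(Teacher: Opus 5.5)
Your proposal is correct and follows essentially the paper's argument. The local eigengenerator $s$ of $\mathcal O_X(-\ell K_X)$ from the proof of Lemma~\ref{lem:constancy} gives $c_Z=\lambda/\ell\in\mathbb{Q}$, and comparing characters in the one-dimensional fiber of a common power at $z$ (your additivity-plus-duality step is the paper's comparison of $s(z)^{M/\ell}$ with $(v^\vee)^{M/m}$) yields $a_{Z,m}=-mc_Z$. Your Gaussian calibration of the sign, where indeed $u_\eta(0)=-n$ and $dz_1\wedge\cdots\wedge dz_n$ has pullback weight $n$, makes explicit a convention check that the paper leaves implicit when it identifies the eigenvalue of $s$ with its fiber character.
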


\begin{proof}
The local eigensection construction in the proof of
Lemma~\ref{lem:constancy}, together with \eqref{eq:slopeandmomentmap},
applies at an arbitrary fixed point, including a singular one.  It gives
$\ell>0$ such that
$-\ell K_X$ is Cartier near $z$, together with a nonvanishing local
eigengenerator $s$ of $\cO_X(-\ell K_X)$ with character $\beta_Z$, such that
\[
        c_Z=u(z)=\frac{\beta_Z}{\ell}.
\]
Thus $c_Z\in\Q$.  Fix
$0\ne v\in\bigl(\cO_X(mK_X)\bigr)_z$, and let $M$ be a
common multiple of $m$ and $\ell$.  The nonzero vectors
\(
        s(z)^{M/\ell}, (v^\vee)^{M/m}
\)
lie in the same one-dimensional fiber
$\bigl(\cO_X(-MK_X)\bigr)_z$, so their
characters agree.  Hence
\[
        \frac{M}{\ell}\beta_Z=-\frac{M}{m}a_{Z,m},
\]
and therefore $a_{Z,m}=-mc_Z$.
\end{proof}

\subsubsection{Critical descent and the graph identity}

We now apply the weight formula to the critical value $c$ fixed at the
beginning of the section. Lemma~\ref{lem:canonical-weight} gives $c\in\Q$.
Let
\[
 U_c:=X^{ss}(\eta,c),\qquad
 \varpi_c\colon U_c\longrightarrow X_c
\]
be the critical semistable locus and its analytic Hilbert quotient, and let
$\widetilde L_c:=\cO_{U_c}e$ carry the inverse-standard linearization fixed
above.  For any positive integer $N$ with $Nc\in\mathbb Z$ and
$NK_{U_c}$ Cartier, put
\[
 \mathcal N_{c,N}
 :=\cO_{U_c}(NK_{U_c})
   \otimes\widetilde L_c^{\otimes(-Nc)}.
\]

\begin{lemma}\label{lem:critical-descent}
The integer $N$ can be chosen so that there exists a unique
line bundle $A_{c,N}$ on $X_c$ and an equivariant isomorphism
\[
        \mathcal N_{c,N}\simeq\varpi_c^*A_{c,N}.
\]
For this choice one has
\begin{equation}\label{eq:critical-descent}
 \cO_{X_\pm}\!\left(N(K_{X_\pm}+D_\pm-cL_\pm)\right)
 \simeq \pi_\pm^*A_{c,N}.
\end{equation}
\end{lemma}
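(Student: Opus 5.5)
The plan is to use Kempf's descent lemma for good (analytic Hilbert) quotients. A $\C^*$-linearized line bundle on $U_c$ descends to $X_c$ exactly when, at every point $x\in U_c$ with closed orbit, the stabilizer $\C^*_x$ acts trivially on the fiber over $x$. We use the analytic version for analytic Hilbert quotients, as in \cite{KKV89} together with \cite{HH99,greb-projec}. Uniqueness of $A_{c,N}$ then follows because $\varpi_c$ has connected fibers and $\cO_{X_c}=(\varpi_{c*}\cO_{U_c})^{\C^*}$. This equality says that invariant nonvanishing sections of $\mathcal N_{c,N}$ correspond to local frames of $A_{c,N}$.

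Closed orbits in $U_c$ come in two kinds.
\begin{enumerate}
\item \emph{Fixed points} $z\in Z_i\subset u^{-1}(c)$. The stabilizer is all of $\C^*$. By Lemma~\ref{lem:canonical-weight}, the weight of $\cO(NK)$ at $z$ is $-Nc$. The inverse-standard linearization gives $\widetilde L_c^{\otimes(-Nc)}$ fiber weight $+Nc$. The total weight on $\mathcal N_{c,N}$ at $z$ is therefore $0$. Here the sign conventions (the geometric pullback convention and $\lambda_t(\rho_t^*e)=t^{-1}e$) have to be tracked carefully.
\item \emph{Orbits meeting $u^{-1}(c)$ at points with $\eta\neq0$.} The stabilizer is a finite cyclic group $\mu_e$, and $e$ is bounded because $u^{-1}(c)$ is compact. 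We replace $N$ by a multiple divisible by all such $e$ and by the Cartier index of $K_X$ near the compact set $u^{-1}(c)$. Then $\mu_e$ acts on the fiber through a character whose $N$-th power is trivial, so it acts trivially.
\end{enumerate}
Local finiteness of the fixed components and compactness of $u^{-1}(c)$ make a single $N$ work. This yields $A_{c,N}$.

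For \eqref{eq:critical-descent}, we restrict along the inclusion $U_\pm\subset U_c$ from the proof of Proposition~\ref{prop:birational}. By the universal property, the descent of $\mathcal N_{c,N}|_{U_\pm}$ to $X_\pm$ is $\pi_\pm^*A_{c,N}$. It therefore remains to identify the descent of $\mathcal N_{c,N}|_{U_\pm}$ with $\cO_{X_\pm}(N(K_{X_\pm}+D_\pm-cL_\pm))$.
\begin{enumerate}
\item The $\widetilde L^{\otimes(-Nc)}$ factor descends to $-NcL_\pm$ by the definition of $L_\pm$.
\item For the canonical factor, we use the quotient formula $\varpi^*(K_{X_\pm}+D_\pm)=K_{U_\pm}$ as $\Q$-divisors. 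On the free locus this comes from contracting with the generating vector field $\eta$, which gives an invariant isomorphism $K_{U}\simeq\varpi^*K_{X}$ of weight zero. The boundary $D_\pm$ accounts for divisorial stabilizers, via the Hurwitz-type formula \cite[Proposition 2.18]{moraga2021fano}, as already used in Proposition~\ref{prop-fano type}.
\end{enumerate}
Both sides are reflexive rank-one sheaves on normal $X_\pm$. The singular part of $U_\pm$ maps to a codimension-$\ge2$ subset, as shown in the proof of Theorem~\ref{thm:regvalues}. So it suffices to compare them on the complement of a codimension-two set, where everything is explicit.

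The main obstacle is the weight-zero check at fixed points that are singular points of $X$. This is handled by Lemma~\ref{lem:canonical-weight}, which holds at singular fixed points too. A further subtlety is verifying that the contraction by $\eta$ has weight exactly zero with the stated linearizations. I would check this in the local equivariant slice $\C^*\times_{\mu_e}S$ from Theorem~\ref{thm:regvalues}.
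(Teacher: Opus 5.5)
Your proposal follows essentially the same route as the paper: descent via \cite{KKV89} after arranging that closed-orbit stabilizers act trivially on $\mathcal N_{c,N}$, then restriction to $U_\pm\subset U_c$ and identification of the descent through the slice, the invariant orbit form (your contraction by the generator), and finite Riemann--Hurwitz. At fixed points you use Lemma~\ref{lem:canonical-weight} to get weight $-Nc+Nc=0$, a check the paper's proof leaves implicit.

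One point to tighten concerns the choice of $N$. Take $N$ and $Nc$ divisible by the stabilizer orders of \emph{all} non-fixed points of $U_c$, as the paper does with $V=U_c\setminus U_c^{\C^*}$, not only those of closed orbits. Orbits in $Z_i^\pm\subset U_\pm$ are not closed in $U_c$, yet their stabilizers must act trivially for $NK_{U_\pm}$ and $\widetilde L_c^{\otimes(-Nc)}$ to descend separately to $X_\pm$. The bound on stabilizer orders near the fixed locus comes from local equivariant linearization, or from the paper's finite-type stabilizer argument; compactness of $u^{-1}(c)$ alone does not give it.
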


\begin{proof}
  Theorem~\ref{thm:regvalues}, Proposition~\ref{prop:birational}, and the
proofs of Proposition~\ref{prop:locallyalgebraic} and
Corollary~\ref{cor-birational}, together with
\cite[Theorem~3]{greb-projec}, allow us to
work algebraically: $U_c$ is quasi-projective, $\varpi_c$ is a good quotient
onto the projective variety $X_c$, and the algebraic maps $\pi_\pm$ satisfy
$\varpi_c|_{U_\pm}=\pi_\pm\circ\varpi_\pm$.  Since $U_c$ is klt and
quasi-projective, a finite cover by Cartier-index neighborhoods gives $q>0$
such that $qK_{U_c}$ is Cartier.

Put
\[
 V:=U_c\setminus U_c^{\C^*},\qquad
 I_V:=\{(t,x)\in\C^*\times V:t\cdot x=x\}.
\]
For every \(x\in V\), the fiber of the projection
\(
 I_V\longrightarrow V
\)
is precisely the stabilizer \(\operatorname{Stab}_{\C^*}(x)\), which
is finite.  Since \(V\) is of finite
type and is quasi-compact, the orders of its geometric fibers are
uniformly bounded by some integer \(B\). Therefore, by choosing $N$ sufficiently divisible, we obtain that every closed-orbit stabilizer acts trivially on
$\mathcal N_{c,N}$, and the descent theorem
\cite[\S4, Proposition~4.2, pp.~83--84]{KKV89} gives a unique line bundle
$A_{c,N}$ on $X_c$ with
\[
        \mathcal N_{c,N}\simeq\varpi_c^*A_{c,N}.
\]

It remains to identify the descent \(\mathcal N_{c,N}|_{U_\pm}\) to \(X_\pm\).  At a closed orbit in
$U_\pm$ with finite stabilizer $H$, Luna's  slice theorem
\cite{Lun73} gives a normal slice $S$ and strongly \'etale maps
\[
        \C^*\times_H S\longrightarrow U_\pm,
        \qquad S/H\longrightarrow X_\pm.
\]
Since the $\C^*$-action on $X$ is effective, the induced $H$-action on the
Luna slice $S$ is faithful (an element of its kernel would fix the nonempty
open image of $\C^*\times_H S$).  The invariant form $dt/t$ removes
the canonical factor in the orbit direction, so it remains to consider the
finite quotient $$\tau\colon S\to S/H.$$

For a prime divisor $E\subset S$ above $B\subset S/H$, strong \'etaleness
identifies the ramification index of $\tau$ along $E$ with the generic
stabilizer order $e_B$ defining the quotient boundary.  Thus, after
localizing at the generic points of $E$ and $B$, finite Riemann--Hurwitz gives
\[
 \tau^*B=e_BE,\qquad R_\tau=(e_B-1)E,
 \qquad
 K_S=\tau^*\bigl(K_{S/H}+(1-e_B^{-1})B\bigr).
\]
Summing over the branch divisors yields
$$K_S=\tau^*(K_{S/H}+D_{S/H}),\qquad D_{S/H}:=\sum_B(1-e_B^{-1})B$$ where
$D_{S/H}$ is the pullback of $D_\pm$.
Since $N$ is divisible by every closed-orbit stabilizer order on $U_\pm$,
 $NK_{U_\pm}$ descends to a line bundle on the quotient; the formula above and reflexive extension
identify its descent with
\[
        \cO_{X_\pm}\!\left(N(K_{X_\pm}+D_\pm)\right).
\]
Since $Nc$ has the same divisibility, the definition of $L_\pm$ identifies
the other descent as $\cO_{X_\pm}(-NcL_\pm)$.  Hence the descent of
$\mathcal N_{c,N}|_{U_\pm}$ is
\(N(K_{X_\pm}+D_\pm-cL_\pm).
\)
On the other hand, restricting the descent from $X_c$ gives
$\mathcal N_{c,N}|_{U_\pm}\simeq
\varpi_\pm^*\pi_\pm^*A_{c,N}$.  Uniqueness of descent proves
\eqref{eq:critical-descent}.
\end{proof}

Let $W$ and $p_\pm$ be as in diagram~\eqref{diag:wall-crossing}. As a consequence of \eqref{eq:critical-descent}, we obtain the following.

\begin{prop}[Graph wall identity]\label{prop:graph-wall}
We have the following equality as $\Q$-Cartier divisors
\begin{equation}\label{eq:graph-wall}
        p_-^*(K_{X_-}+D_- - cL_-)
        =
        p_+^*(K_{X_+}+D_+ - cL_+).
\end{equation}

\end{prop}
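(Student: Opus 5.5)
The plan is to deduce \eqref{eq:graph-wall} directly from Lemma~\ref{lem:critical-descent} by pulling everything back to the normalized graph $W$ and using the commutativity $g=\pi_-\circ p_-=\pi_+\circ p_+$ of diagram~\eqref{diag:wall-crossing}. First fix $N$ as in Lemma~\ref{lem:critical-descent}, so that $Nc\in\mathbb Z$, $NK_{U_c}$ is Cartier, and $N$ is divisible by all closed-orbit stabilizer orders on $U_c$ and $U_\pm$. Then $N(K_{X_\pm}+D_\pm-cL_\pm)$ is Cartier on $X_\pm$, being isomorphic to $\pi_\pm^*A_{c,N}$. This shows that $K_{X_\pm}+D_\pm-cL_\pm$ is $\Q$-Cartier, so its pullbacks under the morphisms $p_\pm$ of normal projective varieties are well defined as $\Q$-Cartier divisor classes.

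Next, pull back \eqref{eq:critical-descent} along $p_\pm$. This gives
\[
p_\pm^*\cO_{X_\pm}\!\left(N(K_{X_\pm}+D_\pm-cL_\pm)\right)\simeq p_\pm^*\pi_\pm^*A_{c,N}=g^*A_{c,N}.
\]
Both sides of the desired identity are therefore linearly equivalent to $\frac1N g^*A_{c,N}$, and so they agree in $\operatorname{Pic}(W)_\Q$.

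To upgrade this to an equality of $\Q$-Cartier divisors rather than of classes, I would fix representatives compatibly on the common open set. By Proposition~\ref{prop:birational}, $\pi_\pm$ are isomorphisms over $X_c\setminus\cup Z_i$. Over this locus $X_-$, $X_+$ and $W$ are identified, and the divisors $K+D$ and $L$ are defined intrinsically by the same quotient data. The canonical divisor is taken with respect to a common rational top form on the identified function fields; $D$ is the boundary from the stabilizer orders, which is the same divisor on the common open set; and $L$ comes from the same frame $e$. With these choices the two sides of \eqref{eq:graph-wall} coincide away from $g^{-1}(\cup Z_i)$.

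Their difference is then a $\Q$-Cartier divisor that is $\Q$-linearly trivial and supported on $g$-exceptional divisors. Two steps are involved here. The first is to check that the isomorphisms in Lemma~\ref{lem:critical-descent} are compatible with the chosen frames: both are descents of the single sheaf $\mathcal N_{c,N}$ on $U_c$, restricted to $U_\pm\subset U_c$, so after fixing a rational section of $\mathcal N_{c,N}$ the induced rational sections on $X_\pm$ agree generically. The second is that the resulting trivial divisor $G$ is then both $g$-nef and $-G$ is $g$-nef, and it is $g$-exceptional. The negativity lemma, applied to $G$ and to $-G$, gives $G=0$.

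I expect the main obstacle to be this bookkeeping of canonical sections and linearizations through descent, in order to get divisor equality rather than linear equivalence. In practice, however, linear equivalence of $\Q$-Cartier divisors, which follows immediately from \eqref{eq:critical-descent}, is likely all that is used later.
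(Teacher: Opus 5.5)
Your class-level step, pulling back \eqref{eq:critical-descent} along $p_\pm$ and using $g=\pi_\pm\circ p_\pm$, is what the paper has in mind. The upgrade to an identity of divisors, however, has a genuine gap.

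The claim that the difference $G$ is supported on $g$-exceptional divisors fails exactly when some fixed component $Z$ at level $c$ maps onto a prime divisor $Q=\varpi_c(Z)\subset X_c$. This is the situation of Lemma~\ref{lem:divisorial-character-jump} and of the last case in Proposition~\ref{prop:bounded-critical-values}. In that case $\pi_\pm$ are isomorphisms over the generic point of $Q$, so the strict transform of $Q$ in $W$ is not $g$-exceptional, even though $Q\subset\cup_i Z_i$ lies outside your common open set. Along $Q$ your representatives genuinely differ term by term:
\begin{itemize}
\item the two extensions of the common frame of $L$ differ, since Lemma~\ref{lem:divisorial-character-jump} gives $\operatorname{coeff}_Q(g_*E_c)>0$;
\item in general the generic stabilizer orders defining $D_\pm$ differ as well.
\end{itemize}
So the identity at $Q$ is the nontrivial relation $\operatorname{coeff}_Q(D_-)-\operatorname{coeff}_Q(D_+)=c\operatorname{coeff}_Q(E_c)$. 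The negativity lemma cannot produce this, because it needs $g_*G=0$ as an input. For instance, if both $\pi_\pm$ are isomorphisms, $g$ is the identity and the lemma is vacuous. $\Q$-linear triviality of $g_*G$ does not rescue the argument once several such $Q$ occur, since a combination of them may be principal on $X_c$. Your argument is fine when no fixed component at level $c$ has divisorial image.

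The remedy is to do the frame bookkeeping you postponed at every prime divisor, not just generically; then the negativity lemma is unnecessary.
\begin{itemize}
\item Fix a rational top form $\kappa$ on the common function field and the common rational frame of $L$. Together these give one $\C^*$-invariant rational section $\sigma$ of $\mathcal N_{c,N}$ on $U_c$, built from the invariant $n$-form corresponding to $\kappa\wedge dt/t$ and the appropriate power of the frame.
\item Let $\bar\sigma$ be its descent to $A_{c,N}$ and put $NG_c:=\operatorname{div}_{X_c}(\bar\sigma)$.
\item Under \eqref{eq:critical-descent}, $\pi_\pm^*\bar\sigma$ is the descent of $\sigma|_{U_\pm}$. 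Apply the slice/Riemann--Hurwitz computation from the proof of Lemma~\ref{lem:critical-descent} at every prime divisor of $X_\pm$, using the stabilizers and frame characters of that side. This shows the divisor of $\pi_\pm^*\bar\sigma$ is exactly $N(K_{X_\pm}+D_\pm-cL_\pm)$ for the compatible representatives.
\item Hence $K_{X_\pm}+D_\pm-cL_\pm=\pi_\pm^*G_c$ as $\Q$-divisors; this is the $G_c$ invoked in the proof of Lemma~\ref{lem:wall-effectivity}. Pulling back by $p_\pm$ gives \eqref{eq:graph-wall}.
\end{itemize}

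Finally, your closing remark that linear equivalence is probably all that is used later is incorrect; the divisor-level identity is essential downstream. Lemma~\ref{lem:complements} pushes \eqref{eq:complement-transport} forward to conclude $\Delta_+\ge0$. Proposition~\ref{prop:bounded-critical-values} compares individual coefficients, namely $A_F=1-\operatorname{coeff}_F(D_+)-c\operatorname{coeff}_F(E_c)$ and $D_--D_+=cE_c$.
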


\subsubsection{The graded character jump and effectivity}
Let $\Omega$ be the common dense quotient open in $X_-$ and $X_+$, and
identify the restrictions of $L_-$ and $L_+$ there through the
common equivariant inverse-character line on the stable locus. Choose an integer
$m>0$ for which $mL_-$ and $mL_+$ are Cartier, and fix one nonzero rational
frame $\ell^{(m)}$ of their common restriction to $\Omega$.  Denote its
rational extensions to $X_-$ and $X_+$ by $\ell_-^{(m)}$ and
$\ell_+^{(m)}$, respectively.  For every prime divisor $F\subset W$, define
\begin{equation}\label{eq:oriented-character-valuation}
 \operatorname{coeff}_F(E_c)
 :=\frac1m\left(
   \operatorname{ord}_F(p_-^*\ell_-^{(m)})
  -\operatorname{ord}_F(p_+^*\ell_+^{(m)})\right).
\end{equation}
Equivalently, $mE_c$ is the divisor of the rational section
\[
 p_-^*\ell_-^{(m)}\otimes
 (p_+^*\ell_+^{(m)})^{-1}.
\]
Replacing $\ell^{(m)}$ by $h\ell^{(m)}$, for
$h\in K(X_c)^\times=K(\Omega)^\times$, adds
$\operatorname{ord}_F(g^*h)$ to both terms in
\eqref{eq:oriented-character-valuation}; passing to a tensor power has the
same cancellation.  Thus the divisor is independent of these choices and gives 
\begin{equation}\label{eq:oriented-character-divisor}
             E_c\sim_{\mathbb Q}p_-^*L_--p_+^*L_+.
\end{equation}

\begin{lemma}[Affine description of the two adjacent quotients]
\label{lem:affine-wall-git}
Let $V=\operatorname{Spec}R\subset X_c$ be affine and let
\[
        \widetilde V:=\varpi_c^{-1}(V)=\operatorname{Spec}A,
        \qquad A=\bigoplus_{j\in\mathbb Z}A_j,\qquad A_0=R,
\]
where the grading is the geometric-pullback grading
$\rho_\lambda^*f=\lambda^jf$ on $A_j$.  For every sufficiently divisible
$N>0$, if
\[
        S_\pm:=\bigoplus_{k\ge0}A_{\pm kN},
\]
then the two base-changed chamber quotients are
\begin{equation}\label{eq:affine-wall-proj}
\begin{aligned}
 X_-\times_{X_c}V&=\operatorname{Proj}_R S_-,\\
 X_+\times_{X_c}V&=\operatorname{Proj}_R S_+.
\end{aligned}
\end{equation}
After enlarging $N$ to clear the finite stabilizer characters, the
tautological bundles satisfy
\[
        \cO_{\operatorname{Proj}S_-}(1)=-NL_-,
        \qquad
        \cO_{\operatorname{Proj}S_+}(1)=NL_+.
\]
\end{lemma}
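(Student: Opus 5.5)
We work algebraically, as in the proof of Lemma~\ref{lem:critical-descent}: by Theorem~\ref{thm:regvalues}, Proposition~\ref{prop:locallyalgebraic} and \cite[Theorem~3]{greb-projec}, $U_c$ is quasi-projective with an algebraic $\C^*$-action and $\varpi_c\colon U_c\to X_c$ is a good quotient. Since $\varpi_c$ is affine, $\widetilde V=\varpi_c^{-1}(V)=\operatorname{Spec}A$ with $A_0=A^{\C^*}=R$. The plan is to identify the open subsets $U_\pm\cap\widetilde V$ inside $\widetilde V$ with the GIT-semistable loci for the characters $\chi_\pm$ (linearizing the trivial bundle by $t^{\mp1}$), and then apply the standard theorem that the GIT quotient of an affine $\C^*$-variety relative to a nontrivial character is $\operatorname{Proj}_R\bigoplus_{k\ge0}A_{\pm kN}$.

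\textbf{Step 1: semistable loci.} For $x\in\widetilde V$ we have $\lim_{\lambda\to0}\lambda\cdot x$ existing in $\widetilde V$ iff $x$ is unstable for the positive-weight Proj. In the affine chart the Hilbert–Mumford criterion gives: $x$ is semistable for $S_+$ iff some $f\in A_{kN}$, $k>0$, has $f(x)\ne0$, iff $\lim_{\lambda\to0}\lambda\cdot x$ does not exist in $\widetilde V$. Similarly, $S_-$-semistability corresponds to the $\lambda\to\infty$ limit. We then match this with the description of $U_\pm$ from Proposition~\ref{prop:birational} (with the orientation change noted after \eqref{diag:wall-crossing}). Points of $U_c\setminus U_\pm$ are exactly the $\C^*$-fixed points over $V$ together with the strata $Z_i^{\mp}$, whose orbits flow to a fixed component $Z_i$ in the direction where $u$ moves toward $c$, via \eqref{eq:real-subgroup-direction}. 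Thus $U_+\cap\widetilde V$ consists of points whose $\lambda\to0$ limit (the direction decreasing $u$) does not exist, i.e.\ points escaping to $u<c$. Hence $U_+\cap\widetilde V=\widetilde V^{ss}(\chi_+)$, and symmetrically for $U_-$. The sign bookkeeping between $u$-monotonicity, the weight convention $\rho_\lambda^*f=\lambda^jf$, and the orientation convention is the step I expect to be the main obstacle; I would check it on a linear model $\C^*\curvearrowright\C^2$ with weights $(1,-1)$ via the local equivariant embeddings of Proposition~\ref{prop:equivariant}.

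\textbf{Step 2: Proj identification.} Standard GIT (or \cite[Section~2.3]{greb-projec}) gives $\widetilde V^{ss}(\chi_\pm)/\!\!/\C^*=\operatorname{Proj}_R S_\pm$, and this is compatible with restriction over $V$. Since $U_\pm/\C^*=X_\pm$ and $\pi_\pm$ are induced from $\varpi_c$, we get $X_\pm\times_{X_c}V=(U_\pm\cap\widetilde V)/\C^*$, which yields \eqref{eq:affine-wall-proj}. Because Proj of a Veronese subring is unchanged, we may take $N$ sufficiently divisible.

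\textbf{Step 3: tautological bundles.} A homogeneous $f\in A_{kN}$ gives the $\C^*$-invariant section $f\,e^{\otimes kN}$ of $\widetilde L^{\otimes kN}$ by the inverse-standard linearization. This invariant section descends to a section of $kNL_+$, so $\mathcal O_{\operatorname{Proj}S_+}(1)=NL_+$ once $N$ is divisible by all stabilizer orders, which is the descent via \cite{KKV89} already used. For $S_-$, $f\in A_{-kN}$ pairs with $e^{\otimes(-kN)}$, giving $-NL_-$.
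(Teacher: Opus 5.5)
Your route is the paper's: identify $U_\pm\cap\widetilde V$ with the affine semistable loci via the $\C^*$ limit criterion (the paper cites Thaddeus), then pass to $\operatorname{Proj}_R S_\pm$ by restricting the good quotients $\varpi_\pm$ over $\pi_\pm^{-1}(V)$ and using uniqueness of good quotients, then read off $\cO(1)$ by descent. However, Step~1, which you rightly flagged as the crux, is wrong in two places as written; the two errors happen to cancel.

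\textbf{First error.} With $\rho_\lambda^*f=\lambda^jf$, take $f\in A_k$ with $k>0$ and $f(x)\neq 0$. Then $f(\lambda\cdot x)=\lambda^kf(x)$, which tends to $0$ as $\lambda\to0$ and blows up as $\lambda\to\infty$. So such an $f$ obstructs the $\lambda\to\infty$ limit, not the $\lambda\to0$ limit. Precisely, the orbit map corresponds to $A\to\C[\lambda^{\pm1}]$, $f\mapsto\lambda^jf(x)$ on $A_j$. The limit $\lim_{\lambda\to\infty}\lambda\cdot x$ exists in $\widetilde V$ iff the image lies in $\C[\lambda^{-1}]$, that is, iff every positive-weight function vanishes at $x$. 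Your own test model refutes your version. Take $\lambda\cdot(w_1,w_2)=(\lambda w_1,\lambda^{-1}w_2)$; then $w_1\in A_1$ and $w_1(1,0)\neq0$, yet $\lambda\cdot(1,0)=(\lambda,0)\to(0,0)$ as $\lambda\to0$.

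\textbf{Second error.} $U_+=X^s(\eta,c+\varepsilon)$ consists of orbits reaching level $c+\varepsilon$, and by \eqref{eq:real-subgroup-direction} $u$ increases with $|\lambda|$. So $U_+\cap\widetilde V$ is the set of points escaping to $u>c$, namely those with no $\lambda\to\infty$ limit in $\widetilde V$. This matches your own remark that the removed strata $Z_i^-$ flow into $Z_i$ as $u$ increases, i.e.\ as $\lambda\to\infty$. The points escaping to $u<c$ make up $U_-\cap\widetilde V$, not $U_+\cap\widetilde V$.

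\textbf{Corrected chain.} With both fixed, $U_+\cap\widetilde V$ is the set of $x\in\widetilde V$ whose $\lambda\to\infty$ limit does not exist in $\widetilde V$. This equals $\bigcup_{k>0}\bigcup_{f\in A_k}D(f)$. Symmetrically, $U_-\cap\widetilde V=\bigcup_{k>0}\bigcup_{f\in A_{-k}}D(f)$, which is exactly the paper's formula. With this, your Steps~2 and~3 go through.

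\textbf{Smaller point in Step~3.} To identify $\cO_{\operatorname{Proj}S_\pm}(1)$ with the descent of $\widetilde L_c^{\otimes\pm N}$, you need $S_\pm$ to be generated in degree one, so that $\cO(1)$ is invertible and is given by the degree-one linearization. The paper gets this from finite generation of $\bigoplus_{k\ge0}A_{\pm k}$ over $R$ followed by passing to a Veronese. That is more than the Veronese invariance of $\operatorname{Proj}$ you invoke.
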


\begin{proof}
 By \eqref{eq:real-subgroup-direction} and the chamber description in
Proposition~\ref{prop:birational}, the affine limit
criterion \cite[\S1, Lemma~1.2 and Proposition~1.3]{Tha96} gives
\[
 U_-\cap\widetilde V
 =\bigcup_{k>0}\ \bigcup_{f\in A_{-k}}D(f),
 \qquad
 U_+\cap\widetilde V
 =\bigcup_{k>0}\ \bigcup_{f\in A_{k}}D(f).
\]
The corresponding affine GIT quotients over $V$ are
$\operatorname{Proj}_R S_-$ and $\operatorname{Proj}_R S_+$
\cite[\S1, p.~694]{Tha96}.
The final paragraph of the proof of Theorem~\ref{thm:regvalues}, applying
\cite[Proposition~10.1]{greb-projec}, makes
$\varpi_\pm\colon U_\pm\to X_\pm$ algebraic good quotients, while
the proof of Corollary~\ref{cor-birational} shows that $\pi_\pm$ are
algebraic.  Since
$\varpi_c|_{U_\pm}=\pi_\pm\circ\varpi_\pm$,
\[
 U_\pm\cap\widetilde V
 =\varpi_\pm^{-1}\bigl(\pi_\pm^{-1}(V)\bigr).
\]
Restriction therefore gives good quotients onto
$\pi_\pm^{-1}(V)=X_\pm\times_{X_c}V$.  By uniqueness of good quotients,
these coincide with the two affine GIT quotients above, proving
\eqref{eq:affine-wall-proj}.

The two half-algebras $\bigoplus_{k\ge0}A_{-k}$ and
$\bigoplus_{k\ge0}A_k$ are finitely generated over $R$.
Thus, for every sufficiently divisible $N$, the rings $S_-$ and $S_+$ are
generated in degree one, and passing to a further common Veronese does not
change their relative Proj.  Degree one then comes
from the linearizations
$\widetilde L_c^{\otimes(-N)}$ and $\widetilde L_c^{\otimes N}$,
respectively.  Their descents are $-NL_-$ and $NL_+$, so they are precisely
the two tautological bundles asserted in the statement.
\end{proof}

\begin{lemma}
\label{lem:divisorial-character-jump}
Let $E_c$ be the divisor defined
in \eqref{eq:oriented-character-valuation}.  If $Q\subset X_c$ is a prime
divisor whose
generic point is contained in the image of the fixed locus,
then
\[
        \operatorname{coeff}_Q(g_*E_c)>0.
\]
\end{lemma}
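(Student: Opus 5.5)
The plan is to reduce the coefficient computation to a local graded calculation over the generic point of $Q$, using the affine description in Lemma~\ref{lem:affine-wall-git}.

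\textbf{Localization.} Since $g$ is birational, $\operatorname{coeff}_Q(g_*E_c)$ equals $\operatorname{coeff}_{\widetilde Q}(E_c)$, where $\widetilde Q$ is the strict transform of $Q$ in $W$. We may therefore replace $X_c$ by an affine neighbourhood $V=\operatorname{Spec}R$ of the generic point of $Q$, and then localize $R$ at the prime of $Q$. This leaves a DVR $R_Q$ with uniformizer $\varpi$. Over $V$, Lemma~\ref{lem:affine-wall-git} gives
\[
X_\pm\times_{X_c}V=\operatorname{Proj}_R S_\pm,\qquad \mathcal O(1)_{S_-}=-NL_-,\qquad \mathcal O(1)_{S_+}=NL_+,
\]
where $S_\pm=\bigoplus_k A_{\pm kN}$ and $N$ is sufficiently divisible.

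\textbf{Local generators over $R_Q$.} After base change to $R_Q$ and shrinking $V$, the $\operatorname{Proj}$ of a finitely generated graded algebra over a local ring, at a point over the closed point, has a homogeneous degree-one element not vanishing there (use the Veronese already fixed). So we choose:
\begin{itemize}
\item $f_+\in A_N$ not vanishing at the generic point of the component of $\pi_+^{-1}(Q)$ through which $\widetilde Q$ maps;
\item $f_-\in A_{-N}$ with the analogous property on the $X_-$ side.
\end{itemize}
If $\pi_\pm^{-1}(Q)$ is not divisorial, we instead use the center of $\widetilde Q$ on $X_\pm$; the argument below only uses orders of vanishing along $\widetilde Q$.

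\textbf{The common frame.} Take $\ell^{(N)}:=f_+$, viewed as a rational section of the common restriction of $NL_\pm$ to $\Omega$, with $\Omega$ the common dense quotient open as before.
\begin{itemize}
\item On $X_+$, $f_+$ is a local generator of $\mathcal O(1)=NL_+$ near the center of $\widetilde Q$. Hence $\operatorname{ord}_{\widetilde Q}(p_+^*\ell_+^{(N)})=0$.
\item On $X_-$, the line bundle $NL_-=\mathcal O(-1)$ is locally trivialized by $f_-^{-1}$. The section $f_+$ becomes the function $f_+/f_-^{-1}=f_+f_-\in A_0=R$. Hence $\operatorname{ord}_{\widetilde Q}(p_-^*\ell_-^{(N)})=\operatorname{ord}_Q(f_+f_-)$.
\end{itemize}
By \eqref{eq:oriented-character-valuation} this gives
\[
\operatorname{coeff}_Q(g_*E_c)=\tfrac1N\operatorname{ord}_Q(f_+f_-).
\]

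\textbf{Positivity.} Let $Z$ be a fixed component with $\varpi_c(Z)\supseteq Q$; such a $Z$ exists because the generic point of $Q$ lies in the image of the fixed locus. A function of nonzero weight vanishes at every $\C^*$-fixed point, since $f(z)=f(\rho_t z)=t^{\pm N}f(z)$ for all $t$. So $f_+f_-$ vanishes along $Z$. As an invariant function, it descends to an element of $R$ vanishing on $\varpi_c(Z)\supseteq Q$. Therefore $\operatorname{ord}_Q(f_+f_-)\ge1$, and the coefficient is at least $1/N>0$. The toy model $\C^2$ with weights $(a,-b)$ confirms both the sign and the value: it gives $1/(ab)$ after normalizing.

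\textbf{Expected obstacle.} The main difficulty is bookkeeping of conventions. One must check that identifying $L_-$ and $L_+$ on $\Omega$ through the inverse-standard linearization really sends $f_+\in A_N$ to the same rational section on both sides. One must also check that the chamber labels match: $U_+$ uses positive-weight functions and $U_-$ negative ones, with generator $-J\eta$ as noted after \eqref{diag:wall-crossing}. An orientation error here would flip the sign. I would settle this by checking the identification on the slice model $\C^*\times_H S$ and comparing with the toy computation. A secondary point is ensuring that $f_\pm$ can be chosen nonvanishing at the centers of $\widetilde Q$; this follows from the degree-one generation of $S_\pm$ over the local ring $R_Q$.
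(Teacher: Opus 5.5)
Your proposal is correct and follows the paper's argument: localize at the generic point of $Q$ to a DVR, take the common frame $\ell^{(N)}=f_+$ with $f_-^{-1}$ framing $NL_-$, read off $N\operatorname{coeff}_Q(g_*E_c)=\operatorname{ord}_Q(f_+f_-)$, and get positivity because nonzero-weight functions vanish on the fixed component $Z$, while $f_+f_-\neq 0$ because $A$ is a domain (a point you should state explicitly). The only difference is cosmetic: the paper first shows $A_{\pm N}=f_\pm R$ via the Rees-algebra/blowup description over the DVR, and uses a dimension count to get $\varpi_c(Z)=Q$, whereas you take $f_\pm$ to be degree-one generators at the centers of $\widetilde Q$, which already suffices for the order computation.
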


\begin{proof}
Let $\eta_Q$ be the generic point of $Q$, and choose an irreducible component
$Z$ of the fixed locus with $\eta_Q\in\varpi_c(Z)$.  By
Proposition~\ref{prop:birational}(ii)--(iii), its
adjacent attracting and repelling quotient strata dominate $\varpi_c(Z)$.
Since the corresponding source strata have dimension less than $n$ and their
$\C^*$-orbits are one-dimensional, $\dim\varpi_c(Z)\le n-2=\dim Q$; hence
$\varpi_c(Z)=Q$.  Thus the localized positive- and negative-weight
semi-invariant algebras are both nonzero.

As in
Lemma~\ref{lem:critical-descent}, we can work in the algebraic setting. 
Choose an affine open neighborhood $V=\operatorname{Spec}R\subset X_c$ of
$\eta_Q$.  Let
$\mathfrak p_Q\subset R$ be the height-one prime defining $Q\cap V$, and
replace $R$ by $R_{\mathfrak p_Q}$, retaining the notation $R$.  Then
$R=\mathcal O_{X_c,\eta_Q}$ is a DVR.
The base change of the critical quotient to $\operatorname{Spec}R$ has
affine total space, say $\operatorname{Spec}A$, and the good-quotient
identity gives $R=A_0$.  Thus the local wall is described by the normal graded domain
\[
        A=\bigoplus_{j\in\mathbb Z}A_j,\qquad R=A_0,
\]
where the grading is the geometric-pullback grading fixed above,
\[
 A_j:=\{a\in A\mid \rho_\lambda^*a
                   =a\circ\rho_\lambda=\lambda^j a\}.
\]
Lemma~\ref{lem:affine-wall-git}, followed by this localization, gives
\[
 X_-\times_{X_c}\operatorname{Spec}R
   =\operatorname{Proj}_R\!\bigoplus_{k\ge0}A_{-kN},
 \qquad
 X_+\times_{X_c}\operatorname{Spec}R
   =\operatorname{Proj}_R\!\bigoplus_{k\ge0}A_{kN}.
\]
Since the two half-algebras are finitely generated, we can choose $N>0$
sufficiently divisible so that
\[
        S_\pm=\bigoplus_{k\ge0}A_{\pm kN}
\]
are generated in degree one.  

Choose $0\ne h_-\in A_{-N}$ and set $I_+=h_-A_N\subset A_0=R$.
Because $S_+$ is generated in degree one,
$h_-^kA_{kN}=(h_-A_N)^k=I_+^k$.  Hence multiplication by $h_-^k$
identifies the $k$-th graded piece of $S_+$ with the $k$-th piece of the
Rees algebra, and
\[
        \operatorname{Proj}_R S_+
        \simeq\operatorname{Bl}_{I_+}(\operatorname{Spec}R).
\]
Since
$I_+$ is a nonzero ideal of the DVR $R$, it is principal.  Write $I_+=(s_+)$.  Since
$s_+=h_-f_+$ for some $f_+\in A_N$, cancellation in the domain $A$
gives $A_N=f_+R$.  Repeating the argument with the signs reversed gives
$A_{-N}=f_-R$.  Let $\mathfrak m_R$ be the maximal ideal of the DVR $R$; it is the
localized height-one prime corresponding to $Q$.  Since $Z$ is fixed, every
semi-invariant of nonzero weight restricts to zero on $Z$: for $z\in Z$, the
identity $f_\pm(z)=\lambda^{\pm N}f_\pm(z)$ for every $\lambda\in\C^*$ forces
$f_\pm(z)=0$.  Hence the invariant function $\gamma_Q=f_+f_-\in A_0=R$
vanishes on $Z$.  Since $\varpi_c(Z)=Q$, it vanishes along $Q$, and
therefore $\gamma_Q\in\mathfrak m_R$.  Finally $A$ is a domain, so
$\gamma_Q\ne0$; as $R$ is a DVR, this gives
$\operatorname{ord}_Q(\gamma_Q)>0$.

A pullback-weight-$N$ function is a section of
$NL$, so $f_+$ is a frame of $NL_+$ on the positive chamber.  The function
$f_-$ has pullback weight $-N$, so $f_-^{-1}$ is a frame of $NL_-$ on the
negative chamber.  Choose the common rational frame $\ell^{(N)}$ in
\eqref{eq:oriented-character-valuation} to be $f_+$.  Its upper extension
is the frame $f_+$, whereas its lower extension is
\[
              f_+=\gamma_Qf_-^{-1}.
\]
The upper order is zero and the lower order is
$\operatorname{ord}_Q(\gamma_Q)$.  Thus
\[
 N\operatorname{coeff}_Q(g_*E_c)
     =\operatorname{ord}_Q(\gamma_Q)>0,
\]
with the character convention fixed above.
\end{proof}

\begin{lemma}\label{lem:wall-effectivity}
The $\Q$-Cartier divisor $E_c = p_-^{\ast}L_- - p_+^{\ast}L_+$ defined
in \eqref{eq:oriented-character-valuation}
is effective.
\end{lemma}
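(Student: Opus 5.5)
The plan is the one sketched at the start of Section~\ref{sec:wall-crossing}. We show that $g_*E_c\ge 0$ on $X_c$, that $-E_c$ is $g$-nef, and then conclude with the negativity lemma for the proper birational morphism $g\colon W\to X_c$ between normal projective varieties. By Proposition~\ref{prop:birational} and Corollary~\ref{cor-birational}, $W$, $X_\pm$ and $X_c$ are normal projective and $g$ is birational. So the negativity lemma applies in its standard form: if $-B$ is $g$-nef and $g_*B\ge0$, then $B\ge0$.

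\emph{Step 1: $g_*E_c\ge0$.} Let $Q\subset X_c$ be a prime divisor. If the generic point of $Q$ lies in $X_c\setminus\bigcup_i Z_i$, then $\pi_\pm$ are isomorphisms over a neighbourhood of $\eta_Q$ by Proposition~\ref{prop:birational}(iii). Moreover $L_-$ and $L_+$ are identified there through the common inverse-character line on the stable locus, so the chosen frame $\ell^{(m)}$ extends with the same order on both sides. Hence $\operatorname{coeff}_Q(g_*E_c)=0$. Otherwise the generic point of $Q$ lies in the image of the fixed locus, and Lemma~\ref{lem:divisorial-character-jump} gives $\operatorname{coeff}_Q(g_*E_c)>0$. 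Divisors of $W$ that are $g$-exceptional do not contribute to $g_*E_c$. Thus $g_*E_c\ge0$.

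\emph{Step 2: $-E_c$ is $g$-nef.} By Lemma~\ref{lem:affine-wall-git}, over each affine $V\subset X_c$ we have $X_+\times_{X_c}V=\operatorname{Proj}_R S_+$ with $\cO(1)=NL_+$, and $X_-\times_{X_c}V=\operatorname{Proj}_R S_-$ with $\cO(1)=-NL_-$. Hence $L_+$ is $\pi_+$-ample and $-L_-$ is $\pi_-$-ample. Now
\[
-E_c=p_+^*L_+ + p_-^*(-L_-).
\]
Let $C\subset W$ be a curve contracted by $g$. Then $p_\pm(C)$ is contracted by $\pi_\pm$, so $p_+^*L_+\cdot C\ge0$ and $p_-^*(-L_-)\cdot C\ge0$, and therefore $-E_c\cdot C\ge0$. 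As a consistency check, the graph wall identity (Proposition~\ref{prop:graph-wall}) shows that $E_c$ differs from $c^{-1}\bigl(p_-^*(K_{X_-}+D_-)-p_+^*(K_{X_+}+D_+)\bigr)$ by zero. This also makes $E_c$ $\Q$-Cartier, as required for intersection numbers, although the argument does not otherwise need it.

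\emph{Step 3: conclusion.} Apply the negativity lemma to $B=E_c$ on $W\to X_c$; $W$ is normal by construction as a normalized graph. This gives $E_c\ge0$.

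The step I expect to be most delicate is Step~1 for divisors $Q$ away from the fixed image. There one must check that the frame identification used in \eqref{eq:oriented-character-valuation} genuinely gives equal orders at the generic point of $Q$. This requires the common dense open $\Omega$ to contain the generic point of every non-fixed-image divisor, so that over $X_c\setminus\bigcup Z_i$ the quotient isomorphisms $\pi_\pm$ respect the linearizations. I would verify this via the local affine description $A=\bigoplus A_j$: away from the fixed locus, the semi-invariants $f_\pm$ are units up to invariant functions, so $\gamma$ is a unit and the jump vanishes.
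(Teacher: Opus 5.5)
Your proposal is correct and follows the paper's proof. Both arguments get $g_*E_c\ge0$ from two facts: the coefficient vanishes over the common open set away from the fixed-locus image, and Lemma~\ref{lem:divisorial-character-jump} gives positivity along that image. Both then get $g$-nefness of $-E_c$ from the $\pi_+$-ampleness of $L_+$ and the $\pi_-$-ampleness of $-L_-$, and conclude with the negativity lemma on the normalized graph.

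The only difference is how the relative ampleness is obtained. The paper deduces it from the ample classes $A_{c\pm\varepsilon}$ combined with the critical descent identity $K_{X_\pm}+D_\pm-cL_\pm=\pi_\pm^*G_c$. You read it off the tautological bundles in Lemma~\ref{lem:affine-wall-git}. Both routes are valid.
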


\begin{proof}
With the compatible representatives and the divisor $G_c$ fixed in the
proof of Proposition~\ref{prop:graph-wall}, we have identities of
$\mathbb Q$-divisors
\[
 K_{X_\pm}+D_\pm-cL_\pm=\pi_\pm^*G_c.
\]
By Proposition~\ref{prop:regular-reduction}, we have ample $\mathbb Q$-divisors on $X_{\pm}$ respectively:
\[
\begin{aligned}
 -(K_{X_-}+D_-)+(c-\varepsilon)L_-
      =-\pi_-^*G_c-\varepsilon L_-,\\
 -(K_{X_+}+D_+)+(c+\varepsilon)L_+
      =-\pi_+^*G_c+\varepsilon L_+.
\end{aligned}
\]
   Therefore $-L_-$ is
$\pi_-$-ample and $L_+$ is $\pi_+$-ample.  Consequently
\[
        -E_c=-p_-^*L_-+p_+^*L_+
\]
is $g$-nef.
We next check the non-exceptional coefficients.  Let $Q$ be a prime
divisor on $X_c$.  Away from the image of the critical locus, both
$\pi_\pm$ are isomorphisms over the generic point of $Q$, and the coefficient of $g_*E_c$ along $Q$
is zero. Lemma~\ref{lem:divisorial-character-jump} show that the coefficient of $g_*E_c$ along $Q$ is positive $Q$ is the image of a component of the critical locus and therefore $g_*E_c$ is effective.  We have shown that $-E_c$ is $g$-nef and
$g_*E_c\ge0$.  Since $W$ and $X_c$ are normal and $g$ is proper birational,
the negativity lemma \cite[Lemma~3.39]{KM98} gives $E_c\ge0$. 
\end{proof}

\subsection{Uniform bounds and compactness of critical locus}\label{sec:uniform-bounds}
The identity~\eqref{eq:graph-wall} and $E_c\ge0$ propagate fixed-index complements across singular positive walls. Moreover we obtain that the coefficients of the boundary divisors
$B_r=D_r+\Delta_r$ are in one fixed finite rational set and this condition allows us to use the boundedness of Hacon--Xu \cite{HX15}.   Then we prove a uniform lower bound on the normalized volumes, which implies
bounded stabilizers and Cartier indices. Finally we use the relative length theorem \cite{BCHM10} to obtain the uniform upper bound for critical values,
and a compact critical locus.

\subsubsection{The first complement}

\begin{lemma}
\label{lem:zero-point}
There is a point
\[
        x\in X^{\rm reg}\setminus\operatorname{Fix}(S^1)
        \quad\text{such that}\quad
        u(x)=0,\qquad du(x)\ne0.
\]
Its $S^1$-isotropy group is finite.
\end{lemma}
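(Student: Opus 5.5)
The plan is to exploit the centering identity from Lemma~\ref{lem:weighted-centering}. Recall that $u=u_\eta$ with $\eta=a\eta_0$ and $a>0$. By Lemma~\ref{lem:weighted-centering} and linearity, $\int_X u\,d\nu=0$, where $d\nu=e^{-f}\omega^n$ is a positive measure of full support on $X^{\rm reg}$.

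\emph{Step 1: $u$ takes both signs on $X^{\rm reg}$.} The function $u$ is continuous on $X$, and by \eqref{lem:canonical-growth} it is not identically zero, since it is proper and $X$ is noncompact. The vanishing of the weighted integral therefore forces $u$ to be positive on a nonempty open subset of $X^{\rm reg}$ and negative on another. Because $X^{\rm reg}$ is connected (it is the complement of an analytic subset of codimension $\ge2$ in the normal irreducible space $X$), the intermediate value theorem gives $u^{-1}(0)\cap X^{\rm reg}\ne\emptyset$. In particular $0\in\operatorname{Int}(u(X))$, so $0>\min_X u$.

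\emph{Step 2: find a point of $u^{-1}(0)$ where $du\neq 0$.} Since $\eta=J\nabla u$, the condition $du(x)\ne0$ at a regular point is equivalent to $\eta(x)\ne0$, which is equivalent to $x\notin\operatorname{Fix}(S^1)$. The main obstacle is that $0$ might be a critical value, so that $u^{-1}(0)\cap X^{\rm reg}$ could a priori lie inside the fixed locus. I would rule this out as follows.
\begin{itemize}
\item By Lemma~\ref{lem:constancy}, the critical values of $u$ are discrete.
\item The fixed locus $\{\eta=0\}$ is a proper analytic subset, so $\{\eta=0\}\cap X^{\rm reg}$ is nowhere dense in $X^{\rm reg}$.
\item Take $y\in u^{-1}(0)\cap X^{\rm reg}$, and suppose every point of $u^{-1}(0)\cap X^{\rm reg}$ near $y$ were fixed.
\item Near $y$, the open sets $\{u>0\}$ and $\{u<0\}$ both accumulate at $y$, since $u$ changes sign on the connected manifold $X^{\rm reg}$. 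Choose $x_\pm$ with $\pm u(x_\pm)>0$ joined by a path in $X^{\rm reg}$ avoiding the analytic set $\{\eta=0\}$. This is possible because the complement of a proper analytic subset of a connected complex manifold is connected.
\item Along this path $u$ crosses $0$ at a point where $\eta\ne0$.
\end{itemize}
This yields $x\in X^{\rm reg}$ with $u(x)=0$ and $\eta(x)\ne0$, hence $du(x)\ne0$.

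\emph{Step 3: finite isotropy.} The $S^1$-action generated by $\eta$ is effective and holomorphic. Since $\eta(x)\ne0$, the orbit $S^1\cdot x$ is one-dimensional. Its stabilizer is therefore a proper closed subgroup of $S^1$, hence finite. This completes the plan.
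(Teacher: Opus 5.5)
Your proposal is correct and follows the paper's proof: the centering identity gives both signs of $u$, and a path in the connected set $X^{\rm reg}\setminus\{\eta=0\}$ (the paper's $X_{\max}(\eta)$) joining a positive point to a negative point crosses $\{u=0\}$ at a non-fixed point, whose stabilizer is a proper closed subgroup of $S^1$ and hence finite. The local discussion around $y$ in Step~2 is unnecessary and can be dropped, since the global path argument alone suffices; in particular, your claim that both sign sets accumulate at an arbitrary zero $y$ does not follow from $u$ changing sign globally.
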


\begin{proof}
Lemma~\ref{lem:weighted-centering} then implies that $u$ takes both positive
and negative values.  Since
$X^{\rm reg}\setminus\operatorname{Fix}(S^1)$ is dense, both sign sets meet
this locus.  It is also path-connected: it is the set $X_{\max}(\eta)$
appearing in the proof of Proposition~\ref{prop:connected-levels}.  A path joining the
two sign points therefore meets $\{u=0\}$ at a point $x$ with
$\eta(x)\ne0$. 
The stabilizer of $x$ is a proper closed subgroup of $S^1$, and is therefore
finite.
\end{proof}

\begin{lemma}\label{lem:zero-patch}
There are $\varepsilon>0$,   and a
number $v>0$ such that for regular $|r|<\varepsilon$,
\[
        A_r^{\,n-1}\ge v.
\]
\end{lemma}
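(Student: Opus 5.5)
The plan is to compute the volume of the reduced class $A_r$ by Duistermaat--Heckman, using the Kähler current $\omega_r$ of Proposition~\ref{prop:regular-reduction}.

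Since $\omega_r$ has continuous local potentials and lies in the class $A_r$, we have
\[
A_r^{\,n-1}=\int_{X_r}\omega_r^{\,n-1}\ge\int_{(U_r\cap X^{\rm reg})/\C^*}\omega_r^{\,n-1}.
\]
Here the non-pluripolar Monge--Ampère mass of a current with continuous potentials computes the intersection number on the compact normal space $X_r$. It therefore suffices to bound from below the volume of the smooth reduced metric on a fixed open patch.

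Take the point $x$ from Lemma~\ref{lem:zero-point}. It lies in $X^{\rm reg}$, has finite isotropy, and satisfies $u(x)=0$ and $du(x)\ne0$. The $\mathbb S^1$-action is locally free near $x$, so the equivariant Darboux (slice) theorem for the smooth Kähler form $\omega$ near the orbit $\mathbb S^1\cdot x$ gives a neighborhood $\mathcal U\subset X^{\rm reg}$ of that orbit of the form
\[
\mathcal U\simeq \mathbb S^1\times_{H}\bigl((-\varepsilon,\varepsilon)\times B\bigr),
\]
with $H$ finite, $u$ the coordinate on $(-\varepsilon,\varepsilon)$, and $B$ a small $2(n-1)$-dimensional ball. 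For every $r\in(-\varepsilon,\varepsilon)$, the image of $u^{-1}(r)\cap\mathcal U$ in $X_r$ is an open set $\mathcal V_r$ contained in the smooth orbifold locus. By the standard description of symplectic reduction, Duistermaat--Heckman, $(\mathcal V_r,\omega_r)$ is symplectomorphic to $(B/H',\omega_{B}+r\,\beta)$, where $\omega_B$ is the reduced form at level $0$ and $\beta$ is the curvature form of the $\mathbb S^1$-bundle $u^{-1}(0)\cap\mathcal U\to\mathcal V_0$. In particular $\omega_r$ on $\mathcal V_r$ depends smoothly on $r$. Shrinking $\varepsilon$ and $B$ so that $\omega_B+r\beta\ge\frac12\omega_B$ for $|r|<\varepsilon$, we obtain
\[
\int_{\mathcal V_r}\omega_r^{\,n-1}\ge 2^{-(n-1)}|H'|^{-1}\int_B\omega_B^{\,n-1}=:v>0
\]
for all regular $|r|<\varepsilon$.

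The main obstacle is the first inequality. We need $\int_{X_r}\omega_r^{n-1}$ to equal $A_r^{n-1}$, and we need the restriction of $\omega_r$ to $\mathcal V_r$ to coincide with the smooth reduced form rather than with some other representative. Both should follow from the construction of $\omega_r$ in Theorem~\ref{thm:regvalues}, where $\omega_r$ is obtained by descending $\mathbb S^1$-invariant local potentials of $\omega$, and from the fact that $\mathcal V_r$ lies over $X^{\rm reg}$. The equality of total mass and intersection number for currents with continuous potentials on a normal projective variety would be cited via the Bedford--Taylor / non-pluripolar product theory on a resolution. The remaining inputs, namely the local-freeness, the slice, and the uniformity of $B$ in $r$, are routine since everything happens on the smooth part away from fixed points.
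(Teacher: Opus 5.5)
Your proposal is correct and takes essentially the same route as the paper. You fix a patch around the locally free orbit through the point of Lemma~\ref{lem:zero-point}, use that the reduced forms $\omega_r$ vary smoothly in $r$ there (your Duistermaat--Heckman formula $\omega_B+r\beta$ makes explicit what the paper simply asserts), and convert to $A_r^{\,n-1}$ via the Bedford--Taylor mass of the continuous-potential current $\omega_r$ on a resolution. The only discrepancy is normalization: in the paper $\omega_r$ represents $2\pi c_1(A_r)$, so your bound picks up a harmless factor $(2\pi)^{-(n-1)}$.
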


\begin{proof}
Let $x$ be a point given by 
Lemma~\ref{lem:zero-point}.  After shrinking an invariant neighborhood of
the compact orbit $S^1x$, every nearby level is locally free, and its
orbifold quotient is identified with a fixed transverse slice.  Choose a
relatively compact domain with smooth boundary in this slice and let
$K_r$ be its closed image in
$ X_r$.  The reduced forms $\omega_r$ depend smoothly
on $r$ on this family of compact orbifold domains.  Since $K_0$ has
nonempty interior,
\[
        \int_{K_0}\omega_0^{\,n-1}>0.
\]
Continuity gives the asserted lower bound for $|r|<\epsilon$.
The  current $\omega_r$ represents $2\pi c_1(A_r)$ and has
continuous local potentials.  By  pulling back to a resolution, we know that its Bedford--Taylor product is defined
and gives the top intersection of $A_r$.
It follows that
\[
 A_r^{\,n-1}
 \ge(2\pi)^{-(n-1)}\int_{K_r}\omega_r^{\,n-1}.
\]
Hence the desired result
follows.
\end{proof}

The critical values are discrete by Lemma~\ref{lem:constancy}.  They are also
closed, since $u$ is proper on the closed fixed locus.  After decreasing
$\varepsilon$, there is no critical value in $(0,\varepsilon)$.  The
reductions $X_r$ for $r\in (0,\epsilon)$ are naturally identified.

\begin{prop}
\label{prop:initial-complement}
On the first positive reduction there are an effective big
$\Q$-divisor $\Delta_r$ and a fixed number $\delta>0$ such that
\[
 K_{X_r}+D_r+\Delta_r\sim_{\Q}0,
 \qquad
 (X_r,D_r+\Delta_r)\ \text{is }\delta\text{-lc}.
\]
\end{prop}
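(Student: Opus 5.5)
The plan is to obtain the complement on the first positive reduction $X_r$, $r\in(0,\varepsilon)$, from the Fano-type structure at $r$ near $0$ (Proposition~\ref{prop-fano type}). We then choose $\Delta_r$ as a small multiple of a general member of a fixed ample-plus-effective linear system, so that the klt discrepancies are controlled uniformly. Since the reductions for $r\in(0,\varepsilon)$ are all naturally identified, the pair $(X_r,D_r)$ and the classes $L_r$, $K_{X_r}+D_r$ do not depend on $r$ in this range; only the ample class $A_r=-(K_{X_r}+D_r)+rL_r$ moves linearly.

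First, by Propositions~\ref{prop:regular-reduction} and~\ref{prop-fano type}, $(X_r,D_r)$ is klt and $-(K_{X_r}+D_r)$ is nef and big. Nefness comes from letting $r\searrow0$ in the ample classes $A_r$. Bigness comes from Lemma~\ref{lem:zero-patch}: the volume $A_r^{n-1}\ge v$ is uniform, so the limit class $-(K_{X_r}+D_r)=\lim_{r\to0}A_r$ has positive top self-intersection. On the normal projective variety $X_r$ (Theorem~\ref{thm:regvalues}), this lets us write, via Kodaira's lemma, $-(K_{X_r}+D_r)\sim_{\Q} H+F$ with $H$ an ample $\Q$-divisor and $F\ge0$.

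Next, choose $k$ so that $kH$ is very ample and $k(K_{X_r}+D_r)$ is Cartier; Cartier-ness uses Lemma~\ref{lem-q-cartier} and the fact that $D_r$ has standard coefficients $1-1/e_B$. Set $\Delta_r:=tF+(1-t)\frac1k H'+ \frac{t}{k}H''$ with $H',H''\in|kH|$ general and $t>0$ small. Then $\Delta_r\sim_{\Q}-(K_{X_r}+D_r)$, $\Delta_r$ is effective, and it is big because it contains a positive multiple of an ample divisor. For general members, Bertini gives that adding $\frac{1-t}{k}H'$ with small coefficient does not lower the minimal log discrepancy. Since klt is an open condition, adding $tF$ with $t\ll1$ keeps the pair klt. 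To keep the coefficient of $H'$ bounded away from $1$, I would instead take $\frac1{k}$ times a general member of $|k(1-t)H|$ after clearing denominators.

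Finally, $\delta$-lc for a fixed $\delta>0$ follows because a klt pair on a fixed projective variety has positive minimal log discrepancy, computed on a log resolution. Fix one log resolution of $(X_r,D_r+F)$. General members of the basepoint-free system then meet it transversally, and the resulting minimal log discrepancy is some $\delta>0$. Since $X_r$ is a single fixed variety for $r\in(0,\varepsilon)$, this $\delta$ is independent of $r$.

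I expect the main obstacle to be bigness of $-(K_{X_r}+D_r)$, that is, justifying that the uniform volume bound passes to the limit class on the fixed variety. The plan there is continuity of intersection numbers in $r$: $A_r^{n-1}$ is a polynomial in $r$. One caveat is that Lemma~\ref{lem:zero-patch} bounds $A_r^{\,n-1}$, which is the top power on the $(n-1)$-dimensional $X_r$, so this is exactly what is needed, combined with nefness.
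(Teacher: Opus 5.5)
Your nefness and bigness step matches the paper's. The construction of $\Delta_r$, however, fails as written. With $H',H''\in|kH|$ you get $\Delta_r\sim_{\Q}tF+(1-t)H+tH=H+tF$, whereas $-(K_{X_r}+D_r)\sim_{\Q}H+F$. Hence $K_{X_r}+D_r+\Delta_r\sim_{\Q}-(1-t)F$, which is not $\Q$-linearly trivial unless $F=0$. Switching to a general member of $|k(1-t)H|$ does not change the class, so that does not help.

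A telltale sign is that you never use nefness of $-(K_{X_r}+D_r)$ after Kodaira's lemma. For a merely big class you cannot shrink the coefficient of $F$, and at $t=1$ the pair $(X_r,D_r+F)$ need not be klt. Nefness is exactly what permits the shrinking: for $t\in(0,1]$,
\[
-(K_{X_r}+D_r)\sim_{\Q}tF+A_t,\qquad A_t:=(1-t)\bigl(-(K_{X_r}+D_r)\bigr)+tH,
\]
and $A_t$ is ample, being nef plus ample. The repaired construction is:
\begin{enumerate}
\item take $t\ll1$ so that $(X_r,D_r+tF)$ is klt;
\item take $k\ge2$ with $kA_t$ very ample and $G\in|kA_t|$ general;
\item set $\Delta_r:=tF+\frac1kG$.
\end{enumerate}
Then $\Delta_r\sim_{\Q}-(K_{X_r}+D_r)$, Bertini keeps $(X_r,D_r+\Delta_r)$ klt, and $\Delta_r$ is big because $A_t$ is ample. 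The constant $\delta>0$ comes from positivity of the minimal log discrepancy of a klt pair.

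With this repair your argument is a valid alternative to the paper's. The paper applies the klt base-point-free theorem directly to the nef and big class $-(K_{X_r}+D_r)$. This is allowed since $a\bigl(-(K_{X_r}+D_r)\bigr)-(K_{X_r}+D_r)$ is nef and big. It then takes $\Delta_r=\frac1mG$ for a general $G\in|-m(K_{X_r}+D_r)|$ and gets klt by Bertini. It obtains $\delta=1/N$ from a Cartier index $N$ of $K_{X_r}+D_r+\Delta_r$, since all log discrepancies then lie in $N^{-1}\mathbb Z_{>0}$.

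Your Kodaira-lemma route avoids the base-point-free theorem, at the cost of the extra small-$t$ perturbation. The Cartier-index observation is a cheaper way to produce $\delta$ than fixing a log resolution.
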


\begin{proof}We write
\[
        (X_r,D_r,L_r)=(X_+,D,L),\qquad
        H:=-(K_{X_+}+D).
\]
On the fixed projective variety $X_+$, Proposition \ref{prop:regular-reduction} implies
\[
        A_r=-(K_{X_r}+D_r)+rL_r=H+rL
\]
is ample.  Letting $r\downarrow0$ proves that $H$ is nef.  By
Lemma~\ref{lem:zero-patch} and continuity of top intersection,
\[
        H^{n-1}
        =\lim_{r\downarrow0}A_r^{n-1}\ge v>0.
\]
Therefore the nef divisor $H$ is big.

The pair $(X_+,D)$ is klt.    By the klt base-point-free theorem,
after replacing $m$ by a sufficiently divisible multiple,
$|mH|$ is base-point-free.  Let
$G\in|mH|$ be general and set
\[
        \Delta:=\frac1mG.
\]
Then $\Delta$ is effective and big, and
\[
        K_{X_+}+D+\Delta\sim_{\Q}0.
\]
For a general $G$, Bertini theorem implies that
$(X_+,D+\Delta)$ is klt.  Choose $N>0$ such that
$N(K_{X_+}+D+\Delta)$ is Cartier.  Its log discrepancies belong to
$N^{-1}\mathbb Z_{>0}$, so the pair is $N^{-1}$-lc.    
\end{proof}

\subsubsection{Fixed-index boundaries across positive walls}\label{sec:complements}

\begin{lemma}[Fixed-index boundaries in every positive chamber]
\label{lem:complements}
There exist an integer $m>0$ and a number $\delta>0$, independent of the
positive regular value $r$, and an effective big $\Q$-divisor
$\Delta_r$ on $X_r$ such that
\begin{equation*}
\begin{gathered}
        (X_r,D_r+\Delta_r)\ \text{is }\delta\text{-lc}, \qquad 
        m(K_{X_r}+D_r+\Delta_r)\sim0 .
\end{gathered}
\end{equation*}
Moreover, the coefficients of $D_r+\Delta_r$ belong to one finite set
independent of $r$.
\end{lemma}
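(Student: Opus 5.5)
The plan is to propagate the complement of Proposition~\ref{prop:initial-complement} across the positive walls in increasing order, using the graph wall identity \eqref{eq:graph-wall} together with the effectivity $E_c\ge0$ from Lemma~\ref{lem:wall-effectivity}. Fix the first positive chamber $(0,\varepsilon)$ and the $\Q$-complement $\Delta_{r}$ given there. Replace $\Delta_r$ by $\frac1m G$ with $G\in|m(-(K_{X_r}+D_r))|$ general and $m$ fixed once and for all, so that $m(K_{X_r}+D_r+\Delta_r)\sim0$ and all coefficients of $\Delta_r$ lie in $\frac1m\mathbb Z$. The coefficients of $D_r$ are of the form $1-e_B^{-1}$. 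We will show (in the step below) that $e_B$ divides a fixed integer, so these coefficients also lie in a finite set; after enlarging $m$ we may assume $m D_r$ is integral.

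\textbf{Inductive step.} Let $c>0$ be a critical value and suppose $(X_-,D_-+\Delta_-)$ is $\delta$-lc with $m(K_{X_-}+D_-+\Delta_-)\sim0$. Define $\Theta_-:=D_-+\Delta_-$ and pull back to $W$:
\[
p_-^*(K_{X_-}+\Theta_-)=0 .
\]
By \eqref{eq:graph-wall},
\[
p_-^*(K_{X_-}+D_-)-p_+^*(K_{X_+}+D_+)=c\,(p_-^*L_--p_+^*L_+)=cE_c .
\]
Hence
\[
p_+^*(K_{X_+}+D_+)+p_-^*\Delta_-+cE_c=0 .
\]
Set $\Delta_+:=p_{+*}(p_-^*\Delta_-+cE_c)$. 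This divisor is effective because both summands are effective (here $c>0$ and $E_c\ge0$ are essential), and $K_{X_+}+D_++\Delta_+\sim_{\Q}0$ pulls back to the crepant identity above. Thus $(X_+,D_++\Delta_+)$ is crepant-birational to $(X_-,D_-+\Delta_-)$ on the common model $W$. Since log discrepancies of every divisor over $X_\pm$ are computed on a common resolution and agree for crepant pairs, the pair $(X_+,D_++\Delta_+)$ is again $\delta$-lc with the same $\delta$, provided its boundary is effective, which we have arranged.

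Bigness of $\Delta_+$ follows because $\Delta_+\sim_\Q -(K_{X_+}+D_+)=A_{c+\varepsilon}-(c+\varepsilon)L_+$. The class $A$ is ample, and $-(K_{X_+}+D_+)$ is big: the volume of $-(K_+D)$ is a birational invariant up to the effective correction above, and it dominates $p_{+*}p_-^*\Delta_-$, which is big. The linear equivalence $m(K_{X_+}+D_++\Delta_+)\sim0$ follows since the pullback to $W$ of the rational section trivializing $m(K_{X_-}+\Theta_-)$ pushes forward to a section on $X_+$, using that $X_\pm$ are normal and the two pairs agree off codimension two via $\pi_\pm$. Here $mcE_c$ must be integral; this is arranged by taking $m$ divisible by the Cartier index of $L_\pm$ and by the denominators of the critical values. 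This is the subtle point: we need these to be bounded independently of $c$.

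\textbf{Main obstacle: uniformity of $m$ and of the coefficient set.} The crepant transfer preserves $\delta$ automatically, but the denominators of $c$, $E_c$, and the $e_B$ could a priori grow with the wall. My first attempt is to avoid fixing them in advance: run the induction with $\Q$-complements only, keeping the $\delta$-lc property and the fact that the coefficients of $D_r$ lie in the hyperstandard set $\{1-1/e\}$, which satisfies DCC. Then invoke Hacon--Xu boundedness \cite{HX15} of $\delta$-lc log Calabi--Yau pairs of Fano type with DCC coefficients to get a bounded family. This yields a bound on the $e_B$, hence a finite coefficient set for $D_r$. It then gives a uniform $m$, via Birkar-type boundedness of complements for bounded families, so that one may replace $\Delta_r$ by an $m$-complement $\Delta'_r$ with coefficients in $\frac1m\mathbb Z$. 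The circularity (bounded stabilizers being needed to bound the denominators of $c$) is broken because the Cartier index of $L_\pm$ is controlled by the stabilizer orders, which are in turn read off from the coefficients of $D_r$.
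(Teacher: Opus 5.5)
\textbf{There is a genuine gap.} Your inductive construction $\Delta_+:=p_{+*}(p_-^*\Delta_-+cE_c)$ is the right one, and effectivity, crepancy (hence $\delta$-lc with the same $\delta$) and bigness go through as you say. The gap is in the actual content of the lemma: the uniform index $m$ and the finite coefficient set.

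You claim this requires $mcE_c$ to be integral, hence uniform bounds on the denominators of the critical values and on the Cartier indices of $L_\pm$. Your fallback does not deliver these, for three reasons.
\begin{enumerate}
\item The coefficient hypothesis in \cite{HX15} concerns the whole boundary $D_r+\Delta_r$. Your argument gives no control over the coefficients of $\Delta_r$.
\item The coefficients $1-e_B^{-1}$ of $D_r$ only record divisorial isotropy. Even a bound on the $e_B$ says nothing about finite stabilizers along loci of codimension $\ge2$ in $X_r$, so it does not bound the Cartier index of $L_\pm$. In the paper, that bound needs the normalized-volume argument of Lemma~\ref{lem:slice-quotient-volume}, which itself depends on the present lemma.
\item The standard bounded-complements theorem produces only lc complements. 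Replacing $\Delta_r$ by such a complement gives up the $\delta$-lc property.
\end{enumerate}
Also, $X_-\dashrightarrow X_+$ need not be an isomorphism in codimension one, since $\pi_\pm$ may contract divisors; fortunately you do not need this.

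\textbf{The missing idea.} No integrality of $mcE_c$ is needed once you track actual divisors instead of linear-equivalence classes.
\begin{itemize}
\item Fix one rational top form $\kappa$ on the common function field and put $K_{X_r}:=\operatorname{div}_{X_r}(\kappa)$ on every model.
\item Fix $\varphi$ with $m(K_{X_-}+D_-+\Delta_-)=\operatorname{div}_{X_-}(\varphi)$ on the first chamber.
\item Your identity $p_-^*(K_{X_-}+\Theta_-)=0$ should read $p_-^*(K_{X_-}+\Theta_-)=\frac1m\operatorname{div}_W(\varphi)$.
\item Combined with \eqref{eq:graph-wall}, this gives $m\,p_+^*(K_{X_+}+D_++\Delta_+)=\operatorname{div}_W(\varphi)$.
\item Pushing forward by $p_+$ yields $m(K_{X_+}+D_++\Delta_+)=\operatorname{div}_{X_+}(\varphi)$ with the same $m$. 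Equivalently, $\Delta_+=\frac1m\operatorname{div}_{X_+}(\varphi)-(K_{X_+}+D_+)$, which is how the paper defines it.
\end{itemize}
The individually non-integral contributions of $cE_c$, $D_+$ and $p_-^*\Delta_-$ cancel in this sum. Consequently $m(D_r+\Delta_r)=\operatorname{div}_{X_r}(\varphi)-m\operatorname{div}_{X_r}(\kappa)$ is integral. Effectivity and $\delta$-lc then place every coefficient of $D_r+\Delta_r$ in $m^{-1}\mathbb Z\cap[0,1-\delta]$, a finite set independent of $r$, and induction over the walls finishes the proof.

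No bound on the $e_B$, on critical-value denominators, or on the index of $L_\pm$ enters here. In the paper, the bounds on stabilizers and on the index of $L_r$ are derived afterwards, from this lemma.
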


\begin{proof}
Fix a nonzero rational top form
$\kappa$ on the common function field  and, on every model, use the canonical divisor
\[
        K_{X_r}:=\operatorname{div}_{X_r}(\kappa).
\]
Start with the complement on the first positive reduction from
Proposition~\ref{prop:initial-complement}.  Choose $m>0$ sufficiently
divisible and $\varphi\in K^\times$ so that
\[
        m(K_{X_-}+D_-+\Delta_-)
        =\operatorname{div}_{X_-}(\varphi).
\]

Let $c>0$ be such a wall and suppose that the complement has already
been constructed on the chamber below it.  On the chamber above it,
define
\[
        \Delta_+:=\frac1m\operatorname{div}_{X_+}(\varphi)
                   -(K_{X_+}+D_+).
\]
This divisor is $\Q$-Cartier because $K_{X_+}+D_+$ is $\Q$-Cartier by
Proposition~\ref{prop:regular-reduction}.
On the normalized graph, put
$E_c=p_-^*L_--p_+^*L_+$.  The wall identity
\eqref{eq:graph-wall} gives
\[
 p_+^*(K_{X_+}+D_+)
 =
 p_-^*(K_{X_-}+D_-)-cE_c.
\]
The pullbacks of the principal divisor of $\varphi$ agree.  Therefore
\begin{equation}\label{eq:complement-transport}
        p_+^*\Delta_+
        =
        p_-^*\Delta_-+cE_c.
\end{equation}
Pushing \eqref{eq:complement-transport} forward by
$p_+$ proves $\Delta_+\ge0$.  Moreover $p_-^*\Delta_-$ is big and adding
$cE_c\ge0$ preserves bigness, so $p_+^*\Delta_+$ is big.  Bigness is
invariant under birational pullback, and therefore $\Delta_+$ is big.

Since the same rational function $\varphi$ is used on both models,
\begin{equation}\label{eq:complement-crepant}
 p_+^*(K_{X_+}+D_++\Delta_+)
 =\frac1m\operatorname{div}_W(\varphi)
 =p_-^*(K_{X_-}+D_-+\Delta_-).
\end{equation}
We obtain all their
log discrepancies agree by \cite[Chapter 2, Lemma 2.30]{KM98}
and hence the initial value of $\delta$ works in every
positive chamber.  By construction,
\[
        m(K_{X_+}+D_++\Delta_+)
        =\operatorname{div}_{X_+}(\varphi),
\]
so the complement index remains equal to $m$.

Finally set $B_r=D_r+\Delta_r$.  The fixed canonical divisors
$K_{X_r}=\operatorname{div}_{X_r}(\kappa)$ have integral coefficients.  The
equality $m(K_{X_r}+B_r)=\operatorname{div}_{X_r}(\varphi)$ then shows that
every coefficient of $B_r$ lies in $m^{-1}\mathbb Z$.  Effectivity and
$\delta$-log canonicity give
\[
        0\le\operatorname{coeff}(B_r)\le1-\delta.
\]
Thus all coefficients lie in the finite set
\[
        m^{-1}\mathbb Z\cap[0,1-\delta],
\]
independently of $r$.
\end{proof}

\subsubsection{Bounded reductions and a uniform volume gap}

Sun--Zhang apply Birkar's boundedness
\cite[Corollary~1.4]{Birkar2021} to bound the underlying varieties $X_r$ obtained from K\"ahler reduction.  Here the
fixed finite rational coefficient set allows Hacon--Xu's Theorem~1.3 to bound
the pairs themselves.  

\begin{lemma}[Uniform reduction-volume gap]
\label{lem:bounded-family-volume-gap}
There is a number $\nu>0$ such that
\[
        \widehat{\Vol}(x,X_r,D_r)\ge\nu
\]
for every positive regular value $r$ and every closed point $x\in X_r$.
\end{lemma}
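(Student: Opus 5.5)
The plan is to deduce the volume gap from boundedness of the family of pairs $(X_r,B_r)$, $B_r=D_r+\Delta_r$, together with lower semicontinuity of the normalized volume in bounded families. Throughout we use the complements of Lemma~\ref{lem:complements}.

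\emph{Step 1: bounded family of pairs.} By Lemma~\ref{lem:complements}, each positive regular reduction carries a boundary $B_r$ with the following properties: $(X_r,B_r)$ is $\delta$-lc, $K_{X_r}+B_r\sim_{\Q}0$, the coefficients of $B_r$ lie in the fixed finite set $m^{-1}\mathbb Z\cap[0,1-\delta]$, and $\Delta_r$ is big. In particular $X_r$ is of Fano type. We invoke Hacon--Xu \cite[Theorem~1.3]{HX15} in the form of boundedness of log Calabi--Yau pairs that are $\delta$-lc, have finite coefficient set, and have big boundary (equivalently, of $\epsilon$-lc Fano-type pairs with finite coefficients, via Birkar's BAB when we only need $X_r$). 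This places all pairs $(X_r,B_r)$, and hence the pairs $(X_r,D_r)$, in a log bounded family. Concretely, there is a projective morphism $\mathcal X\to T$ of finite type, a divisor $\mathcal D$ on $\mathcal X$ with coefficients in the finite set $\{1-e^{-1}\}$, and points $t_r\in T$ such that $(X_r,D_r)\simeq(\mathcal X_{t_r},\mathcal D_{t_r})$. To obtain $D_r$ itself as a fiberwise restriction, we stratify $T$ and use that $D_r\le B_r$ with coefficients drawn from a finite set, so there are only finitely many possible supports and coefficient patterns.

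\emph{Step 2: uniform klt-ness and semicontinuity.} After replacing $T$ by a finite disjoint union of locally closed strata, we may assume that $\mathcal X\to T$ is flat with normal fibers and that $K_{\mathcal X/T}+\mathcal D$ is $\Q$-Cartier of bounded index, so each fiber pair is klt (they are, being equal to the klt pairs $(X_r,D_r)$). We then apply lower semicontinuity of $x\mapsto\widehat{\Vol}(x,\mathcal X_t,\mathcal D_t)$ in $\Q$-Gorenstein flat families, which holds for klt singularities by Blum--Liu (or Xu's constructibility result). On each stratum this function is constructible, so it takes finitely many values. Since the family is of finite type, the infimum over the closed points of the relevant fibers is a minimum of finitely many positive numbers, and we set $\nu$ to be this minimum. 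Positivity holds because normalized volumes of klt points are positive.

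\emph{Main obstacle.} The delicate step is Step 1, namely matching the hypotheses of \cite{HX15}. The difficulty is ensuring that the boundary is genuinely bounded and not just the underlying variety. This is exactly why Lemma~\ref{lem:complements} records the finite coefficient set and the bigness of $\Delta_r$. We first try to derive log boundedness of $(X_r,\operatorname{Supp}B_r)$ directly from Hacon--Xu. If that fails, the fallback is to bound $X_r$ via Birkar and then bound $\operatorname{Supp}D_r$ using a very ample $-kK_{X_r}$-type divisor of bounded degree: we need $D_r\cdot H^{n-2}$ to be bounded, which follows from $B_r\sim_{\Q}-K_{X_r}$ and boundedness of $(-K_{X_r})\cdot H^{n-2}$. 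A secondary point is the uniform $\Q$-Cartier index needed for semicontinuity; this is obtained after passing to finitely many strata where the index of $K+\mathcal D$ is constant.
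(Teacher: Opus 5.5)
Your Step~1 matches the paper: Hacon--Xu \cite[Theorem~1.3]{HX15} applied to the complements $(X_r,B_r)$ of Lemma~\ref{lem:complements}. Extracting $D_r$ as a fiberwise divisor is also harmless, since $1-e_B^{-1}\le 1-\delta$ bounds the divisorial stabilizer orders.

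The gap is in Step~2, where you assume that after stratifying, $K_{\mathcal X/T}+\mathcal D$ is $\Q$-Cartier of bounded index. Blum--Liu semicontinuity (and Xu's constructibility) need a $\Q$-Gorenstein family, and log boundedness of $(X_r,\operatorname{Supp}D_r)$ does not supply one. Stratification makes the index constant on strata only once you know finitely many indices occur. Nothing available at this point bounds the Cartier index of $K_{X_r}+D_r$ uniformly in $r$. The complement gives $m(K_{X_r}+D_r)\sim -m\Delta_r$, which only moves the problem to the index of $\Delta_r$. After a wall, $\Delta_+$ is defined through $K_{X_+}+D_+$ itself, so its index is the one you are trying to bound. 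In the paper, the uniform index bounds of Lemma~\ref{lem:uniform-character-index} are \emph{derived from} this volume gap via the finite-degree formula, so appealing to them here would be circular.

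The missing idea is to never put $(X_r,D_r)$ into a family. The pairs $(X_r,B_r)$ satisfy $m(K_{X_r}+B_r)\sim0$ for a fixed $m$, so the index is bounded by construction. The $\Q$-Gorenstein stratification is then available; the paper uses \cite[Proposition~2.4]{HX15} with $\epsilon=\delta/2$. Semicontinuity on finitely many pointed families gives $\widehat{\Vol}(x,X_r,B_r)\ge\nu$. Then compare boundaries: $\Delta_r=B_r-D_r\ge0$ is $\Q$-Cartier, so for every valuation $v$ centered at $x$ one has $A_{X_r,D_r}(v)=A_{X_r,B_r}(v)+v(\Delta_r)\ge A_{X_r,B_r}(v)$. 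Hence $\widehat{\Vol}(x,X_r,D_r)\ge\widehat{\Vol}(x,X_r,B_r)\ge\nu$. This monotonicity step replaces both your fiberwise extraction of $D_r$ and the unjustified index claim.
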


\begin{proof}
Put $B_r:=D_r+\Delta_r$.  By Lemma~\ref{lem:complements}, the
$(X_r,B_r)$ are projective klt pairs of fixed dimension, $B_r$ is big,
$K_{X_r}+B_r\sim_\Q0$, and the coefficients of $B_r$ belong to one fixed
finite set.  Hence \cite[Theorem~1.3]{HX15} shows that these pairs form a
bounded family.

Taking $\epsilon=\delta/2$, generic flatness,
\cite[Proposition~2.4]{HX15}, and Noetherian induction place them in
finitely many $\Q$-Gorenstein flat families of klt pairs.  Base-changing
each family by its total space and using the diagonal section gives pointed
families containing every $x\in(X_r,B_r)$.  By
\cite[Proposition~8 and Theorem~28]{BL21}, normalized volume is Zariski
lower semicontinuous on each parameter space which gives a uniform constant $\nu>0$ such that
\[
        \widehat{\Vol}(x,X_r,B_r)\ge\nu .
\]
Finally, since $B_r-D_r=\Delta_r\geq 0$,
\(
 A_{X_r,D_r}(v)
 =A_{X_r,B_r}(v)+v(\Delta_r)\ge A_{X_r,B_r}(v).
\)
Therefore
\[
 \widehat{\Vol}(x,X_r,D_r)
 \ge\widehat{\Vol}(x,X_r,B_r)\ge\nu .
\]
\end{proof}

\subsubsection{Uniform on stabilizers and Cartier index}\label{sec:normalized-volume}

Let $\nu>0$ be the constant in
Lemma~\ref{lem:bounded-family-volume-gap}.  We bound the stabilizer order
$|H|$ by comparing this uniform lower bound for
$\widehat{\Vol}(x,X_r,D_r)$ with the universal upper bound on a transverse
slice via the finite degree formula.

Let $p\in X^s(\eta,r)$ be a stable point, let $x\in X_r$ be its image, and
let
\[
        H:=\operatorname{Stab}_{\C^*}(p).
\]
After restricting over an affine neighborhood of $x$, Luna's slice theorem \cite{Lun73} gives a transverse slice $(S,s)$ and a
strongly \'etale chart
\[
        \C^*\times_H S\longrightarrow X^s(\eta,r).
\]
Since $X^s(\eta,r)$ is an
open subspace of the klt space $X$, preservation of klt singularities under
\'etale pullback along the two \'etale maps
$\C^*\times S\to\C^*\times_HS\to X^s(\eta,r)$ shows that
$\C^*\times S$ is klt; smooth-factor invariance then shows that $S$ is klt.
On the quotient, we have
\[
 (s\in(S,0))\xrightarrow{\ \pi\ }
 (\bar s\in(S/H,D_H))
 \xrightarrow[\text{\'etale}]{\ \tau\ }
 (x\in(X_r,D_r)).
\]
Let $\pi\colon S\to S/H$ be the quotient map, and characterize $D_H$ by
\[
K_S=\pi^*(K_{S/H}+D_H).
\]
The strongly \'etale Cartesian square and the ramification formula give
\[
D_H=\tau^*D_r.
\]

\begin{lemma}
\label{lem:slice-quotient-volume}
One has
\begin{equation}\label{eq:finite-quotient-exact}
 |H|\,\widehat{\Vol}(x,X_r,D_r)
 =\widehat{\Vol}(s,S)
 \le (n-1)^{n-1}.
\end{equation}
\end{lemma}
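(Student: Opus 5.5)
The plan is to prove the equality and the inequality in \eqref{eq:finite-quotient-exact} separately. Both sides are computed in the local setting fixed above: the germ $s\in S$, the finite quotient $\pi\colon S\to S/H$ with $K_S=\pi^*(K_{S/H}+D_H)$, and the étale map $\tau$ with $D_H=\tau^*D_r$.

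\emph{Step 1: étale invariance.} Normalized volume is an invariant of the analytic, or equivalently henselian, germ of the pair. Since $\tau$ is étale at $\bar s$ and $D_H=\tau^*D_r$, it induces an isomorphism of completed local rings carrying $D_r$ to $D_H$. Hence
\[
\widehat{\Vol}(x,X_r,D_r)=\widehat{\Vol}(\bar s,S/H,D_H).
\]
Log discrepancies and colengths are computed on the completion, so this step is formal.

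\emph{Step 2: finite degree formula.} I would invoke the finite degree formula of Xu--Zhuang (``uniqueness of the minimizer of normalized volume''). For a finite quasi-étale-in-the-log-sense morphism $\pi\colon(S,\Delta_S)\to(S/H,D_H)$ of klt germs with $K_S+\Delta_S=\pi^*(K_{S/H}+D_H)$ and $\pi$ Galois at $s$, it gives
\[
\widehat{\Vol}(s,S,\Delta_S)=\deg(\pi)\,\widehat{\Vol}(\bar s,S/H,D_H).
\]
Here $\Delta_S=0$. We also have $\deg\pi=|H|$, because the $H$-action on the slice is faithful; this was noted in the proof of Lemma~\ref{lem:critical-descent}, since the $\C^*$-action on $X$ is effective. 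The crepant relation is exactly the characterization of $D_H$. The hypotheses to check are that $S$ is klt, which was shown just above the statement, and that $(S/H,D_H)$ is klt, which follows from the crepant pullback. This step is the main obstacle. The formula is a deep result, so the real work is verifying that our analytic, étale-local situation matches its algebraic hypotheses. The alternative would be to prove the easier inequality $\widehat{\Vol}(s,S)\le |H|\,\widehat{\Vol}(\bar s)$, by pulling back valuations and ideals, together with the reverse inequality through $H$-invariant minimizers. If needed, I would fall back to algebraizing via Luna's slice, which is available because $X^s(\eta,r)$ is quasi-projective with an algebraic action.

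\emph{Step 3: the upper bound.} The slice $S$ is klt of dimension $n-1$, since the orbit direction has dimension one. Liu's theorem (Li--Liu, and Liu's ``volume of singularities'' bound) states $\widehat{\Vol}(s,S)\le\widehat{\Vol}(0,\C^{n-1})=(n-1)^{n-1}$ for any klt germ of dimension $n-1$. Combining Steps 1--3 gives \eqref{eq:finite-quotient-exact}.

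As a consequence, Lemma~\ref{lem:bounded-family-volume-gap} then yields $|H|\le (n-1)^{n-1}/\nu$.
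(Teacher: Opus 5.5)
Your proposal is correct and is essentially the paper's proof. The paper likewise identifies $\widehat{\Vol}(x,X_r,D_r)$ with $\widehat{\Vol}(\bar s,S/H,D_H)$ through the completed log germs (citing \cite[Theorem~5]{BL21} and \cite[Proposition~2.11]{dFEM11}), applies the Xu--Zhuang finite degree formula \cite[Theorem~1.3]{XZ21} to the log-crepant quotient $\pi$ of degree $|H|$ (using faithfulness of the $H$-action on the slice), and then uses the universal bound; the only differences are that the paper gets kltness of $(S/H,D_H)$ from $\tau$ being étale rather than from the crepant relation (both work), and it takes $\widehat{\Vol}(s,S)\le (n-1)^{n-1}$ from Liu--Xu \cite[Theorem~A.4]{LX19}.
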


\begin{proof}
Since $\pi$ is finite and $\tau$ is \'etale, both $S$ and $X_r$ have
dimension $n-1$. Since $H$ acts faithfully on the slice, a general point of $S$ has trivial stabilizer. Therefore the quotient
$\pi\colon S\to S/H$ has Galois degree $|H|$.   The source is klt by the
slice argument above, while the target is klt because $\tau$ is \'etale and
$(X_r,D_r)$ is klt.  The identity
$K_S=\pi^*(K_{S/H}+D_H)$ makes this morphism log crepant; hence
\cite[Theorem~1.3]{XZ21} gives
\[
 \widehat{\Vol}(s,S)
 =|H|\,\widehat{\Vol}(\bar s,S/H,D_H).
\]

Finally, $\tau$ and $D_H=\tau^*D_r$ identify the completed pointed log
germs at $\bar s$ and $x$, so
\[
 \widehat{\Vol}(\bar s,S/H,D_H)
 =\widehat{\Vol}(x,X_r,D_r)
\]
by \cite[Theorem~5]{BL21} and \cite[Proposition~2.11]{dFEM11}.  The
universal bound \cite[Theorem~A.4]{LX19} gives
$\widehat{\Vol}(s,S)\le(n-1)^{n-1}$, and the result follows.
\end{proof}

In the smooth setting of Sun--Zhang \cite[Section~3]{SunZhang}, the normalized volume of the underlying coarse variety is
considered without a boundary divisor, and discrepancy bounds are then used to
control the contribution from pseudoreflections. Here, instead, we retain the
boundary divisor \(D_r\) and apply the finite-degree formula directly, thereby
controlling the full stabilizer \(H\) in a single step.
\begin{lemma}
\label{lem:uniform-character-index}
There is an integer $M>0$ such that $ML_r$ is a Cartier divisor for
every positive regular value $r$.
\end{lemma}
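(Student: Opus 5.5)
The plan is to bound the stabilizer orders uniformly and then read off the Cartier index of $L_r$ locally on Luna slices. By Lemma~\ref{lem:bounded-family-volume-gap} and Lemma~\ref{lem:slice-quotient-volume}, every stable point $p\in X^s(\eta,r)$ with $r>0$ regular has stabilizer $H=\operatorname{Stab}_{\C^*}(p)$ satisfying
\[
|H|\le \frac{(n-1)^{n-1}}{\nu}.
\]
This bound is independent of $r$ and of $p$. Since $H\subset\C^*$ is finite, it is cyclic of order $|H|$. Set
\[
M:=\operatorname{lcm}\{1,2,\ldots,\lfloor (n-1)^{n-1}/\nu\rfloor\}.
\]
Then $M$ is divisible by every stabilizer order occurring on any positive chamber.

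Next I will show that $ML_r$ is Cartier near each point $x\in X_r$. Choose a stable point $p$ over $x$ with closed orbit, and use the Luna chart $\C^*\times_H S\to X^s(\eta,r)$, which is strongly \'etale, together with $S/H\to X_r$. The linearized bundle $\widetilde L_r^{\otimes M}$ restricted to the chart is trivialized by $e^{\otimes M}$. The stabilizer $H$ acts on the fiber over $p$ through the character $t\mapsto t^{-M}$, which is trivial on $H$ because $|H|$ divides $M$.

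By Kempf's descent criterion \cite[\S4, Proposition~4.2]{KKV89}, $\widetilde L_r^{\otimes M}$ therefore descends on a saturated neighborhood of the orbit to a line bundle. Strong \'etaleness lets me transport this to a line bundle on a neighborhood of $x$ in $X_r$. Uniqueness of descent identifies this local line bundle with $ML_r$ as defined via $L_r^{(m)}$: both pull back to $\widetilde L_r^{\otimes Mm}$ after a further power, and descent is unique. Hence $ML_r$ is locally free at $x$, and since $x$ is arbitrary, $ML_r$ is Cartier on all of $X_r$.

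The main obstacle I expect is the descent step, specifically that the criterion must be checked exactly at closed orbits of the good quotient. On the stable locus every orbit is closed and one-dimensional, so the stabilizers there are precisely the finite groups bounded above. I also need the inverse-standard linearization to restrict to a character of $H$ of the stated form, namely $t^{-1}$ raised to the power $M$. This holds by construction, since $\lambda_t(\rho_t^*e)=t^{-1}e$ for all $t$, in particular for $t\in H$ fixing $p$.

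A secondary point is that the identification of the local line bundle with $ML_r$ must not depend on $m$. That follows from the uniqueness of descent already noted when $L_r$ was defined.
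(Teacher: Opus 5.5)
Your proposal is correct and follows the paper's proof: bound $|H|\le (n-1)^{n-1}/\nu$ using Lemmas~\ref{lem:bounded-family-volume-gap} and~\ref{lem:slice-quotient-volume}, take $M$ to be the lcm of the possible stabilizer orders so that every stabilizer acts trivially on $\widetilde L_r^{\otimes M}$, descend via \cite{KKV89}, and identify the descent with $ML_r$ by uniqueness of descent. The only difference is that you localize through Luna charts, whereas the paper applies the descent theorem directly to the good quotient $U_r\to X_r$. The global application is slightly cleaner because it avoids having to transport a bundle from $S/H$ along the \'etale map $S/H\to X_r$.
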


\begin{proof}
Let $\nu>0$ be the constant in
Lemma~\ref{lem:bounded-family-volume-gap}, let $p$ be any stable point
over a positive regular reduction, let $x\in X_r$ be its image, and put
$H=\operatorname{Stab}_{\C^*}(p)$.  By
\eqref{eq:finite-quotient-exact},
\[
        |H|\,\nu
        \le |H|\,\widehat{\Vol}(x,X_r,D_r)
        \le (n-1)^{n-1}.
\]
Thus $|H|\le(n-1)^{n-1}/\nu$, and every stabilizer order is at most
\[
 B:=\max\left\{1,
      \left\lfloor\frac{(n-1)^{n-1}}{\nu}\right\rfloor\right\}.
\]
Set $M=\operatorname{lcm}\{1,\ldots,B\}$.

Every stabilizer acts trivially on the fiber of
$\widetilde L_r^{\otimes M}$.  By the descent theorem
\cite[\S4, Proposition~4.2, pp.~83--84]{KKV89}, this power descends to a line
bundle on $X_r$.  By the definition and uniqueness of $L_r$, its descent is
$ML_r$.
\end{proof}

\subsubsection{Bounded critical values and compactness}\label{sec:fixed-locus}

\begin{lemma}
\label{lem:isomorphic-chambers-type-I}
Let $c$ be an isolated critical value with both adjacent levels
nonempty.  If both adjacent modifications are isomorphisms, then $E_c$
has a nonzero positive coefficient.
\end{lemma}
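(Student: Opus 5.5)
The plan is to reduce Lemma~\ref{lem:isomorphic-chambers-type-I} to Lemma~\ref{lem:divisorial-character-jump}. The one genuinely new input needed is that, when $\pi_\pm$ are isomorphisms, the image $\varpi_c(Z)$ of some fixed component $Z\subset u^{-1}(c)$ is a prime divisor of $X_c$.

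First, the reduction. If $\pi_-$ and $\pi_+$ are both isomorphisms, then the normalized graph $W$ is identified with $X_c$ and $g=\mathrm{id}$. The divisor $E_c$ of \eqref{eq:oriented-character-valuation} then equals $g_*E_c$, viewed as a divisor on $X_c\simeq X_\pm$. Since $c$ is a critical value, there is at least one fixed component $Z$ with $u(Z)=c$. By Lemma~\ref{lem:constancy}, $Z$ is compact, and $Z\subset u^{-1}(c)$. Once we know that $Q:=\varpi_c(Z)$ is a prime divisor of $X_c$, Lemma~\ref{lem:divisorial-character-jump} gives $\operatorname{coeff}_Q(g_*E_c)>0$. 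This is exactly the nonzero positive coefficient claimed.

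Second, the dimension count for $\varpi_c(Z)$.
\begin{itemize}
\item By Proposition~\ref{prop:birational}(iii), $\pi_-$ maps $E^-=Z^-/\!/\C^*$ onto $Z$. Because $\pi_-$ is an isomorphism, each fiber of $E^-\to Z$ is a single point.
\item Hence for every $z\in Z$, the attracting set $Z^-_z=\{x:\lim_{\lambda\to\infty}\lambda\cdot x=z\}$ is a single $\C^*$-orbit. The same holds for the repelling set $Z^+_z$.
\item I would then pass to the algebraic local model of Lemma~\ref{lem:affine-wall-git}: $\widetilde V=\operatorname{Spec}A$ over an affine $V=\operatorname{Spec}R\ni\varpi_c(z)$. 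The affine quotients $X_\pm\times_{X_c}V=\operatorname{Proj}_RS_\pm$ are identified with $V$ itself.
\item From Proj of a graded $R$-algebra generated in degree one being isomorphic to $\operatorname{Spec}R$, I would deduce that $A_{\pm N}$ are invertible $R$-modules, say $A_N=f_+R$ and $A_{-N}=f_-R$, exactly as in the proof of Lemma~\ref{lem:divisorial-character-jump}.
\item Then $A$ is, up to a finite Veronese and finite stabilizer, $R[f_+,f_-]/(f_+f_--\gamma)$ with $\gamma\in R$.
\item The fixed locus in $\operatorname{Spec}A$ is $\{f_+=f_-=0\}$. Its image in $V$ is $\{\gamma=0\}$, which is a divisor as soon as $\gamma$ is a nonzero nonunit.
\item $\gamma\neq0$ because $A$ is a domain. $\gamma$ is a nonunit because $z$ is a fixed point, so $f_\pm(z)=0$.
\item By Krull's principal ideal theorem, every component of $\{\gamma=0\}$ has codimension one in $V$. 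So $\varpi_c(Z)$ contains a prime divisor $Q$ whose generic point lies in the image of the fixed locus, which is precisely the hypothesis of Lemma~\ref{lem:divisorial-character-jump}.
\end{itemize}

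The main obstacle is the step deducing that $A_{\pm N}$ are invertible $R$-modules from $\operatorname{Proj}_RS_\pm\simeq\operatorname{Spec}R$, over an $R$ that is not yet known to be a DVR. The argument in Lemma~\ref{lem:divisorial-character-jump} used the DVR property to make $I_+=h_-A_N$ principal. I would first try to localize at height-one primes containing $\gamma$, where $R$ is a DVR by normality of $X_c$. There, the argument of Lemma~\ref{lem:divisorial-character-jump} applies verbatim and shows that $\operatorname{ord}_Q(\gamma)>0$ for some height-one $Q$. This amounts to checking that the image of the fixed locus is not entirely of codimension at least two.

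If that fails, I would argue by contradiction. Suppose $\varpi_c(Z)$ has codimension at least two. Then $E_c$, being a $\Q$-Cartier divisor supported over $\varpi_c(Z)$, would vanish, so $L_-=L_+$ on $X_c$. The weight of the canonical fiber at $z$ computed in Lemma~\ref{lem:canonical-weight}, compared through the slice $\C^*\times_HS$ with the characters of $f_\pm$, should then contradict the nonvanishing of the weights $\pm N$ of $f_\pm$ at $z$.
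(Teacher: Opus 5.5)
Your plan follows the paper's route: localize on an affine $V=\Spec R$ around a point $q$ of the image of the fixed locus, show $A_N=f_+R$ and $A_{-N}=f_-R$, and use that $\gamma=f_+f_-\in R$ is a nonzero non-unit. There is a gap, however, at exactly the step you call the main obstacle, and neither of your workarounds closes it.

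\textbf{Why the workarounds fail.} Localizing at height-one primes containing $\gamma$ is circular. The element $\gamma$ only exists once $f_\pm$ have been produced on a neighborhood of $q$. Moreover, at an arbitrary height-one prime $P$, the argument of Lemma~\ref{lem:divisorial-character-jump} gives $\operatorname{ord}_P(\gamma_P)>0$ only if the generic point of $P$ is already known to lie in the image of the fixed locus. That is precisely what you are trying to prove. The weight-comparison contradiction, as written, is a hope rather than an argument.

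\textbf{The missing idea.} It is elementary and needs no DVR. Take $0\ne h_-\in A_{-N}$ and set $I_+=h_-A_N$. The Rees-algebra computation you already quote gives $\operatorname{Proj}_RS_+\simeq\operatorname{Bl}_{I_+}\Spec R$. The exceptional ideal $I_+\cO_{\operatorname{Bl}}$ of any blowup is invertible, so a blowup that is an isomorphism has invertible center. Hence $I_+$ is locally principal, and after shrinking $V$ around $q$ it is principal, say $I_+=(h_-f_+)$ with $f_+\in A_N$. Cancellation in the domain $A$ gives $A_N=f_+R$, and symmetrically $A_{-N}=f_-R$. Alternatively, pick $f_+\in A_N$ generating $\cO(1)$ over $q$; near $q$ one then has $\operatorname{Proj}S_+=D_+(f_+)$, and $R\simeq(S_+)_{(f_+)}$ forces $A_{kN}=f_+^kR$.

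\textbf{Finishing, and how your route compares.} Once $f_\pm$ exist, your detour through the dimension count and Lemma~\ref{lem:divisorial-character-jump} is unnecessary. Take $f_+$ as the common frame in \eqref{eq:oriented-character-valuation}. Its extension to $X_+$ is a frame, while on $X_-$ it equals $\gamma f_-^{-1}$. Hence $NE_c|_V=\operatorname{div}_V(\gamma)$. This divisor is effective, and it is nonzero because $\gamma$ is a nonzero non-unit of the normal domain $R$ (Krull's principal ideal theorem). This is the paper's proof.

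Your own route is also valid once invertibility is in place. Near $q$, the fixed locus is $V(f_+,f_-)$, and its image is $V(\gamma)$, which has pure codimension one. Lemma~\ref{lem:divisorial-character-jump} then applies to any component of $V(\gamma)$. This route simply recomputes the same $\gamma$ after localizing.
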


\begin{proof}
This follows from Lemmas~\ref{lem:affine-wall-git}
and~\ref{lem:divisorial-character-jump}; we use the same notation as
there.  Let \(q\) be a closed point in the image of the fixed locus \(Z\) under the quotient map.
Restricting to an affine neighborhood of \(q\), we have a normal
graded domain
\[
A=\bigoplus_{j\in\mathbb Z}A_j,\qquad
R=A_0,\qquad
S_\pm=\bigoplus_{k\geq 0}A_{\pm kN},
\]
where, after passing to a common Veronese, both \(S_+\) and \(S_-\) are
generated in degree one.  By hypothesis, the two relative Proj morphisms
are isomorphisms.

Choose \(0\neq h_-\in A_{-N}\).  The Rees-algebra calculation in the proof
of Lemma~\ref{lem:divisorial-character-jump} identifies
\(\operatorname{Proj}_R S_+\) with the blowup of the ideal \(h_-A_N\).
Since by the assumption, the corresponding relative Proj morphism is an isomorphism, this
ideal is invertible.  Similarly, \(h_+A_{-N}\) is invertible for any
\(0\neq h_+\in A_N\).  After shrinking the affine neighborhood of \(q\),
we may therefore assume that both ideals are principal.  It follows that
\[
A_N=f_+R,\qquad A_{-N}=f_-R
\]
for some \(f_+\in A_N\) and \(f_-\in A_{-N}\).

As in the proof of Lemma~\ref{lem:divisorial-character-jump}, the product
\[
    \gamma_Z:=f_+f_-
\]
is a nonzero non-unit element of \(R\) and hence gives a nontrivial positive
contribution to \(E_c\). Consequently, \(E_c\neq 0\), which proves the claim.
\end{proof}

Let $M$ be the integer in Lemma~\ref{lem:uniform-character-index}.

\begin{prop}
\label{prop:bounded-critical-values}
Every positive critical value satisfies
\[
        c\le 2(n-1)M.
\]
Therefore, the zero set of $\eta$ is compact.
\end{prop}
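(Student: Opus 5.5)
The plan is to bound a positive critical value $c$ by applying the relative length theorem for the klt pair $(X_-,D_-)$ to the wall-crossing contraction $\pi_-\colon X_-\to X_c$. We test the graph wall identity and the relative ampleness of $-L_-$ against a contracted rational curve, and use the uniform Cartier index $M$ from Lemma~\ref{lem:uniform-character-index}. Compactness of the zero set of $\eta$ then follows formally.

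\emph{Step 1: the case where $\pi_-$ is not an isomorphism.}
\begin{itemize}
\item As in the proof of Lemma~\ref{lem:wall-effectivity}, we have $K_{X_-}+D_--cL_-=\pi_-^*G_c$ and $-L_-$ is $\pi_-$-ample. Hence
\[
K_{X_-}+D_-\equiv_{\pi_-}cL_-
\]
is $\pi_-$-anti-ample, because $c>0$.
\item $(X_-,D_-)$ is klt by Proposition~\ref{prop:regular-reduction}, and $\dim X_-=n-1$. The cone and length theorem \cite{BCHM10} then gives, on any $(K_{X_-}+D_-)$-negative extremal ray of $\overline{NE}(X_-/X_c)$, a rational curve $C$ contracted by $\pi_-$ with
\[
0<-(K_{X_-}+D_-)\cdot C\le 2(n-1).
\]
\item Since $C$ is $\pi_-$-contracted, $-(K_{X_-}+D_-)\cdot C=-c\,L_-\cdot C$.
\item $ML_-$ is Cartier and $-L_-$ is $\pi_-$-ample, so $-ML_-\cdot C$ is a positive integer. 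Thus $-L_-\cdot C\ge 1/M$.
\item Combining these gives $c/M\le 2(n-1)$, i.e.\ $c\le 2(n-1)M$.
\end{itemize}

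\emph{Step 2: the case where $\pi_-$ is an isomorphism.} This is the step I expect to be the main obstacle. The argument above cannot simply be run on the $+$ side: there $L_+$ is $\pi_+$-ample, so $K_{X_+}+D_+$ is $\pi_+$-ample and the length theorem gives no bound.
\begin{itemize}
\item If both $\pi_\pm$ are isomorphisms, Lemma~\ref{lem:isomorphic-chambers-type-I} shows that $E_c=L_--L_+$ has a positive coefficient along a divisor $Q$ dominated by a fixed component $Z$.
\item Lemma~\ref{lem:divisorial-character-jump} gives the same conclusion whenever $Q=\varpi_c(Z)$ is a divisor.
\item I would first try to rule out the one-sided situation ($\pi_-$ an isomorphism, $\pi_+$ not), or reduce it to an extremal contraction on the $-$ side. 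The idea is to use that the stratum $Z^-$ then has codimension one, so its image is exactly such a $Q$.
\item Next I would produce a rational curve with bounded anticanonical degree inside $Q$. The candidate is the generic fibre of the $\mathbb P$-weighted projective bundle structure of $Z^-\to Z$: the orbits $\lambda\to\infty$ flowing into $Z$, quotiented by $\mathbb C^*$.
\item On such a curve one compares $K_{X_-}+D_-$ with $cL_-$ using the local graded ring $A=\bigoplus A_j$ of Lemma~\ref{lem:affine-wall-git}, where the weights are bounded via Lemma~\ref{lem:canonical-weight}.
\item If this fails, the fallback is to apply the length theorem to a $(K+D)$-negative extremal contraction on the log resolution $W$. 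Here one uses $E_c\ge0$, Lemma~\ref{lem:wall-effectivity}, and the fact that the curve meets $E_c$ positively.
\end{itemize}

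\emph{Step 3: compactness of the zero set.}
\begin{itemize}
\item By Lemma~\ref{lem:constancy}, critical values are discrete and $u$ is constant on each component of $\{\eta=0\}$.
\item By Step 1 (and Step 2), all critical values lie in $[\min_X u,\,2(n-1)M]$.
\item Hence $\{\eta=0\}\subseteq u^{-1}\bigl([\min_X u,2(n-1)M]\bigr)$, which is compact by properness of $u$. Since $\{\eta=0\}$ is closed, it is compact.
\end{itemize}
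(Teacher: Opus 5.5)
\emph{Verdict.} Your Steps 1 and 3 are the paper's argument: the relative length theorem for $(X_-,D_-)$ over $X_c$, the bound $-L_-\cdot C\ge 1/M$, and then properness of $u$. The proof is incomplete at Step 2, which covers the case where $\pi_-$ is an isomorphism. You give tentative strategies there rather than an argument, and none of them works as stated.

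\textbf{Why the one-sided case ($\pi_-$ an isomorphism, $\pi_+$ not) fails.}
\begin{itemize}
\item This case cannot be ruled out, and $Z^-$ need not have codimension one. If $\pi_-$ is an isomorphism, then $E^-=Z^-//\mathbb C^*\to Z$ is finite. That forces only $\dim Z^-=\dim Z+1$.
\item For an isolated fixed point $Z$, $E^-$ is a point and $\pi_+$ contracts the divisor $E^+$ to the point $Z$. One example is a $(K+D)$-positive weighted blow-up of a cyclic quotient point of $X_c$.
\item In that situation no divisor $Q\subset X_c$ is dominated by $Z$. So Lemma~\ref{lem:divisorial-character-jump} gives nothing.
\item Your candidate curve does not exist either. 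The fibres of $E^-\to Z$ are zero-dimensional, so there are no weighted projective curves on the $-$ side.
\item The fallback fails as well. Here $W=X_+$ and $K_{X_+}+D_+\equiv_{\pi_+}-cE_c\equiv_{\pi_+}cL_+$ is $\pi_+$-ample, as you observed yourself. Hence there is no $(K+D)$-negative extremal ray over $X_c$.
\end{itemize}

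\textbf{Why the case where both $\pi_\pm$ are isomorphisms is unfinished.} Here $g$ contracts no curves at all. You obtain $E_c\neq0$ from Lemma~\ref{lem:isomorphic-chambers-type-I}, but you never turn this into a bound on $c$.

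\textbf{The missing idea.} In both remaining cases no curve is needed; the bound comes from discrepancies together with integrality.
\begin{itemize}
\item By Lemmas~\ref{lem:uniform-character-index} and~\ref{lem:wall-effectivity}, $ME_c$ is an effective integral Cartier divisor. So every prime component $F$ of $E_c$ has $\operatorname{coeff}_F(E_c)\ge 1/M$.
\item \emph{One-sided case.} Since $-E_c$ is $\pi_+$-ample and $\pi_+$ contracts a curve, $E_c\neq0$; pick a component $F$. With $X_-=X_c$, the wall identity \eqref{eq:graph-wall} reads $K_{X_+}+D_+=\pi_+^*(K_{X_-}+D_-)-cE_c$. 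The log discrepancy of $F$ over the klt pair $(X_-,D_-)$ then satisfies $0<A(F;X_-,D_-)=1-\operatorname{coeff}_F(D_+)-c\operatorname{coeff}_F(E_c)\le 1-c/M$. Hence $c<M$.
\item \emph{Both isomorphisms.} Choose the same canonical divisor on the common model. Then \eqref{eq:graph-wall} becomes $D_--D_+=cE_c$. Boundary coefficients lie in $[0,1)$, so $c/M\le c\operatorname{coeff}_F(E_c)=\operatorname{coeff}_F(D_-)-\operatorname{coeff}_F(D_+)<1$. Again $c<M$.
\end{itemize}
Since $n\ge2$, both cases give $c<M\le 2(n-1)M$. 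Combined with your Steps 1 and 3, this completes the proof.
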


\begin{proof}
Fix a positive critical value $c$ and the notation
\[
 \pi_\pm\colon X_\pm\longrightarrow X_c,\qquad
 E_c=p_-^*L_--p_+^*L_+ .
\]
Since $ML_\pm$ are Cartier,
$ME_c$ is an integral Cartier divisor.  Lemma \ref{lem:wall-effectivity} makes it effective,
so every nonzero coefficient $a$ of $E_c$ satisfies $a\ge1/M$. 

First suppose that $\pi_-$ is not an isomorphism.  Over $X_c$ one has
\[
        K_{X_-}+D_-\equiv_{\pi_-}cL_- ,
\]
and $-L_-$ is $\pi_-$-ample.  Hence
$-(K_{X_-}+D_-)$ is $\pi_-$-ample.  The relative length theorem
\cite[Theorem~3.8.1]{BCHM10}, applied to the klt pair $(X_-,D_-)$ over
$X_c$, gives a contracted rational curve $C$ with
\[
        0<-(K_{X_-}+D_-)\cdot C\le2(n-1).
\]
 Since $ML_-$ is
Cartier and $-L_-$ is relatively ample,
\[
                   -ML_-\cdot C\in\mathbb Z_{>0},
 \qquad            -L_-\cdot C\ge\frac1M.
\]
Intersecting the critical descent identity with $C$ gives
\[
        c=\frac{-(K_{X_-}+D_-)\cdot C}{-L_-\cdot C}
        \le2(n-1)M .
\]

It remains to assume that $\pi_-$ is an isomorphism.  Identify
$X_-=X_c$ via $\pi_-$. Then we can take
normalized graph $W$ to be $X_+$
and therefore
\[
        E_c=\pi_+^*L_--L_+ .
\]
The proof of Lemma~\ref{lem:wall-effectivity} shows that $L_+$ is
$\pi_+$-ample, so $-E_c=L_+-\pi_+^*L_-$ is also $\pi_+$-ample.   If $\pi_+$ is not an isomorphism, then  $E_c\ne0$.  Choosing a prime component $F$ of $E_c$, we have
\[
        \operatorname{coeff}_F(E_c)\geq \frac{1}{M}.
\]
The wall identity \eqref{eq:graph-wall} becomes
\[
 K_{X_+}+D_+
 =\pi_+^*(K_{X_-}+D_-)-cE_c .
\]
Since $D_+$ is a boundary,
$\operatorname{coeff}_F(D_+)\ge0$.  Since $(X_-,D_-)$ is klt, the
log discrepancy $A_F:=A(F;X_-,D_-)$ is positive.  Using compatible
canonical divisors, the definition of log discrepancy and the wall identity
give
\[
\begin{aligned}
 A_F
 &=1+\operatorname{coeff}_F\!\left(
        K_{X_+}-\pi_+^*(K_{X_-}+D_-)\right)\\
 &=1-\operatorname{coeff}_F(D_+)-c\cdot\operatorname{coeff}_F(E_c).
\end{aligned}
\]
Since $\operatorname{coeff}_F(E_c)\geq \frac{1}{M}$, $A_F>0$, and $\operatorname{coeff}_F(D_+)\ge0$, it follows that
$c<M$.

Finally suppose that both modifications are isomorphisms.
Lemma~\ref{lem:isomorphic-chambers-type-I} gives $E_c\ne0$.  On
the common normal variety choose the same canonical divisor on both sides and then the wall identity becomes
\[
        D_--D_+=cE_c.
\]
While both boundary coefficients belong to
$[0,1)$.  Hence
\[
     c\cdot \frac{1}{M}   \leq c\cdot\operatorname{coeff}_F(E_c)
        =\operatorname{coeff}_F(D_-)
         -\operatorname{coeff}_F(D_+)<1,
\]
and $c<M$. 

Because $\eta=J\nabla u$, the set $Z(\eta)$ is the critical locus of $u$.
 Since $u$ is proper, the upper bound on the critical values implies $Z(\eta)$ is compact.
\end{proof}

\subsection{Construction of the Fano fibration}\label{sec:fano-construction}

Following the
strategy of \cite{SunZhang}, we first construct the morphism outside a compact set, extend it to a global proper map,
 globalize a fixed anticanonical embedding, and verify its algebraic structure using the $\mathbb C^*$-action.

\begin{lemma}[Stabilized quotient and its punctured cone]
\label{lem:stabilized-cone}
There exist a regular value $r_*$, a $\C^*$-invariant Zariski-open set
$U\subset X$, a normal projective klt pair $(M,D)$, a semi-ample
$\Q$-line bundle $L_0$ on $M$, and a proper surjective algebraic
equivariant map
\[
        \pi_U\colon U\longrightarrow Y'\setminus\{o\},
\]
where $Y'$ is a normal affine cone with vertex $o$.  The set $U$ and the
quotient $(M,D,L_0)$ are the stable set and polarized quotient at every
regular value $r\ge r_*$.
\end{lemma}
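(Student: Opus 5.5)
The plan is to take $r_*$ strictly larger than the largest critical value of $u$. By Proposition~\ref{prop:bounded-critical-values} every positive critical value is at most $2(n-1)M$. By Lemma~\ref{lem:constancy} the critical values form a discrete, closed set, so $r_*:=2(n-1)M+1$ is a regular value and no critical value lies in $[r_*,\infty)$. For every $r\ge r_*$ the stable sets $X^s(\eta,r)$ coincide. Indeed, the argument in the proof of Proposition~\ref{prop:birational} shows that $X^s(\eta,s)$ is constant while $s$ moves within a chamber containing no critical values. Set $U:=X^s(\eta,r_*)$. It is $\C^*$-invariant and, by the proof of Proposition~\ref{prop:locallyalgebraic}, Zariski open and quasi-projective with an algebraic $\C^*$-action. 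Since $Z(\eta)$ is compact and lies in $\{u<r_*\}$, we get $U\supseteq\{u\ge r_*\}$. More precisely, $U$ is the set of points whose $\lambda\to0$ limit (the flow of $-J\eta$, along which $u$ decreases) does not land in the fixed locus. Its complement is therefore the union of the compact fixed locus with the finitely many ascending strata $Z_i^{+}$ of the critical components, and hence is a closed analytic set.

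Next I take $(M,D):=(X_{r_*},D_{r_*})$, which is a normal projective klt pair by Theorem~\ref{thm:regvalues} and Proposition~\ref{prop:regular-reduction}, and $L_0:=L_{r_*}$. Since $ML_0$ is Cartier (Lemma~\ref{lem:uniform-character-index}), the quotient $\varpi\colon U\to M$ is a $\C^*$-bundle up to finite stabilizers, with $U$ equivariantly identified with the orbifold punctured total space of $-L_0$ or $L_0$, following the inverse-standard linearization convention. For semi-ampleness of $L_0$, I would use that $A_r=-(K_M+D)+rL_0$ is ample for every $r\ge r_*$. Dividing by $r$ and letting $r\to\infty$ shows $L_0$ is nef. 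The klt basepoint-free theorem then applies to $L_0-\tfrac1r A_r$, written as $L_0=K_M+D+(A_r+(r-1)L_0)$ adjusted suitably. Concretely, $aL_0-(K_M+D)=A_a$ is nef and big for large $a$, so Kawamata's basepoint-free theorem gives that $L_0$ is semi-ample.

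Then I set $Y':=\Spec\bigoplus_{k\ge0}H^0(M,kNL_0)$ for $N$ divisible by $M$, which is the affine cone over the contraction defined by $|kNL_0|$. Equivalently, $Y'=\Spec H^0(U,\cO_U)$ restricted to the nonnegative-weight part. This ring is finitely generated because $L_0$ is semi-ample, and it is normal because $M$ is normal. The map $\pi_U$ comes from the natural morphism from the total space of $L_0^{-1}$ over $M$, minus the zero section, to the cone, composed with $U\to$ (total space), which is finite-quotient equivariant. Concretely, weight-$k$ invariant functions on $U$ are sections of $kL_0$, and these define $\pi_U$. Its image avoids the vertex $o$ because $L_0$ is semi-ample, so some section is nonvanishing at each point. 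The map is proper because it is the composition of the proper contraction map on the total spaces with a finite map. It is surjective because every ray of the cone is hit.

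The main obstacle is matching signs and linearizations so that the cone is indeed affine over nonnegative weights and $\pi_U$ is proper. This requires checking that the weights of functions on $U$ growing toward $u\to\infty$ correspond to positive powers of $L_0$, which amounts to confirming that $L_0$ (rather than $-L_0$) is the nef direction. I would verify this with the asymptotic ampleness of $A_r/r$, together with the convention $\rho_t^*f=t^kf$ and the choice that $f e^{\otimes k}$ is invariant.
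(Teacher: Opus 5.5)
Your proposal is correct and follows the paper's argument. You take $r_*$ above the bound of Proposition~\ref{prop:bounded-critical-values}, get nefness of $L_0$ from $A_r/r$ and semi-ampleness from the klt basepoint-free theorem applied to a Cartier multiple, and obtain $\pi_U$ by mapping $U$ finitely onto the punctured total space of the character line bundle and then properly onto the punctured cone. Taking $Y'=\Spec\bigoplus_{k\ge0}H^0(M,kNL_0)$ directly is equivalent to the paper's detour through the semi-ample contraction $p\colon M\to M'$ with $L'$ very ample and projectively normal, since $p_*\cO_M=\cO_{M'}$.

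Two of your asides are inaccurate but never used. First, by \eqref{eq:real-subgroup-direction}, $-J\eta=\nabla u$ increases $u$, and every non-fixed point has its $\lambda\to0$ limit in the fixed locus. So $U$ is instead the set of non-fixed points whose orbit leaves every compact set as $\lambda\to\infty$. Its complement is the fixed locus together with the $\lambda\to\infty$ strata, which are the sets $Z_i^-$ of Proposition~\ref{prop:birational} at the various walls, not $Z_i^+$. Second, $\Spec$ of the full nonnegative-weight part of $\cO(U)$ is in general a finite cyclic cover of $Y'$, with $\mu_N$-quotient equal to $Y'$, rather than $Y'$ itself.
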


\begin{proof}
By Proposition~\ref{prop:bounded-critical-values}, the critical values are
bounded above; choose a regular $r_*$ larger than all of them.  Then the stable set and its quotient are independent of
$r\ge r_*$.  We denote them by
\[
        U=X^s(\eta,r),\qquad (M,D,L_0).
\]
By Theorem~\ref{thm:regvalues} and the final paragraph of its proof, $U$ is a
$\C^*$-invariant Zariski-open quasi-projective variety, $U\to M$ is an
algebraic good quotient, and $M$ is normal and projective.  The pair $(M,D)$
is klt and $-(K_M+D)+rL_0$ is ample by Proposition~\ref{prop:regular-reduction}.

Dividing by $r$ and letting $r\to\infty$ shows that $L_0$ is nef, while
$aL_0-(K_M+D)$ is ample for $a\gg0$.  Applied to a Cartier multiple of
$L_0$, the base-point-free theorem
\cite[Theorem~3.3]{KM98} makes $L_0$ semi-ample.
Choose $k>0$, divisible by the uniform index of
Lemma~\ref{lem:uniform-character-index}, such that $L:=kL_0$ is a
base-point-free Cartier divisor.  Let
\[
        p\colon M\longrightarrow M',\qquad p^*L'=L,
\]
be its semi-ample morphism with connected fibers.  After replacing $k$ by a
multiple, we may assume that $L'$ is very ample and projectively normal.  Set
\[
        Y':=\Spec\bigoplus_{j\ge0}H^0(M',jL')
\]
and denote its vertex by $o$.

We can view $U\to M$ as the punctured total space of the dual
of the character line bundle.  Near an orbit with finite stabilizer
$H\subset\C^*$, the quotient map is strongly \'etale-locally modeled by
\[
   \C^*\times_HS\longrightarrow S/H,
   \qquad (t,s)\sim(th^{-1},h\cdot s).
\]
By the choice of $k$, the order of $H$ divides $k$, and hence $h^k=1$ for
every $h\in H$.  Thus the formula
\[
   \C^*\times_HS\longrightarrow (S/H)\times\C^*,
   \qquad [t,s]\longmapsto(\bar s,t^k),
\]
is well defined.  These intrinsic fiber-power maps glue to a finite
surjective morphism $U\to(L^\vee)^\times$. 
There is also a natural morphism from $(L^\vee)^\times $ to $\bigl((L')^\vee\bigr)^\times$ induced by $p$. Finally, the
very ampleness and projective normality of $L'$ identify
$\bigl((L')^\vee\bigr)^\times$ with $Y'\setminus\{o\}$ by contraction of
the zero section.  The composite is the required proper surjective map
$\pi_U$.
\end{proof}

Put $A:=X\setminus U$. The affine property of $Y'$ and the equivariant property of $\pi_U$ and the properness of the function $u$ implies that $\pi_U$ extends uniquely to a proper surjective
$\mathbb T$-equivariant holomorphic map
\[
        \pi'\colon X\longrightarrow Y'.
\]
Moreover, $A$ is compact analytic and
$\pi'^{-1}(o)=A$.

\begin{lemma}[Equivariant anticanonical algebraization]
\label{lem:end-algebraicity}
There are $m>0$ and $N>0$ such that $-mK_X$ is Cartier, and finitely many
$\mathbb T$-eigensections of $\cO_X(-mK_X)$ define a
$\mathbb T$-equivariant closed analytic immersion
\[
        (\pi',F)\colon X\hookrightarrow Y'\times\mathbb P^N.
\]
Moreover,
\[
        \cO_X(-mK_X)\simeq F^*\cO_{\mathbb P^N}(1),
\]
$X$ is the analytification of a normal quasi-projective variety,
$\pi'$ is projective algebraic, and $-K_X$ is $\pi'$-ample.
\end{lemma}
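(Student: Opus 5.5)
The idea is to follow \cite[Section~4]{SunZhang}: first embed a compact neighborhood of the exceptional set $A=\pi'^{-1}(o)$, then spread the embedding outward using the $\C^*$-action and the algebraic structure on $U$ from Lemma~\ref{lem:stabilized-cone}.

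\emph{Step 1: a Cartier multiple and the sheaf of sections.} Since $X$ is klt and $A$ is compact, and $X=\C^*\cdot V$ for a relatively compact neighborhood $V$ of $A\cup u^{-1}([\min u,r_*])$, finitely many charts show that the Cartier index of $K_X$ is bounded. Fix $m_0$ with $-m_0K_X$ Cartier. Equivariance of the canonical linearization lets us work with $\mathbb T$-eigensections of $\cO_X(-mK_X)$, $m\in m_0\mathbb N$.

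\emph{Step 2: relative ampleness near $A$ and over $Y'\setminus\{o\}$.} Over $Y'\setminus\{o\}$, the map $\pi_U$ factors through $(L^\vee)^\times\to M$. The fibers of $\pi_U$ are fibers of $p\colon M\to M'$, lifted by finite covers. Along these fibers, $-(K_M+D)$ restricts to an ample class, because $-(K_M+D)+rL_0$ is ample and $L_0$ is trivial on $p$-fibers. Together with the canonical bundle formula $K_U=\varpi^*(K_M+D)$ from the proof of Lemma~\ref{lem:critical-descent}, this shows that $-K_X$ is $\pi_U$-ample. Near the compact set $A$, the metric $h^m$ on $-mK_X$ has curvature $m\omega$, which is a Kähler current bounded below by Proposition~\ref{prop:bddlocal}. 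We therefore apply the peak-section construction: Lemma~\ref{lem:peak}, Lemma~\ref{lem:separation}, and the Hörmander estimate Proposition~\ref{prop:hormander} with weight $f$, which give global $L^2$ sections. These produce finitely many global sections of $-mK_X$ that separate points and tangents on a compact neighborhood $K\supset A$ and have no common zero there. By Lemma~\ref{lem:decomposition}, we may replace them with their $\mathbb T$-eigencomponents without losing these properties, after adding finitely many components.

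\emph{Step 3: globalization by the action.} The sections in Step 2 are eigensections, and the map is built in the same way as Proposition~\ref{prop:equivariant}. So the map $F=[s_0:\dots:s_N]$ is $\mathbb T^{\C}$-equivariant for a linear action on $\mathbb P^N$. Since $\rho_\lambda$ for $|\lambda|\ge1$ moves every point into $K\cup U$ and preserves the base-point-free and immersion properties, $(\pi',F)$ is an immersion on $X$. Here the flow of $-J\eta$ increases $u$, and $\pi'$ already separates distinct $\C^*$-orbit fibers over $Y'\setminus\{o\}$. Injectivity also holds: two points with the same image lie in the same $\pi'$-fiber, and the fibers over $Y'\setminus\{o\}$ are compact and are handled by enlarging the section set using the $\pi_U$-ampleness from Step 2. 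Properness of $\pi'$ then makes $(\pi',F)$ a closed immersion with $F^*\cO(1)\simeq\cO_X(-mK_X)$. The image is a closed analytic subvariety of $Y'\times\mathbb P^N$ that is proper over the affine variety $Y'$. By relative GAGA (Chow's theorem over the projective compactification of $Y'$, using $\C^*$-invariance and a weight argument at the vertex), it is algebraic. Normality passes to the algebraic structure, $\pi'$ becomes the restriction of a projection and hence projective, and $-K_X$ is $\pi'$-ample by construction.

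\emph{Main obstacle.} I expect the hardest step to be producing enough global sections near the singular compact set $A$ and combining them with the fiberwise ampleness over $Y'\setminus\{o\}$ so that a single finite set of sections gives an immersion everywhere. The first approach would be a weighted Hörmander argument with weight $f$, using that the growth in \eqref{lem:canonical-growth} keeps the $L^2$ norms finite.
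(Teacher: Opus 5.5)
\emph{Near $A$ (Step 2).} Here you take a different but workable route from the paper. The paper does not use peak sections. It puts on $\cO_X(-m_0K_X)$ the continuous strictly positive metric coming from Theorem~\ref{thm:complexstructure} and smooths it by Richberg regularization. Grauert's criterion then gives $\pi'$-relative ampleness, and the relative embedding theorem over a small Stein neighbourhood $B'$ of $o$ embeds $\pi'^{-1}(B')$. Taking $\mathbb T$-weight components of $\pi'_*\mathcal L$ gives eigensections on $\pi'^{-1}(B)$, which are then extended to $X$ by their characters. Your H\"ormander/peak-section construction can replace this, and it produces global sections directly. It needs two adjustments:
\begin{itemize}
\item the usual lcm/Veronese bookkeeping to reach a single power, since Lemmas~\ref{lem:peak} and~\ref{lem:separation} only give point-dependent powers;
\item a $\mathbb T$-invariant choice of $K$, so that Lemma~\ref{lem:decomposition} applies.
\end{itemize}
The genuine gaps are in Step~3.

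\emph{Globalization.} Your finite family of sections is only known to be base-point-free, immersive and separating on the compact set $K$. Nothing makes it immersive on the noncompact set $U$. The $\pi_U$-ampleness only gives enough local sections of some power over small open subsets of $Y'\setminus\{o\}$. Enlarging the section set fibre by fibre over a noncompact base never produces a finite family. Also, $\rho_\lambda$ with $|\lambda|\ge1$ increases $u$ and pushes points away from $A$. Since $K\cup U=X$, the claim that it moves points into $K\cup U$ has no content.

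The missing observation is the contraction in the other direction. As $\lambda\to0$ (the direction decreasing $u$), $\pi'(\rho_\lambda x)=\lambda\cdot\pi'(x)\to o$. Because $\pi'$ is proper with $\pi'^{-1}(o)=A$, the neighbourhood $K$ contains $\pi'^{-1}(B)$ for some neighbourhood $B$ of $o$. Consequently:
\begin{itemize}
\item every $\C^*$-orbit meets $K$, so base-point-freeness and the immersion property propagate to all of $X$ by equivariance;
\item if $\pi'(x)=\pi'(y)$, a single $\lambda$ moves both points into $\pi'^{-1}(B)\subset K$, where they are separated, hence $F(x)\ne F(y)$.
\end{itemize}
This is exactly the paper's argument, and it makes your $\pi_U$-ampleness discussion unnecessary.

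\emph{Algebraicity.} Relative GAGA over the affine base $Y'$ fails without further input. For example, the graph of $y\mapsto[1:e^{y}]$ in $\C\times\mathbb P^1$ is closed analytic and proper over $\C$ but not algebraic. Invoking Chow's theorem on $\overline{Y'}\times\mathbb P^N$ would require knowing that the closure of the image $Z$ is analytic at infinity, which you have not shown.

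What actually does the work is the following; let $q\colon Y'\times\mathbb P^N\to Y'$ be the projection.
\begin{itemize}
\item Over a $\mathbb T$-invariant Stein neighbourhood $V$ of $o$, stable under contraction, relative Serre generation and Grauert coherence show that $\mathcal I_Z(d)$ is generated by $q_*\mathcal I_Z(d)$ for $d\gg0$.
\item This invariant coherent module over the local ring at $o$ is generated by weight-homogeneous elements. This uses Fourier projection of weighted Taylor series, the positivity of the Reeb weights, and Noetherianity.
\item Homogeneous germs on the cone are regular functions. Hence the generators $\sum_\alpha f_{\nu,\alpha}(y)w^\alpha$ are algebraic.
\item The algebraic ideal $\mathcal J$ they generate satisfies $\mathcal J^{\rm an}=\mathcal I_Z$ on $q^{-1}(V)$. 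This equality spreads to all of $Y'\times\mathbb P^N$ because both ideals are invariant and every radial orbit meets $q^{-1}(V)$.
\end{itemize}
Your phrase ``a weight argument at the vertex'' points in this direction, but this mechanism is the actual content of the step and has to be supplied.
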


\begin{proof}
Choose relatively compact $\mathbb T$-invariant Stein neighborhoods
$B\Subset B'$ of $o$, stable under radial contraction, and put
$K':=\pi'^{-1}(\overline{B'})$.  Compactness of $K'$ and local
$\mathbb Q$-Gorensteinness give $m_0>0$ such that
$\mathcal L_0:=\cO_X(-m_0K_X)$ is invertible near $K'$.  Every orbit of
the radial $\C^*$-subgroup meets this neighborhood, so equivariance of the
reflexive canonical sheaf propagates invertibility and shows that
$\mathcal L_0$ is invertible on $X$.  By the soliton equation and
Theorem~\ref{thm:complexstructure}\ref{thm:complexstructure2},
$\mathcal L_0$ carries a continuous Hermitian metric whose curvature
is strictly positive.  Richberg regularization gives a smooth strictly
positive metric, so the dual line bundle $\mathcal L_0^\vee$ is relatively
negative in the sense of Grauert \cite[\S3, Satz~2]{Gra62}.  Equivalently,
$\mathcal L_0$ is relatively ample for the proper holomorphic map $\pi'$
\cite[Definition~5, pp.~859--860]{CDJ13}.

For a sufficiently divisible $r$, set
$m=rm_0$, $\mathcal L=\mathcal L_0^{\otimes r}$, and
$\mathcal E=\pi'_*\mathcal L$.  After shrinking $B'$, the relative
embedding theorem gives a closed immersion
\[
 X_{B'}\longrightarrow\mathbb P_{B'}(\mathcal E),
 \qquad X_{B'}:=\pi'^{-1}(B'),
\]
whose tautological bundle pulls back to $\mathcal L$
\cite[p.~860, discussion preceding Theorem~6]{CDJ13}.  Choosing a
$\mathbb T$-weight decomposition of the sections and shrinking to $B$ gives
$\mathbb P_B(\mathcal E)\hookrightarrow B\times\mathbb P^N$, and the
resulting eigensections $s_0,\ldots,s_N$ give a relative closed immersion on
$X_B$.

The $s_i$ extend uniquely to $X$ by their $\C^*$-characters, because every
radial orbit meets $X_B$.  Equivariance propagates base-point-freeness and
the local-immersion property.  If $\pi'(x)=\pi'(y)$, scale the common image
into $B$ and use separation on $X_B$ to obtain $x=y$.  Thus
$F=[s_0:\cdots:s_N]$, and
$(\pi',F)\colon X\to Y'\times\mathbb P^N$ is an injective local closed
immersion.  It is proper because $\pi'$ is
proper, hence is a closed analytic immersion, and evaluation gives
\begin{equation}\label{eq:end-anticanonical-evaluation}
        \mathcal L\simeq F^*\cO_{\mathbb P^N}(1).
\end{equation}

Let $Z$ be its analytic image, let
$q\colon Y'\times\mathbb P^N\to Y'$ be the projection, and let
$\mathcal I_Z$ be its ideal.  After shrinking a radial-invariant Stein
neighborhood $V$ of $o$, relative Serre generation and Grauert coherence
give $d\gg0$ such that
$q_*\mathcal I_Z(d)$ is coherent and
$q^*q_*\mathcal I_Z(d)\to\mathcal I_Z(d)$ is surjective over $q^{-1}(V)$
\cite[Chapter~III, Section~4.1 and Chapter~V, Section~4.8]{GPR94}.
In the local ring of the
affine cone, whose Reeb weights are positive, an invariant coherent submodule is generated by
homogeneous elements: decompose local sections into their weighted Taylor series, take the
homogeneous pieces by Fourier projection, and then use Noetherianity. Thus \[
        G_\nu(y,w)=\sum_{|\alpha|=d}f_{\nu,\alpha}(y)w^\alpha
\]
generating $\mathcal I_Z(d)$ over $q^{-1}(V)$.
Each $f_{\nu,\alpha}$ is a homogeneous holomorphic germ at $o$ and hence regular functions.
Thus the $G_\nu$ extend to algebraic eigen-equations on
$Y'\times\mathbb P^N$.  Let $\mathcal J$ be the algebraic ideal they generate
and put $W:=V(\mathcal J)$.  On $q^{-1}(V)$ one has
$\mathcal J^{\rm an}=\mathcal I_Z$; both ideals are radial-invariant, and
every radial orbit meets $q^{-1}(V)$, so equality holds globally.  Hence
$W^{\rm an}=Z$ globally.  Analytification detects normality
\cite[Expos\'e~XII, Proposition~2.1(vi)]{SGA1}, so $W$ is normal.  Since
the $G_\nu$ are eigensections for the diagonal algebraic torus action,
the ideal $\mathcal J$ they generate is invariant; hence
$W=V(\mathcal J)$ is invariant, and
\[
        W\subset Y'\times\mathbb P^N
\]
makes $W$ quasi-projective and $W\to Y'$ projective.
Identifying $X$ with $W^{\rm an}$, equation~\eqref{eq:end-anticanonical-evaluation}
shows that $-K_X$ is $\pi'$-ample.
\end{proof}

\begin{proof}[Proof of Theorem~\ref{thm:pffibration}]
Lemmas~\ref{lem:stabilized-cone}--\ref{lem:end-algebraicity} give a
normal quasi-projective algebraic structure on $X$, a projective morphism
$\pi'\colon X\to Y'$, and a $\pi'$-ample anticanonical class $-K_X$. Let
\[
        X\xrightarrow{\pi}Y\xrightarrow{\nu}Y'
\]
be the $\mathbb T$-equivariant algebraic Stein factorization of $\pi'$. Then $\pi$ is projective with connected fibers \cite[Corollary III.11.5]{Har77}.
 Moreover since $Y'$ is affine, the
finiteness of $\nu$ implies that $Y$ is affine
\cite[Chapter~II, Exercise~3.4]{Har77}.  A
Cartier multiple of $-K_X$ is $\pi'$-ample and therefore $\pi$-ample by
\cite[Proposition~4.6.13(v), pp.~91--92]{EGAII}.  Analytic and
algebraic klt singularities agree over $\C$
\cite[Proposition~1.10.3(3)]{Kaw24}.  Hence
$\pi\colon X\to Y$ is a Fano fibration.  The $\mathbb T$-action descends
through the canonical Stein factorization, so all maps are equivariant.

Put $R=\cO(Y)$.  The maximum principle for holomorphic functions gives $R_0=\C$ and
$\langle\alpha,\xi\rangle>0$ for every nonzero weight occurring in $R$.  Therefore
$(\pi\colon X\to Y,\xi)$ is a polarized Fano fibration.
\end{proof}

\bibliographystyle{amsalpha}
\bibliography{references}

\end{document}